\newcommand{\hyref}[2]{ \hyperref[#2]{#1~\ref*{#2}} }
\newtheorem{thmIntro}{Theorem}    
\newcommand\rawcrossout[2]{\ensurestackMath{%
  \setbox0=\hbox{$#2$}%
  \crosswd=\wd0\relax%
  \setbox0=\hbox{$#1$}%
  \termwd=\wd0\relax%
  \FPdiv\myscale{\the\termwd}{\the\crosswd}%
  \stackengine{0pt}{#1}{\stretchrel*{\scalebox{\myscale}[1]{#2}}{#1}}{O}{c}{F}{T}{L}}}
\def\XX{\kern-3pt/}
\def\YY{\kern-.5pt}
\newcommand\crossout[1]{\rawcrossout{#1}{\YY/\YY}}
\newcommand{\Canakci}{\c{C}anak\c{c}\i}
\newcommand{\Ilke}{\.{I}lke }
\newcommand{\I}{\.{I}}
\numberwithin{figure}{section}
\numberwithin{table}{section}
\theoremstyle{plain}
\newtheorem{theorem}{Theorem}[section]
\newtheorem{prop}[theorem]{Proposition}
\newtheorem{lemma}[theorem]{Lemma}
\newtheorem{corollary}[theorem]{Corollary}
\newtheorem{proposition}[theorem]{Proposition}
\theoremstyle{definition}
\newtheorem{notation}[theorem]{Notation}
\newtheorem{remark}[theorem]{Remark}
\newtheorem{example}[theorem]{Example}
\newtheorem{definition}[theorem]{Definition}
\newtheorem{observation}[theorem]{Observation}
\newtheorem{convention}[theorem]{Convention}
\definecolor{dgreen}{rgb}{0,0.45,0}
\definecolor{beaublue}{rgb}{0.74, 0.83, 0.9}
\definecolor{darklavender}{rgb}{0.45, 0.31, 0.59}
\definecolor{darkorchid}{rgb}{0.6, 0.2, 0.8}
\definecolor{darkpastelpurple}{rgb}{0.59, 0.44, 0.84}
\definecolor{electricviolet}{rgb}{0.56, 0.0, 1.0}
\definecolor{capri}{rgb}{0.0, 0.75, 1.0}
\definecolor{indiagreen}{rgb}{0.07, 0.53, 0.03}
\definecolor{pastelbrown}{rgb}{0.51, 0.41, 0.33}
\definecolor{prune}{rgb}{0.44, 0.11, 0.11}
\definecolor{ao}{rgb}{0.0, 0.0, 1.0}
\definecolor{bistre}{rgb}{0.24, 0.17, 0.12}
\definecolor{britishracinggreen}{rgb}{0.0, 0.26, 0.15}
\definecolor{brown(traditional)}{rgb}{0.59, 0.29, 0.0}
\newcommand{\sX}{\mathsf{X}}
\DeclareMathAlphabet{\mathpzc}{OT1}{pzc}{m}{it}
\newcommand{\bC}{\mathbb{C}}
\newcommand{\bH}{\mathbb{H}}
\newcommand{\bN}{\mathbb{N}}
\newcommand{\bR}{\mathbb{R}}
\newcommand{\bT}{\mathbb{T}}
\newcommand{\bZ}{\mathbb{Z}}
\newcommand{\ga}{\calg_{\gamma}}
\DeclareMathOperator{\match}{\textup{ Match }}
\DeclareMathOperator{\cross}{\textup{cross }}
\DeclareMathOperator{\acc}{\mathrm{Acc }}
\newcommand{\cala}{\mathcal{A}}
\newcommand{\calb}{\mathcal{B}}
\newcommand{\cald}{\mathcal{D}}
\newcommand{\calf}{\mathcal{F}}
\newcommand{\calg}{\mathcal{G}}
\newcommand{\calh}{\mathcal{H}}
\newcommand{\cals}{\mathcal{S}}
\newcommand{\calt}{\mathcal{T}}
\newcommand{\calz}{\mathcal{Z}}
\renewcommand{\setminus}{\backslash}
\DeclareMathOperator{\bad}{\mathrm{Bad }}
\newcommand{\arr}{\ar@{-}[r]}
\newcommand{\mun}{\mu^{(n)}}
\newcommand{\muone}{\mu^{(1)}}
\newcommand{\muze}{\mu^{(0)}}
\newcommand{\muinf}{\mu^{\infty}}
\newcommand{\mubul}{\mu^{\bullet}}
\renewcommand{\phi}{\varphi}
\renewcommand{\epsilon}{\varepsilon}
\newcounter{sarrow}
\tikzset{join/.code=\tikzset{after node path={%
\ifx\tikzchainprevious\pgfutil@empty\else(\tikzchainprevious)%
edge[every join]#1(\tikzchaincurrent)\fi}}}
\tikzset{>=stealth',every on chain/.append style={join},
         every join/.style={->}}
\tikzset{vertex/.style={circle,fill=black,inner sep=1pt,outer sep=2pt},
         tinyvertex/.style={font=\scriptsize,minimum size=6pt},
         smallvertex/.style={inner sep=1pt, font=\small},
         >=stealth',
         leadsto/.style={-angle 90,decorate,decoration=snake,very thick},
         cut/.style={decorate,decoration=saw,very thick}}
\tikzset{
    partial ellipse/.style args={#1:#2:#3}{
        insert path={+ (#1:#3) arc (#1:#2:#3)}
    }
}
\begin{document}

\title[Infinite rank surface cluster algebras]{Infinite rank surface cluster algebras}
\subjclass[2010]{51M10, 
13F60, 
 05C10
}
\keywords{Surface cluster algebra, infinite triangulation, infinite sequence of mutations, lambda length, decorated Teichm\"uller space, Ptolemy relation, skein relation}
\thanks{This work was partially supported by EPSRC grant EP/N005457/1.}


\author{\Ilke  \Canakci}
\address{School of Mathematics, Statistics and Physics, Newcastle University, Newcastle Upon Tyne NE1 7RU, United Kingdom.}
\email{ilke.canakci@newcastle.ac.uk}

\author{Anna Felikson}
\address{Department of Mathematical Sciences, Durham University, Lower Mountjoy, Stockton Road, Durham, DH1 3LE, United Kingdom.}
\email{anna.felikson@durham.ac.uk}

\begin{abstract}
We generalise surface cluster algebras to the case of infinite surfaces where the surface contains finitely many accumulation points of boundary marked points. To connect different triangulations of an infinite surface, we consider infinite mutation sequences. 

We show transitivity of infinite mutation sequences on triangulations of an infinite surface and  examine different types of mutation sequences. Moreover, we use a hyperbolic structure on an infinite surface to extend the notion of surface cluster algebras to infinite rank by giving cluster variables as lambda lengths of arcs. Furthermore, we study the structural properties of infinite rank surface cluster algebras in combinatorial terms, namely we extend ``snake graph combinatorics" to give an expansion formula for cluster variables. We also show skein relations for infinite rank surface cluster algebras.
\end{abstract}

\maketitle

{\small
\setcounter{tocdepth}{2}
\tableofcontents
}

\section{Introduction}

We introduce cluster algebras associated with infinite bordered surfaces. Our work extends cluster algebras from (finite) marked surfaces \cite{FST,FT} and generalises surface cluster algebra combinatorics advanced in \cite{MSW,CS}.

Cluster algebras were introduced by a groundbreaking work of Fomin and Zelevinsky \cite{FZ1} and further developed in \cite{BFZ,FZ2,FZ4} with the original motivation to give an algebraic framework for the study of dual  canonical basis in Lie theory. Cluster algebras are combinatorially defined commutative algebras given by generators, {\it cluster variables}, and relations which are iteratively defined via a process called {\it mutation}.

An important class of cluster algebras, called {\it cluster algebras associated with surfaces} or {\it surface cluster algebras}, was introduced in \cite{FST} by using  combinatorics of (oriented) Riemann surfaces with marked points and consequently, building on an earlier work of \cite{FG1,FG2}, an intrinsic formulation of surface cluster algebras was given in \cite{FT} where lambda lengths of curves serve as cluster variables (see also \cite{P}).  Surface cluster algebras also constitute a significant class in terms of classification of cluster algebras since it was shown in \cite{FeSTu,FeSTu2,FeSTu3} that all but finitely many cluster algebras of {\it finite mutation type} are those associated with triangulated surfaces (or to orbifolds for the skew-symmetrizable case).  An initial data to construct a cluster algebra corresponds to a {\it triangulation} of the surface, namely a maximal collection of non-crossing {\it arcs}. 

Combinatorial and geometric aspects of surface cluster algebras have been studied remarkably widely. For instance, an expansion formula for cluster variables was given by Musiker, Schiffler and Williams \cite{MSW} extending the work of \cite{S2,ST,S3,MS} and bases for surface cluster algebras were constructed in \cite{MSW2,T,FeTu}. Further combinatorial aspects of surface cluster algebras were studied in \cite{CS,CS2,CS3,CLS}. Moreover, it was shown in \cite{M} that the quantum cluster algebra coincides with the quantum upper cluster algebra for the surface type under certain assumptions. In addition, combinatorial topology of triangulated surfaces and surface cluster algebra combinatorics have been influential in representation theory, e.g. in cluster categories associated with marked surfaces \cite{BZ,QZ,CaSc,ZZZ} and  in gentle algebras associated with marked surfaces \cite{ABCP,Labardini,Ladkani,CaSc}.

Infinite rank cluster algebras have appeared in a number of different contexts. In particular, in the study of cluster categories associated with the infinity-gon and the infinite double strip in \cite{IT, IT2, HJ, LP}. Moreover, triangulations of the infinity-gon were used in \cite{BHJ,HJ} to give full classification of $SL_2$-tilings and also in \cite{GG} to associate subalgebras of the coordinate ring $\bC[Gr(2,\pm\infty)]$ of an infinite rank Grassmannian $Gr(2,\pm\infty).$ Furthermore,    infinite rank cluster algebras were considered in \cite{HL,HL2} in the context of quantum affine algebras and also in  \cite{FK}  to show that the equations of generalised $Q$-systems can be given as mutations. 
See also~\cite{GG1,G} where infinite rank cluster algebras are given as colimits of finite rank cluster algebras.

However so far, to the best of our knowledge, infinite rank cluster algebras were considered with the focus only on {\it finite mutation sequences}. One exception to this restriction was given in an independent recent work of Baur and Gratz  \cite{BG} which considers infinite triangulations of unpunctured surfaces and classifies those in the same equivalence class. The article \cite{BG} concentrates on two examples, namely, on triangulations of the infinity-gon as well as of the {\it completed} infinity-gon where triangulations are completed with strictly asymptotic arcs (those connecting to limit points). The first is motivated to overcome finiteness constraints in the corresponding representation theory whereas the latter is motivated to introduce a combinatorial model to capture 
representation theory of the polynomial ring in one variable. However \cite{BG} does not study the associated cluster algebra structures. 

\bigskip

In this paper, we aim to consider infinite sequences of mutations and subsequently we introduce cluster algebras for {\it infinite surfaces} allowing infinite mutation sequences. More precisely, we have a three-fold goal for this paper: 

\begin{itemize}
\item {\it topological:} not to have any restrictions on the type of triangulations, i.e. being able to mutate between any two triangulations of a fixed surface;
\item {\it geometric:} to introduce cluster algebras from infinite surfaces by associating a hyperbolic structure to the surface and considering lambda lengths of arcs as cluster variables without having to deal with combinatorics of (infinite) quiver mutations;
\item {\it combinatorial:} to extend further  main properties of surface cluster algebras from finite case using ``snake graph calculus", in particular:
\begin{itemize}
\item[-] to give an expansion formula for cluster variables in terms of intersection pattern of  arcs with a fixed initial triangulation;
\item[-] to show skein relations, i.e. certain identities in the cluster algebra associated with generalised Ptolemy relations in the surface, for infinite surface cluster algebras.
\end{itemize} 
\end{itemize}

Our setting is the following. By an {\it infinite surface} we mean a connected oriented surface of a finite genus, with finitely many interior marked points (punctures), with finitely many boundary components, and with infinitely many {\it boundary marked points}, located in such a way that they have only finitely many {\it accumulation points} of boundary marked points.

We start with investigating the combinatorics of infinite mutations in Section~\ref{Sec:Combinatorics}. In order to be able to mutate between any two triangulations, we need to introduce infinite sequences of mutations, called {\it infinite mutations}, and moreover, {\it infinite sequences of infinite mutations}. When we refer to infinite mutation sequences, we only permit sequences ``converging" in a certain sense. Our main result establishes the transitivity of infinite sequences of infinite mutations on triangulations of a given infinite surface.

\begin{thmIntro}[Theorem~\ref{Thm:T-->T'}] \label{ThmA}
For any two triangulations $T$ and $T'$ of an infinite surface $\cals,$ there exists a mutation sequence $\mubul$ such that $\mubul(T)=T'$, where $\mubul$ is a ``finite mutation",  a ``finite sequence of infinite mutations" or an ``infinite sequence of infinite mutations" (see also Definition~\ref{Def:Mutation}). 
\end{thmIntro}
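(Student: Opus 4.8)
The plan is to localise the problem at the finitely many accumulation points and to reduce everything else to the classical finite case. Since $\cals$ has only finitely many accumulation points $p_1,\dots,p_k$ of boundary marked points, I would fix small pairwise disjoint neighbourhoods $U_1,\dots,U_k$ of them. Outside $\bigcup_i U_i$ the marked points cannot accumulate, so $\cals\setminus\bigcup_i U_i$ is a compact surface carrying only finitely many marked points, while inside each $U_i$ infinitely many marked points accumulate to $p_i$. Thus every triangulation is ``finite away from the $p_i$'' and ``genuinely infinite near each $p_i$'', and the whole difficulty is concentrated in the local models around the accumulation points.

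The first and main step is a \emph{local standardisation lemma}: near each accumulation point, any triangulation can be carried by an infinite mutation (or, in general, by an infinite sequence of infinite mutations) to a fixed standard triangulation of $U_i$ --- for instance a fan of arcs based at a chosen marked point and marching monotonically towards $p_i$ --- without disturbing the triangulation outside $U_i$. I would build the standardising sequence of flips so that it proceeds ``from far to near'': first correct the arcs lying furthest from $p_i$, then those in the next shell, and so on, each step being an ordinary flip. The crucial point is convergence in the sense of Definition~\ref{Def:Mutation}: because every compact subregion of $U_i$ meets only finitely many shells, each fixed arc is flipped only finitely often, so the sequence converges to a well-defined limit triangulation, which by construction is the standard fan. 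When a single sweep does not suffice --- e.g.\ when the two local patterns differ by an operation shifting infinitely many nested blocks --- one iterates, producing an infinite sequence of infinite mutations; here one must check that the outer iteration again converges, i.e.\ that each arc eventually stabilises across the successive sweeps.

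Granting the local lemma, the proof concludes by a finite reduction. Applying it near each $p_i$ transforms $T$ into a triangulation $\widehat T$ that is standard inside every $U_i$, and likewise transforms $T'$ into $\widehat T'$; since there are only $k$ accumulation points this uses a \emph{finite} sequence of infinite mutations on each side. Now $\widehat T$ and $\widehat T'$ agree inside $\bigcup_i U_i$ and differ only on the compact core $\cals\setminus\bigcup_i U_i$, where both restrict to triangulations of a finite marked surface. By the classical connectivity of the flip graph of a finite marked surface (Fomin, Shapiro and Thurston), $\widehat T$ and $\widehat T'$ are related by a \emph{finite} mutation. Composing the standardising infinite mutations for $T$, this finite mutation, and the reversed standardising mutations for $T'$ yields a sequence $\mubul$ with $\mubul(T)=T'$; its type is ``finite'', ``finite sequence of infinite'', or ``infinite sequence of infinite'' according to whether the local models were already standard, required finitely many infinite sweeps, or required an iterated infinite family of sweeps.

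The main obstacle is the local standardisation lemma, and specifically the convergence bookkeeping. One has to organise the flips near an accumulation point so that the sequence converges in the prescribed sense and actually limits to the intended standard triangulation, rather than drifting off to a different limit or failing to stabilise any arc; and one must identify precisely which local configurations force the passage from a single infinite mutation to an iterated (infinite sequence of infinite) mutation. Making the ``far-to-near'' sweep precise --- in particular controlling how arcs that straddle several shells are handled, and ensuring no arc is flipped infinitely often --- is the technical heart of the argument.
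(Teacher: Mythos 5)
Your overall strategy---localise at the finitely many accumulation points, standardise there, and reduce the rest to the finite flip-connectivity of Fomin--Shapiro--Thurston---is in the same spirit as the paper's proof, which cuts $\cals$ into finitely many almost elementary domains of $T'$ and handles each one separately. However, there is a genuine gap at the step ``the reversed standardising mutations for $T'$''. In this setting mutation sequences are \emph{not} invertible: the relation ``$T$ can be mutated to $T'$ by a sequence of a given type'' is badly asymmetric. Concretely, an incoming fan or zig-zag around $p_i$ can be turned into an outgoing fan by a single infinite mutation, but Theorem~\ref{no mun from stronger} shows that no \emph{finite} sequence of infinite mutations can go back; and Theorem~\ref{infinite sequences} shows that for some pairs only an infinite sequence of infinite mutations will do. So if your standard local model is a fan based at a marked point with arcs marching towards $p_i$ (an incoming fan), then passing from the standardised $\widehat T'$ back to a $T'$ that is, say, an outgoing fan near $p_i$ is provably \emph{not} the reversal of the standardising sweep and needs a separate existence proof---which is essentially the content of the theorem you are trying to establish. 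The fix is to prove a two-sided local lemma (``any triangulation of the local piece mutates to any other''), which is exactly the paper's Lemma~\ref{mutating to elementary}, or to choose the outgoing fan as the standard model so that the return trip is a single infinite mutation because no arc of $T'$ crosses it infinitely often (Proposition~\ref{Prop:NoBadArc}); but in the latter case the \emph{forward} standardisation of $T$ may itself require an infinite sequence of infinite mutations, and one must then check that the resulting infinite-then-finite-then-infinite composition can be reorganised into an admissible sequence.

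The second substantive issue is one you flag yourself but do not resolve: admissibility. Your ``far-to-near sweep'' plausibly ensures each arc is flipped finitely often \emph{within} one infinite mutation, but for an infinite sequence of infinite mutations you must show that the orbit of every arc (including arcs created along the way) \emph{converges} in the sense of Definition~\ref{Def:LimitArc}. This is delicate: an orbit can a priori have one subsequence drifting towards a limit arc and another towards an accumulation point, and the paper's proof of Lemma~\ref{mutating to elementary} devotes a careful case analysis (incoming fans versus zig-zags converging to a limit arc) precisely to rule this out. Finally, a smaller point: for the reduction to the compact core to work, the standardisation must also install separating arcs cutting each $U_i$ off from the rest of the surface; arcs of an arbitrary triangulation do not respect your decomposition into $\bigcup_i U_i$ and the core, so ``$\widehat T$ and $\widehat T'$ differ only on a finite surface'' requires this additional step (the paper achieves it via Proposition~\ref{Lem:FinArcs-->Discs} and Lemma~\ref{Lem:ChangeMutatforGamma}).
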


Furthermore, 
we show that the type of the mutation $\mubul$ required in Theorem~\ref{ThmA} depends on combinatorial properties of triangulations $T$ and $T'$ such as  intersection numbers of curves of $T'$ with the curves in $T$. We stress that there are some pairs of triangulations requiring infinite mutations and there are pairs of triangulations for which infinite sequences of infinite mutations are unavoidable. More precisely, we have the following result.

\begin{thmIntro}[Theorem~\ref{infinite sequences}, Theorem~\ref{bad are all limit}, Theorem~\ref{no mun from stronger}] Let $T$ and $T'$ be triangulations of an infinite surface. Then
\begin{enumerate}
\item  the mutation sequence $\mu^{\bullet}$ satisfying $\mu^{\bullet}(T)=T'$ in Theorem A can be chosen  ``finite" if and only if $T'$ crosses $T$ in finitely many places;
\item there exist triangulations $T$ and $T'$ such that there is no ``finite sequence of infinite mutations'' transforming $T$ to $T'$;
\item given some sufficient conditions (described in terms of intersections of $T$ and $T'$), there exists a ``finite sequence of infinite mutations'' transforming $T$ to $T'$;
\item given some sufficient conditions (described in certain combinatorial terms), there is no  ``finite sequence of infinite mutations'' transforming $T$ to $T'$.
\end{enumerate}
\end{thmIntro}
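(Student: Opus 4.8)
The plan is to treat the four statements separately, with parts (1) and (3) handled constructively and parts (2) and (4) via a mutation-invariant obstruction.

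\textbf{Part (1).} For the forward implication, a finite mutation is a finite composition of flips, each of which exchanges a single arc for the opposite diagonal of a quadrilateral; hence $\mubul(T)$ and $T$ share all but finitely many arcs, and the finitely many new arcs all lie inside the finite subsurface affected by the flips. An arc contained in a finite union of triangles of $T$ meets $T$ in only finitely many points, so $\cross(T',T)<\infty$. For the converse, if $\cross(T',T)<\infty$ then all crossings are confined to a compact subsurface $\cals_0$ disjoint from neighbourhoods of the accumulation points, so inside $\cals_0$ the picture is that of a finite marked surface. I would then run the standard crossing-reduction induction of \cite{FST}: repeatedly choose an arc of $T'$ crossing $T$, flip along its first crossing to strictly decrease $\cross$, and iterate until $T'$ is reached after finitely many flips.

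\textbf{Part (3).} Here the strategy is to process the finitely many accumulation points one at a time. The sufficient condition should guarantee that the arcs of $T'$ crossing $T$ infinitely often (the members of $\bad(T',T)$) are all asymptotic to accumulation points. Near each accumulation point $p$ I would perform a single infinite mutation: a convergent sequence of flips that ``combs'' the fan of $T$-arcs approaching $p$ into the asymptotic configuration dictated by $T'$. Convergence must be verified — each individual arc may be flipped only finitely often, so that the limit is again a triangulation — and this is precisely where the hypothesis on $\bad(T',T)$ is used. After treating each of the finitely many accumulation points, only finitely many crossings remain, and a final finite mutation (Part (1)) completes the transformation; the whole process is a finite sequence of infinite mutations $\mun$.

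\textbf{Parts (2) and (4).} These are the non-existence statements, and I expect them to be the crux. The plan is to isolate an invariant that a single infinite mutation can alter by only a controlled amount, so that a finite sequence $\mun$ cannot bridge an infinite gap. The natural candidate is a quantity attached to $\bad(T',T)$: a convergent infinite mutation fixes all but locally finitely many arcs near each accumulation point and can only rearrange the asymptotic data there, so it cannot create or remove badness that is ``spread out'' rather than concentrated at the accumulation points. For Part (2) I would exhibit an explicit pair $T,T'$ — for instance with bad arcs not asymptotic to any single accumulation point, or with a nested (second-order) accumulation — where the discrepancy forces infinitely many independent infinite mutations; the invariant then shows no $\mun$ suffices, while the full $\muinf$ does, consistently with Theorem~\ref{ThmA}. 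Part (4) packages this obstruction as a general (``stronger'') combinatorial hypothesis and reruns the same invariant argument.

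\textbf{Main obstacle.} The hardest step is making the invariant in Parts (2) and (4) precise: one must pin down exactly which reconfigurations a single convergent infinite mutation can realise, prove that this class is closed under finite composition, and verify that the constructed $T'$ lies outside it. Establishing convergence of the infinite mutations built in Part (3) is the secondary difficulty, since one must control the flips at one accumulation point without disturbing the configuration at the others.
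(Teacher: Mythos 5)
Your Parts (1) and (3) track the paper closely: Part (1) is Proposition~\ref{finite} with essentially the same two-line argument, and Part (3) has the shape of Theorem~\ref{bad are all limit} (the paper's sufficient condition is that $\bad_T T'$ is finite and consists only of limit arcs of $T'$; the bad arcs are then cleared one at a time, each by one infinite mutation followed by a single flip, with a final infinite mutation supplied by Proposition~\ref{Prop:NoBadArc} once no bad arcs remain).

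The genuine gap is in Parts (2) and (4). Your candidate invariant --- a measure of how ``spread out'' the bad set $\bad_T T'$ is --- is exactly the kind of invariant the paper shows does \emph{not} work: Example~\ref{ex:countrex2} exhibits $T,T'$ with $|\bad_T T'|=\infty$ and yet $T'=\muone_2\circ\muone_1(T)$, while Example~\ref{ex:countrex1} exhibits $|\bad_T T'|=3$ with no finite sequence of infinite mutations sufficing; so neither the cardinality nor the distribution of the bad set is monotone under infinite mutations. What the paper actually uses is two separate facts. First (Proposition~\ref{prop:|bad|=>more than one}), a \emph{single} admissible infinite mutation $\muone$ with $\muone(T)=T'$ forces $|\bad_T T'|<\infty$, because every non-limit arc of $T'$ is produced after finitely many flips and there are only finitely many limit arcs. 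Second, the combinatorial type $\mathsf{In}/\mathsf{Out}/\mathsf{Zig}$ of the almost elementary domain at each one-sided accumulation point, refined to the ``stronger domain'' relation $\mathcal D_{\overline p}\succ\mathcal D'_{\overline p}$ of Definition~\ref{def stronger domain}, is the monotone quantity: converting an incoming fan or a zig-zag into an outgoing fan in one infinite mutation would create infinitely many bad arcs (Lemma~\ref{in-out-zig}), contradicting the first fact, and this obstruction is stable under finite composition (Theorem~\ref{no mun from stronger}). For Part (2) the paper then simply takes $T$ of type $\mathsf{In}$ and $T'$ of type $\mathsf{Out}$ at one accumulation point. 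Without identifying this domain-type invariant your argument for (2) and (4) does not close; moreover your suggested examples (bad arcs not asymptotic to any single accumulation point, or second-order accumulation) are either not obviously obstructed or not available in this setting, since the surfaces here have only finitely many accumulation points by definition.
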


In Section~\ref{Sec:ClusAlg}, we follow \cite{FT} and \cite{P} and consider  triangulated surfaces with hyperbolic metrics such that every triangle of $T$ is an ideal triangle in this metric. We choose horocycles at every marked point (including accumulation points). Doing so, we require that if a sequence of marked points converge to an accumulation point $p_*$, then the corresponding horocycles converge to the horocycle at $p_*.$  Under these assumptions, the lambda lengths of the arcs satisfy some ``limit conditions". On the other hand, it turns out that as soon as these necessary conditions are satisfied, there exists a unique  hyperbolic structure and a unique choice of horocycles leading to the prescribed values of lambda lengths. In other words, lambda lengths of the arcs in an infinite surface can serve as coordinates on the decorated Teichm\"uller space. In order to achieve these results, we need to consider triangulations without infinite zig-zag (or leap-frog) pieces and we refer to such triangulations as {\it fan triangulations}. In this setting, the following version of Laurent phenomenon holds.

\begin{thmIntro}[Theorem~\ref{Laurent for punctures}] Lambda lengths of the arcs with respect to an initial fan triangulation are absolutely converging Laurent series in terms of the initial (infinite) set of variables corresponding to the fan triangulation.
 
\end{thmIntro}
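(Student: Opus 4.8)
The plan is to deduce the result from the snake-graph expansion formula together with the positivity of lambda lengths. Recall that for an arc $\gamma$ crossing the arcs $\tau_{i_1},\tau_{i_2},\dots$ of the fan triangulation $T$ in order, the associated snake graph $G_\gamma$ is assembled from one tile per crossing, and the expansion takes the shape
\[
\lambda(\gamma)=\frac{1}{\prod_{k}\lambda(\tau_{i_k})}\sum_{P\in\mathrm{Match}(G_\gamma)}x(P),
\]
where each weight $x(P)$ is a finite product of lambda lengths of the edges selected by the perfect matching $P$. If $\gamma$ crosses $T$ in only finitely many places, then $G_\gamma$ is a finite snake graph, the sum is finite, and the right-hand side is an honest Laurent polynomial; in this case the statement reduces to the classical Laurent phenomenon for a finite sub-surface and there is nothing to prove. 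The entire content of the theorem lies in the remaining case, which for a fan triangulation occurs precisely when $\gamma$ runs into an accumulation point $p_*$ and therefore meets infinitely many arcs of $T$.

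First I would reduce convergence to a boundedness statement. Since every lambda length is a positive real number and every coefficient appearing in the series is a non-negative integer (it counts perfect matchings), all terms of the series are positive, so there is no cancellation; consequently \emph{absolute convergence is equivalent to mere boundedness of the partial sums}. Thus it suffices to exhibit a finite upper bound for every finite truncation of the matching sum.

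Next I would produce this bound geometrically by approximation. Choose a sequence of arcs $\gamma_n$ which terminate at the boundary marked points $x_n\to p_*$ rather than at $p_*$ itself; each $\gamma_n$ crosses $T$ in only finitely many places, so $\lambda(\gamma_n)$ is a genuine Laurent polynomial $L_n$ by the finite case above, and $G_{\gamma_n}$ is the truncation of $G_\gamma$ to its first several tiles. Because $T$ is a \emph{fan} triangulation, the tail of the crossing sequence is a single straight strip of tiles, so any fixed monomial can only be produced by matchings that use the forced minimal matching on the far tail; hence its coefficient in $L_n$ stabilises as $n\to\infty$, yielding a well-defined limiting Laurent series $L=\lim_n L_n$. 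On the geometric side, the limit conditions guarantee that the horocycle at $x_n$ converges to the horocycle at $p_*$, whence $\lambda(\gamma_n)\to\lambda(\gamma)$, a finite positive number. Since the $L_n$ have positive terms and converge to the finite value $\lambda(\gamma)$, the partial sums of $L$ are uniformly bounded, and the previous paragraph gives absolute convergence with sum $\lambda(\gamma)$.

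The main obstacle is the control of the infinite tail near $p_*$: one must show that the infinite product $\prod_k\lambda(\tau_{i_k})$ in the denominator and the infinite matching sum in the numerator behave compatibly, so that the quotient converges. I expect this to rest entirely on the fan hypothesis together with the limit conditions. The fan hypothesis makes the tail a single straight run of tiles, governed by a three-term Ptolemy recursion among the $\lambda(\gamma_n)$ with positive coefficients, while the limit conditions — the convergence of the horocycles, equivalently of the relevant lambda lengths, as $x_n\to p_*$ — force the successive increments $\lambda(\gamma_{n+1})-\lambda(\gamma_n)$ to decay geometrically. This simultaneously shows that $\lim_n\lambda(\gamma_n)$ exists and is finite and that the partial sums of the series are bounded; absolute convergence and the identification of the sum with $\lambda(\gamma)$ then follow from the positivity argument without any cancellation analysis.
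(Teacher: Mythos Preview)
Your argument has a genuine gap at the coefficient-stabilisation step. In the model case of a single incoming fan (source $p_0$, base points $p_1,p_2,\dots\to p_*$), the approximating arc $\gamma_n=p_1p_n$ has lambda length
\[
\lambda(\gamma_n)=x_1\,x_n\sum_{i=1}^{n-1}\frac{x_{i,i+1}}{x_ix_{i+1}},
\qquad\text{while}\qquad
\lambda(\gamma)=x_1\,x_*\sum_{i\ge 1}\frac{x_{i,i+1}}{x_ix_{i+1}}.
\]
Because of the global factor $x_n$ versus $x_*$, \emph{no} monomial of the target series ever appears in any $L_n$; the coefficient of each monomial of $L$ in $L_n$ is identically zero, so ``the coefficient in $L_n$ stabilises'' is false as a statement about formal Laurent series. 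What is true is the \emph{numerical} convergence $x_n\to x_*$, but that is precisely one of the compatibility conditions imposed by the converging-horocycle hypothesis, and using it amounts to the explicit estimate the paper carries out (Proposition~\ref{Rem:IncomingFan}(4)), not a stabilisation/positivity argument. There are two secondary issues as well: the snake-graph tail for a fan crossing is a zig-zag, not a straight strip (see Observation~\ref{Obs:LimitTile}(a)); and an arc can meet infinitely many arcs of $T$ without having an endpoint at $p_*$, simply by crossing the limit arc of an incoming fan and continuing into the next fan, so the truncation-at-an-endpoint scheme does not cover the general case.

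For comparison, the paper does not use snake graphs at this point at all. It argues by induction on the number of elementary fans in the domain $\mathcal{D}_\gamma^T$ (Proposition~\ref{prop Laurent}): the base case is the explicit fan formula above, whose absolute convergence is proved directly from the compatibility conditions $x_i\to x_*$ and $\sum x_{i,i+1}<\infty$; the inductive step is a single Ptolemy relation across the arc $\gamma'\in T$ separating the last fan from the rest, followed by division by the initial variable $x_{\gamma'}$. The punctured case (Theorem~\ref{Laurent for punctures}) is then reduced to this by further Ptolemy-type identities. The snake-graph expansion is established only afterwards, in Section~\ref{Sec:SG}, by showing it agrees with the lambda length already known to be a convergent Laurent series; invoking it here would invert the logical order.
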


Based on the result of this theorem, we can introduce infinite rank surface cluster algebras associated with {\it fan} triangulations. Subsequently, we show that the definition of cluster algebra associated with a fan triangulation is independent of the choice of an initial fan triangulation which allows us to speak about cluster algebras associated with a given infinite surface.

Section~\ref{Sec:SG} is devoted to the generalisation of combinatorial results known for (finite) surface cluster algebras. We extend the notion of snake graphs introduced in \cite{MSW,MS} to infinite snake graphs and give an expansion formula for cluster variables with respect to fan triangulations of the surface  by generalising the Musiker-Schiffler-Williams \cite{MSW} formula subject to some ``limit" conditions. This formula also
 manifestly gives cluster variables as Laurent series (with positive integer coefficients) in an (infinite) initial set of cluster variables associated with fan triangulations. 

\begin{thmIntro}[Theorem~\ref{Thm:MSW-main}]
Let $T$ be a fan triangulation of an infinite surface $\cals$ and  $\gamma$ be an arc. Let $x_{\gamma}$ be the  cluster variable associated with $\gamma$, let $\ga$ be the snake graph of $\gamma$ and $x_{\ga}$ be the Laurent series associated with  $\ga$. Then $x_{\gamma}=x_{\ga}.$

\end{thmIntro}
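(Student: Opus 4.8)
The plan is to prove the expansion formula $x_\gamma = x_{\calg_\gamma}$ by reducing the infinite statement to the already-established finite case via a careful truncation-and-limit argument. First I would set up the approximation: since $T$ is a fan triangulation, the arc $\gamma$ crosses only finitely many arcs of $T$ in any bounded region, but may accumulate near the finitely many accumulation points $p_*$. I would choose an exhaustion of $\cals$ by finite subsurfaces $\cals_n$, each carrying the finite triangulation $T_n = T \cap \cals_n$ obtained by restricting $T$, chosen so that $\gamma$ meets only the crossings recorded inside $\cals_n$ up to a controlled ``tail''. On each $\cals_n$ the classical Musiker--Schiffler--Williams formula of \cite{MSW} applies verbatim, giving a finite snake graph $\calg_{\gamma,n}$ with $x_{\gamma,n} = x_{\calg_{\gamma,n}}$, where $x_{\gamma,n}$ is the lambda length computed in the truncated surface.

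The key steps, in order, are then as follows. \emph{Step 1:} verify that the infinite snake graph $\calg_\gamma$ is the direct limit of the finite snake graphs $\calg_{\gamma,n}$, i.e.\ that the tiles of $\calg_\gamma$ are exactly the union of tiles appearing in the $\calg_{\gamma,n}$, with matchings of $\calg_\gamma$ arising as limits of matchings of the $\calg_{\gamma,n}$; this is essentially a bookkeeping check that the MSW tile-construction is local in the crossing sequence. \emph{Step 2:} show that the sum $x_{\calg_\gamma} = \sum_{P} x(P)/\mathrm{cross}(\gamma)$ over perfect matchings $P$ of $\calg_\gamma$ converges as an absolutely convergent Laurent series in the initial variables, and that it is the limit of the finite partial sums $x_{\calg_{\gamma,n}}$. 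Here I would lean on the Laurent-series convergence already secured in Theorem~\ref{Laurent for punctures}, together with the limit conditions on lambda lengths imposed near each $p_*$, to dominate the matching contributions from the tail tiles. \emph{Step 3:} identify the analytic limit: show $\lim_n x_{\gamma,n} = x_\gamma$, which follows because lambda lengths in the truncated surfaces converge to the lambda length in $\cals$ under the convergence of horocycles to the horocycle at $p_*$, exactly the setup fixed in Section~\ref{Sec:ClusAlg}.

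Combining Steps 1--3 gives the result: $x_\gamma = \lim_n x_{\gamma,n} = \lim_n x_{\calg_{\gamma,n}} = x_{\calg_\gamma}$, where the first equality is Step~3, the second is the finite MSW theorem applied on each $\cals_n$, and the third is Steps~1--2. I would phrase the final assembly so that both sides are recognised as the same absolutely convergent Laurent series, with the equality holding termwise after matching up the two indexing sets (matchings of $\calg_\gamma$ versus the geometric continued-fraction/Ptolemy expansion of $\gamma$).

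The hard part will be Step~2, controlling convergence uniformly near the accumulation points. Away from the $p_*$ everything is finite and the classical theory transfers directly; the genuine difficulty is that infinitely many tiles of $\calg_\gamma$ cluster near each $p_*$, so I must show that the contributions of matchings supported on far-out tiles decay fast enough for absolute convergence, and crucially that the truncation $\calg_{\gamma,n} \to \calg_\gamma$ does not lose or duplicate matchings in the limit. This is where the ``limit conditions'' on lambda lengths earn their keep: they are precisely what forces the geometric tail (the product of edge weights along distant tiles) to be summable, so that the interchange of limit and infinite matching-sum in Steps~1--2 is justified rather than merely formal.
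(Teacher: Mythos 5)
Your strategy --- exhaust $\cals$ by finite subsurfaces, apply the finite Musiker--Schiffler--Williams formula on each, and pass to the limit --- is genuinely different from the paper's proof. The paper instead works in the universal cover, where $\cald_\gamma^T$ is a finite union of elementary fans, and inducts on the number of fans crossed: the base case of a single infinite fan is handled by an explicit bijection between perfect matchings of the one-sided infinite zig-zag (classified by their \emph{height}, Lemma~\ref{Lem:UniquePMexcept_1tile}) and the terms of the closed-form Laurent series $x_{s,\ast}=x_sx_{\ast}\sum_i x_{i,i+1}/(x_ix_{i+1})$ from Proposition~\ref{Rem:IncomingFan}; the inductive step applies a Ptolemy relation at the arc separating the last two fans and matches it against a partition of $\match\ga$ according to which edge of the tile $G_\ast$ is used, the point being that fixing that edge forces a \emph{unique} matching on the adjacent infinite zig-zag tail. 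Your route avoids the induction and the snake-graph-calculus bookkeeping, at the price of a double limit interchange; if completed it would arguably be more uniform, but it does not come for free.

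The concrete gap is in your Step~1. The perfect matchings of the truncations $\calg_{\gamma,n}$ do \emph{not} form a direct system whose limit is $\match\ga$ in any naive sense: the restriction of a perfect matching of $\ga$ to the first $n$ tiles is in general not a perfect matching of $\calg_{\gamma,n}$ (vertices shared with the $(n+1)$-st tile may be covered by an edge of that later tile), and conversely the finite graphs contain matchings whose ``limit'' is the non-matching of Example~\ref{Ex:NoMatching}(b), which fails the limit-edge condition of Definition~\ref{def:perfect matching} at the limit tile. So ``not losing or duplicating matchings'' is not a bookkeeping check: the correspondence that actually works is matching-of-height-$i$ in $\ga$ $\leftrightarrow$ the $i$-th term of the finite expansion, and even then the weights agree only up to a terminal factor (the truncated arc $\gamma_n$ ends at a marked point $p_{n+1}$, so its last tile carries boundary weights, e.g.\ $x_{n+1}$ in place of $x_\ast$, that are absent from $\ga$). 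You would need to prove (i) the height classification of matchings of infinite zig-zags, essentially re-deriving Lemma~\ref{Lem:UniquePMexcept_1tile}, to set up the term-by-term correspondence, and (ii) that the discrepancy factors converge to $1$ (respectively to $x_\ast$) using $x_{n+1}\to x_\ast$ and $\sum x_{i,i+1}<\infty$ --- and this must be done not just for a single fan but at every infinite zig-zag piece of the staircase graph simultaneously, where the pieces interact through the tiles $G_\ast$ of the separating arcs. Your Steps~2--3 correctly identify where the analytic input (Theorem~\ref{Laurent for punctures} and the horocycle convergence) enters, but without (i) the identity $\lim_n x_{\calg_{\gamma,n}}=x_{\ga}$ is asserted rather than proved.
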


We also generalise the skein relations of \cite{MW,CS} by extending the technique of \cite{CS} for cluster algebras from {\it unpunctured} infinite surfaces.

\begin{thmIntro}[Theorem~\ref{Thm:Skein}]
Let $\cals$ be an infinite \emph{unpunctured} surface, $T$ be a fan triangulation of $\cals,$ and $\gamma_1,\gamma_2$ be crossing generalised arcs. Let $\gamma_3,\dots,\gamma_6$ be the (generalised) arcs obtained by smoothing a crossing of $\gamma_1$ and $\gamma_2$ and $x_1,\dots,x_6$ be the corresponding elements in the associated infinite surface cluster algebra $\cala(\cals)$ in terms of the initial cluster corresponding to $T$. Then the identity
\begin{align*}
x_1x_2=x_3x_4+x_5x_6
\end{align*} 
holds in $\cala(\cals)$.

\end{thmIntro}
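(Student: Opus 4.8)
The plan is to reduce the infinite-surface skein relation to the finite case established in \cite{CS}, using the snake-graph expansion formula of Theorem~\ref{Thm:MSW-main} as the essential bridge. First I would recall the geometric meaning of the statement: the two crossing generalised arcs $\gamma_1,\gamma_2$ meet at some crossing point, and smoothing that crossing produces two resolutions, yielding the pairs $(\gamma_3,\gamma_4)$ and $(\gamma_5,\gamma_6)$; this smoothing is a purely \emph{local} operation, confined to an arbitrarily small disk around the crossing point. The identity $x_1x_2=x_3x_4+x_5x_6$ is therefore a statement about Laurent series in the infinite initial cluster attached to the fan triangulation $T$, and by Theorem~\ref{Thm:MSW-main} each $x_i$ equals the Laurent series $x_{\calg_{\gamma_i}}$ coming from the (possibly infinite) snake graph of $\gamma_i$. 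The heart of the argument is to show that the generating-function identity among these snake-graph Laurent series holds, and then to transfer it back to the cluster algebra $\cala(\cals)$.

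The key steps, in order, would be as follows. First, I would set up the smoothing locally and record that, away from a compact region containing the crossing, the four smoothed arcs agree with $\gamma_1,\gamma_2$; consequently their snake graphs $\calg_{\gamma_3},\dots,\calg_{\gamma_6}$ differ from $\calg_{\gamma_1},\calg_{\gamma_2}$ only in finitely many tiles. Second, I would invoke the finite skein relation of \cite{CS}: one restricts attention to a finite subsurface $\cals_0$ of $\cals$ containing the crossing region and all the relevant arc segments, triangulated compatibly with $T$, so that inside $\cals_0$ everything is an honest finite unpunctured surface cluster algebra. There the identity $x_1x_2=x_3x_4+x_5x_6$ is already known. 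Third, I would upgrade this finite identity to the infinite setting by a truncation-and-limit argument: write each $x_{\calg_{\gamma_i}}$ as an absolutely convergent Laurent series (Theorem~\ref{Thm:MSW-main} together with \emph{Theorem~\ref{Laurent for punctures}} guarantee absolute convergence), express these as limits of their finite truncations coming from nested finite subsurfaces exhausting $\cals$, apply the finite skein relation on each truncation, and pass to the limit. Absolute convergence is exactly what legitimises multiplying the two series $x_1x_2$ term-by-term and rearranging, so the limit of the products equals the product of the limits.

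The main obstacle I anticipate is the compatibility between the \emph{global} snake-graph expansion and the \emph{local} smoothing operation. Smoothing a crossing is local in the surface, but the snake graph of an arc is a global object whose tiles are indexed by the (infinitely many) crossings of the arc with the initial fan triangulation $T$; I must verify that smoothing affects the snake graphs $\calg_{\gamma_i}$ only in a controlled, finite piece while leaving the ``tails'' of the graphs (the parts responsible for the infinitely many factors and for convergence) \emph{identical} across the four smoothed arcs and the two original arcs. Concretely, the delicate point is to match the tail of $\calg_{\gamma_1}$ together with the tail of $\calg_{\gamma_2}$ against the tails of the pairs $\calg_{\gamma_3},\calg_{\gamma_4}$ and $\calg_{\gamma_5},\calg_{\gamma_6}$, so that the common tail factors out of all three products and the remaining identity is precisely the finite one from \cite{CS}. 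I would handle this by choosing the exhausting subsurfaces $\cals_0$ large enough to contain the crossing region and then arguing that, outside $\cals_0$, the resolved arcs are identical to the originals, so the corresponding snake-graph tails coincide verbatim and the factorisation is exact. Once this tail-matching is in place, the passage to the limit is routine given absolute convergence, and the identity descends to $\cala(\cals)$.
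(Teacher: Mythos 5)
Your overall strategy --- reduce to the finite skein relation of \cite{CS} by localising the smoothing and passing to a limit --- is genuinely different from what the paper does (the paper generalises the perfect-matching bijection of the snake graph calculus directly to infinite staircase snake graphs, using the explicit classification of their perfect matchings in Lemma~\ref{Lem:UniquePMexcept_1tile} to locate the switching position). As written, however, your reduction has a gap at its very first step. You assert that, since smoothing is local, the snake graphs $\calg_{\gamma_3},\dots,\calg_{\gamma_6}$ differ from $\calg_{\gamma_1},\calg_{\gamma_2}$ in only finitely many tiles and that the ``tails'' coincide verbatim. This is false in general: the resolved curves must be isotoped back to minimal position with respect to $T$, and this isotopy can cancel \emph{infinitely} many crossings. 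For instance, if $\gamma_1$ and $\gamma_2$ both emanate from the same accumulation point $p_*$ and run in parallel through an incoming fan before crossing outside it, then the resolution joining the two $p_*$-ends produces a curve that traverses the fan twice in opposite directions; every pair of crossings with a fan arc bounds a bigon, and the reduced representative misses the fan entirely. The tails of the resolved snake graphs then bear no resemblance to those of $\calg_{\gamma_1}\sqcup\calg_{\gamma_2}$, so there is no common tail to ``factor out''. Handling exactly this cancellation is the content of the resolution/grafting identities in \cite{CS}, which is why the paper works at the level of matchings rather than factorisations.

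A second, independent problem is the truncation step. The finite truncations of the Laurent series $x_{\calg_{\gamma_i}}$ (partial sums over matchings of an initial segment of the snake graph) are \emph{not} the cluster variables of truncated arcs on a finite subsurface: compare Proposition~\ref{Rem:IncomingFan}(3) and (4), where the finite arc $p_sp_n$ carries the factor $x_n$ while the limit arc $p_sp_*$ carries $x_*$. So the finite skein relation of \cite{CS} cannot be applied to the partial sums; you would have to work instead with genuine truncated arcs $\gamma_i^{(n)}$ ending at marked points converging to the accumulation points, prove that their resolutions converge (as isotopy classes) to the resolutions of $\gamma_1,\gamma_2$ --- which again runs into the infinite-cancellation issue above --- and then argue that an identity of lambda-length functions obtained as a limit via Proposition~\ref{Prop:LimitCond} forces the identity of formal Laurent series. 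None of these steps is automatic, and the second one can fail for the same reason as your tail-matching claim. To repair the proof you would either need to carry out the paper's route (extend the matching bijection to infinite staircase snake graphs, which Proposition~\ref{Prop:InfSG} and Lemma~\ref{Lem:UniquePMexcept_1tile} make feasible) or add a genuine analysis of which crossings survive the smoothing.
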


Finally, in Section~\ref{Sec:Consq} we collect properties of infinite surface cluster algebras.

\begin{thmIntro}[Theorem~\ref{Thm:Consq}]
Let $\cals$ be an infinite surface and $\cala(\cals)$ be the corresponding cluster algebra. Then
\begin{itemize}
\item seeds are uniquely determined by their clusters; 
\item for any two seeds containing a  cluster variable $x_\gamma$ there exists a mutation sequence $\mu^\bullet$ (where $\mu^\bullet$ is a finite mutation, a finite sequence of infinite mutations or an infinite sequence of infinite mutations) such  that $x_\gamma$ belongs to every cluster obtained in the course of mutation $\mu^\bullet$;
\item there is a cluster containing a collection of  cluster variables $\{x_i \mid i\in I\}$, where $I$ is a finite or infinite index set, if and only if for every choice of $i,j\in I$ there exists a cluster containing $x_i$ and $x_j$.
\end{itemize}
Moreover, if $T$ is a fan triangulation of $\cals$ then
\begin{itemize}
\item the ``Laurent phenomenon'' holds, i.e. any cluster variable in $\cala(\cals)$ is an absolutely converging Laurent series in cluster variables corresponding to the arcs (and boundary arcs) of $T$;
\item ``Positivity'' holds, i.e. the coefficients in the Laurent series expansion of a cluster variable  in $\cala(\cals)$ are positive. 
\end{itemize}
If in addition the surface ${\cals}$ is unpunctured then 
\begin{itemize}
\item the exponent of initial variable $x_i$ in the denominator of a cluster variable $x_{\gamma}$ corresponding to an arc $\gamma$ is equal to the intersection number  $|\gamma\cap \tau_i|$ of $\gamma$ with the arc $\tau_i$ of $T$.
\end{itemize}

\end{thmIntro}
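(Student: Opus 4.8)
The plan is to collect the listed properties by assembling results already established in the paper, treating each bullet as a corollary of the corresponding machinery developed in Sections~\ref{Sec:Combinatorics}--\ref{Sec:SG}. I would organise the proof into blocks matching the three groups of assertions, and in each block transfer the finite-rank argument to the infinite setting, being careful about convergence and the restriction to fan triangulations where it is needed.

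\textbf{Seeds from clusters and connectivity.} For the first group I would argue as follows. To show a seed is determined by its cluster, I would use that in the surface model a cluster variable $x_\gamma$ records the arc $\gamma$ together with its lambda length, and a full cluster corresponds to a triangulation $T$; since a triangulation of $\cals$ determines its own exchange/compatibility data (the set of arcs that can replace a given one by a flip), the seed is recovered from the set of arcs, hence from the cluster. For the second bullet, the claim that any two seeds sharing $x_\gamma$ are joined by a mutation sequence fixing $\gamma$ throughout follows from Theorem~\ref{ThmA} applied to the two triangulations $T,T'$ containing the arc $\gamma$: I would run the mutation sequence $\mubul$ inside the subsurface obtained by cutting $\cals$ along $\gamma$ (so $\gamma$ is never flipped), and invoke transitivity there. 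The third bullet is the ``compatibility is pairwise'' statement: an arc collection lies in a common triangulation iff its arcs are pairwise non-crossing, and I would reduce to the classical fact that a pairwise non-crossing family of arcs on a surface extends to a triangulation, using the local finiteness coming from the finitely many accumulation points to guarantee that such an extension exists even for infinite index sets $I$.

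\textbf{Laurent phenomenon and positivity.} For the second group, assuming $T$ is a fan triangulation, the Laurent phenomenon is exactly Theorem~\ref{Laurent for punctures}, and positivity is read off from the snake-graph expansion Theorem~\ref{Thm:MSW-main}: the series $x_{\ga}$ associated with the (possibly infinite) snake graph $\ga$ is a sum over perfect matchings with monomial terms having coefficient $1$, so all coefficients in the Laurent expansion are nonnegative integers, and absolute convergence is part of the statement of Theorem~\ref{Thm:MSW-main}. I would simply cite these theorems and note that, since $x_\gamma=x_{\ga}$, positivity and the Laurent property pass to every cluster variable.

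\textbf{Denominator exponents in the unpunctured case.} The final bullet is the denominator-vector statement, and this is where I expect the real work. In the finite unpunctured case the exponent of $x_i$ in the denominator of $x_\gamma$ equals the geometric intersection number $|\gamma\cap\tau_i|$, proved via the snake-graph expansion: the unique ``minimal'' matching of $\ga$ contributes the denominator monomial, whose $\tau_i$-exponent counts the tiles of $\ga$ labelled $i$, and that count is $|\gamma\cap\tau_i|$. I would lift this to the infinite setting by again using Theorem~\ref{Thm:MSW-main}: the snake graph $\ga$ is built from the crossing sequence of $\gamma$ with $T$, so the number of tiles labelled $\tau_i$ is $|\gamma\cap\tau_i|$ by construction, and the minimal matching still exists and still produces the denominator. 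The obstacle is ensuring that no cancellation or convergence phenomenon in the infinite series alters the denominator, i.e. that the minimal matching genuinely realises the lowest power of each $x_i$ even when $\ga$ is infinite; I would handle this by restricting attention to a single fixed $\tau_i$, for which only the finitely many tiles of $\ga$ crossing $\tau_i$ are relevant, thereby reducing the exponent computation to a finite sub-snake-graph and quoting the finite-case result there.
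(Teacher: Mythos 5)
Your proposal is correct and follows essentially the same route as the paper: the first three bullets are deduced from the arc--variable bijection of Theorem~\ref{variables are distinct} (your cut-surface argument for the second bullet and the maximality/extension argument for the third are exactly the reductions that bijection implicitly relies on), and the remaining bullets are read off from the expansion results. The one citation to adjust is positivity: Theorem~\ref{Thm:MSW-main} is established only for \emph{unpunctured} surfaces, so for punctured $\cals$ you should invoke Corollary~\ref{positivity} (positivity of the coefficients produced by the iterated Ptolemy/limit construction), which is how the paper handles that case.
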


We would expect that our construction can be used as a combinatorial model to generalise  representation theory of finite dimensional algebras (e.g. surface cluster categories, surface gentle algebras, tilting theory, etc.) to the infinite case beyond  $A_{\infty}$ types.    

\bigskip

The paper is organised as follows. Section~\ref{Sec:Combinatorics} is devoted to combinatorics of infinite triangulations, in particular transitivity of infinite mutations and a discussion of different types of mutation sequences. Section~\ref{Sec:ClusAlg} deals with hyperbolic geometry and introduces surface cluster algebras associated with infinite surfaces. Section~\ref{Sec:SG} concerns with infinite snake graphs and introduces an expansion formula for cluster variables as well as  skein relations in this context. Finally in Section~\ref{Sec:Consq}, we establish some properties of infinite rank surface cluster algebras.
\bigskip

{\it Acknowledgements:}   We would  like to thank Peter J\o rgensen and Robert Marsh for stimulating discussions regarding potential representation theory associated with our combinatorial model.   We would also like to thank Karin Baur and Sira Gratz for sharing their preprint \cite{BG} shortly before publishing it on the arXiv. Last but not least, we are very grateful to the anonymous referee for significant comments and corrections they proposed.

\section{Triangulations and mutations of infinite surfaces}
\label{Sec:Combinatorics}

In this section, we introduce our setting for infinite surfaces, infinite triangulations, and infinite sequences of mutations. The main result of this section presents transitivity of infinite mutation sequences.
\subsection{Infinite triangulations} We first fix our setting for infinite surfaces.
\begin{definition}[Infinite surface]
Throughout the paper by an {\it infinite surface} $\cals$ we mean a connected oriented surface  
\begin{itemize}
\item[-] of a finite genus,
\item[-] with finitely many interior marked points (punctures),
\item[-] with finitely many boundary components, 
\item[-] with infinitely many {\it boundary marked points}, located in such a way that they have only finitely many {\it accumulation points} of boundary marked points.  
 \end{itemize}
 Accumulation points themselves are also considered as boundary marked points. 

\end{definition}

The following definition coincides with the one for finite surfaces.

\begin{definition}[Arc, compatible arcs, triangulation, boundary arc]
An {\it arc} on an infinite surface $\cals$ is a non-self-intersecting curve with both endpoints at the marked points of $\cals$, considered up to isotopy.
As usual, we assume that an arc $\gamma$ is disjoint from the boundary of $\cals$ except for the endpoints, and that it does not cut an unpunctured monogon from $\cals$. The two endpoints of $\gamma$ may coincide.
Two arcs are called {\it compatible} if they do not intersect (i.e.  if there are representatives in the corresponding isotopy classes which do not intersect). 
A {\it triangulation} of $\cals$ is a maximal (by inclusion) collection of mutually compatible arcs, i.e a maximal set of arcs on $\cals$ one can draw without crossings.
A {\it boundary arc} is a non-self-intersecting curve $\alpha\in \partial \cals$ such that the endpoints of $\alpha$ are  boundary marked points
and no other point of $\alpha$ is a boundary marked point.

\end{definition}

Obviously, any triangulation of an infinite surface contains infinitely many arcs and infinitely many triangles.

\begin{remark}
In case of an infinite punctured surface we will also consider {\it tagged triangulations} as in~\cite{FST}. We will not  
present this definition here as we will only use this notion occasionally and  in a very straightforward way.

\end{remark}

\begin{notation}
Denote by $\mathbb T_\cals$ the set of all triangulations of an infinite surface $\cals$.
When the surface is clear from the context, we will simply abbreviate this notation by $\mathbb T$.

\end{notation}

\begin{definition}[Convergence of arcs, limit of arcs, limit arc] \label{Def:LimitArc}
{\color{white} a}
\begin{itemize}
\item
We say that a sequence of arcs $\{\gamma_i\}_{i\in \bN}$,   {\it converges} to an arc (or to a boundary arc) $\gamma_*$ in an infinite surface $\cals$ if
\begin{itemize}
\item[(1)] the endpoints of  $\{\gamma_i\}$ converge to the endpoints of  $\gamma_*$ and
\item[(2)] for $i$ large, the arc $\{\gamma_i\}$ is ``almost isotopic'' to $\gamma_*$ in the following sense: 
\smallskip
\item[-]
Let  $\{p_i\}\to p_*$ and $\{q_i\}\to q_*$ be the endpoints of $\{\gamma_i\}$, $i\in \bN,$ converging to the endpoints of $\gamma_*$.
Let $U_{p_*}\subset \partial \cals$ and $U_{q_*}\subset \partial \cals$ be  neighbourhoods of $p_*$ and $q_*$ such that neither $U_{p_*}$ nor $U_{q_*}$ contains a connected component of $\partial \cals$. 
Let $\tilde \gamma_i$ be the arc obtained from $\gamma_i$ by shifting the endpoints to $p_*$ and $q_*$ without leaving  the sets  $U_{p_*}$ and $U_{q_*},$ respectively.
We say that $\{\gamma_i\}$ {\it converge} to $\gamma_*$ if  $\tilde \gamma_i$ is isotopic to $\gamma_*$ for all $i>N$ for some $N\in \mathbb N$.
\end{itemize}
\smallskip
\noindent
If $\{\gamma_i\}$ converge to $\gamma_*$ we also say that $\gamma_*$ is a {\it limit of the arcs}  
 $\gamma_{i}$ and write $\gamma_i\to \gamma_*$ as $i\to \infty$.

\item Given a triangulation $T$ of $\cals$, if   $\{\gamma_i \}\to \gamma_*$ as $i\to \infty$  and $\gamma_{i},\gamma_*\in T$ for all $i\in \bN$,  we say that  $\gamma_*$ is a {\it limit arc} of $T$. Graphically, we will show limit arcs (as well as boundary limit arcs) by dashed lines, see Fig.~\ref{limit arcs}.
\item Furthermore, if $p_*=q_*$ and the curves $\tilde \gamma_i$ (obtained from $\gamma_i$ by shifting the endpoints to $p_*=q_*$ inside $U_{p_*}$)
are contractible to the accumulation point $p_*$, we say that $\gamma_i$ {\it converges to the accumulation point} $p_*$ and write $\gamma_i\to p_*$.
\end{itemize}
\end{definition} 

\begin{figure}[!h]
\begin{center}
\psfrag{a}{\small (a)}
\psfrag{b}{\small (b)}
\epsfig{file=./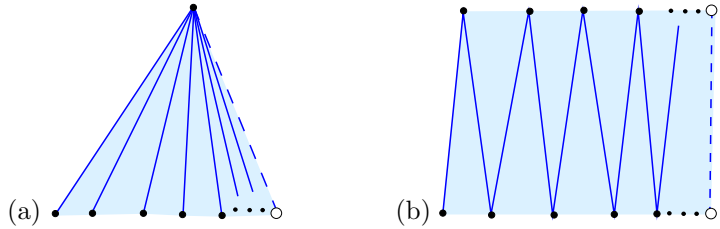,width=0.59\linewidth}
\caption{A limit arc in a triangulation may arise in two different ways: in an infinite fan (a) and as a limit of an infinite zig-zag (b). } 
\label{limit arcs}
\end{center}
\end{figure}
 
\begin{prop}
Let $T$ be a triangulation of $\cals$ containing a sequence of  arcs  $\gamma_i$, $i\in \bN$. 
If $\{\gamma_i\}\to \gamma_*$ as $i\to \infty$  then  $\gamma_*\in T$.

\end{prop}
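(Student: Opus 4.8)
The plan is to deduce $\gamma_*\in T$ from the \emph{maximality} of $T$. Since a triangulation is a maximal collection of pairwise compatible arcs, and since $\gamma_*$ is a genuine arc by hypothesis (convergence in Definition~\ref{Def:LimitArc} is to an arc or a boundary arc; if $\gamma_*$ is a boundary arc it is disjoint in its interior from every arc of $T$ and lies in $\partial\cals$, so the statement holds by convention, and I treat the case of an interior arc), it suffices to show that $\gamma_*$ is compatible with every $\delta\in T$. Indeed, were $\gamma_*$ compatible with all of $T$ yet $\gamma_*\notin T$, then $T\cup\{\gamma_*\}$ would be a strictly larger collection of pairwise compatible arcs, contradicting maximality. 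Thus the proposition reduces to a single closedness property: if every $\gamma_i$ is compatible with a fixed arc $\delta$ and $\gamma_i\to\gamma_*$, then $\gamma_*$ is compatible with $\delta$.

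I would prove this by contraposition, so suppose $\gamma_*$ crosses some $\delta\in T$, i.e. the geometric intersection number satisfies $|\gamma_*\cap\delta|\ge 1$; since $\gamma_i,\delta\in T$ we have $|\gamma_i\cap\delta|=0$ for every $i$, and I aim to contradict this for $i$ large. Fix $i>N$ so that the shifted arc $\tilde\gamma_i$ of Definition~\ref{Def:LimitArc} is isotopic to $\gamma_*$ rel $\{p_*,q_*\}$, whence $|\tilde\gamma_i\cap\delta|=|\gamma_*\cap\delta|\ge 1$. Suppose first that neither $p_*$ nor $q_*$ is an endpoint of $\delta$. Because $\delta$ meets the boundary only at its own endpoints, for small enough $U_{p_*},U_{q_*}$ a neighbourhood of $U_{p_*}\cup U_{q_*}$ in $\cals$ is disjoint from $\delta$; the endpoint shift producing $\tilde\gamma_i$ from $\gamma_i$ is then realised by an ambient isotopy supported off $\delta$, which preserves intersection with $\delta$. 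Hence $|\gamma_i\cap\delta|=|\tilde\gamma_i\cap\delta|\ge 1$, contradicting $|\gamma_i\cap\delta|=0$.

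The main obstacle, and the only remaining case, is when $\delta$ shares an endpoint with $\gamma_*$ — necessarily one of $p_*,q_*$, since these are the only marked points through which $\delta$ can enter arbitrarily small boundary neighbourhoods. Then $\delta$ enters every $U_{p_*}$, and dragging the endpoint of $\gamma_i$ from $p_i$ to $p_*$ may cross the single strand of $\delta$ emanating from $p_*$, changing the count by $\pm 1$; thus one cannot directly equate $|\gamma_i\cap\delta|$ with $|\gamma_*\cap\delta|$, and the naive estimate bounds their difference only by $2$ in absolute value, which fails to contradict $|\gamma_*\cap\delta|\ge 1$ when this intersection number is $1$ or $2$. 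To close this gap I would place $\gamma_*$ and $\delta$ in minimal position and exploit that near a shared endpoint they emanate without crossing (a crossing there would bound a removable bigon), so that the essential crossings of $\gamma_*$ with $\delta$ lie in a fixed compact part of $\cals$ separated from $p_*$ and $q_*$; choosing $U_{p_*},U_{q_*}$ inside such a crossing-free collar and verifying, by a bigon-exclusion argument, that the collar portion of $\gamma_i$ cannot cancel one of these protected crossings forces $|\gamma_i\cap\delta|\ge 1$ once more. This local analysis at the finitely many shared endpoints is the delicate heart of the argument; away from it the statement is an immediate consequence of the isotopy-invariance of the geometric intersection number.
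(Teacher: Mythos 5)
Your proposal is correct and follows essentially the same route as the paper: assume $\gamma_*\notin T$, invoke maximality of $T$ to produce an arc of $T$ crossing $\gamma_*$, and derive a contradiction by showing that such an arc must also cross $\gamma_i$ for large $i$. The paper dismisses that last step as "easy to see", whereas you work it out and correctly isolate the only delicate case (an arc of $T$ sharing the endpoint $p_*$ or $q_*$ with $\gamma_*$), resolving it by a standard bigon-exclusion argument.
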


\begin{proof} 
Suppose that  $\gamma_*\notin T$. Then $T$ contains an arc $\alpha$ which intersects  $\gamma_*,$ otherwise $T$ would not be a maximal set of compatible arcs. However, it is easy to see that if $\alpha$ intersects $\gamma_*$, then it also intersects  $\gamma_i$ for large enough $i$, which contradicts the assumption that $\alpha$ and $\gamma_i$ lie in the same triangulation $T$.

\end{proof}

\begin{remark}
Notice that an arc with an endpoint at an accumulation point or an arc connecting two accumulation points is {\it not} necessarily a limit arc, see Fig.~\ref{surf}
for an example. 

\end{remark}

\begin{figure}[!h]
\begin{center}
\epsfig{file=./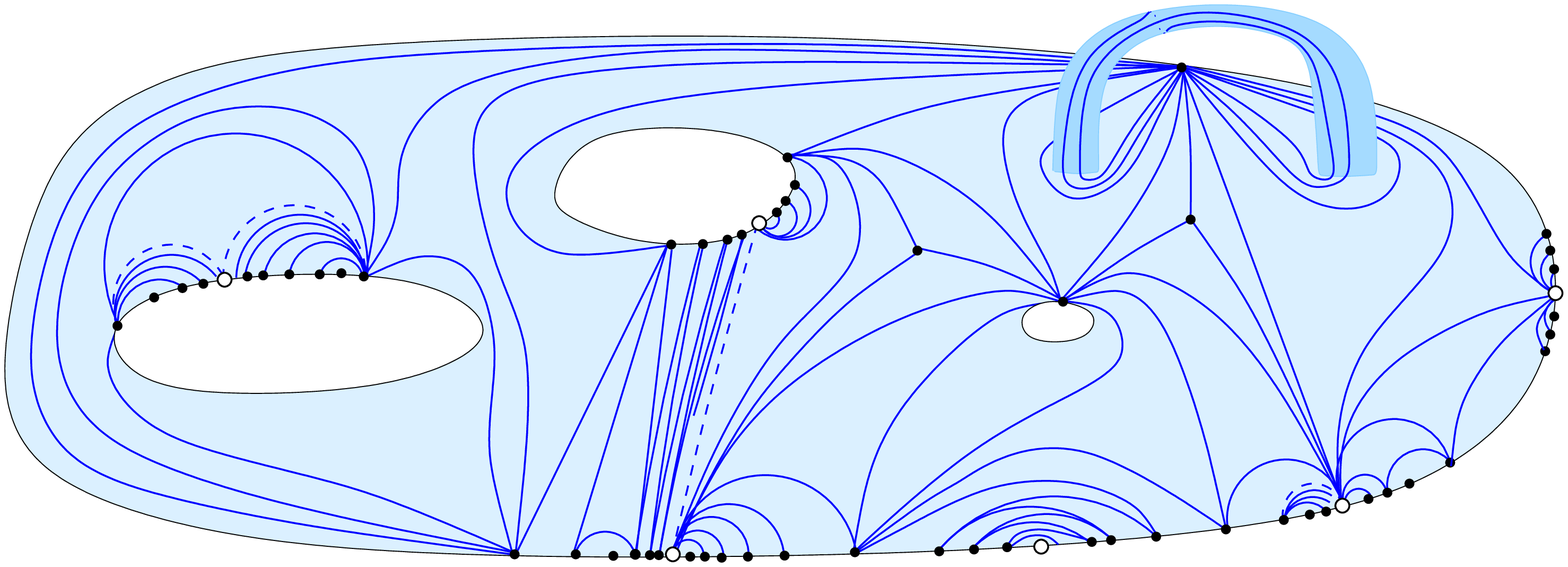,width=0.59\linewidth}
\caption{Triangulation of an infinite surface: marked points are denoted by dots, accumulation points are indicated by white dots and limit arcs are shown by dashed lines.} 
\label{surf}
\end{center}
\end{figure}

\begin{proposition}\label{Prop:FinManyAcc} Let $\cals$ be an infinite surface. Every triangulation of $\cals$ contains only finitely many limit arcs.
\end{proposition}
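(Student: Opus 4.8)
The plan is to show that each limit arc forces an accumulation point of boundary marked points, and then to use the hypothesis that $\cals$ has only finitely many accumulation points together with a finiteness-of-topology argument.

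\medskip

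\textbf{Step 1: Each limit arc requires an accumulation point at one of its endpoints.} First I would observe that if $\gamma_*$ is a limit arc of $T$, then by Definition~\ref{Def:LimitArc} there is a sequence of distinct arcs $\gamma_i\in T$ with $\gamma_i\to\gamma_*$, so the endpoints $p_i\to p_*$ and $q_i\to q_*$ with $p_*,q_*$ the endpoints of $\gamma_*$. Since the $\gamma_i$ are mutually compatible (they all lie in $T$) and are eventually ``almost isotopic'' to $\gamma_*$ but pairwise distinct, infinitely many of the $p_i$ (or infinitely many of the $q_i$) must be pairwise distinct boundary marked points; otherwise, after passing to a subsequence, the $\gamma_i$ would share both endpoints and be isotopic to $\gamma_*$, contradicting that they are distinct compatible arcs. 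Hence the distinct endpoints accumulate at $p_*$ (or $q_*$), so at least one endpoint of $\gamma_*$ is an accumulation point of boundary marked points.

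\medskip

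\textbf{Step 2: Bound the number of limit arcs incident to a fixed accumulation point.} Fix an accumulation point $p_*$. I would argue that only finitely many limit arcs of $T$ can have an endpoint at $p_*$. The arcs of $T$ incident to $p_*$ are mutually compatible, so they are linearly ordered by the cyclic order in which they leave the point $p_*$ along the boundary; a limit arc incident to $p_*$ corresponds to a place where infinitely many arcs of $T$ accumulate. Because the surface has finite genus, finitely many boundary components, and finitely many punctures, its topology is finite: the other endpoints of these limit arcs must land on a finite set of topologically distinct targets (the finitely many other accumulation points, punctures, boundary components, together with isotopy classes determined by finite genus). Combined with the fact that compatible limit arcs sharing the endpoint $p_*$ cannot cross, only finitely many non-isotopic such limit arcs can exist at $p_*$. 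The key finiteness input here is that an infinite family of mutually compatible, pairwise non-isotopic arcs on a surface of finite complexity would force infinitely many accumulation points, which is excluded.

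\medskip

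\textbf{Step 3: Combine.} By Step~1 every limit arc has an endpoint at one of the finitely many accumulation points, and by Step~2 each accumulation point is the endpoint of only finitely many limit arcs. Therefore the total number of limit arcs is finite.

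\medskip

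\textbf{Main obstacle.} I expect the crux to be Step~2, making precise the assertion that a surface of finite genus with finitely many boundary components and punctures admits only finitely many mutually compatible, pairwise non-isotopic arcs emanating from a single point and terminating at a fixed family of finitely many targets. This is essentially a statement about the finite complexity of the surface: one must rule out, for instance, arcs that wind around a positive-genus handle infinitely often, which is handled by the finite-genus hypothesis, and arcs that could accumulate somewhere other than a prescribed accumulation point, which is exactly what the hypothesis of finitely many accumulation points forbids. The cleanest route is probably to cut the surface along a fixed finite triangulation skeleton away from the accumulation points, reducing to a controlled local model near $p_*$ (an ideal-polygon-like neighbourhood) in which the compatible arcs through $p_*$ are visibly finite in number.
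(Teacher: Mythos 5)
Your Step~1 is correct and is the same observation the paper relies on: a (non-trivial) limit arc forces a sequence of pairwise distinct endpoints of arcs of $T$ accumulating at one of its endpoints, so that endpoint is an accumulation point. The problem is Step~2, which you yourself flag as the crux. Two of its ingredients do not hold. First, the claim that the other endpoints of the limit arcs at $p_*$ ``must land on a finite set of topologically distinct targets'' is unjustified: the second endpoint of a limit arc can be an arbitrary boundary marked point (for an incoming fan the limit arc joins the accumulation point to the fan's source, which is just an ordinary marked point), so a priori the candidate targets form an infinite set. Second, and more seriously, your stated ``key finiteness input'' --- that an infinite family of mutually compatible, pairwise non-isotopic arcs emanating from a single point on a surface of finite complexity would force infinitely many accumulation points --- is false: an infinite fan with source $p_*$ is exactly such a family, and it lives on the infinity-gon, which has a single accumulation point. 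Finite genus, finitely many boundary components and finitely many punctures do \emph{not} bound the number of compatible arcs through a point; what has to be bounded is the number of \emph{limit} arcs, and that requires using the defining property of limit arcs rather than the topology of $\cals$.

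The argument the paper uses for Step~2 is sharper and avoids all of this: each accumulation point is the endpoint of at most \emph{two} limit arcs, one for each side from which marked points can accumulate. Indeed, a limit arc at $p_*$ is by Definition~\ref{Def:LimitArc} the limit of arcs of $T$ whose endpoints approach $p_*$ from the left or from the right along the boundary, and a given one-sided approach can produce at most one limit arc --- two distinct limit arcs fed by sequences interleaving on the same side of $p_*$ would force crossings among the (mutually compatible) arcs of $T$ converging to them. With this in place, Step~3 goes through exactly as you wrote it. So the overall skeleton of your proof matches the paper's, but the local finiteness statement at each accumulation point needs to be proved via this one-limit-arc-per-side argument, not via a global finite-complexity count.
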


\begin{proof}
Every accumulation point is an endpoint of at most two limit arcs since each of  the right and left limits at this point gives rise to at most one  limit arc. The result follows since $\cals$ has finitely many accumulation points.
\end{proof}

\begin{proposition}
\label{Prop:ArcsFinInter} 
Two arcs on an infinite surface $\cals$ have only finitely many intersections.
\end{proposition}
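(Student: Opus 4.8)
The plan is to reduce the statement to the familiar finite-surface situation by exploiting the fact that an individual arc is a \emph{compact} curve meeting the boundary $\partial\cals$ only at its two endpoints, so that, unlike a whole triangulation, a single arc cannot ``feel'' the infinitely many marked points clustering at an accumulation point. Fix two arcs $\gamma,\delta$ and choose smooth representatives that are straightened to embedded rays near each endpoint and that meet transversally in the interior of $\cals$ (a generic small perturbation of $\delta$ rel endpoints achieves interior transversality). Since $\gamma$ and $\delta$ are continuous images of $[0,1]$, each is compact, hence closed, so the crossing set $\gamma\cap\delta$ is a closed subset of the compact set $\gamma$ and is therefore itself compact.

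Suppose, for contradiction, that $\gamma\cap\delta$ is infinite. Then it has an accumulation point $x_*$, and since $\gamma$ and $\delta$ are closed we get $x_*\in\gamma\cap\delta$; by construction $x_*$ is a non-isolated point of the crossing set. First I would rule out that $x_*$ is an interior, non-marked point: there the two arcs are transverse, so $x_*$ would be an isolated crossing, a contradiction. Hence $x_*$ must be a marked point. Here is the one genuinely new point: if $x_*$ were an accumulation point $p_*$ that is \emph{not} an endpoint of, say, $\delta$, then because the interior of $\delta$ is disjoint from $\partial\cals$ and $p_*\in\partial\cals$, we would have $p_*\notin\delta$, contradicting $x_*\in\delta$; likewise an interior marked point (puncture) lying on an arc must be an endpoint, since arc interiors avoid punctures. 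Therefore $x_*$ is necessarily a common endpoint of both $\gamma$ and $\delta$.

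It remains to exclude accumulation of crossings at a shared endpoint $x_*$. The key observation is that in a sufficiently small neighbourhood of $x_*$ the only boundary point touched by either arc is $x_*$ itself (the other marked points clustering near an accumulation point are simply not met by $\gamma$ or $\delta$), so one may isotope each arc near $x_*$ to an embedded ray emanating from $x_*$ in a definite direction; two such rays meet only at $x_*$, contradicting the assumption that crossings accumulate there. This contradiction shows $\gamma\cap\delta$ is finite. The main obstacle, and the only place where infiniteness of the surface enters, is precisely the reduction carried out in the second paragraph: one must verify that intersections cannot pile up at an accumulation point $p_*$, and the crux is the elementary but essential remark that a single arc is a compact curve meeting $\partial\cals$ only at its two endpoints, hence it stays clear of every accumulation point other than (possibly) its own endpoints.
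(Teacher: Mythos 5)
Your argument is correct, but it takes a different route from the paper's. The paper's proof is a reduction: it observes that each accumulation point has left and right boundary neighbourhoods containing no endpoints of the two fixed arcs, removes those neighbourhoods to obtain a \emph{finite} marked surface containing both arcs, and then simply invokes the standard fact that two arcs on a finite surface intersect finitely often. You instead prove the statement from first principles: choosing representatives that are transverse in the interior and straightened to rays near their endpoints, you note that the crossing set is compact, that transversality makes interior crossings isolated, and that any accumulation point of crossings would therefore have to lie on $\partial\cals$, hence be a common endpoint of both arcs (since an arc meets $\partial\cals$ only at its endpoints, it cannot pass through an accumulation point of marked points that is not one of its own endpoints); the shared-endpoint case is then excluded by the ray normalisation. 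The essential observation powering both proofs is the same one you isolate at the end --- a single arc stays clear of the infinite clustering of marked points except possibly at its own endpoints --- but the paper uses it to cut down to the finite-surface theory, which cleanly offloads all general-position issues, whereas your version is self-contained and does not presuppose the finite case. Both are valid; note only that your proof-by-contradiction phrasing is really the positive statement that for your chosen representatives every intersection point is isolated, so the compact crossing set is finite, which bounds the minimal intersection number from above.
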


\begin{proof}
Fix two arcs $\gamma,\gamma'$  on $\cals.$ Each accumulation point of $\cals$ has a right and a left neighbourhood  containing no endpoints of $\gamma$ and $\gamma'.$ We remove these neighbourhoods from $\cals$  as shown in Fig.~\ref{two arcs in finite S}. The obtained surface is finite, and hence the result follows.
\end{proof}

\begin{figure}[!h]
\begin{center}
\epsfig{file=./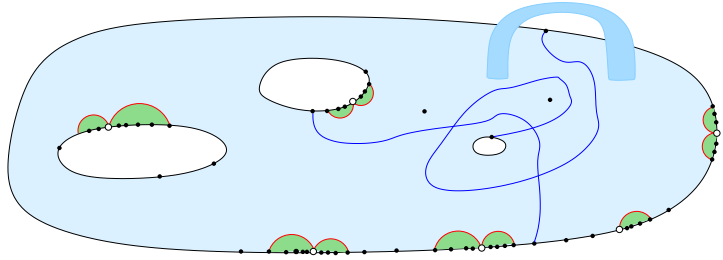,width=0.59\linewidth}
\caption{Removing left and right neighbourhoods of accumulation points gives rise to  a finite surface. } 
\label{two arcs in finite S}
\end{center}
\end{figure}

Propositions~\ref{Prop:FinManyAcc}~and~\ref{Prop:ArcsFinInter} imply the following corollary.

\begin{corollary} \label{Cor:FiniteLimArc}
For any arc $\gamma\in\cals$ and a triangulation $T$ of $\cals,$ there are only finitely many crossings of $\gamma$ with limit arcs of $T.$

\end{corollary}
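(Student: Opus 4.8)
The plan is to combine the two preceding propositions directly. Corollary~\ref{Cor:FiniteLimArc} asserts that a fixed arc $\gamma$ crosses only finitely many limit arcs of a triangulation $T$. By Proposition~\ref{Prop:FinManyAcc}, the triangulation $T$ contains only finitely many limit arcs in total; call them $\gamma_*^{(1)},\dots,\gamma_*^{(m)}$. First I would fix these finitely many limit arcs. By Proposition~\ref{Prop:ArcsFinInter}, the arc $\gamma$ meets each individual $\gamma_*^{(k)}$ in only finitely many points, say $n_k$ of them. Hence the total number of crossings of $\gamma$ with limit arcs of $T$ is at most $\sum_{k=1}^m n_k$, which is finite.

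The argument is essentially just "finite $\times$ finite is finite", so there is no real obstacle here. The two input facts do all the work: Proposition~\ref{Prop:FinManyAcc} bounds the number of limit arcs, and Proposition~\ref{Prop:ArcsFinInter} bounds the number of intersection points with any one of them. One subtlety worth a sentence is that Proposition~\ref{Prop:ArcsFinInter} is stated for two arcs, whereas a limit arc might be a boundary limit arc; I would either note that limit arcs are themselves arcs in the sense already used (so the proposition applies), or observe that crossings with boundary limit arcs can be treated identically since they are non-self-intersecting curves with endpoints at marked points and the finite-surface truncation argument of Proposition~\ref{Prop:ArcsFinInter} applies verbatim.

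Concretely, the proof would read roughly as follows. By Proposition~\ref{Prop:FinManyAcc}, $T$ has finitely many limit arcs $\gamma_*^{(1)},\dots,\gamma_*^{(m)}$. By Proposition~\ref{Prop:ArcsFinInter}, each intersection number $|\gamma\cap\gamma_*^{(k)}|$ is finite. Therefore the number of crossings of $\gamma$ with limit arcs of $T$ equals $\sum_{k=1}^{m}|\gamma\cap\gamma_*^{(k)}|<\infty$, as claimed.
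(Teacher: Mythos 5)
Your proposal is correct and matches the paper exactly: the paper gives no separate proof, simply stating that Propositions~\ref{Prop:FinManyAcc} and~\ref{Prop:ArcsFinInter} imply the corollary, which is precisely your ``finitely many limit arcs, each crossed finitely often'' argument. Your extra remark about boundary limit arcs is a reasonable clarification but not needed beyond what the paper assumes.
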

\subsection{Infinite mutation sequences}

As in~\cite{FST}, we are going to use flips of arcs to change a triangulation (see Fig.~\ref{flip}). Our aim is to be able to transform every triangulation of an infinite surface $\cals$ to any other triangulation of $\cals$. Observe that a limit arc cannot be flipped, and moreover, one cannot make it flippable in finitely many steps.  
To fix this, we will need to introduce {\it infinite mutations} (see Definition~\ref{Def:Mutation}). However, even that would not be enough for transitivity of the action of our moves on all triangulations of $\cals$, and therefore we will also need to introduce {\it infinite sequences of infinite mutations}.

\begin{figure}[!h]
\begin{center}
\psfrag{g}{\small $\gamma$}
\psfrag{g'}{\small $\gamma'$}
\psfrag{m}{$\mu$}
\epsfig{file=./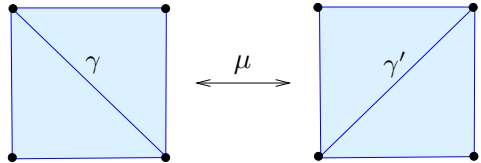,width=0.39\linewidth}
\caption{Flip of an arc $\gamma\in T$: $\mu(\gamma)=\gamma'$, $\mu(\alpha)=\alpha \quad \forall \alpha\in T$, $\alpha\ne \gamma$.} 
\label{flip}
\end{center}
\end{figure}

\begin{definition}[Infinite mutation, infinite sequence of infinite mutations] 
\label{Def:Mutation}
Consider a triangulation $T=\{\gamma_i\mid i\in\bN\}$  of an infinite surface $\cals$.
\begin{enumerate}
\item An \emph{elementary mutation} $\mu$ is a flip of an arc in $T$, see Fig.~\ref{flip}.
\item A \emph{finite mutation} $\mu_{n}\circ\ldots\circ\mu_{1}$ is a composition of finitely many flips for some $n\in\bN$. We will use the notation $\mu^{(0)}$ to refer to  a finite mutation.
\item An \emph{infinite mutation} is the following two step procedure:
\begin{itemize}
\item[-]  apply an \emph{admissible} infinite sequence of elementary mutations $\ldots  \circ \mu_{2} \circ \mu_{1}$,  where a sequence is admissible if for every $\gamma\in T$ there exists $n=n(\gamma)\in \bN$ such that $\forall i>n,$ we have $\mu_i\circ\ldots\circ \mu_2\circ \mu_1(\gamma)=\mu_n\circ\ldots \circ\mu_2\circ \mu_1(\gamma)$;
\item[-] complete the resulting collection of arcs by  all limit arcs (if there are any).
\end{itemize}
We will use the notation $\mu^{(1)}$ to specify an infinite mutation.
\item A \emph{finite sequence of infinite mutations} $\mu^{(1)}_n\circ\ldots\circ\mu^{(1)}_1$ is a composition of finitely many admissible infinite mutations $\mu^{(1)}_i$ for $i=1,\ldots,n$ for some $n \in \bN$. We will use the notation $\mun$ to specify a sequence of $n$ infinite mutations.
\item An \emph{infinite sequence of infinite mutations} is the following two step procedure:
\begin{itemize}
\item[-]  apply an \emph{admissible} infinite composition of infinite mutations $\ldots\circ\muone_{2}\circ\muone_{1}$,  where a composition is admissible  if the {\it orbit} of every individual arc converges, i.e.
the sequence of arcs $\{\muone_{i+j}\circ\ldots\circ\muone_{i+2}\circ\muone_{i+1}(\gamma) \}$, for $j \geq 1$, converges for every arc \\
$\gamma\in \bigcup\limits_{i=0}^\infty\left[\muone_i\circ\ldots\circ\muone_{2}\circ\muone_{1}(T)\right]$, for $i \geq 0$;
\item[-]  \emph{complete} the resulting collection of arcs by  all limit arcs (if there are any).
\end{itemize}
\noindent
We will use the notation $\muinf$ to specify an infinite sequence of infinite mutations.
\end{enumerate}

\end{definition}

If an arc $\gamma'$ is obtained from $\gamma$ by a mutation sequence $\mu$ (defined above) we will say that $\gamma'$ lies in the {\it orbit}  of $\gamma$ for the mutation sequence $\mu$.

\begin{remark}
In Definition~\ref{Def:Mutation}, when completing collections of arcs by limit arcs we only need to add those that are not already in the collection before the completion. 
\end{remark}

\begin{figure}[!h]
\begin{center}
\epsfig{file=./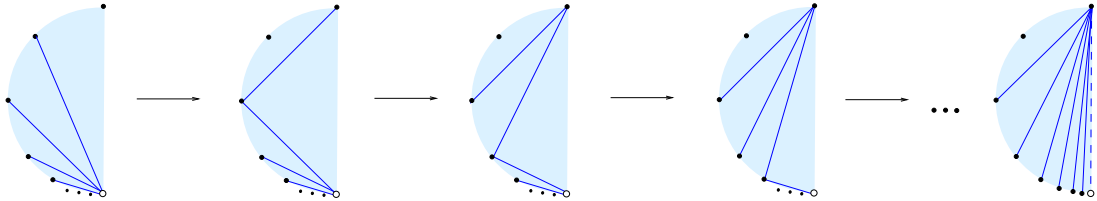,width=0.9\linewidth}
\caption{Switching between two triangulations (more precisely, in terms of Definition~\ref{Def:InfFanZigZag} below,
it is transforming an elementary outgoing fan to an  elementary incoming fan).}
\label{Fig:switch(outgoing-->incoming)}
\end{center}
\end{figure}

\begin{example} 
\label{Example: mutations}
\begin{itemize}
\item[(a)] In Fig.~\ref{Fig:switch(outgoing-->incoming)},~\ref{Fig:shiftsource}~and~\ref{Fig:switch(zigzag-->fan}, we see examples of finite sequences of infinite mutations. 
\item[(b)] The infinite mutation in Fig.~\ref{Fig:shiftsource} allows us to shift the source of a fan in one direction, namely in the direction of the accumulation point. However, 
to shift it back we would need to apply an  infinite sequence of infinite mutations (this will follow from  Theorem~\ref{no mun from stronger}).
\item[(c)] Applying the shift as in  Fig.~\ref{Fig:shiftsource} infinitely many times we get an infinite sequence of infinite mutations which may serve as an ``inverse'' to the infinite mutation  shown in Fig.~\ref{Fig:switch(outgoing-->incoming)}
(again,   Theorem~\ref{no mun from stronger} shows that there is no way to  mutate back with a finite sequence of infinite mutations).
Notice that in this example the orbit of every arc of the initial triangulation converges to the accumulation point.
\end{itemize}
\end{example}

\begin{figure}[!h]
\begin{center}
\epsfig{file=./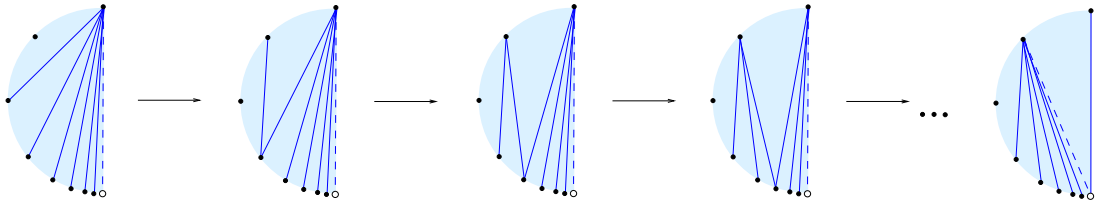,width=0.9\linewidth}
\caption{Shifting the source of a fan. Notice that the limit arc in the first triangulation becomes a regular arc at the end of infinite mutation and a new limit arc is created.}
\label{Fig:shiftsource}
\end{center}
\end{figure}

\begin{remark}
\label{compareBG}
\begin{itemize}
\item Our notion of infinite mutation is similar to the notion of {\it mutation along admissible sequences} in~\cite{BG}. More precisely, 
an infinite mutation is a  {\it mutation along admissible sequences} completed by all limit arcs. However, this does not coincide precisely with the notion of {\it completed mutation} in~\cite{BG}, as we only add limit arcs while \cite{BG} sometimes adds more arcs.

\item Our notion of infinite sequence of infinite mutations is a bit more
restrictive than the notion of {\it transfinite mutation} in~\cite{BG}: we require every individual orbit to converge, while \cite{BG} considers a collection of orbits which stabilise allowing the others to diverge.
\end{itemize}
\end{remark}

\begin{figure}[!h]
\begin{center}
\psfrag{1}{\tiny 1}
\psfrag{2}{\tiny 2}
\psfrag{3}{\tiny 3}
\psfrag{4}{\tiny 4}
\psfrag{5}{\tiny 5}
\psfrag{6}{\tiny 6}
\psfrag{7}{\tiny 7}
\psfrag{8}{\tiny 8}
\epsfig{file=./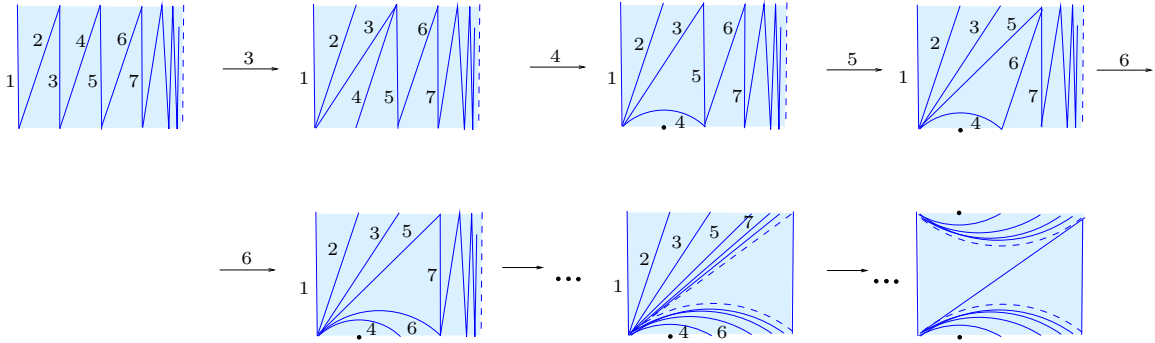,width=0.95\linewidth}
\caption{Transforming an infinite zig-zag into a  union of two  fans via composition of two infinite mutations (where the second infinite mutation coincides with the one in Fig.~\ref{Fig:shiftsource}).}
\label{Fig:switch(zigzag-->fan}
\end{center}
\end{figure}

\begin{remark} 
\begin{itemize}
\item[(a)] Note that \emph{not} every infinite mutation  (even an admissible one!) gives rise to a complete triangulation, see Fig.~\ref{non-complete-trian} for an example of admissible infinite mutation not leading to a triangulation.
\item[(b)] Notice also that if a (finite or infinite) sequence of infinite  mutations is applied to a compatible collection of arcs not forming a triangulation then the result will not be a triangulation either.
\end{itemize}
\end{remark}

\begin{figure}[!h]
\begin{center}
\psfrag{1}{\tiny 1}
\psfrag{2}{\tiny 2}
\psfrag{3}{\tiny 3}
\psfrag{4}{\tiny 4}
\psfrag{5}{\tiny 5}
\psfrag{6}{\tiny 6}
\psfrag{7}{\tiny 7}
\psfrag{8}{\tiny 8}
\psfrag{64654}{\tiny $6\!\circ\! 4\!\circ\! 6\!\circ\! 5\!\circ\! 4$}
\psfrag{86876}{\tiny $8\!\circ\! 6\!\circ\! 8\!\circ\! 7\!\circ\! 6$}
\epsfig{file=./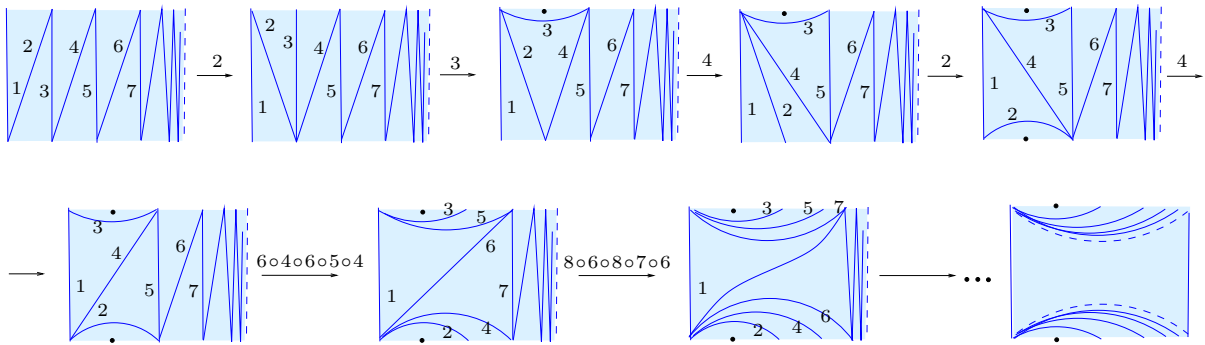,width=0.99\linewidth}
\caption{Mutating an infinite zig-zag to an incomplete triangulation: there is no diagonal in the resulting triangulation as every individual arc moves to a lower fan (for even indices) or to an upper fan (for odd indices) and we cannot add a diagonal at the end since it does not appear as a limit arc.}
\label{non-complete-trian}
\end{center}
\end{figure}

\subsection{Elementary domains}

For the proofs of main results of this section, we will need to cut the surface into smaller pieces. Each of the pieces will be a disc triangulated in one of the five ways classified according to the local behaviour around an accumulation point.

In what follows, by disc we always mean an unpunctured disc with finitely or infinitely many boundary marked points.

\begin{definition}[Elementary domains]
\label{Def:InfFanZigZag}
Triangulations of discs combinatorially equivalent to the ones shown in Fig.~\ref{Fig:local} will be called {\it elementary domains}. Moreover, the elementary domains shown in Fig.~\ref{Fig:local}(a)-(e) will be called
\begin{itemize}
\item[(a)] {\it finite fan} (may contain any finite number of triangles);
\item[(b)] {\it infinite incoming fan};
\item[(c)] {\it infinite outgoing fan};
\item[(d)] {\it infinite zig-zag} around an accumulation point;
\item[(e)] {\it infinite zig-zag} converging to a limit arc.
\end{itemize} 

\end{definition}

\begin{figure}[!h]
\begin{center}
\psfrag{a}{\scriptsize (a)}
\psfrag{b}{\scriptsize (b)}
\psfrag{c}{\scriptsize (c)}
\psfrag{d}{\scriptsize (d)}
\psfrag{e}{\scriptsize (e)}
\epsfig{file=./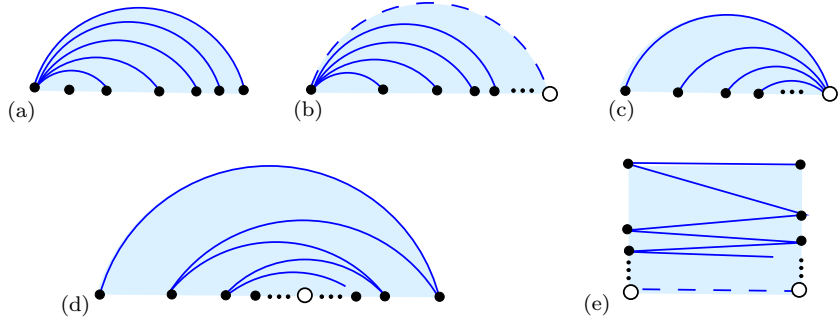,width=0.69\linewidth}
\caption{Five types
 of elementary domains: (a) finite fan, (b) incoming fan, (c) outgoing fan, (d) zig-zag around an accumulation point, (e) zig-zag converging to a limit arc. Notice that the incoming and outgoing fans may lie on the right or on the left of the accumulation point.}
\label{Fig:local}
\end{center}
\end{figure}

\begin{remark} In the literature, infinite fan triangulations are also called {\it fountains}.
Zig-zag triangulations around an accumulation point are also called {\it leap-frogs}. 
\end{remark}

\begin{remark}
Any domain in Definition~\ref{Def:InfFanZigZag}  has at most one left accumulation point and at most one right  accumulation point, and these points may coincide.

If  the two limit points of an  infinite zig-zag of type (e) coincide in such a way that the limit arc is contractible to a point and therefore vanishes, we  obtain exactly an infinite zig-zag of type (d). Therefore, we will understand a zig-zag around an accumulation point as a zig-zag converging to a (vanishing) limit arc.

Similarly, when a limit arc of an incoming fan is contracted and vanishes, we obtain an outgoing fan.
Hence, in many cases it makes sense to speak about fans in general, without specifying the type.
\end{remark}

There are triangulations of a disc with at most two accumulation points which are not exactly of the form given as elementary domains, but  have very similar underlying behaviour, see Fig.~\ref{Fig:local_AlmostElementary}. This leads us to the following definitions. 

\begin{definition}[Total number of accumulation points $\acc(\cals)$ of $\cals$] \label{Defn:Acc(S)}
By the {\it total number of accumulation points in $\cals$} we mean the total number of left and right accumulation points on $\cals$ (where two-sided accumulation points are counted twice). Denote this number by $\acc(\cals)$. 
\end{definition}

\begin{definition}[Almost elementary domains] 
\label{Def:UnderlyingTriangulation} 
Let $\cald$ be a disc with at least one accumulation point, let $T$ be a triangulation of $\cald$. Suppose that it is possible to remove an infinite sequence of marked points from $\cald$ together with all arcs incident to them so that 
\begin{itemize}
\item[(a)] the total number of accumulation points of the obtained surface $\cals'$ is the same as the one of $\cals$, i.e. $\acc(\cals')=\acc(\cals)$;
\item[(b)]  the obtained triangulation is an infinite elementary domain.
\end{itemize}  
Then we say that $\cald$ together with $T$ is an {\it almost elementary domain $(\cald,T)$.} The type of an 
 almost elementary domain is determined according to the type of the corresponding   elementary domain (i.e. {\it almost elementary incoming/outgoing fan},  {\it almost elementary zig-zag}).
\end{definition}

\begin{figure}[!h]
\begin{center}
\epsfig{file=./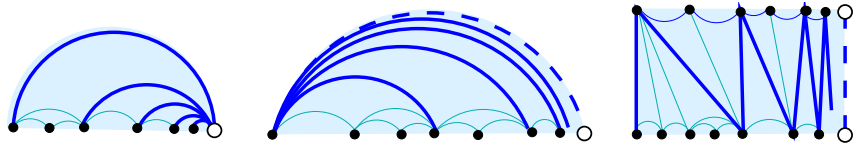,width=0.7\linewidth}
\caption{Examples of almost elementary domains.} 
\label{Fig:local_AlmostElementary}
\end{center}
\end{figure}

\begin{definition}[Source of fan, base of fan, bases of zig-zag]
\label{base}
{\color{white} A}
\begin{itemize}
\item By the {\it source} of a fan $\calf$ we mean the marked point incident to infinitely many arcs of $\calf$.
\item Let $\gamma$ be the (possibly vanishing) limit arc of a fan $\calf$ and let $\alpha\neq\gamma$ be another boundary arc of $\calf$ incident to the source of $\calf$.
Then  by the {\it base} of $\calf$ we mean $\partial \calf\setminus \{\gamma\cup \alpha\}$ 
(see the horizontal part of the boundary of the fan in Fig.~\ref{elem_bases}).
\item Similarly, for an almost elementary zig-zag $\mathcal Z$, let $\gamma$ be the (possibly vanishing) limit arc and let $\alpha$ be the first arc of the zig-zag. Then  by the {\it bases} of $\calz$ we mean the connected components of $\partial \calz\setminus \{\gamma\cup \alpha\}$ 
(see the two horizontal parts of the boundary of the zig-zag in Fig.~\ref{elem_bases}).
\end{itemize}

\end{definition}

\begin{figure}[!h]
\begin{center}
\psfrag{a}{\small $\alpha$}
\psfrag{g}{\small $\gamma$}
\epsfig{file=./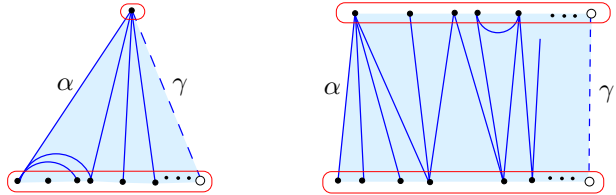,width=0.5\linewidth}
\caption{Source of a fan, base of a fan and  bases of a zig-zag.} 
\label{elem_bases}
\end{center}
\end{figure}

Our next aim is to prove Proposition~\ref{Lem:FinArcs-->Discs}  which shows that almost elementary domains may serve as building blocks of triangulated infinite surfaces and that every triangulation requires finitely many of them. For this we first need a technical lemma giving a characterisation of almost elementary domains.

\begin{lemma}
\label{l-almost elementary}
Let $D$ be a disc with at least one accumulation point of marked points on the boundary.
Let $T$ be a triangulation of $D$ such that it contains
\begin{itemize}
\item[(1)] no internal limit arc, and
\item[(2)]
no arc $\gamma$ cutting $D$ into two discs $D_1$ and $D_2$  such that 
$\acc(D_i)<\acc(D)$ for $i=1,2$. 
\end{itemize}
Then $(D,T)$ is an almost elementary fan or an almost elementary zig-zag.
\end{lemma}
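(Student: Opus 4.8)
The plan is to argue locally-to-globally: first I would determine the combinatorics of $T$ in a neighbourhood of a single accumulation point, and then glue these local pictures using hypotheses (1) and (2). Fix an accumulation point $p$ of $D$ and, on one side of it, list the consecutive boundary marked points $m_0,m_1,m_2,\dots$ with $m_i\to p$. Each boundary edge $m_im_{i+1}$ lies in a unique triangle $\Delta_i$ of $T$ with third vertex (apex) $v_i$; set $\delta_i=m_iv_i$. Since the $\delta_i$ lie in $T$ they are pairwise non-crossing, and as their near endpoints $m_i$ march monotonically to $p$, non-crossing forces the far endpoints $v_i$ to be weakly monotone along $\partial D$, hence to converge to a single point $v_*\in\partial D$. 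If $v_*$ is an ordinary marked point, then $v_i=v_*=:s$ for all large $i$ (no marked point accumulates at an ordinary point), so cofinally all $\Delta_i$ share the apex $s$ and we see the tail of a fan with source $s$ and limit arc $sp$. If instead $v_*$ is an accumulation point $q$ (possibly $q=p$), the $v_i$ are eventually distinct and march to $q$, and the triangles $\Delta_i$ form the tail of a zig-zag with ends $p,q$ and limit arc $pq$.

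By Proposition~\ref{Prop:FinManyAcc} and Corollary~\ref{Cor:FiniteLimArc} only finitely many limit arcs occur, and hypothesis (1) pins them down: the limit arc found above ($sp$ for a fan, $pq$ for a zig-zag) must be a boundary arc of $D$, or vanish. I would use this in two ways. It guarantees that each elementary tail genuinely closes up on $\partial D$ rather than being separated off by an interior limit arc; and it excludes degenerate local pictures, most importantly a ``fountain'' at a two-sided accumulation point (fans on both sides sharing one source), whose common limit arc would be approached from both sides and hence be interior. Consequently a two-sided accumulation point must carry a zig-zag of type~(d) in the sense of Definition~\ref{Def:InfFanZigZag}.

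For the global step I would remove small left and right neighbourhoods of the finitely many accumulation points, exactly as in the proof of Proposition~\ref{Prop:ArcsFinInter}, leaving a finite triangulated core; thus $D$ is this core with an elementary tail attached at each accumulation side. I then bound $\acc(D)$ using (2). If two accumulation sides did not belong to a common zig-zag, a diagonal of the finite core separating their two tails would be an arc of $T$ cutting $D$ into two discs of strictly smaller $\acc$, contradicting (2). By contrast, the two ends of a single zig-zag are joined by a (possibly vanishing) boundary limit arc and cannot be separated by any arc of $T$: indeed any separating arc would have to cross infinitely many zig-zag diagonals, which is impossible by Proposition~\ref{Prop:ArcsFinInter}. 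Hence all accumulation sides lie in one elementary piece, forcing $\acc(D)\in\{1,2\}$ --- a single fan when $\acc(D)=1$ (a one-sided point admits only a fan), and a single zig-zag when $\acc(D)=2$ (either a two-sided point, or two one-sided ends joined by the boundary limit arc).

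Finally I would match this with Definition~\ref{Def:UnderlyingTriangulation}: the finite core, together with the finitely many ``early'' triangles whose apexes differ from the stable value $s$ (resp.\ from the monotone zig-zag pattern), are precisely the bounded defects permitted by the \emph{almost elementary} relaxation, so deleting a suitable sparse infinite subsequence of marked points and their incident arcs exposes a genuine elementary fan or zig-zag while preserving $\acc$. I expect the main obstacle to be the local classification of the first paragraph: rigorously showing that pairwise non-crossing forces the apex sequence $v_i$ to be monotone and convergent, and ruling out all ``mixed'' behaviour (apexes neither eventually constant nor eventually strictly monotone toward a single accumulation point). This is where the finiteness inputs (Propositions~\ref{Prop:FinManyAcc} and~\ref{Prop:ArcsFinInter}, Corollary~\ref{Cor:FiniteLimArc}) and hypothesis (1) do the real work, by limiting the available limit arcs and excluding configurations that would create either an interior limit arc or infinitely many crossings.
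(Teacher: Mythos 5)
Your overall plan (classify the local picture at each accumulation side, then glue using (1) and (2)) is reasonable, but the mechanism you propose for the local step does not work, and you have correctly located the crux: the apex-sequence argument of your first paragraph fails. The problem is that the triangles $\Delta_i$ containing the \emph{boundary} edges $m_im_{i+1}$ need not be the structural triangles of the domain at all. In an almost elementary incoming fan with source $s$ and base points $p_1,s_1,p_2,s_2,\dots\to p$ (an ``ear'' $p_is_ip_{i+1}$ attached along each arc $p_ip_{i+1}$), the boundary-edge triangles are precisely the ears, and their apexes read $p_2,p_1,p_3,p_2,p_4,p_3,\dots$ --- not weakly monotone, and converging to the accumulation point $p$ itself rather than stabilising at $s$. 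Your dichotomy would then classify this incoming fan as a ``zig-zag with ends $p,p$'', which is wrong. Similarly, an outgoing fan with source at the accumulation point has $v_i\equiv p$ constant with $p$ an accumulation point, contradicting your claim that the $v_i$ are then ``eventually distinct'' and again landing in the wrong branch of the dichotomy. Since almost elementary domains (with infinitely many attached polygons) are exactly what the lemma must produce, the local classification cannot be run off the boundary-adjacent triangles. The paper instead builds a chain of triangles starting from a fixed basepoint and repeatedly crossing an \emph{interior} arc into the adjacent triangle, using hypothesis (2) to constrain where the new vertex can land; this chain automatically skips over the ears, and its arcs either converge to an interior limit arc (contradicting (1)) or to a boundary segment (yielding the zig-zag), or all share a vertex (yielding the fan).

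Your global step also needs repair: the ``finite core'' obtained by deleting boundary neighbourhoods of the accumulation points is not a union of triangles of $T$, so the separating ``diagonal of the finite core'' you invoke is not automatically an arc \emph{of $T$}, and only arcs of $T$ can be played against hypothesis (2). In the paper this issue never arises because the case of two accumulation points is handled directly: either the construction produces an interior limit arc (excluded by (1)), or it produces the boundary limit arc of a zig-zag spanning both accumulation points, so no separate gluing argument is needed.
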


\begin{proof}
We will consider two cases: either $D$ has at least two accumulation points or $D$ has a unique accumulation point.

\medskip
\noindent
\underline{\bf Case  1:}
Suppose that $D$ contains at least 2 accumulation points. We will identify the disc $D$ with the upper half-plane $\{z\in \bC \mid \textup{Im }z> 0 \}\cup \{\infty\}$, and we will assume that
 $0$ and $\infty$ are successive accumulation points, i.e. there is no accumulation point at any $r>0$, $r\in\mathbb R$. We will also assume (by symmetry) that $\infty$ is an ascending limit, i.e. that there is an infinite sequence of marked points in $\{ r>0, r \in \mathbb{R}\}$.  Furthermore, there are two possibilities: either there is a sequence of marked points decreasing to 0 or not. We will consider these cases.

\smallskip
\noindent
{\bf Case 1.a:} Suppose $0$ is not an accumulation point from the right. The marked points on the positive ray can be labelled by $q_i$, $i\in \bN$, so that $q_i<q_j$ if $i<j$, $q_i\to \infty$ as $i\to \infty$ (see Fig.~\ref{almost-pf}, left). Assumption~(2) implies that there is no arc of $T$ connecting $q_i$ to any point $s<0$; and moreover, there is no arc from $q_i$ to $\infty$. Similarly, there is no arc from $0$ to any point $s\le 0$ (since $0$ is an accumulation point from the left). 

Consider the point $q_1$. Denote $u_1=q_1$. The triangle $t_1$ in $T$ containing the boundary arc $0q_1$ has a third vertex. From the discussion above we derive that it is one of  $q_i$ ($i>1$). Denote that vertex $u_2=q_i$, and let $\gamma_{0,2}$ be the arc connecting $0$ with $u_2$. There is another triangle $t_2\ne t_1$ containing the arc $\gamma_{0,2}$. Let $u_3=q_j,$ where $j>i,$ be the third vertex of $t_2$. Denote by $\gamma_{0,3}$ the arc connecting   $0$ with $u_3$. We may continue in this way constructing an infinite growing sequence of the points $u_i$, an infinite sequence of triangles $t_i$ all having $0$ as a vertex, and an infinite sequence of arcs $\gamma_{0,i}\in T$. However, this implies that the arc connecting $0$ with $\infty$ is a limit arc, which contradicts assumption~(1).     

\begin{figure}[!h]
\begin{center}
\psfrag{0}{$0$}
\psfrag{inf}{$\infty$}
\psfrag{=}{\scriptsize $=$}
\psfrag{1}{\scriptsize $q_1$}
\psfrag{2}{\scriptsize $q_2$}
\psfrag{3}{\scriptsize $q_3$}
\psfrag{4}{\scriptsize $q_4$}
\psfrag{5}{\scriptsize $q_5$}
\psfrag{6}{\scriptsize $q_6$}
\psfrag{u1}{\scriptsize $u_1$}
\psfrag{u2}{\scriptsize $u_2$}
\psfrag{u3}{\scriptsize $u_3$}
\psfrag{t0}{\scriptsize $t_0$}
\psfrag{q0}{\scriptsize $q_0$}
\psfrag{t1}{\scriptsize $t_1$}
\psfrag{t2}{\scriptsize $t_2$}
\psfrag{t3}{\scriptsize $t_3$}
\psfrag{a0}{\scriptsize $a_0$}
\psfrag{b0}{\scriptsize $b_0$}
\psfrag{b1}{\scriptsize $b_1$}
\epsfig{file=./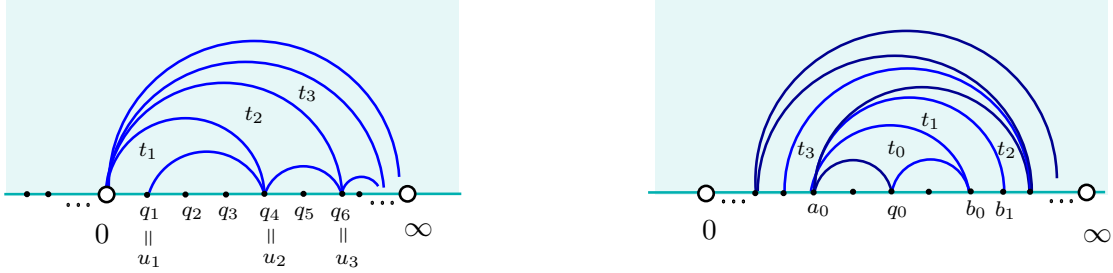,width=0.98\linewidth}
\caption{To the proof of Lemma~\ref{l-almost elementary}: Case~1.a (left) and Case~1.b (right).} 
\label{almost-pf}
\end{center}
\end{figure}

\smallskip
\noindent
{\bf Case 1.b:} Suppose $0$ is an  accumulation point from the right. The marked points on the positive ray can be labelled by $q_i$, $i\in \bZ$, so that $q_i<q_j$ if $i<j$, $q_i\to \infty$ as $i\to \infty$ and $q_i\to 0$ as $i\to -\infty$ (see Fig.~\ref{almost-pf}, right). Assumption~(2) implies that every arc emanating from $q_i$ have the other endpoint at some $q_j$ (as it cannot terminate at any point $r\leq 0$ or $r=\infty$).  

Let $u_0=q_0$. There are finitely many triangles of $T$ incident to $q_0$ (as $q_0$ is not an accumulation point) and the leftmost of these triangles has at least one vertex in the interval $(0,q_0)$, while the rightmost triangle has a vertex in $(q_0, \infty)$. Moreover, the arcs (and boundary arcs) incident to $q_0$ split into a family of consecutive arcs landing in $(0,q_0)$ and a family of consecutive arcs landing in $(q_0,\infty)$. Hence, exactly one triangle incident to $q_0$ has vertices $a_0q_0b_0$ such that $0<a_0<q_0<b_0<\infty$. Denote this triangle by $t_0$. The side $a_0b_0$ of $t_0$ belongs to some other triangle $t_1\in T$, with the third vertex either at a point $a_1<a_0$ or at a point $b_1>b_0$. Denote the leftmost vertex of $t_1$ by $a_1$ and the rightmost vertex of $t_1$ by $b_1$ (with either $a_1=a_0$ or $b_1=b_0$).
 Similarly, this new side $a_1b_1$ of $t_1$ also belongs to the next triangle $t_2$. Proceeding in this way we obtain a sequence of triangles $t_i$ and a sequence of arcs $a_ib_i$. Notice that all $a_i,b_i$ are positive, the sequence  $\{a_i\}$ is monotone decreasing and the sequence $\{b_i\}$ is monotone increasing. Hence both sequences are converging, which implies that there exists a limit arc $\gamma$ (a limit of the arcs $a_ib_i$). This contradicts assumption~(1) unless $\gamma$ is actually a boundary segment with endpoints at $0$ and $\infty$. In the latter case, we see that there are no marked points in the negative ray, and the arcs $a_ib_i$ define an almost elementary zig-zag of $D$. Moreover, the set of vertices $\{a_i,b_i\}$ is obtained from the set $\{q_i\}$ by removing some marked points (so that the total number of accumulation points in the disc remains equal to 2), removing some further vertices from the set   $\{a_i,b_i\}$ we can obtain an elementary zig-zag. By definition, this implies that the original triangulation of $D$ is  an almost elementary zig-zag.

This completes the consideration of Case~1.

\medskip
\noindent
\underline{\bf Case  2:}
Suppose that $D$ contains a unique accumulation point, say at $\infty$.
If it has a two-sided accumulation point, then the case is considered exactly in the same way as in Case~1.b.
So, suppose it is a one-sided limit, say from the right. Then we may assume that the marked points $q_i, i\in\bN,$ on $D$ satisfy $q_1<q_2<q_3<\dots$. 
Using the same construction as in Case~1.a,  we arrive to an almost elementary incoming or outgoing fan.  
Notice that in this setting there may be an arc from $q_i$ to $\infty$, and if there are infinitely many of such arcs, the fan under consideration is outgoing.
\end{proof}

\begin{prop}
\label{Lem:FinArcs-->Discs} 
Fix a triangulation $T$ of $\cals.$ 
There exists a finite set of arcs  $\gamma_i$ in $T$ with $i=1,\ldots,n$ such that $\cals \backslash \{\gamma_1,\ldots,\gamma_n\}$ is a  finite union  of almost elementary domains.
\end{prop}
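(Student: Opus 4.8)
The plan is to build the cut system in three stages and to recognise the final pieces by Lemma~\ref{l-almost elementary}; the whole difficulty is to keep each stage finite. First I would reduce to the case of discs. Removing a small one-sided neighbourhood of each accumulation point (as in the proof of Proposition~\ref{Prop:ArcsFinInter}) turns $\cals$ into a compact surface $\cals^\circ$ of the same finite genus, with finitely many boundary components and punctures, and --- since the marked points of $\cals$ accumulate only at the accumulation points --- with only finitely many marked points. Hence $T$ restricts to a \emph{finite} triangulation of $\cals^\circ$, and the standard spanning-tree argument for its dual graph (a spanning tree of the dual graph corresponds to a triangulated disc; one cuts along the cotree arcs) produces a finite set of arcs of $\cals^\circ$ whose complement is a disjoint union of discs. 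These arcs lie in $T$, and since each removed neighbourhood is itself a disc reglued along boundary segments, cutting $\cals$ along them yields a finite union of discs, each carrying finitely many accumulation points on its boundary.

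Second, I would enlarge the cut system by all \emph{limit arcs} of $T$, of which there are finitely many by Proposition~\ref{Prop:FinManyAcc}. A limit arc has both endpoints at accumulation points, so cutting along it again produces discs, and after this stage no piece contains an internal limit arc; thus hypothesis~(1) of Lemma~\ref{l-almost elementary} holds for every resulting disc. A piece with no accumulation point is a finite triangulated polygon, which is a finite fan or decomposes into finitely many finite fans along diagonals of $T$ (in the worst case into its individual triangles); I would treat these as the degenerate finite case, understanding \emph{almost elementary domain} in the statement to include finite fans. It then remains to treat the discs $D$ with $\acc(D)\ge 1$.

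Third, for such a disc I would induct on $\acc(D)$, cutting along \emph{splitting arcs}, i.e. arcs $\gamma\in T$ witnessing the failure of hypothesis~(2): $\gamma$ cuts $D$ into discs $D_1,D_2$ with $\acc(D_i)<\acc(D)$. If no such arc exists --- and, by the previous stage, no internal limit arc exists either --- then $(D,T)$ is almost elementary by Lemma~\ref{l-almost elementary} and we stop. Otherwise we cut along $\gamma$ and recurse on $D_1$ and $D_2$. The finiteness mechanism is that the accumulation slots of $D$ (left and right, a two-sided point counting twice) are \emph{partitioned} between $D_1$ and $D_2$: a slot at an endpoint of $\gamma$ is assigned to one side, any other slot lies wholly in one piece, so $\acc(D_1)+\acc(D_2)=\acc(D)$. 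Combined with $\acc(D_i)<\acc(D)$ this forces $\acc(D_i)\ge 1$, so the potential $\sum_{\text{pieces}}\acc(\,\cdot\,)^2$ drops strictly at every cut (replacing $m^2$ by $a^2+b^2$ with $a+b=m$, $a,b\ge 1$). Hence only finitely many splitting arcs are used, no piece of $\acc=0$ is created here, the recursion terminates, and each terminal piece --- having $\acc=1$ and therefore admitting no splitting arc --- is almost elementary. Since cutting along an ordinary arc never creates a limit arc, hypothesis~(1) is preserved throughout.

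I expect the main obstacle to be the first stage: making precise that a finite subset of arcs of the \emph{infinite} triangulation $T$ cuts $\cals$ into discs, and verifying that cutting along these together with the limit arcs genuinely yields topological discs (rather than annuli or higher-genus pieces) once the accumulation-point neighbourhoods are reattached. By contrast, the potential-function bookkeeping of the third stage is routine, once one checks the two combinatorial facts that a splitting arc partitions the accumulation slots and that an $\acc=1$ disc admits no splitting arc.
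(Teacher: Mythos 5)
Your overall architecture --- first cut $\cals$ into discs along finitely many arcs of $T$, then cut along the finitely many limit arcs (Proposition~\ref{Prop:FinManyAcc}), then recurse on arcs that split the accumulation points, and finally invoke Lemma~\ref{l-almost elementary} --- is the same as the paper's, and your potential function (the sum over the pieces of $\acc(\,\cdot\,)^2$, which drops at every cut because a splitting arc partitions the accumulation slots) is a genuine improvement on the paper's bare assertion that the recursion ``clearly terminates''. The gap is concentrated in your first stage. The restriction of $T$ to the truncated surface $\cals^{\circ}$ is \emph{not} a finite triangulation: infinitely many arcs of $T$ meet any neighbourhood of an accumulation point (each of the infinitely many marked points accumulating there is incident to arcs of $T$); the arcs of $T$ that survive truncation leave untriangulated regions along the cut; and one can even have infinitely many arcs of $T$ joining a \emph{fixed} pair of retained marked points (nested arcs, each cutting off one more point of an accumulating sequence, converging to a limit arc). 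So there is no finite dual graph for the spanning-tree argument to act on, and if you instead choose an auxiliary finite triangulation of $\cals^{\circ}$, the resulting cut arcs need not belong to $T$, which the statement requires. The paper closes exactly this step differently: it classifies the arcs of $T$ itself into those joining distinct boundary components, those that are non-separating or genus-reducing, and those that merely cut off discs, and cuts only along arcs of the first two kinds; since the genus and the number of boundary components of $\cals$ are finite, finitely many such cuts suffice to reduce every component to a disc.

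Two smaller points. You never deal with punctures: your $\cals^{\circ}$ retains them, so your ``discs'' may be punctured discs, to which Lemma~\ref{l-almost elementary} (stated for unpunctured discs) does not apply; the paper first cuts along one arc of $T$ incident to each puncture. And your justification ``a limit arc has both endpoints at accumulation points'' is false --- the limit arc of an incoming fan joins the source of the fan, an ordinary marked point, to an accumulation point --- although the conclusion you draw from it (cutting a disc along any arc yields discs) holds regardless.
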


\begin{proof} 
For each puncture on $\mathcal S,$ we cut along an arc coming to this puncture to get an unpunctured surface.
Then, we  cut along each of the finitely many limit arcs to obtain finitely many surfaces. 
On each connected component, there may be $3$  different types of arcs, see Fig.~\ref{arcs}:
\begin{itemize}
\item[A.]  arcs connecting two boundary components;
\item[B.]  arcs with two endpoints on the same boundary component, which cut a disc from $\cals$ (in other words, these arcs are contractible to the boundary of $\cals$);
\item[C.] a non-trivial arc with endpoints at the same boundary but not contractible to this boundary.
\end{itemize}

\begin{figure}[!h]
\begin{center}
\epsfig{file=./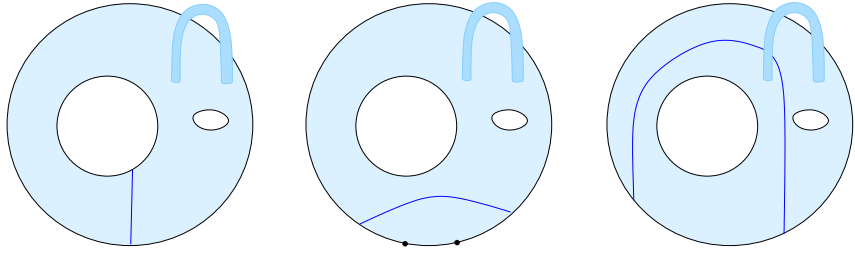,width=0.7\linewidth}
\caption{Examples of arcs of types A (left), B (middle), C (right).}
\label{arcs}
\end{center}
\end{figure}

If there is an arc of type A, we cut along this arc and reduce the number of boundary components. If there is an arc of type C, we cut along it  either to reduce the genus or  to split the surface into two connected components each having either  smaller genus or smaller number of boundary components. If there is no arc of type A or C, then the connected component is a disc.

Now we are left with finitely many discs  $D_i$ each of them having finitely many accumulation points.
The discs having no accumulation points on the boundary may be cut into finitely many separate triangles. Hence, it is left to consider a disc $D_i$ containing at least one accumulation point. If there is an arc $\gamma\in T$  cutting  $D_i$ into two parts $D^+_i$ and $D^-_i$ such that $\acc(D^+_i)<\acc(D_i)$ and $\acc(D^-_i)<\acc(D_i),$ then we cut $D_i$ along $\gamma$. Otherwise, Lemma~\ref{l-almost elementary} implies that $D_i$ is an almost elementary  domain (see Fig.~\ref{Fig:Cut_IsolateDics}). Clearly, this process terminates in finitely many steps and results in finitely many discs all triangulated as almost elementary domains, as required. 
\end{proof}

\begin{figure}[!h]
\begin{center}
\epsfig{file=./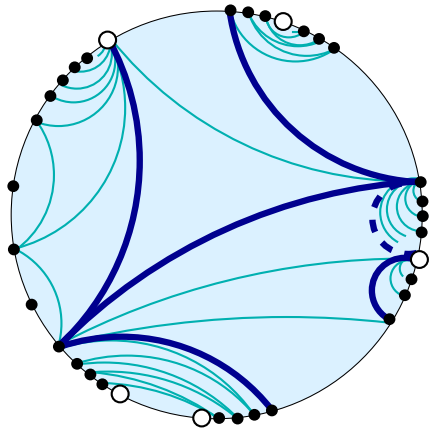,width=0.35\linewidth}
\caption{Example of cutting along thick arcs into almost elementary fans and zig-zags.} 
\label{Fig:Cut_IsolateDics}
\end{center}
\end{figure} 

\begin{remark}
\label{non-uniquenes of elementary domains}
The choice of cuts splitting  $\cals$ into almost elementary domains in Proposition~\ref{Lem:FinArcs-->Discs} is not unique. In particular, one can always cut a finite number of triangles from any infinite domain.
\end{remark}
\subsection{Transitivity of infinite mutations}
The main result of this section is that any two infinite triangulations of $\cals$ are connected by a sequence of mutations.
This generalises the result of~\cite{BG} which is obtained in a slightly different setting and which establishes transitivity of transfinite mutations for the case of {\it completed} infinity-gon.

\begin{theorem} 
\label{Thm:T-->T'}
For any two triangulations $T$ and $T'$ of $\cals,$ there exists a mutation sequence $\mubul$ such that $\mubul(T)=T'$, where $\mubul$ is a finite mutation,  a finite sequence of infinite mutations or an infinite sequence of infinite mutations. 
\end{theorem}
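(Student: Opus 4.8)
The plan is to route both triangulations through a common canonical triangulation, organised by the decomposition of Proposition~\ref{Lem:FinArcs-->Discs}. Fix a canonical triangulation $T_0$ of $\cals$ on its given marked points: cut along one arc into each puncture and along a fixed finite system of arcs resolving the genus and the surplus boundary components, place a standard incoming fan (type (b) of Definition~\ref{Def:InfFanZigZag}) on the marked points accumulating to each accumulation point, and triangulate the remaining finite ``skeleton'' arbitrarily. It suffices to produce, for every triangulation, a mutation sequence carrying it to $T_0$: applying this to both $T$ and $T'$ and composing $T\to T_0$ with the reverse of $T'\to T_0$ yields the desired $\mubul$, provided each move admits an inverse realisable by a mutation sequence. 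The latter holds because the elementary moves below are reversible up to strengthening the mutation type — for instance the source shift of Fig.~\ref{Fig:shiftsource} is undone by the infinite sequence of infinite mutations of Example~\ref{Example: mutations}(c), while a flip is undone by a flip — so the inverse of $T'\to T_0$ is again a mutation sequence of one of the three permitted types.

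To carry an arbitrary $T$ to $T_0$ I would first apply Proposition~\ref{Lem:FinArcs-->Discs}, cutting $\cals$ along finitely many arcs of $T$ into finitely many almost elementary domains. Normalising the infinite part then proceeds domain by domain using the moves of this section: an almost elementary zig-zag is turned into a union of two fans by the pair of infinite mutations of Fig.~\ref{Fig:switch(zigzag-->fan}, an outgoing fan is converted to an incoming fan by Fig.~\ref{Fig:switch(outgoing-->incoming)}, and the source of each resulting fan is moved to its canonical position by iterating Fig.~\ref{Fig:shiftsource}. After these (finitely many) infinite mutations, every neighbourhood of an accumulation point carries the canonical incoming fan of $T_0$, and all remaining differences between the current triangulation and $T_0$ live in a compact region away from the accumulation points. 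That region is a finite triangulated surface, so the discrepancy is removed by a finite mutation using flip transitivity for finite marked surfaces \cite{FST}, completing the reduction $T\to T_0$.

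The main obstacle I anticipate is twofold. First, the cut systems of Proposition~\ref{Lem:FinArcs-->Discs} for $T$ and $T'$ need not match, so one must argue that, once every accumulation point has been normalised to the canonical fan, the two global configurations differ only within a finite subsurface; finiteness of the genus, of the numbers of punctures and boundary components, and of $\acc(\cals)$ is exactly what confines the residual problem to the finite setting of \cite{FST}. Second, and more delicately, each infinite mutation invoked above must be verified to be \emph{admissible} in the sense of Definition~\ref{Def:Mutation} — that the orbit of every individual arc eventually stabilises, respectively converges — and one must check that completing by limit arcs at each stage yields a genuine triangulation rather than an incomplete collection such as the one in Fig.~\ref{non-complete-trian}. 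Controlling these convergence conditions through the normalisation of each domain, and confirming that the iterated source shifts used to invert $T'\to T_0$ genuinely assemble into an admissible infinite sequence of infinite mutations, is the technical heart of the argument.
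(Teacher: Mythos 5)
There is a genuine gap, and it sits exactly where you flag a ``proviso'': the composite $T\to T_0\to T'$ is not, in general, a mutation sequence of any of the three permitted types. Definition~\ref{Def:Mutation} only admits finite mutations, finite ($\omega$-free) compositions of infinite mutations, and $\omega$-indexed sequences of infinite mutations completed by limit arcs. Your normalisation $T'\to T_0$ already forces some domains through moves whose inverses are infinite sequences of infinite mutations (e.g.\ undoing a source shift of Fig.~\ref{Fig:shiftsource}, or returning an incoming fan to an outgoing one; see Theorem~\ref{no mun from stronger} and Example~\ref{Example: mutations}), so the reverse of $T'\to T_0$ is a finite concatenation of several infinite sequences of infinite mutations. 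Concatenating two $\omega$-sequences produces an $(\omega+\omega)$-indexed sequence, which is not an object of the theory; reordering or interleaving to compress it back into a single $\omega$-sequence changes which arcs are flipped when and requires exactly the kind of orbit-convergence argument the theorem is about. In other words, closure of the admissible mutation classes under composition is not available as a black box --- establishing it is essentially equivalent to the transitivity statement itself, so the reduction through $T_0$ is circular at this step. The same problem already infects the forward half: if $T$ has an incoming fan whose source lies closer to the accumulation point than the canonical one, then $\cald_{\overline p}^{T}\succ\cald_{\overline p}^{T_0}$ and no finite sequence of infinite mutations carries $T$ to $T_0$ (Theorem~\ref{no mun from stronger}), so even $T\to T_0$ may be an infinite sequence of infinite mutations, after which nothing further can be appended.

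The paper avoids this by never leaving the target triangulation $T'$: it first inserts the finitely many cut arcs of $T'$ from Proposition~\ref{Lem:FinArcs-->Discs} via Lemma~\ref{Lem:ChangeMutatforGamma} (a finite sequence of infinite mutations), and then realises all the per-domain corrections \emph{in parallel}, interleaving the first, second, third, \dots\ infinite mutations of the finitely many commuting domain-wise sequences so that the whole construction is a single $\omega$-sequence. The second issue is that you explicitly defer the admissibility check --- that every orbit stabilises or converges, and that the completion by limit arcs yields a genuine triangulation rather than the failure of Fig.~\ref{non-complete-trian}. That verification (carried out in Lemma~\ref{mutating to elementary}, Cases 2.a and 2.b, via a case analysis of arcs crossing infinitely many arcs of an incoming fan or of a zig-zag) is not a routine check to be confirmed later; it is the technical heart of the proof, and without it, and without resolving the composition problem above, the argument does not close.
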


We will first prove a couple of technical lemmas and the proof of the theorem will be presented at the end of this section.

\begin{lemma} 
\label{Rem:Almostelementary-->Elemenatry}
\begin{enumerate}
\item Let $\calf$ be an almost elementary fan. Then there exists an infinite mutation $\muone$ such that $\muone(\calf)$ is an elementary fan. 
\item Let $\calz$ be an almost elementary zig-zag. Then there exists an infinite mutation $\muone$ such that  $\muone(\calz)$ is an elementary zig-zag with (possibly infinitely many) finite polygons  attached along its bases.
\end{enumerate}
\end{lemma}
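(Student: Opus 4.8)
The plan is to exploit the defining property of an almost elementary domain (Definition~\ref{Def:UnderlyingTriangulation}): after deleting an infinite set $M$ of marked points together with all arcs incident to them, the triangulation becomes an honest elementary domain $\calf_0$ (respectively $\calz_0$). Since the arcs of $\calf_0$ (respectively $\calz_0$) involve no point of $M$, and since deleting marked points only removes arcs and never creates new ones, all of these ``skeleton'' arcs --- the fan arcs through the source $s$ in case (1), and the zig-zag (spine) arcs in case (2) --- must already be present in $\calf$ (respectively $\calz$). These skeleton arcs cut the disc into a sequence of finite polygons $\{P_j\}_{j\in\bN}$: each $P_j$ is one triangle of the underlying elementary domain into which the extra points of $M$ have been inserted along its base edge. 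No point of $M$ can be joined across a skeleton arc, since that arc is present and arcs do not cross, so every point of $M$ is confined to a single $P_j$. My infinite mutation $\muone$ will then act on each $P_j$ separately.

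For part (1), each finite polygon $P_j$ has the source $s$ as a vertex. Inside $P_j$ I flip to the fan triangulation of $P_j$ based at $s$; by the standard fact that any two triangulations of a convex polygon are related by finitely many diagonal flips, this costs a finite sequence $\sigma_j$ of elementary mutations, and these flips touch only diagonals interior to $P_j$ (the edges through $s$ that bound $P_j$ are never flipped). Performing $\sigma_0,\sigma_1,\sigma_2,\dots$ in turn joins $s$ to every marked point of $\calf$, so the resulting collection is an elementary fan once we complete it by the limit arc at the accumulation point $p_*$ as prescribed in Definition~\ref{Def:Mutation}.

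For part (2) the only change is the target triangulation inside each $P_j$. Now $P_j$ is a finite polygon whose non-base edges are arcs of $\calz_0$ and whose base carries the inserted points of $M$; here I flip $P_j$ (again finitely many flips, never touching the spine arcs) to the triangulation containing the base arc $c_j$ joining the two base endpoints. This arc cuts off from $P_j$ a finite polygon $Q_j$ along the base, while the remaining part is exactly the corresponding triangle of $\calz_0$. Performing these finite blocks for all $j$ and completing by the (possibly vanishing) limit arc yields an elementary zig-zag with the finite polygons $Q_j$ attached along its two bases, as required. Note the asymmetry with case (1): a fan has a single source that can absorb every surplus point into the fan, whereas a zig-zag has no such vertex, so the surplus points must survive as finite polygons.

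In both cases the composite sequence $\dots\circ\sigma_1\circ\sigma_0$ is genuinely an \emph{infinite mutation}, and I must verify it is \emph{admissible} in the sense of Definition~\ref{Def:Mutation}. This is where the polygon decomposition pays off: because the blocks $\sigma_j$ live in pairwise arc-disjoint regions (they share only skeleton or spine edges, which are never flipped), every original arc is a diagonal of at most one $P_j$, hence is moved only during the single finite block $\sigma_j$ and remains constant thereafter, while skeleton arcs, spine arcs and the limit arc are fixed throughout. Thus each arc's orbit stabilises and the sequence is admissible. The main obstacle is therefore not the mutation itself but the structural claim underlying the whole argument: that the skeleton/spine arcs of the underlying elementary domain are present in the almost elementary domain and decompose it into finite polygons meeting only along those arcs. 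Establishing this confinement of the points of $M$ --- essentially a consequence of the preservation of $\acc$ under the deletion together with the fact that no arc may cross a present arc --- is the step that requires care; everything else reduces to finite-polygon flip combinatorics.
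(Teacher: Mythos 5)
Your proof is correct and follows essentially the same route as the paper's: both decompose the almost elementary domain into the finite polygons sitting over the triangles of the underlying elementary domain (finiteness of each polygon coming from the preservation of $\acc$ under the deletion), and then perform finitely many flips inside each polygon successively to reach the target pattern. Your write-up is merely more explicit than the paper's about why the skeleton/spine arcs survive the deletion, about the different target triangulation in the zig-zag case, and about admissibility of the resulting infinite mutation, but these are elaborations of the same argument.
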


\begin{proof} Let $\cald$ be an almost elementary fan or zig-zag. By Definition~\ref{Def:UnderlyingTriangulation}, one can remove   a (possibly infinite) number of boundary marked points (together with the arcs incident to them)  from $\cald$,  so that the resulting surface $\cald'$ is an elementary domain and $\acc(\cald')=\acc(\cald)$. From each triangle $t_i$ of $\cald'$, we have removed at most finitely many marked points (otherwise we would get a contradiction to condition (a) of Definition~\ref{Def:UnderlyingTriangulation}). Hence, each triangle $t_i$ of $\cald'$ is subdivided in $\cald$ into finitely many triangles constituting a finite triangulated polygon $\tilde t_i$ in $\cald$. We can transform the triangulation of $\tilde t_i$ to a finite fan as shown in Fig.~\ref{fig:almost elem-->elem} in finitely many flips. Doing so for $\tilde t_i, i\in\bN,$  successively, we will get an infinite mutation transforming the triangulation of $D$ to the required pattern.  

\end{proof}

\begin{figure}[!h]
\begin{center}
\epsfig{file=./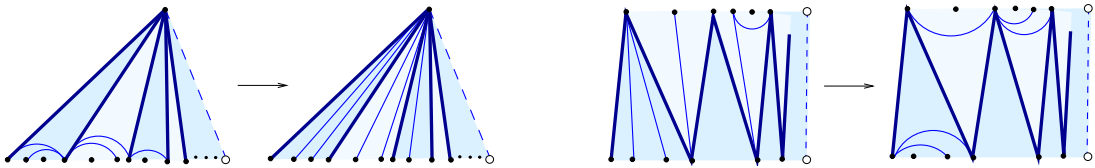,width=0.9\linewidth}
\caption{To the proof of Lemma~\ref{Rem:Almostelementary-->Elemenatry}.} 
\label{fig:almost elem-->elem}
\end{center}
\end{figure}

\begin{definition}[Domain of $\gamma$ in $T$,  almost elementary domains  of $\gamma$] 
\label{Def: domain of gamma}
Let $\gamma\in\cals$ be an arc and $T$ be a triangulation of $\cals$.
\begin{itemize}
\item[(1)] A {\it domain}  $\cald^T_{\gamma}$   of $\gamma$ in $T$  is an open set which consists of triangles intersected by $\gamma$
given by 
$$\cald^T_{\gamma}=(\bigcup\limits_{i\in I}  \Delta\strut^\mathrm{o}_i) \bigcup (\bigcup\limits_{j\in J} \gamma_j),$$
where $J$ is the index set of all arcs in $T$ intersected by $\gamma$,  $I$ is the index set of triangles of $T$ having at least one of their boundary arcs in the set  $\{\gamma_j \ | \ j\in J\}$, and  $\Delta\strut^\mathrm{o}$ is the interior of a triangle $\Delta$. 

\item[(2)] As it is the case for any surface, the domain $\cald^T_{\gamma}$   of $\gamma$ may be cut into almost elementary domains (see Proposition~\ref{Lem:FinArcs-->Discs}). These domains for  $\cald^T_{\gamma}$  will be called 
 {\it almost elementary domains} of $\gamma$.
\end{itemize}
\end{definition}

\begin{figure}[!h]
\begin{center}
\psfrag{a}{\small (a)}
\psfrag{b}{\small (b)}
\psfrag{c}{\small (c)}
\psfrag{g}{{\color{red} $\gamma$}}
\epsfig{file=./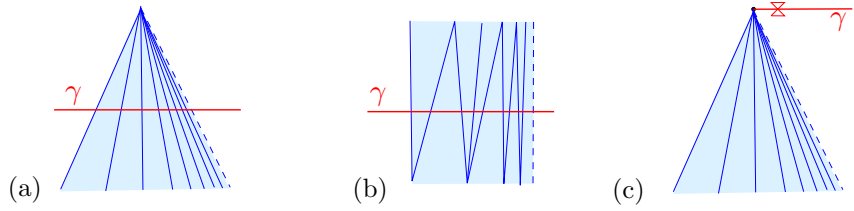,width=0.7\linewidth}
\caption{ Almost elementary domains of $\gamma$: (a) elementary fan, (b)~(almost) elementary zig-zag, (c) in the punctured case, a fan with the source in a puncture $p$ is an elementary domain for an arc $\gamma$ incident to $p$ if $\gamma$ is tagged oppositely at $p$  with respect to the tagging of $T$.}
\label{elem_crossings}
\end{center}
\end{figure}

Examples of elementary domains of an arc $\gamma$ are shown in Fig.~\ref{elem_crossings}.

\begin{remark}
In the case of a punctured surface we need to use the notion of crossing of tagged arcs, see~\cite[Definition~7.4]{FST}.  
In particular, the arc $\gamma$ in Fig.~\ref{elem_crossings}(c) (having an endpoint at a puncture $p$  and  tagged   oppositely at $p$ with respect to the tagging of $T$)  crosses every arc of the fan, so the whole fan will be an almost elementary domain for $\gamma$.

\end{remark}

\begin{remark}
\label{rem: domain of gamma=in+zig}
The domain of $\gamma$ in $T$ is a finite union of almost elementary incoming fans and almost elementary zig-zags.
Indeed, it cannot contain an almost elementary outgoing fan, as no arc can  cross infinitely many arcs of an outgoing fan.
(Also, the union is finite as the surface is a finite union of almost elementary domains).

\end{remark}

\begin{remark}
Almost elementary domains of $\gamma$ in $T$ are not uniquely defined  (compare to Remark~\ref{non-uniquenes of elementary domains}).
\end{remark}

\begin{remark} 
\label{fan is elementary}
If $D$ is an almost elementary fan domain of an arc $\gamma$, then (after removing finitely many triangles) $D$ is actually an elementary fan domain. Indeed,
as  $\gamma$ intersects every triangle in  the domain $\cald^T_{\gamma}$,  $\gamma$ crosses only the arcs of the triangulation of $D$ incident to the source of the fan (except for finitely many triangles at the end). 
\end{remark}

\begin{lemma}
\label{split domain}
For any arc $\gamma\in\cals$ and a triangulation $T$, the domain $\mathcal D_\gamma^T$ of $\gamma$ is a finite union of  disjoint almost elementary domains $R_1,...,R_k$ such that for every domain $R_i$ one of the following holds:
\begin{itemize}
\item[-] either $R_i$ is a single triangle,
\item[-] or all parts of $\gamma$ crossing $R_i$ are parallel to each other  (see Fig.~\ref{pencils}).
\end{itemize}
\end{lemma}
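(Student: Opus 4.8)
The plan is to refine the coarse decomposition that is already available. By Remark~\ref{rem: domain of gamma=in+zig}, the domain $\cald^T_\gamma$ is a finite union of almost elementary incoming fans and almost elementary zig-zags, and by Remark~\ref{fan is elementary} each almost elementary fan is, after deletion of finitely many triangles, an elementary fan. Hence it suffices to treat a single such piece $R$ and to cut it, along finitely many arcs of $T$, into subregions each of which is either a single triangle or an almost elementary domain crossed by mutually parallel parts of $\gamma$; taking the union over the finitely many pieces then gives the lemma.

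First I would record that $\gamma$ meets $R$ in only finitely many maximal parts. By Proposition~\ref{Lem:FinArcs-->Discs} the surface is cut into its almost elementary pieces along a \emph{finite} set of arcs of $T$, and by Proposition~\ref{Prop:ArcsFinInter} the arc $\gamma$ crosses each of these finitely often; since every passage of $\gamma$ out of $R$ must cross one of them (the rest of $\partial R$ lies on $\partial\cals$ and carries only endpoints of $\gamma$), $\gamma$ enters and leaves $R$ finitely many times. I would then sort these parts by how many arcs of $R$ they cross, calling a part \emph{finite} or \emph{infinite} accordingly. An infinite part crosses arcs of $R$ arbitrarily close to the (possibly vanishing) limit arc $\gamma_*$ of $R$, since the arcs of $R$ converge to $\gamma_*$; being compact and non-self-intersecting, $\gamma$ must then run parallel to $\gamma_*$ in the tail of $R$, so all infinite parts are mutually parallel there. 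As there are finitely many finite parts and each crosses a bounded range of the linearly ordered arcs of $R$, there is a largest index $b_{\max}$ beyond which no finite part crosses.

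With this, the cut is immediate. Cutting $R$ along the arc of index $b_{\max}$ separates it into an infinite tail $R_\infty$ (the arcs of larger index together with the accumulation point(s)) and a finite region $R_0$. Every triangle of $R_\infty$ is crossed only by infinite parts, which are mutually parallel, so $R_\infty$ is an almost elementary incoming fan (resp. zig-zag) all of whose crossing parts are parallel; while $R_0$ consists of finitely many triangles, which I would declare to be the single-triangle pieces. This yields finitely many pieces per fan or zig-zag, hence finitely many over the whole domain, with disjoint interiors covering $\cald^T_\gamma$, as illustrated in Fig.~\ref{pencils}.

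The step I expect to be the main obstacle is the geometric claim that the infinite parts are mutually parallel, that is, that a part crossing infinitely many arcs of $R$ is forced to accumulate at $\gamma_*$ and to be isotopic there to $\gamma_*$. This is where one must combine the convergence of the arcs of $R$ to $\gamma_*$ with the fact that $\gamma$ is compact, has endpoints only at marked points, and has no self-intersections, so that two infinite parts cannot cross and must sweep out exactly the same sequence of arcs in the tail. One also has to note that in $\cald^T_\gamma$ only triangles actually crossed by $\gamma$ occur, so any finite polygons attached along the bases of a zig-zag (as produced in Lemma~\ref{Rem:Almostelementary-->Elemenatry}) are met by finitely many finite parts only and therefore lie inside $R_0$.
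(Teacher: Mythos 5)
Your overall strategy is the paper's: decompose $\cald_\gamma^T$ into almost elementary fans and zig-zags, observe that $\gamma$ meets each piece $R$ in finitely many parts, and cut off a finite portion so that the remaining infinite tail is crossed in a parallel fashion. The finiteness of the set of parts is argued correctly. But there is a genuine gap where you declare the infinite parts to be mutually parallel in $R_\infty$. Your cut is placed at the arc of index $b_{\max}$, and $b_{\max}$ is determined by the \emph{finite} parts only; this does not control where the \emph{infinite} parts \emph{begin}. An infinite part need not enter $R$ through the first arc of the fan: it may have an endpoint at a marked point lying on the base (say at index $m>b_{\max}$), or it may enter through one of the finitely many cutting arcs that form part of the base, again at a triangle of arbitrarily large index. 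Restricted to your $R_\infty$, such a part crosses only the arcs of index greater than $m$, while an infinite part entering through the arc of index $b_{\max}$ crosses all of them. These two parts are each ``parallel to $\gamma_*$ in the tail'', but they are not parallel to each other as crossings of $R_\infty$, which is what the lemma asserts and what the later application (resolving all crossings of a fan simultaneously by shifting its source, Lemma~\ref{Lem:ChangeMutatforGamma}) actually requires.

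The repair is precisely the extra step in the paper's proof. Since there are finitely many parts altogether, there are finitely many points where parts of $\gamma$ meet the base(s) of $R$ --- endpoints at marked points and crossings of base arcs --- whether those parts are finite or infinite. One removes the finitely many triangles containing \emph{all} of these points, not just the ones produced by finite parts. This cuts $R$ into several finite subdomains, which are further cut into single triangles, and one infinite subdomain which every remaining part enters through its initial arc and leaves through the limit arc (or at the accumulation point); since there is a unique way to cross a fan or a zig-zag without touching the bases, all parts there are parallel. Your closing remark that the finite polygons attached along the bases are ``met by finitely many finite parts only'' is unjustified for the same reason: an infinite part may well pass through such a polygon on its way into the elementary fan.
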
 

\begin{figure}[!h]
\begin{center}
\psfrag{g}{{\color{red} $\gamma$}}
\epsfig{file=./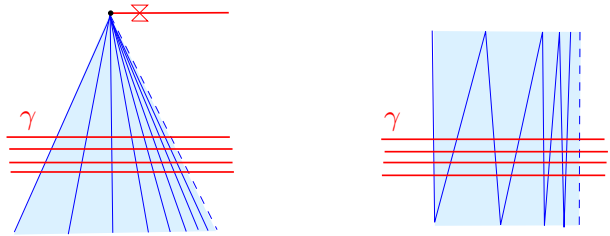,width=0.5\linewidth}
\caption{Parallel  crossings of elementary fan and zig-zag domains.}
\label{pencils}
\end{center}
\end{figure}

\begin{proof}
Applying Proposition~\ref{Lem:FinArcs-->Discs} to  $\mathcal D_\gamma^T$ we can find a finite set of arcs $\alpha_1,\dots,\alpha_k\in T$ such that $\mathcal D_\gamma^T\setminus \{\alpha_1,\dots,\alpha_k \}$ is a  finite union of unpunctured discs $R_i$  with at most two accumulation points. In each of these discs  the triangulation is an almost elementary fan or almost elementary zig-zag.
There are only finitely many pieces $\gamma_j^i$ of the arc $\gamma$ in each $R_i$ since the discs $R_i$ are obtained by finitely many cuts,   each cutting $\gamma$ into finitely many pieces (as any two arcs on  $\mathcal D_\gamma^T$  have finitely many intersections by Proposition~\ref{Prop:ArcsFinInter}).

Let  $\gamma_j^i\in R_i$ be a piece of $\gamma$ crossing finitely many triangles in $R_i$. Then we can cut these triangles out of $R_i$ and consider each of them as an individual elementary domain.
Therefore, we can assume that $R_i$ is an almost elementary fan or zig-zag and every part  $\gamma_j^i\in R_i$ of $\gamma$ crosses the limit arc (or terminates at the accumulation point in the case of a zig-zag around an accumulation point).

Furthermore, as there are finitely many  pieces  $\gamma_j^i$ in $R_i$, there are finitely many endpoints of   $\gamma_j^i$
leaving $R_i$ through the base/bases.
Thus, we can also remove finitely many triangles containing these endpoints, see Fig.~\ref{cuts}. This will cut $R_i$ into finitely many subdomains (one infinite and several finite ones). We will cut the finite domains into finitely many triangles. The remaining infinite domain then is crossed in a parallel way as there is a unique way to cross a fan or a zig-zag not crossing the bases. 

Therefore, $\mathcal D_\gamma^T$   is a finite union of disjoint triangles, infinite fan domains and infinite zig-zag domains, and each infinite domain is crossed by pieces of $\gamma$ in a parallel way.
\end{proof}

\begin{figure}[!h]
\begin{center}
\psfrag{g}{{\color{red} $\gamma$}}
\epsfig{file=./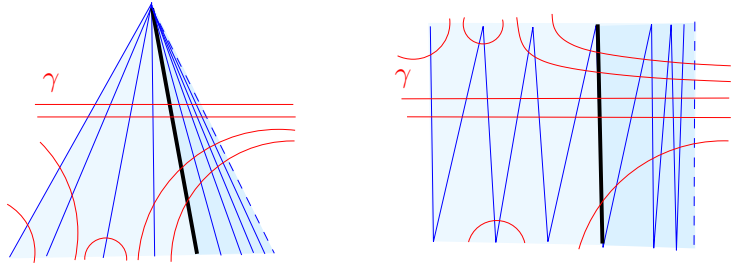,width=0.6\linewidth}
\caption{Cutting fan and zig-zag domains to isolate parallel crossings by $\gamma$.}
\label{cuts}
\end{center}
\end{figure}

\begin{lemma}
\label{Lem:ChangeMutatforGamma}
For any arc $\gamma\in\cals$ and triangulation $T$ of $\cals,$ there exists a finite sequence of infinite mutations $\mun$ of $T$ such that $T'=\mun(T)\in \mathbb T, \gamma\in T'$ and 
$T|_{\cals \backslash \cald^T_{\gamma}} =T'|_{\cals \backslash \cald^{T}_{\gamma}}.$
Moreover, $\mun$ can be chosen so that no elementary mutation inside $\mun$ flips any arc $\alpha\in \cals \backslash \cald^{T}_{\gamma}$.
\end{lemma}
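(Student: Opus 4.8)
The plan is to localise the problem to the domain $\cald^T_\gamma$, straighten $\gamma$ separately on each piece of a suitable decomposition using the fan/zig-zag infinite mutations already exhibited, and then remove the finitely many leftover crossings by an ordinary finite mutation. First I would apply Lemma~\ref{split domain} to write $\cald^T_\gamma$ as a finite disjoint union of almost elementary domains $R_1,\dots,R_k$, separated by finitely many arcs of $T$, where each $R_i$ is either a single triangle or is crossed by $\gamma$ in a parallel family of strands. By Remark~\ref{rem: domain of gamma=in+zig} each infinite $R_i$ is an incoming fan or a zig-zag, never an outgoing fan, so $\gamma$ meets only finitely many arcs in the triangle pieces but infinitely many in the infinite pieces; the latter are exactly the crossings a finite mutation cannot reach, and this parallel normal form is what makes the subsequent flip bookkeeping manageable.

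On each infinite fan $R_i$ I would apply the source-shift infinite mutation of Fig.~\ref{Fig:shiftsource}: writing $s$ for the source, $\alpha=sv_0$ for the boundary side and $sv_1,sv_2,\dots$ for the diagonals, I flip them in the order in which $\gamma$ meets them, replacing $sv_m$ by the diagonal $v_0v_{m+1}$. Using that $\gamma$ crosses $R_i$ in parallel, each flip removes exactly one crossing of $\gamma$; moreover each arc is flipped at most once and is never touched afterwards, so the sequence is admissible in the sense of Definition~\ref{Def:Mutation} and every flip is supported inside $R_i$. After completing by the new limit arc $v_0p_*$, the resulting triangulation of $R_i$ is crossed by $\gamma$ in only finitely many points. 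For a zig-zag $R_i$ I would first convert it to a fan by the infinite mutation of Fig.~\ref{Fig:switch(zigzag-->fan} (cf.\ Lemma~\ref{Rem:Almostelementary-->Elemenatry}) and then apply the source shift, again flipping only arcs inside $R_i$; the two infinite mutations together cut the crossings of $\gamma$ inside $R_i$ down to a finite number.

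Composing these finitely many infinite mutations (at most two per infinite domain), whose supports $R_i$ are pairwise disjoint and hence non-interfering, yields a triangulation $T''$ in which $\gamma$ has only finitely many crossings, all inside $\cald^T_\gamma$: the crossings with the finitely many separating arcs, those inside the triangle pieces, and the finitely many leftovers on each $R_i$; finiteness throughout is guaranteed by Proposition~\ref{Prop:ArcsFinInter}. A final ordinary finite mutation flipping these remaining crossings in the usual order along $\gamma$ produces a triangulation $T'$ containing $\gamma$, and since a finite mutation is a degenerate case of a finite sequence of infinite mutations, the whole composition is a legitimate $\mun$. Because every flip performed lies inside $\cald^T_\gamma$ and the completion steps only ever add limit arcs internal to the domains, no arc of $\cals\setminus\cald^T_\gamma$ is ever flipped, which gives $T|_{\cals\setminus\cald^T_\gamma}=T'|_{\cals\setminus\cald^T_\gamma}$.

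I expect the main obstacle to be the zig-zag case: one must verify that flipping along $\gamma$ there (directly, or through the intermediate fan) genuinely lowers the crossing number at each step and never forces an already settled arc to be flipped again, since this is precisely what secures both admissibility of the infinite mutation and its locality. The parallel-crossing normal form supplied by Lemma~\ref{split domain}, together with the explicit zig-zag-to-fan mutation, is what I would lean on to push this verification through.
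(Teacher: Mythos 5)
Your proposal is correct and follows essentially the same route as the paper's proof: decompose $\cald^T_\gamma$ via Lemma~\ref{split domain} into disjoint almost elementary domains crossed in parallel, clear the infinitely many crossings in each fan by the source shift of Fig.~\ref{Fig:shiftsource} and in each zig-zag by first converting it to fans as in Fig.~\ref{Fig:switch(zigzag-->fan}, then finish with a finite mutation on the resulting finite domain, noting that all flips and added limit arcs stay inside $\cald^T_\gamma$. The only cosmetic difference is that the paper invokes \cite{H} for the final finite mutation and counts the zig-zag case as an almost-elementary-to-elementary step followed by the two-fan conversion, but these are the same ingredients you use.
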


\begin{proof}
By Lemma~\ref{split domain}, the domain  $\mathcal D_\gamma^T$ of $\gamma$ is a finite union of  disjoint almost elementary domains $R_1,...,R_k$ such that for every infinite domain $R_i$ the crossing of $R_i$ by $\gamma$ looks like a parallel pencil. More precisely, among  $R_i,...,R_k$ there are finitely many triangles, finitely many elementary fans and finitely many almost elementary zig-zags. 

Given an elementary fan domain $R_i$, let $\mu^{(1)}_i$ be an infinite mutation shifting the source of the fan as in the left of
Fig.~ \ref{intermediate1} 
(see Fig.~\ref{Fig:shiftsource} for the construction of  $\mu^{(1)}_i$). Denote $\mu_{R_i}:=\mu^{(1)}_i$.

\begin{figure}[!h]
\begin{center}
\psfrag{i}{$\mu_{R_i}$}
\psfrag{j}{$\mu_{R_j}$}
\epsfig{file=./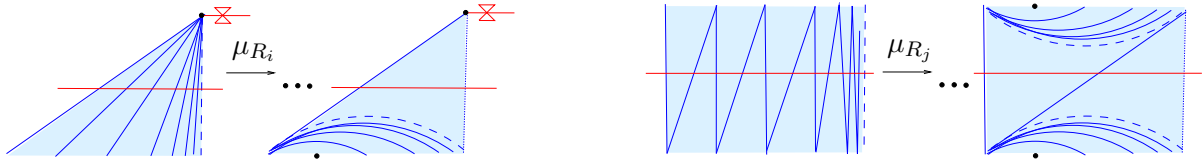,width=0.99\linewidth}
\caption{ $\mu_{R_i}$: resolving infinitely many crossings in a fan and zig-zag domains.}
\label{intermediate1}
\end{center}
\end{figure}
 
Similarly, for every almost elementary zig-zag domain $R_j$, we first apply an infinite mutation  $\mu^{(1)}_j$ to turn it to an elementary zig-zag (with finite polygons attached along its boundary arcs, see Lemma~\ref{Rem:Almostelementary-->Elemenatry}). Then we apply a composition of two infinite mutations $\mu^{(2)}_j$ which turns the zig-zag into two fans as in Fig.~\ref{Fig:switch(zigzag-->fan}. Denote $\mu_{R_j}:=\mu^{(2)}_j \circ \mu^{(1)}_j$, see Fig.~\ref{intermediate1}~(right).

Notice that as the domains  $R_1,...,R_k$ are disjoint, the above mutations $\mu_{R_i}$ do not interact, i.e. they commute. Applying  the composition 
$ \mun=\mu_{R_k} \circ \dots \circ  \mu_{R_1}$  to $T$, we obtain a triangulation $T^*:=\mun(T)$ having finitely many crossings with $\gamma$ inside every $R_i$. This means that $\cald^{T^*}_{\gamma}$ is a finite surface, and in view of \cite{H} there exists a finite mutation $\mu^{(0)}$ such that $\gamma\in\mu^{(0)}(T^*)\in\bT.$ Hence, $\mu^{(0)}\circ\mun$ is a finite sequence of infinite mutations of the required form.

Since we only consider  arcs lying inside the domain  $\cald^{T}_{\gamma}$ of $\gamma$ in the argument of the proof,
 no elementary mutation inside $\mu^{(0)}\circ\mun$ flips any arc $\alpha\in \cals \backslash \cald^{T}_{\gamma}$.
\end{proof}

\begin{remark}
Applying Lemma~\ref{Lem:ChangeMutatforGamma}, we can try to prove Theorem~\ref{Thm:T-->T'} (transitivity of sequences of infinite mutations)
in the following way: 
\begin{itemize}
\item[(1)] label the arcs of $T'$ with positive integers, so that $T'=\{\gamma_i \mid i\in \bN \}$; 
\item[(2)] apply   Lemma~\ref{Lem:ChangeMutatforGamma} repeatedly, to obtain first a triangulation $T_1$ containing $\gamma_1$,
then a triangulation $T_2$ containing $\gamma_1,\gamma_2$; after $n$ steps we get  a finite sequence of infinite mutations transforming $T$ to a triangulation $T_n$ containing $\gamma_1,\dots,\gamma_n$; 
\item[(3)] applying  Lemma~\ref{Lem:ChangeMutatforGamma} infinitely many times we get a triangulation containing all arcs $\gamma_i\in T'$, i.e. we obtain $T'$.
\end{itemize}

The only problem with this algorithm is that in general it is not clear whether the infinite sequence of infinite mutations is admissible (i.e. whether the orbit of every arc converges).

\bigskip

To prove Theorem~\ref{Thm:T-->T'}, we will first show the statement for almost elementary domains in Lemma~\ref{mutating to elementary}.
Both the proof of  Lemma~\ref{mutating to elementary}  and the proof of Theorem~\ref{Thm:T-->T'}
will be based on Lemma~\ref{Lem:ChangeMutatforGamma} and Observation~\ref{not in domain}.
\end{remark}

\begin{observation}
\label{not in domain}
Let $\cals$ be an infinite surface, $T$ be a triangulation of $\cals $, let $\alpha\in T$ and $\gamma\notin T$ be two arcs on $\cals$.
If $\alpha\cap \gamma=\emptyset$ then $\alpha\cap \cald_\gamma^T=\emptyset$.

\end{observation}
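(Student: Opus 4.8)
Let $\cals$ be an infinite surface, $T$ be a triangulation of $\cals$, let $\alpha\in T$ and $\gamma\notin T$ be two arcs on $\cals$. If $\alpha\cap\gamma=\emptyset$ then $\alpha\cap\cald_\gamma^T=\emptyset$.

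=== PROOF PROPOSAL ===

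The plan is to argue directly from the definition of the domain $\cald_\gamma^T$ given in Definition~\ref{Def: domain of gamma}. Recall that $\cald_\gamma^T$ is the open set assembled from the \emph{open} triangles of $T$ that $\gamma$ passes through, together with the interiors of the arcs $\gamma_j\in T$ that $\gamma$ actually crosses; it does \emph{not} include the boundary arcs bounding these triangles that $\gamma$ does not cross. So the natural strategy is: take an arc $\alpha\in T$ with $\alpha\cap\gamma=\emptyset$, and show that $\alpha$ can meet neither the interior pieces $\Delta^{\mathrm o}_i$ nor the crossed arcs $\gamma_j$ making up $\cald_\gamma^T$.

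First I would dispose of the crossed arcs. Each $\gamma_j$ in the union is, by construction, an arc of $T$ that $\gamma$ genuinely intersects, so $\gamma_j\ne\alpha$ (since $\alpha$ is disjoint from $\gamma$). Because $\alpha$ and $\gamma_j$ are both arcs of the triangulation $T$, they are compatible, hence disjoint as isotopy classes; thus $\alpha$ meets no $\gamma_j$. Second I would handle the open triangles: suppose for contradiction that $\alpha$ enters the interior $\Delta^{\mathrm o}_i$ of one of the triangles $\Delta_i$ crossed by $\gamma$. Since $\alpha$ is an arc of $T$ and $\Delta_i$ is a triangle of $T$, the only way $\alpha$ can pass through the open triangle $\Delta_i^{\mathrm o}$ is if $\alpha$ is one of the three sides of $\Delta_i$ — an arc of $T$ cannot cut across the interior of a triangle of the same triangulation. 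But each of the three sides of $\Delta_i$ is either a crossed arc $\gamma_j$ (already excluded) or a side of $\Delta_i$ that $\gamma$ does not cross; and a side of $\Delta_i$ lying on the boundary of $\Delta_i$ has only its relative interior inside $\Delta_i^{\mathrm o}$ trivially, i.e. its interior does not actually lie in the open triangle. Here is where I must be a little careful: the key point is that $\Delta_i$ is crossed by $\gamma$, so $\gamma$ passes through $\Delta_i^{\mathrm o}$ and exits through two of its sides, at least one of which is a crossed arc $\gamma_j$. If $\alpha$ coincided with a side of $\Delta_i$ through which $\gamma$ passes, then $\alpha$ would cross $\gamma$, contradicting $\alpha\cap\gamma=\emptyset$; and $\alpha$ cannot be a side that $\gamma$ does not cross while simultaneously having interior points strictly inside $\Delta^{\mathrm o}_i$.

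The cleanest way to close the argument is to contrapose the exit structure of $\gamma$ through $\Delta_i$: whenever $\gamma$ crosses the open triangle $\Delta_i^{\mathrm o}$, it does so along a segment whose two endpoints lie on sides of $\Delta_i$, and any arc $\alpha$ of $T$ meeting $\Delta_i^{\mathrm o}$ must be one of the three sides bounding $\Delta_i$ (since $T$ has no crossings, $\alpha$ cannot traverse the open triangle). For $\alpha$ to have points in the \emph{open} set $\cald_\gamma^T$, it would have to be a side of $\Delta_i$ across which $\gamma$ passes — but such a side is one of the $\gamma_j$, forcing $\alpha=\gamma_j$ and hence $\alpha\cap\gamma\ne\emptyset$, a contradiction. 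I expect the main obstacle to be purely bookkeeping: making precise that $\cald_\gamma^T$, being a union of open triangles glued only along the arcs that $\gamma$ crosses, does not secretly contain the uncrossed sides of the $\Delta_i$, so that an arc $\alpha$ disjoint from $\gamma$ has no access to it. Once the open-set structure is spelled out as in Definition~\ref{Def: domain of gamma}, the compatibility of arcs in $T$ finishes the proof in a line.
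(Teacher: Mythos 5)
Your argument is correct and is essentially the paper's own proof, which simply observes that by Definition~\ref{Def: domain of gamma} an arc of $T$ meets $\cald_\gamma^T$ if and only if it is one of the arcs crossed by $\gamma$; you have merely unpacked why the uncrossed arcs of $T$ can touch neither the crossed arcs $\gamma_j$ (compatibility within $T$) nor the open triangle interiors (an arc of $T$ never enters the interior of a triangle of $T$). No gap — the extra detail is fine but not needed beyond the definition.
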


\begin{proof}
By Definition~\ref{Def: domain of gamma} of  $\cald_\gamma^T$, an arc $\alpha$ have common points with  $\cald_\gamma^T$ if and only if
$\alpha\cap \gamma\ne\emptyset$. 

\end{proof}

\begin{lemma}
\label{mutating to elementary}
Let $(\cald,T')$  be an almost elementary domain, and let $T$ be another triangulation of $\cald$. Then there exists a   mutation sequence $\mubul$ such that $\mubul(T)=T'$,  where $\mubul$ is a finite mutation,  a finite sequence of infinite mutations or an infinite sequence of infinite mutations.

\end{lemma}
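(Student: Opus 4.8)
The plan is to realise $T'$ arc by arc, building it up from the base(s) of the almost elementary domain towards its accumulation point(s). First I would fix an enumeration $\gamma_1,\gamma_2,\dots$ of the (non-boundary) arcs of $T'$ with the property that for every $n$ the arcs $\gamma_1,\dots,\gamma_n$ cut $\cald$ into a finite triangulated piece $F_n$, on which the triangulation already agrees with $T'$, together with a single remaining disc $D_n$, arranged so that the discs shrink: $D_1\supseteq D_2\supseteq\cdots$ with the $D_n$ collapsing onto the limit arc of $\cald$ (or onto its accumulation point). For a fan this is just the enumeration $\gamma_n=(s,p_n)$ of the arcs through the source $s$, ordered by how close $p_n$ is to the accumulation point; for a zig-zag it is the enumeration of the crossing arcs $\delta_n$ ordered towards the limit arc.

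Next I would set $T_0=T$ and, inductively, apply Lemma~\ref{Lem:ChangeMutatforGamma} to $\gamma_n$ to obtain a finite sequence of infinite mutations $\Phi_n$ with $T_n:=\Phi_n(T_{n-1})\ni\gamma_n$ and such that no elementary mutation occurring in $\Phi_n$ flips an arc outside the domain $\cald^{T_{n-1}}_{\gamma_n}$. Two facts then drive the induction. On the one hand, since $\gamma_1,\dots,\gamma_{n-1}$ all lie in $T'$ they are disjoint from $\gamma_n$, so Observation~\ref{not in domain} shows they meet neither $\cald^{T_{n-1}}_{\gamma_n}$ nor, therefore, get flipped by $\Phi_n$; hence $\gamma_1,\dots,\gamma_n\in T_n$. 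On the other hand, $\gamma_n$ lies entirely on the far side of $\gamma_{n-1}$, so $\cald^{T_{n-1}}_{\gamma_n}\subseteq D_{n-1}$ and all of $\Phi_n$ takes place inside $D_{n-1}$; consequently $T_n$ agrees with $T'$ on $F_n$ and can differ from it only inside $D_n$.

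The composition $\Phi=\cdots\circ\Phi_2\circ\Phi_1$ is then an infinite composition of (finite sequences of) infinite mutations, and the main point to verify is that it is admissible in the sense of Definition~\ref{Def:Mutation}, i.e.\ that the orbit of every arc converges. Here the nesting $D_1\supseteq D_2\supseteq\cdots$ does the work: an arc that ever coincides with some $\gamma_k\in T'$ is, by the previous paragraph, never flipped again, so its orbit is eventually constant; any other arc, once it is first flipped (which happens when it is swept into some $\cald^{T_{n-1}}_{\gamma_n}\subseteq D_{n-1}$), has all of its subsequent images confined to the nested shrinking discs $D_{n-1}\supseteq D_n\supseteq\cdots$, and therefore either stabilises at an arc of $T'$ or converges to the limit arc of $\cald$ (or to its accumulation point) in the sense of Definition~\ref{Def:LimitArc}. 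Thus every orbit converges, $\Phi$ is admissible, and after completing by limit arcs we obtain $\Phi(T)=T'$; depending on whether the construction terminates after finitely many steps, $\Phi$ is a finite mutation, a finite sequence of infinite mutations, or an infinite sequence of infinite mutations.

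I expect the delicate step to be exactly this admissibility argument: the naive algorithm sketched in the remark after Lemma~\ref{Lem:ChangeMutatforGamma} can fail to be admissible for a general target triangulation, and what rescues it here is the special geometry of almost elementary domains, namely that the arcs of $T'$ admit a linear order whose associated domains retreat monotonically onto a single limit arc or accumulation point. I would also take a little care with the almost elementary (rather than strictly elementary) case, where each triangle of the underlying fan or zig-zag carries a finite sub-triangulation; folding these finitely many extra arcs into the pieces $F_n$, using Lemma~\ref{Rem:Almostelementary-->Elemenatry} locally, keeps the $F_n$ finite and leaves the convergence argument intact.
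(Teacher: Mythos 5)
Your overall strategy is the same as the paper's: enumerate the arcs of $T'$ in a convergent order, apply Lemma~\ref{Lem:ChangeMutatforGamma} inductively, use Observation~\ref{not in domain} to show that arcs of $T'$ already obtained are never flipped again, and then verify admissibility. The setup through ``$\gamma_1,\dots,\gamma_n\in T_n$ and all of $\Phi_{n+1}$ takes place inside $D_n$'' is sound. The gap is in the admissibility step, precisely where you flag the delicacy: the inference ``the tail of the orbit is confined to the nested shrinking discs $D_{n-1}\supseteq D_n\supseteq\cdots$, therefore it stabilises or converges'' does not hold. Convergence in the sense of Definition~\ref{Def:LimitArc} requires the sequence to settle on a single limit (the endpoints converge \emph{and} the arcs are eventually almost isotopic to one fixed $\gamma_*$), whereas confinement to shrinking neighbourhoods of the limit arc is compatible with oscillation: in an incoming fan the orbit could alternate between arcs isotopic to the limit arc (one endpoint at the source) and arcs contractible towards the accumulation point, and in a zig-zag converging to a limit arc it could have one subsequence converging to the limit arc and another converging to one of the two accumulation points. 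The paper states this failure mode explicitly and devotes its Cases 2.a and 2.b to excluding it.

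The fix cannot be extracted from Lemma~\ref{Lem:ChangeMutatforGamma} as a black box, because that lemma only guarantees \emph{some} finite sequence of infinite mutations clearing the crossings with $\gamma_n$; it says nothing about where the images of the other arcs land inside $D_{n-1}$. In the zig-zag-to-limit-arc case the paper has to \emph{choose} the elementary mutations inside each step in a specific order (first shift fan sources to remove the infinitely many crossings, then resolve the remaining crossings in the order they appear along the oriented arc $p_{i+1}p_{i+2}$) and then track explicitly how the offending arc $\alpha=p_iq$ transforms, concluding that its orbit either stabilises or, if a particular configuration recurs forever, converges to the limit arc. Your proof needs an argument of this kind --- either a case analysis on the four types of infinite almost elementary domains with a concrete description of the orbits, or some other mechanism ruling out oscillation --- before the composition $\Phi$ can be declared admissible. (Your handling of the almost elementary versus elementary distinction via a suitable enumeration is fine and matches the paper's closing remark.)
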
 

\begin{proof}
First, suppose that $\cald$ is an elementary domain. 
Label the arcs of $T'$ by $\gamma_i$, $i\in \bN$ so that the sequence $\{\gamma_i\}$ converges as $i\to \infty$ 
(in other words, label the arcs from top to bottom in Fig.~\ref{Fig:local}(c)-(e), and from bottom to top in  Fig.~\ref{Fig:local}(b)). By   Lemma~\ref{Lem:ChangeMutatforGamma} we can find a finite sequence of infinite mutations  $\mu_1^{(n)}$ such that 
$\gamma_1\in T_1:=\mu_1^{(n)}(T)$, then we find a sequence  $\mu_2^{(n)}$ such that 
$\gamma_2\in T_2:=\mu_2^{(n)}\circ\mu_1^{(n)}(T)$. As $\gamma_2$ does not intersect $\gamma_1$ (lying in the same triangulation $T'$), we see from Observation~\ref{not in domain} that $\gamma_1$ is disjoint from $\cald_{\gamma_2}^{T_1}$, which implies that 
$\gamma_1$ is not touched by any elementary mutation inside $\mu_2^{(n)}$. In particular, we have $\gamma_1,\gamma_2\in T_2$.
Repeatedly applying  Lemma~\ref{Lem:ChangeMutatforGamma}  for $\gamma_i$, we obtain a finite sequence of infinite mutations
$\mu_i^{(n)}\circ\dots\circ\mu_2^{(n)}\circ\mu_1^{(n)}$ which transforms $T$ to $T_i$ such that $\gamma_1,\dots,\gamma_i\in T_i$. 
Doing this infinitely many times, we obtain a set of arcs on the surface containing all arcs $\gamma_i$ of $T'$, hence we obtain the triangulation $T'$. 

To finish the proof, we are left to show that the composition  $\mu=\ldots\circ\mu_2^{(n)}\circ\mu_1^{(n)}$ is an admissible sequence of infinite mutations, i.e. that the orbit of every arc of $T$ or $T_i$  either stabilises or converges.
We will first show this for an arc $\alpha$  crossing finitely many arcs of $T'$ and then we will extend the proof for all other arcs.

\medskip
\noindent
{\bf 1: arcs with finitely many crossings.} 
Let $\alpha\in T$ be an arc. Notice that if $\alpha\cap \gamma_i=\emptyset$ then we obtain that
$\mu_j^{(n)}\circ\dots\circ\mu_2^{(n)}\circ\mu_1^{(n)}(\alpha)\cap \gamma_i=\emptyset$
(this follows by induction from  Observation~\ref{not in domain} and the fact that
 $\mu_i^{(n)}$ acts in the domain $\cald_{\gamma_i}^{T_{i-1}}$).
Hence, the orbit of every arc $\alpha\in T$  crossing finitely many arcs of $T'$ stabilises.

\medskip
\noindent
{\bf 2: arcs with infinitely many crossings.} 
Notice that the arcs $\gamma_1,\dots,\gamma_i$ are not touched by any of the mutations $\mu_j^{(n)}$ with $j>i$.
This implies that as $j$ grows, all elementary mutations contained in  $\mu_j^{(n)}$  are performed inside smaller and smaller regions, which shrink in the limit to the accumulation point (when $\cald$ is as in Fig.~\ref{Fig:local} (c) or (d)) or to the limit arc (in cases when $\cald$ is as in Fig.~\ref{Fig:local} (b) or (e)). In the former case this immediately implies that the orbit of every arc either stabilises or converges.
The latter case (i.e. the cases of an incoming fan and of a zig-zag converging to a limit arc) require more work as the orbit of an arc may have a subsequence converging to the limit arc and another subsequence converging to one of the accumulation points. 

{\bf 2.a:}
 {\it $(\cald,T')$ is an  incoming fan.} In this case an arc $\alpha\in T$ crossing infinitely many arcs of the incoming fan $T'$ have one of its endpoints at the accumulation point and another at some marked point in the base of the fan.
If after several mutations the arc $\alpha$ transforms to an arc $\alpha'$ 
connecting the source of the fan to the base, then 
none of the further mutations  will change $\alpha'$ anymore (since $\alpha'$ does not cross any of $\gamma_i$ and so does not lie in any of domains $\cald_{\gamma_{i+1}}^{T_i}$), hence the orbit of $\alpha$ will stabilise.
If every arc in the orbit of $\alpha$ has one end at the accumulation point and another end at the base,
then the orbit of $\alpha$ converge to the accumulation point.
  
{\bf 2.b:}
{\it $(\cald,T')$ is a zig-zag converging to a limit arc.} In this case, an arc  $\alpha\in T$ crossing infinitely many arcs of $T'$ connects a marked point $p_i$ in one of the bases of the zig-zag to an accumulation point $q$ lying either on the other  or on the same base of the zig-zag (see Fig.~\ref{convergence} (a) and (b)). 
 We may assume that the mutations  $\mu_1^{(n)},\dots,\mu_i^{(n)}$ are already applied, so that all arcs $\gamma_j=p_jp_{j+1}$ for $j\le i$ are already in the triangulation $T_i$. Also, we assume $\alpha=p_iq\in T_i$.
We need to  construct the mutation $\mu_{i+1}^{(n)}$  clearing the arc $\gamma_{i+1}=p_{i+1}p_{i+2}$ from the intersections. 
 In both cases of 
Fig.~\ref{convergence} (a) and (b), all almost elementary domains of $\gamma_{i+1}$ in ${T_i}$ are fans (as the the arc $\alpha=p_iq$ separates the left limit from the right one in $T_i$, so  the zig-zags are impossible). We will first remove infinite number of intersections (if they exist) by shifting the source of fans as in Fig.~\ref{Fig:shiftsource}, then we remove all the other intersections in order of their appearance on the (oriented) path $p_{i+1}p_{i+2}$. Then the arc $\alpha=p_iq$ will be flipped either to an arc having no endpoints at an accumulation point (which would imply its orbit will stabilise as shown in  Case~1) or to an arc connecting $p_{i+1}$ with the accumulation point on the other base. The latter  may only occur in the case of Fig.~\ref{convergence} (a), and if this configuration repeats infinitely many times, then the orbit of $\alpha$ converges to the limit arc.

\begin{figure}[!h]
\begin{center}
\psfrag{a}{\small (a)}
\psfrag{b}{\small (b)}
\psfrag{al}{\color{red} \scriptsize $\alpha$}
\psfrag{A}{\scriptsize $q$}
\psfrag{1}{\scriptsize $p_1$}
\psfrag{2}{\scriptsize $p_2$}
\psfrag{3}{\scriptsize $p_3$}
\psfrag{i-1}{\scriptsize $p_{i-1}$}
\psfrag{i}{\scriptsize $p_i$}
\psfrag{i+1}{\scriptsize $p_{i+1}$}
\psfrag{i+2}{\scriptsize $p_{i+2}$}
\epsfig{file=./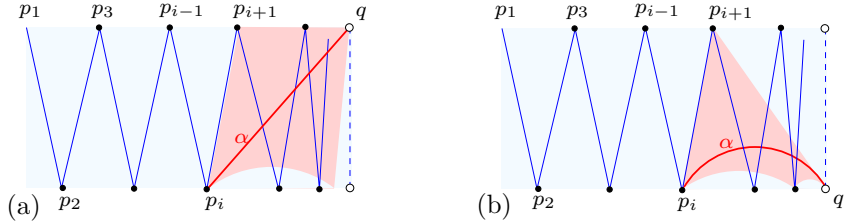,width=0.69\linewidth}
\caption{Arcs crossing infinitely many arcs of a zig-zag: if an arc $\alpha=p_iq$ is the first arc crossed by the (oriented) arc $p_{i+1}p_{i+2}$ then $\alpha$ flips to an arc crossing only finitely many arcs of the zig-zag.}
\label{convergence}
\end{center}
\end{figure}

So, in each of the cases the orbits of all arcs either stabilise or converge, hence the infinite mutations $\mu_i^{(n)}$ compose an admissible infinite sequence of infinite mutations.

Finally, if $\cald$ is not an elementary domain but an {\it almost} elementary one, then exactly the same reasoning works
for appropriate enumeration of arcs of $T'$ (i.e. for the enumeration where the arcs forming the underlying elementary domain triangulation have increasing numbers and additional arcs lying ``between'' $\gamma_i$  and $\gamma_j$  have the numbers between $i$ and $j$).

\end{proof}

\begin{remark}
We expect that  Lemma~\ref{mutating to elementary} can be derived from the results of~\cite{BG}, but it is not a straightforward task due to differences in the definitions, see Remark~\ref{compareBG}.

\end{remark}

\begin{corollary}
\label{Cor: mutating to elementary}
Let $\cald$ with triangulation $T'$ be an almost elementary domain, and let $T$ be another triangulation of $\cald$.
If every arc of $T'$ intersects only finitely many arcs of $T$,  then there exists an infinite mutation $\muone$ such that $\muone(T)=T'$.

\end{corollary}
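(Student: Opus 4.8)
The plan is to re-run the proof of Lemma~\ref{mutating to elementary}, but to use the finiteness hypothesis in order to make each clearing step a \emph{finite} mutation, so that the whole process assembles into a single infinite mutation rather than an infinite sequence of infinite mutations. As there, I would enumerate the arcs of $T'$ as $\gamma_1,\gamma_2,\dots$ with $\{\gamma_i\}$ converging as $i\to\infty$ (to the limit arc, to the accumulation point, or to a vanishing limit arc), inserting the extra arcs of an almost elementary domain between the arcs of the underlying elementary triangulation exactly as at the end of that proof. I would then introduce $\gamma_1,\gamma_2,\dots$ one at a time.

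First I would check, by induction on $i$, that each clearing step is finite. Suppose $T_{i-1}$ is obtained from $T$ by a finite mutation; then $T_{i-1}$ agrees with $T$ outside a finite set of arcs, so since $\gamma_i$ crosses only finitely many arcs of $T$ by hypothesis, it also crosses only finitely many arcs of $T_{i-1}$, whence $\cald^{T_{i-1}}_{\gamma_i}$ is a finite surface. By the finite-surface transitivity result \cite{H} (exactly as invoked inside Lemma~\ref{Lem:ChangeMutatforGamma}) there is a finite mutation $\muze_i$ with $\gamma_i\in T_i:=\muze_i(T_{i-1})$ that flips only arcs of $\cald^{T_{i-1}}_{\gamma_i}$, i.e.\ only arcs crossing $\gamma_i$. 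By Observation~\ref{not in domain} the previously introduced arcs $\gamma_1,\dots,\gamma_{i-1}$ lie outside $\cald^{T_{i-1}}_{\gamma_i}$ and are left untouched, so $\gamma_1,\dots,\gamma_i\in T_i$ and the induction closes. Composing gives a sequence of elementary flips $\mu=\dots\circ\muze_2\circ\muze_1$ that introduces every arc of $T'$; completing the resulting collection by its limit arcs, if any (the limit of the $\gamma_i$), should recover $T'$.

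The main obstacle is to verify that $\mu$ is admissible as a \emph{single} infinite mutation, i.e.\ that the orbit of every arc stabilises after finitely many flips rather than merely converging. For an arc meeting only finitely many $\gamma_i$, iterating Observation~\ref{not in domain} as in Case~1 of Lemma~\ref{mutating to elementary} shows that the orbit stabilises. The hard case is an arc $\alpha$ crossing infinitely many $\gamma_i$: in Lemma~\ref{mutating to elementary} such an $\alpha$ could give an orbit converging, without stabilising, to the accumulation point (Case~2.a) or to the limit arc (Case~2.b), which is precisely what a single infinite mutation forbids. I expect the finiteness hypothesis to exclude this: since every $\gamma_i$ meets only finitely many arcs of $T$, clearing $\gamma_i$ should flip $\alpha$ \emph{off} the accumulation point (respectively the limit arc) after finitely many steps --- the model being the transformation of an outgoing fan into an incoming fan in Fig.~\ref{Fig:switch(outgoing-->incoming)}, where each arc is flipped exactly once and no orbit retains an endpoint at the accumulation point --- rather than pushing $\alpha$ along it forever. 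Making this dichotomy precise, and showing that the hypothesis always lands us in the stabilising alternative, is the step I expect to require the most care; once it is established, $\muone:=\mu$ is the desired single infinite mutation with $\muone(T)=T'$.
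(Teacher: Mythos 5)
Your overall route is exactly the paper's: re-run the proof of Lemma~\ref{mutating to elementary}, observe that the hypothesis lets each clearing step be a finite mutation, and then argue that the resulting composition of elementary flips is an admissible infinite mutation. Your inductive justification that each step is finite (each $T_{i-1}$ differs from $T$ in finitely many arcs, so by Proposition~\ref{Prop:ArcsFinInter} the arc $\gamma_i$ still crosses only finitely many arcs of $T_{i-1}$, and Observation~\ref{not in domain} protects the arcs already installed) is correct and in fact more explicit than what the paper writes.

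However, your write-up stops short of a proof at precisely the point you flag: the stabilisation of the orbit of an arc $\alpha$ of $T$ that crosses infinitely many arcs of $T'$. This is a genuine gap, not a formality, because the hypothesis of the corollary is asymmetric --- it bounds $|\gamma\cap T|$ for $\gamma\in T'$ but says nothing about $|\alpha\cap T'|$ for $\alpha\in T$ --- so such arcs $\alpha$ do exist (e.g.\ $T$ an outgoing fan at $p_*$ and $T'$ an incoming fan, as in Fig.~\ref{Fig:switch(outgoing-->incoming)}), and these are exactly the arcs whose orbits, in Cases~2.a and~2.b of the proof of Lemma~\ref{mutating to elementary}, are allowed to converge to the accumulation point or to the limit arc \emph{without} stabilising. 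A single infinite mutation forbids that, so "I expect the finiteness hypothesis to exclude this" must be replaced by an actual argument. The paper closes this by reusing the Case~2 analysis of Lemma~\ref{mutating to elementary}: in an almost elementary fan or zig-zag an arc crosses infinitely many arcs of $T'$ only if it has an endpoint at the accumulation point (or runs to the far base, as in Fig.~\ref{convergence}), and the clearing of the first $\gamma_j$ met by such an arc flips it, within a finite polygon, onto an arc which either emanates from the source of the fan or has no endpoint at the accumulation point --- after which Case~1 applies and the orbit stabilises. You should either reproduce that dichotomy and check that the finite clearing steps (performed, as in Case~2.b, by resolving crossings in order along $\gamma_j$) always land in the stabilising alternative, or cite the relevant part of the lemma's proof explicitly; without one of these, the admissibility of $\muone$ as a single infinite mutation is not established.
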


\begin{proof}
We construct the sequence of mutations as in the proof of Lemma~\ref{mutating to elementary}, but notice that when each arc of $T$ intersects only finitely many arcs of $T'$, one can substitute each of  $\mu_i^{(n)}$ by a finite mutation.
Moreover, the proof of Lemma~\ref{mutating to elementary} also shows that the orbit of every arc  stabilises. 

\end{proof}

We are now ready to prove  the main theorem of this section.

\begin{proof}[Proof of Theorem~\ref{Thm:T-->T'}]
We will start by choosing arcs $\gamma_1, \ldots, \gamma_s$ in $T',$ $s\in\bN,$ as in Proposition~\ref{Lem:FinArcs-->Discs}, which split the surface $\cals$ into finitely many almost elementary domains $\cald_1,\ldots,\cald_{k}$ for some $k\in\bN$. 
Then we  apply Lemma~\ref{Lem:ChangeMutatforGamma} repeatedly for each of  $\gamma_1\dots,\gamma_s$ in the same way as we did in the proof of Lemma~\ref{mutating to elementary}. We obtain a finite sequence of infinite mutations $\mun$ such that $T^*:=\mun(T)$ is a triangulation containing the arcs  $\gamma_1, \ldots, \gamma_s$.

As  $\cald_1,\ldots,\cald_{k}$  are almost elementary domains of $T'$,  Lemma~\ref{mutating to elementary} implies that there exist finite or infinite sequences of infinite mutations $\mu_i^{(n)}$ transforming $T^*|_{\cald_i}$ to $T'|_{\cald_i}$. All mutations in these sequences only flip the arcs in the corresponding almost elementary domains, so the infinite mutations contained in   $\mu_i^{(n)}$ and  $\mu_j^{(n)}$ commute for $i\ne j$. Which means that we can apply the first mutations from each of  $\mu_1^{(n)},\dots \mu_k^{(n)}$, then the second mutations from each of  $\mu_1^{(n)},\dots \mu_k^{(n)}$, then the third ones and so on, forming an infinite sequence of infinite mutations and obtaining all arcs of $T'$ in the limit (and this is an admissible  sequence of infinite mutations since  $\mu_i^{(n)}$ is admissible sequence for each $i$).  

\end{proof}

\subsection{Infinite sequences are unavoidable}
In the main result of this section, Theorem~\ref{infinite sequences}, we will show that for every infinite surface there are triangulations $T$ and $T'$ such that it is not possible to transform $T$ to $T'$ using finitely many infinite mutations, i.e. the use of infinite sequences of infinite mutations is necessary  to ensure transitivity of mutations on triangulations of a given surface.

\begin{notation}[Intersection number]
Denote by 
\begin{itemize}
\item[-]
$|\gamma\cap \alpha|$ the minimum number of intersection points of representatives of the arcs $\gamma$ and $\alpha$ on $\cals$;

\item[-]
$|\gamma\cap T|=\sum_{\tau\in T}|\gamma\cap \tau|$ the sum of intersection numbers $|\gamma\cap \tau|$ for all $\tau\in T$ (this may be infinite);
\item[-]
$|T'\cap T|=\sum_{\gamma\in T'}|\gamma\cap T|$ the sum of all intersection numbers of all  arcs  in the triangulation $T$ with the arcs of the triangulation $T'$ (this may also be infinite).
\end{itemize}

\end{notation}

\begin{remark}
\begin{itemize}
\item[(a)] Clearly, $|T'\cap T|=|T\cap T'|$.
\item[(b)] In a punctured surface, the intersection numbers for the tagged arcs are computed as defined in~\cite[Definition~8.4]{FST}.
In particular, for  arcs $\gamma\ne \alpha$   on $\cals$ we have $|\gamma\cap \alpha|<\infty$; also $\gamma\cap \alpha=0$ if and only if $\gamma$ is compatible with $\alpha$.
\end{itemize}
\end{remark}

\begin{definition}[Bad arcs for a given triangulation] 
For an arc $\gamma\in\cals$ and a triangulation $T$ of $\cals,$ we say $\gamma$ is a \emph{bad arc for $T$} if $|\gamma\cap T|=\infty.$ We denote by $\bad_T T'$ the set of all arcs of $T'$ which are bad for the triangulation $T$.
We also denote by $|\bad_T T'|$ the cardinality of this set (which may be infinite).

\end{definition}

\begin{definition} Let $p$ be a left (resp. right) accumulation point of boundary marked points of a surface $\cals$ and $T$ be a triangulation of $\cals$. A $\overrightarrow p$-\emph{domain} (resp. $\overleftarrow p$-\emph{domain}) \emph{of} $T$ is an almost elementary domain of $T$ containing infinitely many marked points accumulating to $p$ from the left (resp. right). We will denote a $\overrightarrow p$-domain (resp. $\overleftarrow p$-domain) of $T$ by $\cald^T_{\overrightarrow p}$ or simply by $\cald_{\overrightarrow p}$ (resp. $\cald^T_{\overleftarrow p}$ or $\cald_{\overleftarrow p}$). When there is no need to specify which side the accumulation point $p$ is approached from, we will use the simplified notation $\cald_{\overline{p}}$.
\end{definition}

\begin{remark} Given triangulations $T$ and $T'$ of $\cals$ and an accumulation point $p$, by $\cald_{\overline{p}}$ and $\cald'_{\overline{p}}$ we will mean $\overline{p}$-domains of $T$ and $T'$ where $\overline{p}$ stands either for $\overrightarrow{p}$ or $\overleftarrow{p}$ in both $\cald_{\overline{p}}$ and $\cald_{\overline{p}}'$ simultaneously.
\end{remark}

\begin{lemma}
\label{in-out-zig}
Let $T$ and $T'$ be triangulations of $\cals$ and let $p$ be an accumulation point. Let $\cald_{\overline{p}}$ and $\cald'_{\overline{p}}$ be $\overline{p}$-domains of $T$ and $T'$, respectively.

If $\cald_{\overline{p}}$ is an almost elementary incoming fan or an almost elementary zig-zag and $\cald'_{\overline{p}}$ is an almost elementary outgoing fan, then $|Bad_{T}T'|=\infty$. 

\end{lemma}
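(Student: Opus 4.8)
The plan is to produce infinitely many arcs of $T'$ that are bad for $T$, and the natural candidates are the arcs of the outgoing fan $\cald'_{\overline p}$ itself. Recall that in an outgoing fan (Definition~\ref{Def:InfFanZigZag} and the remarks following it) the source coincides with the accumulation point, so $\cald'_{\overline p}$ consists of infinitely many pairwise distinct arcs $\gamma'_1,\gamma'_2,\dots\in T'$, each of the form $\gamma'_i=pq'_i$, all emanating from $p$, whose second endpoints $q'_i$ are boundary marked points accumulating at $p$ from the $\overline p$ side. I will show that each individual $\gamma'_i$ already crosses infinitely many arcs of $T$; then $|\gamma'_i\cap T|=\infty$, so $\gamma'_i\in\bad_T T'$ for every $i$, and hence $|\bad_T T'|=\infty$.

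Fix one such arc $\gamma'_i=pq'_i$. Since the arcs of $\cald_{\overline p}$ have endpoints at the marked points accumulating at $p$ from the $\overline p$ side, all but finitely many of these marked points lie strictly between $q'_i$ and $p$ along the boundary; write $m$ for a generic such point and let $\beta$ be the boundary segment from $q'_i$ to $p$ containing them. The crucial structural feature of $\cald_{\overline p}$ is that, being an (almost) elementary incoming fan or zig-zag, it has no self-contained triangulated pocket accumulating at $p$: in an incoming fan each such $m$ is joined by an arc of $T$ to the source $s$, and in a zig-zag each such $m$ is joined by an arc of $T$ reaching the opposite base (equivalently, the opposite accumulation point). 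In either case this arc, call it $\alpha_m\in T$, has one endpoint at $m$ (strictly inside the interval bounded by $q'_i$ and $p$) and its other endpoint outside that interval.

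Consequently $\alpha_m$ cannot be confined to the disc bounded by $\gamma'_i\cup\beta$: since it cannot cross the boundary segment $\beta$, it must exit through $\gamma'_i$, so $\alpha_m$ crosses $\gamma'_i$. Concretely, realising the arcs as non-crossing chords in a disc with all endpoints on the boundary circle, two chords meet precisely when their endpoints interleave, and the pair $\{q'_i,p\}$ does interleave with $\{m,m'\}$ whenever $m$ lies between $q'_i$ and $p$ and $m'$ does not. Distinct points $m$ yield arcs $\alpha_m$ that coincide for at most two values of $m$, so we obtain infinitely many distinct arcs of $T$ crossing $\gamma'_i$; by Proposition~\ref{Prop:ArcsFinInter} each contributes a positive finite summand to $|\gamma'_i\cap T|$, whence $|\gamma'_i\cap T|=\infty$. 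As this holds for every $i$, the set $\bad_T T'$ is infinite.

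The main obstacle I expect is checking, uniformly across the three possible shapes of $\cald_{\overline p}$ (an incoming fan, a zig-zag around $p$, and a zig-zag converging to a limit arc), that the $T$-arcs incident to the marked points crowding at $p$ genuinely escape the interval between $q'_i$ and $p$ and therefore cross $\gamma'_i$; this is a short case analysis on the combinatorics of elementary domains, most cleanly settled by the interleaving criterion in the half-plane model. Finally, the reduction from \emph{almost} elementary to elementary is harmless, as passing between the two only alters finitely many marked points and arcs and so cannot affect the asserted infiniteness.
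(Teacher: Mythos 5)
Your argument is correct and matches the paper's one-sentence proof, which simply asserts that every arc of $\cald'_{\overline{p}}$ incident to $p$ crosses infinitely many arcs of $T$; you supply the interleaving justification that the paper omits. One small slip in your closing remark: an almost elementary domain may differ from the underlying elementary one by \emph{infinitely} many marked points and arcs (see Definition~\ref{Def:UnderlyingTriangulation}), not finitely many --- but the reduction is still harmless, because the arcs of the underlying elementary fan or zig-zag survive as arcs of $T$ and already provide the infinitely many crossings you need.
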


\begin{proof}
Every arc of $\mathcal D'_{\overline{p}}$ incident to $p$ crosses infinitely many arcs of $T$, so there are infinitely many bad arcs in $T'$ with respect to $T$.

\end{proof}

\begin{proposition} 
\label{prop:|bad|=>more than one}
For two triangulations $T$ and $T'$ of $\cals,$ if $|\bad_T T'|=\infty$ then $\muone(T)  \neq T'$ for any  infinite mutation $\muone$.

\end{proposition}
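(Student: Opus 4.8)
The plan is to argue by contradiction: suppose some infinite mutation $\muone$ satisfies $\muone(T)=T'$, and show that this would force $|\bad_T T'|<\infty$. Write the admissible elementary sequence underlying $\muone$ as $\ldots\circ\mu_2\circ\mu_1$ and set $T_i=\mu_i\circ\cdots\circ\mu_1(T)$. By Definition~\ref{Def:Mutation}, the arcs of $T'$ fall into two classes: the \emph{stabilised} arcs, namely for each $\gamma\in T$ its eventual value $\beta_\gamma=\mu_{n(\gamma)}\circ\cdots\circ\mu_1(\gamma)$ (which exists by admissibility and lies in $T_{n(\gamma)}$), together with the arcs added during the completion step, all of which are limit arcs.

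The heart of the argument is the following claim, which I would establish by induction on the number of flips: \emph{if a triangulation $\tilde T$ is obtained from $T$ by finitely many elementary flips, then every arc of $\tilde T$ crosses only finitely many arcs of $T$.} The base case is immediate. For the inductive step, write $\tilde T=\mu(\hat T)$, where $\hat T$ is obtained from $T$ by finitely many flips and $\mu$ flips an arc $\delta\in\hat T$ to $\delta'$. Every arc of $\tilde T$ other than $\delta'$ already lies in $\hat T$ and is handled by the induction hypothesis. The new arc $\delta'$ is a diagonal of the quadrilateral $Q$ formed by the two triangles of $\hat T$ adjacent to $\delta$, and $\delta'\subset Q$. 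Since $Q$ has no marked points in its interior, any arc of $T$ meeting $Q$ must cross one of the four sides of $Q$; each of these sides lies in $\hat T$ and hence, by the induction hypothesis, is crossed by only finitely many arcs of $T$. Therefore the arcs of $T$ crossing $\delta'$ form a subset of this finite collection, which proves the claim.

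Applying the claim to $\tilde T=T_{n(\gamma)}$ shows that every stabilised arc $\beta_\gamma$ crosses only finitely many arcs of $T$; that is, no stabilised arc is bad for $T$. Consequently the only arcs of $T'=\muone(T)$ that can possibly be bad for $T$ are the limit arcs added in the completion step, and by Proposition~\ref{Prop:FinManyAcc} there are only finitely many of these. Hence $|\bad_T T'|<\infty$, contradicting the hypothesis $|\bad_T T'|=\infty$, and the proposition follows.

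I expect the main obstacle to be the inductive claim, and specifically making the quadrilateral argument rigorous at the level of geometric intersection numbers: one must work with representatives in minimal position and rule out spurious crossings arising at the corners of $Q$ (for instance when an arc of $T$ has an endpoint at a vertex of $Q$). Everything else is organisational, resting on the admissibility condition in Definition~\ref{Def:Mutation} and on the finiteness of limit arcs from Proposition~\ref{Prop:FinManyAcc}.
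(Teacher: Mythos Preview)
Your proof is correct and follows the same strategy as the paper: argue by contradiction, split the arcs of $T'=\muone(T)$ into stabilised arcs and limit arcs added in the completion step, show the stabilised ones are not bad for $T$, and conclude using the finiteness of limit arcs (Proposition~\ref{Prop:FinManyAcc}; the paper cites Corollary~\ref{Cor:FiniteLimArc} here, which appears to be a minor mislabelling).

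Your inductive quadrilateral argument for the key claim is valid, but the corner-case worry you flag can be avoided entirely by a simpler count. After $n$ elementary flips, the triangulation $T_n$ differs from $T$ in at most $n$ arcs: write $T=(T\cap T_n)\cup R$ with $|R|\le n$. Any arc $\beta\in T_n$ is compatible with every arc of $T_n$, hence with every arc in $T\cap T_n$; so $\beta$ can only cross arcs in $R$. There are at most $n$ such arcs and by Proposition~\ref{Prop:ArcsFinInter} each is crossed finitely often, so $|\beta\cap T|<\infty$. This gives the claim without any geometric analysis of the flip quadrilateral, and in particular without the vertex-incidence issue you anticipated.
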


\begin{proof}
Suppose that $\muone(T)=T'$. Then every non-limit arc of $T'$ arises after finitely many flips, i.e. intersects with finitely many arcs in $T'$, so, it is not bad for $T$. By Corollary~\ref{Cor:FiniteLimArc}, there are only finitely many limit arcs in $T'.$
This contradicts the assumption that  $|\bad_T T'|=\infty$. 

\end{proof}

\begin{theorem}
\label{infinite sequences}
Let $\cals$ be an infinite surface.
Then there exist triangulations $T$ and $T'$ such that $\mun(T)\ne T'$ for any finite sequence of infinite mutations $\mun$.

\end{theorem}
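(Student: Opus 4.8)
The plan is to exhibit, near a single accumulation point, an ``incoming'' triangulation $T$ and an ``outgoing'' triangulation $T'$, and then to show that the property of being outgoing at that point can never be \emph{created} from a non-outgoing triangulation by a single infinite mutation; iterating this obstruction along a finite sequence gives the result.

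First I would fix an accumulation point $p$ of $\cals$, which exists since $\cals$ has infinitely many boundary marked points with only finitely many accumulation points. Choose boundary marked points $q_0,q_1,q_2,\dots\to p$ approaching $p$ from one fixed side, cutting off a disc-shaped region $R$ near $p$ bounded by the boundary arcs $q_iq_{i+1}$ together with a single closing arc $\eta$ from $q_0$ to $p$. The region $R$ admits two standard fan triangulations: an \emph{incoming} fan with source $q_0$ (arcs $q_0q_i$ and limit arc $q_0p$) and an \emph{outgoing} fan with source $p$ (arcs $pq_i$), exactly as in Fig.~\ref{Fig:switch(outgoing-->incoming)}. Since these share the same boundary of $R$, I can extend both by one fixed common triangulation of $\cals\setminus R$, obtaining triangulations $T$ (incoming at $p$) and $T'$ (outgoing at $p$).

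The key step is a one-step obstruction. For a triangulation $S$, classify its $\overline{p}$-domain as an incoming fan, an outgoing fan, or a zig-zag by Lemma~\ref{l-almost elementary}, and call $S$ \emph{outgoing at $p$} when this domain is an outgoing fan; this is intrinsic, being equivalent to ``$S$ has infinitely many arcs incident to $p$''. I claim that no single infinite mutation can turn a triangulation $S$ that is \emph{not} outgoing at $p$ into one that is outgoing at $p$. Indeed, if $S$ is not outgoing then $\cald^S_{\overline p}$ is an incoming fan or a zig-zag, and if $S'=\muone(S)$ were outgoing then $\cald^{S'}_{\overline p}$ would be an outgoing fan; Lemma~\ref{in-out-zig} then gives $|\bad_S S'|=\infty$, whence Proposition~\ref{prop:|bad|=>more than one} forces $\muone(S)\ne S'$, a contradiction. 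Running this along the sequence finishes the proof: suppose $\mun=\muone_n\circ\cdots\circ\muone_1$ satisfies $\mun(T)=T'$, and set $T_0=T$, $T_i=\muone_i(T_{i-1})$, so $T_n=T'$. Since $T_0=T$ is not outgoing at $p$, the one-step obstruction shows by induction that no $T_i$ is outgoing at $p$, contradicting the fact that $T_n=T'$ is outgoing at $p$.

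I expect the main obstacle to be the bookkeeping in the construction and the well-definedness of the invariant, rather than the logical core. Concretely, one must verify that the incoming and outgoing fan triangulations of $R$ genuinely share the same boundary so that they glue to a common outside triangulation, and that ``outgoing at $p$'' is an honest property of each intermediate $T_i$ despite the non-uniqueness of the cuts splitting a surface into almost elementary domains (cf. Remark~\ref{non-uniquenes of elementary domains}). The cleanest way around the latter is to adopt throughout the intrinsic characterization ``outgoing at $p$ $\Leftrightarrow$ infinitely many arcs incident to $p$,'' which is manifestly independent of any choice of cuts and agrees with the domain-type classification, so that Lemma~\ref{in-out-zig} and Proposition~\ref{prop:|bad|=>more than one} apply at every step.
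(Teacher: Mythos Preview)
Your proof is correct and takes essentially the same approach as the paper: both fix a one-sided accumulation point $\overline p$, choose $T$ incoming and $T'$ outgoing there, and use Lemma~\ref{in-out-zig} together with Proposition~\ref{prop:|bad|=>more than one} to show that the outgoing type at $\overline p$ cannot be created by a single infinite mutation, then iterate along the finite sequence. The only differences are cosmetic---you construct $T$ and $T'$ explicitly and argue by forward induction, whereas the paper simply asserts their existence and takes a minimal $k$ with $\mathsf{Type}_{\overline p}(T_k)=\mathsf{Out}$; your intrinsic characterization ``outgoing at $\overline p$ $\Leftrightarrow$ infinitely many arcs incident to $p$ on that side'' is a clean way to sidestep the non-uniqueness of almost elementary decompositions.
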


\begin{proof}
Given a triangulation $T$ and an accumulation point $p\in\cals$, we introduce a function $\mathsf{ Type}_{\overline p}(T):T\to \{\mathsf{In}, \mathsf{Out}, \mathsf{Zig}\}$ defined by
$$
\mathsf{Type}_{\overline p }(T)=
\begin{cases}
\mathsf{In},& \text{if $\mathcal D_{\overline p}$ is an almost elementary incoming fan,} \\
\mathsf{Out},& \text{if $\mathcal D_{\overline p}$ is an almost elementary outgoing fan,}\\
\mathsf{Zig},& \text{if $\mathcal D_{\overline p}$ is an almost elementary zig-zag.}
\end{cases}
$$

Now,  choose  triangulations $T$ and $T'$  so that $\mathsf{Type}_{\overline p}(T)=\mathsf{In}$ and  $\mathsf{Type}_{\overline p}(T')=\mathsf{Out}$.
Suppose there is a finite sequence of infinite mutations $\mun$ such that $\mun(T)=T'$. Denote 
$\mun=\muone_n\circ\dots\circ\muone_1$ and $T_i:=\muone_i\circ\dots\circ\muone_1(T)$.
Choose the minimal $k$, $1\leq k\leq n$, satisfying $$\mathsf{Type}_{\overline p}(T_k)=\mathsf{Out}$$ (note that  such $k$ exists as $\mathsf{Type}_{\overline p}(T)=\mathsf{In}$ and $\mathsf{Type}_{\overline p}(T')=\mathsf{Out}$).
Then $\mathsf{Type}_{\overline p}(T_{k-1})$ is either $\mathsf{In}$ or $\mathsf{Zig}$.
By Lemma~\ref{in-out-zig}, this implies that $|\bad_{T_{k-1}}{T_k}|=\infty$. So, the existence of $\muone_k$ such that 
$\muone_k(T_{k-1})=T_k$ is in contradiction with Proposition~\ref{prop:|bad|=>more than one}.

\end{proof}

\subsection{Existence of finite sequences of infinite mutations}
\label{sec hierarchy}

Theorem~\ref{Thm:T-->T'} shows that for any two triangulations $T$ and $T'$ of a surface $\cals$ there exists some mutation sequence $\mu^\bullet$ transforming $T$ to $T'$.
This sequence can be
\begin{itemize}
\item[-] a finite sequence of elementary mutations $\mu^{(0)}$;
\item[-] a finite sequence of infinite mutations $\mun$;
\item[-] an infinite sequence of infinite mutations  $\mu^{(\infty)}$.
\end{itemize}
In this section we discuss the following questions: 

\medskip

{\it 
Given two triangulations $T$ and $T'$ of a surface $\cals$, 
\begin{itemize}
\item[-] is there a finite sequence of elementary mutations $\mu^{(0)}$ satisfying  $\mu^{(0)}(T)=T'$? 
\item[-] if not, is there a finite sequence of infinite mutations $\mun$ satisfying $\mun(T)=T'$?  
\end{itemize}
}

\medskip

The first of these questions is easy:

\begin{proposition} 
\label{finite}
Two triangulations $T$ and $T'$ of $\cals$ are related by a finite sequence of elementary mutations if and only if  $|T\cap T'|<\infty$.

\end{proposition}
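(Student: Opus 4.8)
The plan is to prove the two implications directly, reducing the backward direction to the finite-surface case, which is available through Hatcher's theorem \cite{H} (and its tagged analogue in \cite{FST} for a punctured region).

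For the forward direction, suppose $T'=\mu^{(0)}(T)$ for a finite mutation $\mu^{(0)}=\mu_n\circ\dots\circ\mu_1$. Since each elementary mutation flips a single arc, both $T\setminus T'$ and $T'\setminus T$ are finite (of cardinality at most $n$). I would expand $|T\cap T'|=\sum_{\gamma\in T'}\sum_{\tau\in T}|\gamma\cap\tau|$ and observe that a term $|\gamma\cap\tau|$ vanishes unless $\gamma\in T'\setminus T$ and $\tau\in T\setminus T'$: indeed any $\gamma\in T\cap T'$ is compatible with every arc of $T$ (arcs of a triangulation are mutually compatible), and any $\gamma\in T'$ is compatible with each $\tau\in T\cap T'$. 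Thus the double sum collapses to a sum over the finite set $(T'\setminus T)\times(T\setminus T')$, and each term is finite by Proposition~\ref{Prop:ArcsFinInter}; hence $|T\cap T'|<\infty$.

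For the backward direction, assume $|T\cap T'|<\infty$. By maximality an arc compatible with every arc of a triangulation belongs to it, so every arc of $T'\setminus T$ crosses $T$ (and symmetrically); as each such arc contributes at least $1$ to $|T\cap T'|=|T'\cap T|$, the sets $T\setminus T'$ and $T'\setminus T$ are both finite. The key construction is a finite subsurface: let $R$ be the closure of the union of all triangles of $T$ crossed by some arc of $T'\setminus T$. Since each arc of $T'\setminus T$ meets finitely many arcs of $T$ (Proposition~\ref{Prop:ArcsFinInter}) and hence passes through finitely many triangles, and since $T'\setminus T$ is finite, $R$ is a union of finitely many triangles, i.e.\ a compact surface with boundary containing no accumulation point in its interior. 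I would then check that $T$ and $T'$ coincide outside $R$ and restrict to two triangulations of $R$ with the same boundary: the interior boundary arcs of $R$ are arcs of $T$ not crossed by $T'\setminus T$, hence uncrossed by $T'$ and so lying in $T\cap T'$; every arc of $T'\setminus T$ lies inside $R$ by construction; and any arc of either triangulation lying outside $R$ is uncrossed by the other and thus common to both. Applying the finite-surface flip-connectivity theorem \cite{H} (the tagged version of \cite{FST} if $R$ contains a puncture) to each connected component of $R$ yields a finite sequence of flips carrying $T|_R$ to $T'|_R$; these flips touch only arcs interior to $R$, so they extend by the identity on $\cals\setminus R$ to a finite mutation $\mu^{(0)}$ with $\mu^{(0)}(T)=T'$.

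The step I expect to be the main obstacle is the bookkeeping that isolates the difference between $T$ and $T'$ inside the genuinely finite piece $R$ — in particular verifying that the interior boundary arcs of $R$ are common to $T$ and $T'$ and that no arc of $T'\setminus T$ escapes $R$ — so that the finite case applies cleanly, without interference from the infinitely many common arcs that may accumulate elsewhere on $\cals$.
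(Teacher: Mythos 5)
Your proof is correct and follows essentially the same route as the paper's (much terser) argument: the forward direction observes that a finite mutation only changes finitely many arcs and so can only create finitely many crossings, and the backward direction localises all the crossings inside a finite triangulated subsurface and invokes finite-surface flip-connectivity. Your write-up simply supplies the bookkeeping (finiteness of $T\setminus T'$ and $T'\setminus T$, the construction of $R$, and the check that $T$ and $T'$ agree outside $R$) that the paper leaves implicit.
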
 

\begin{proof}
Resolve each of the finitely many crossings one by one to switch from one triangulation to the other.
Conversely, a finite mutation is completely contained inside a finite union of finite polygons, so it cannot create infinitely many crossings.

\end{proof}

We will not be able to give a complete answer to the second question but we will give a sufficient condition for the existence of $\mun$
(Theorem~\ref{bad are all limit}) and a sufficient condition for the non-existence (see Theorem~\ref{no mun from stronger}). The proofs of existence statements will be mainly based on the notion of {\it bad arcs}, while the proof for non-existence will use the function introduced in the proof  of Theorem~\ref{infinite sequences} and based on the types $\mathsf{In}, \mathsf{Out}, \mathsf{Zig}$ of infinite almost elementary domains.

\begin{proposition}\label{Prop:NoBadArc} For two triangulations $T$ and $T'$ of $\cals$, if $|\bad_T T'|=0,$ (i.e. if no arc of $T'$ intersects infinitely many  arcs of $T$) then there exists an infinite mutation $\mu^{(1)}$ such that $\mu^{(1)}(T)=T'.$
\end{proposition}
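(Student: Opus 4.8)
The plan is to reduce the statement to its local counterpart, Corollary~\ref{Cor: mutating to elementary}, by splitting $\cals$ into almost elementary domains of $T'$ and transforming $T$ to $T'$ one domain at a time, in the spirit of the proof of Theorem~\ref{Thm:T-->T'}. The role of the hypothesis $|\bad_T T'|=0$ is precisely that, on each such domain, every arc of $T'$ meets only finitely many arcs of $T$, so that Corollary~\ref{Cor: mutating to elementary} applies and yields a \emph{single} infinite mutation with stabilising orbits, rather than the infinite sequence of infinite mutations that a generic pair $T,T'$ forces.

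Concretely, I would first use Proposition~\ref{Lem:FinArcs-->Discs} to select finitely many arcs of $T'$ cutting $\cals$ into almost elementary domains, but then only realise the finitely many \emph{non-limit} arcs among these cuts. Each such arc crosses finitely many arcs of $T$, so by Lemma~\ref{Lem:ChangeMutatforGamma} it can be realised by a finite mutation, and by Observation~\ref{not in domain} an already-inserted arc is disjoint from the domain of the next one and so is never flipped again; hence a single finite mutation $\muze$ installs all of them, giving $T_0=\muze(T)$. The non-limit cuts split $\cals$ into finitely many regions $R_1,\dots,R_m$, each carrying the full $T'$-triangulation of a finite union of almost elementary domains glued along limit arcs. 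On each $R_j$ every arc of $T'|_{R_j}$ still crosses only finitely many arcs of $T_0$ (a finite mutation alters only finitely many arcs, each adding finitely many crossings by Proposition~\ref{Prop:ArcsFinInter}), so the argument of Corollary~\ref{Cor: mutating to elementary} produces a single infinite mutation $\muone_j$ with $\muone_j(T_0|_{R_j})=T'|_{R_j}$ whose orbits all stabilise, any internal limit arc being supplied by completion. Since the $R_j$ are disjoint the $\muone_j$ act on disjoint sets of arcs and commute, so interleaving them flip-by-flip gives one infinite sequence of flips; it is admissible as a single infinite mutation because each arc belongs to a unique $R_j$ and its orbit, governed only by $\muone_j$, stabilises. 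Completing by the finitely many limit arcs (Proposition~\ref{Prop:FinManyAcc}) then produces $T'$, and prepending the finitely many flips of $\muze$ leaves the whole object a single admissible flip sequence followed by one completion, i.e.\ a single infinite mutation.

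The step I expect to be the main obstacle is the bookkeeping around the limit arcs, and in particular the limit cut arcs shared by adjacent domains. A limit arc is produced by no flip and appears only in the final completion, so it cannot be installed in advance the way the non-limit cuts are; this is exactly why the regions $R_j$ above may contain internal limit arcs rather than being honest elementary domains, and one must check that the argument of Corollary~\ref{Cor: mutating to elementary} genuinely extends to such a region — that the fans and zig-zags on either side of a shared limit arc can be realised together by one infinite mutation with stabilising orbits, the limit arc emerging from completion, and that the final collection is exactly $T'$ with no spurious arcs (an arc crossing a limit arc of $T'$ crosses infinitely many arcs converging to it, hence cannot survive). The essential input is again $|\bad_T T'|=0$: it is this that makes every orbit \emph{stabilise} rather than merely converge, since an arc crossing a shared limit arc meets infinitely many arcs of $T'$ yet is flipped into $T'$ after finitely many steps — in contrast with Lemma~\ref{mutating to elementary}, where such an orbit may converge to a limit arc or to an accumulation point without ever stabilising, which is precisely what makes an infinite sequence of infinite mutations unavoidable in the general case.
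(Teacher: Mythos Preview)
Your strategy is essentially the paper's --- cut $\cals$ along finitely many arcs of $T'$ into almost elementary domains via Proposition~\ref{Lem:FinArcs-->Discs}, install the cut arcs by finitely many flips, apply Corollary~\ref{Cor: mutating to elementary} in each domain, and interleave --- but you introduce an unnecessary complication around limit arcs that creates the very obstacle you then struggle with.

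The misconception is the claim that ``a limit arc is produced by no flip and appears only in the final completion, so it cannot be installed in advance.'' An arc $\gamma_*$ being a \emph{limit arc of $T'$} means only that some sequence of arcs of $T'$ converges to it; this says nothing about how $\gamma_*$ sits relative to $T$. The hypothesis $|\bad_T T'|=0$ guarantees $|\gamma_*\cap T|<\infty$ for every such $\gamma_*$, so its domain $\cald_{\gamma_*}^T$ is a finite union of triangles and $\gamma_*$ can be realised by a finite sequence of flips just like any other cut arc. Whether $\gamma_*$ later plays the role of a limit arc in $T'$ is irrelevant at the installation stage.

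This is exactly what the paper does: it installs \emph{all} the cut arcs $\gamma_1,\dots,\gamma_s$ (limit or not) by one finite mutation $\muze$, obtaining $\widetilde T=\muze(T)$, and then applies Corollary~\ref{Cor: mutating to elementary} directly in each genuine almost elementary domain $\cald_i$ of $T'$. Your larger regions $R_j$, glued along un-installed limit arcs, never arise, and the extension of Corollary~\ref{Cor: mutating to elementary} to such regions --- which you correctly flag as needing a separate argument --- is never required. Once you drop the artificial distinction between limit and non-limit cuts, your proof collapses to the paper's.
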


\begin{proof}
As in the proof of Theorem~\ref{Thm:T-->T'},
we first use Proposition~\ref{Lem:FinArcs-->Discs} to choose finitely many  arcs $\gamma_1, \ldots, \gamma_s\in T'$, $s\in\bN,$  splitting the surface $\cals$ into finitely many almost elementary domains $\cald_1,\ldots,\cald_{k}$ for some $k\in\bN$. 
As  $|\bad_T T'|=0$, we can find a finite sequence $\muze$ of elementary mutations first transforming  $T$  to a triangulation 
containing $\gamma_1$, then to a triangulation containing $\gamma_1,\gamma_2$, and finally to a triangulation $\widetilde T$ containing all of the arcs $\gamma_1,\ldots,\gamma_s$.

Now, when  $\gamma_1,\ldots,\gamma_s$ are in the triangulation $\widetilde T=\muze(T)$, we are left to sort the question in each of the almost elementary domains  $\cald_1,\ldots,\cald_{k}$ separately.
By Corollary~\ref{Cor: mutating to elementary}, there exists an infinite mutation $\muone_i$ such that $\muone_i(\widetilde T|_{\cald_i})=T'|_{\cald_i}$. Since mutations in separate domains commute, we can
 compose the first elementary mutations of $\muone_1,\ldots,\muone_s$, then the second elementary mutations of $\muone_1,\ldots,\muone_s$, then the third ones an so on, to obtain an infinite composition $\tilde{\mu}^{(1)}$ of elementary mutations transforming 
$\widetilde T$ to $T'$. Moreover, this composition is an admissible infinite mutation, i.e. the orbit of every arc stabilise,
since each of $\muone_i$ is an infinite mutation and distinct mutations act in distinct elementary domains $\cald_i$.
Hence, $\tilde{\mu}^{(1)}\circ\muze$ is an infinite mutation transforming $T$ to $T'$.
\end{proof}

The following observation is easy to check using the definition of bad arcs.

\begin{observation}
\label{crossing limit arc}
Let $T$ and $T'$ be two triangulations of an infinite surface $\cals$. Then
\begin{itemize}
\item[(a)] if $|\alpha\cap \gamma|\ne 0$, where $\gamma\in T'$ and  $\alpha\in T$ is a limit arc,  then $\gamma\in \bad_T T'$;

\item[(b)] if $|\alpha\cap \gamma|\ne 0$ where  $\alpha\in T$ is a limit arc  and  $\gamma\in T'$ is a limit arc, then $|\bad_TT'|=\infty$;

\item[(c)] if $T$ contains a zig-zag around an accumulation point $p$ (as in Fig.~\ref{Fig:local}.d), then every arc $\gamma\in T'$ incident to $p$ 
belongs to $\bad_T T'$.
\end{itemize}

\end{observation}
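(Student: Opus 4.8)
\textbf{Proof proposal for Observation~\ref{crossing limit arc}.}

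The plan is to verify each of the three statements directly from the definition of bad arcs together with the convergence properties of limit arcs established earlier in this section. The common mechanism behind all three parts is the following: a limit arc $\alpha\in T$ is, by Definition~\ref{Def:LimitArc}, the limit of a sequence of arcs $\{\alpha_i\}\subset T$ with $\alpha_i\to\alpha$. If an arc $\gamma$ genuinely crosses $\alpha$ (i.e.\ $|\alpha\cap\gamma|\neq 0$), then I would argue that $\gamma$ must also cross $\alpha_i$ for all sufficiently large $i$; this is precisely the observation already used in the proof of Proposition on limit arcs lying in $T$, namely that an arc meeting $\alpha$ meets $\alpha_i$ for large $i$. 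Since the $\alpha_i$ are pairwise distinct arcs of $T$, this produces infinitely many crossings of $\gamma$ with arcs of $T$, giving $|\gamma\cap T|=\infty$ and hence $\gamma\in\bad_T T'$.

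For part~(a), the argument above is essentially complete: the single crossing of $\gamma$ with the limit arc $\alpha$ forces $\gamma$ to cross infinitely many of the approximating arcs $\alpha_i\in T$, so $|\gamma\cap T|=\infty$ and $\gamma\in\bad_T T'$. For part~(b), I would apply part~(a) in a symmetric fashion. Here both $\alpha\in T$ and $\gamma\in T'$ are limit arcs with $|\alpha\cap\gamma|\neq 0$. Writing $\gamma$ as the limit of arcs $\{\gamma_j\}\subset T'$ with $\gamma_j\to\gamma$, the crossing of $\alpha$ with $\gamma$ forces $\alpha$ to cross $\gamma_j$ for all large $j$; by part~(a) applied to each such $\gamma_j$ (which crosses the limit arc $\alpha\in T$), every one of these infinitely many arcs $\gamma_j$ lies in $\bad_T T'$, whence $|\bad_T T'|=\infty$.

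For part~(c), suppose $T$ contains a zig-zag around an accumulation point $p$ (Fig.~\ref{Fig:local}(d)), and let $\gamma\in T'$ be incident to $p$. In a zig-zag around $p$, the arcs of $T$ accumulate at $p$ and converge to $p$ (a vanishing limit arc), so near $p$ there are infinitely many arcs of $T$ whose endpoints approach $p$ from both sides. I would argue that any arc $\gamma$ having an endpoint \emph{at} $p$ but which is not itself one of the zig-zag arcs must cross infinitely many of them: since $\gamma$ leaves $p$ into the interior while the zig-zag arcs form an infinite ``fence'' separating $p$ from the rest of the base, $\gamma$ is forced to cross all but finitely many of these arcs. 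Hence $|\gamma\cap T|=\infty$ and $\gamma\in\bad_T T'$.

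The main obstacle I anticipate is making the crossing argument rigorous near the accumulation point in part~(c), and more generally justifying the step ``if $\gamma$ crosses the limit arc $\alpha$ then $\gamma$ crosses $\alpha_i$ for large $i$.'' This requires care with the isotopy-class definition of crossing and with the precise sense in which $\alpha_i\to\alpha$ (Definition~\ref{Def:LimitArc}), since one must choose representatives consistently so that the crossing is not removable by isotopy. However, this is exactly the type of argument already carried out in the proof that limits of arcs in a triangulation belong to the triangulation, so I would expect to reuse that reasoning essentially verbatim, and the overall verification to remain routine once that lemma-style crossing fact is invoked.
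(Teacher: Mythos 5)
Your proof is correct and follows exactly the route the authors intend: the paper gives no written proof (it states only that the observation ``is easy to check using the definition of bad arcs''), and the one substantive ingredient you isolate --- that an arc essentially crossing a limit arc $\alpha$ must cross the approximating arcs $\alpha_i\in T$ for all large $i$ --- is precisely the fact the paper already invokes when showing that a limit of arcs of a triangulation belongs to that triangulation. Parts (b) and (c) then follow as you describe, so nothing further is needed.
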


\begin{lemma}
\label{bad limit arc}
 For two triangulations $T$ and $T'$ of $\cals$, if $0<|\bad_T T'|< \infty$ then there exists a limit arc  $\gamma\in T'$  satisfying
$\gamma\in \bad_T T'$.

In particular, if  $|Bad_{T}T'|=1$, then the only bad arc is a limit arc of $T'$.

\end{lemma}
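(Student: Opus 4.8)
The plan is to start from any bad arc, locate an accumulation point near which it accumulates its crossings, and then read off a bad \emph{limit} arc from the domain of $T'$ at that point, using the finiteness hypothesis to exclude the bad configurations.

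\textbf{Step 1 (localising the crossings).} First I would pick any $\gamma\in\bad_T T'$ (possible since $|\bad_T T'|>0$). By Proposition~\ref{Prop:ArcsFinInter} the infinitely many crossings of $\gamma$ with $T$ are distributed over infinitely many distinct arcs of $T$, so their crossing points accumulate at some $x_*\in\gamma$. As in the proof of Proposition~\ref{Prop:ArcsFinInter}, after removing small neighbourhoods of the finitely many accumulation points the surface is of finite type, so only finitely many arcs of $T$ meet any compact set disjoint from the accumulation points. Hence $x_*$ must sit at an accumulation point $p$, or on a limit arc of $T$ ending at such a $p$; in either case $\gamma$ crosses infinitely many arcs of the $\overline{p}$-domain $\cald^T_{\overline{p}}$ of $T$ for a suitable side $\overline{p}$. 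By Remark~\ref{rem: domain of gamma=in+zig} (no arc crosses infinitely many arcs of an outgoing fan) this domain is an incoming fan or a zig-zag, i.e.\ of type $\mathsf{In}$ or $\mathsf{Zig}$.

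\textbf{Step 2 (forcing a limit arc of $T'$ at $p$).} Next I would analyse the $\overline{p}$-domain $\cald'_{\overline{p}}$ of $T'$. It cannot be an outgoing fan: with $\cald^T_{\overline{p}}$ of type $\mathsf{In}$ or $\mathsf{Zig}$, Lemma~\ref{in-out-zig} would then force $|\bad_T T'|=\infty$, contradicting the hypothesis. A $T'$-zig-zag around $p$ carrying no limit arc incident to $p$ is excluded in the same spirit: its arcs would cross $\cald^T_{\overline{p}}$ infinitely often (the limit arc of the $T$-incoming fan, via Observation~\ref{crossing limit arc}(a), or the $T$-zig-zag, via Observation~\ref{crossing limit arc}(c)), again giving infinitely many bad arcs. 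Therefore $\cald'_{\overline{p}}$ is an incoming fan or a zig-zag converging to a limit arc, and in particular it carries a limit arc $\ell'$ of $T'$ incident to $p$.

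\textbf{Step 3 (the limit arc is bad) and the main obstacle.} Finally I would show $\ell'\in\bad_T T'$, which by Proposition~\ref{Prop:FinManyAcc} and Corollary~\ref{Cor:FiniteLimArc} is a genuine limit arc of $T'$. If $\cald^T_{\overline{p}}$ is a zig-zag, this is immediate from Observation~\ref{crossing limit arc}(c), since $\ell'$ is incident to $p$. If $\cald^T_{\overline{p}}$ is an incoming fan with limit arc $\ell_p$, I would argue by contradiction: were $\ell'$ not bad, it would, near $p$, avoid the fan sector of $T$ swept out by the fan arcs, while the arcs of $\cald'_{\overline{p}}$ converging to $\ell'$ have endpoints at the marked points accumulating to $p$ that lie inside that sector; these arcs would then be forced either to cross $\ell_p$ or to cross the $T$-fan arcs infinitely often, producing infinitely many bad arcs and contradicting $|\bad_T T'|<\infty$. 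This last geometric step is the crux: turning the informal ``same-side'' picture into a rigorous statement that the $T'$-limit arc cannot dodge the $T$-fan while the neighbouring $T'$-arcs still reach the shared base points inside it; care is needed because the crossings may accumulate at $p$ itself rather than in the interior of $\ell_p$ (whose endpoint is $p$). Granting this, $\ell'$ is a bad limit arc of $T'$, proving the first assertion, and the ``in particular'' statement follows at once, since for $|\bad_T T'|=1$ the unique bad arc must be exactly this $\ell'$.
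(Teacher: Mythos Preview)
Your outline matches the paper's: pick a bad arc, localise at an accumulation point $p$, note that $\cald^T_{\overline p}$ is of type $\mathsf{In}$ or $\mathsf{Zig}$, rule out $\cald'_{\overline p}$ being of type $\mathsf{Out}$ via Lemma~\ref{in-out-zig}, and then exhibit a bad limit arc of $T'$ at $p$. The paper additionally begins by noting that if the chosen bad arc is already a limit arc of $T'$, there is nothing to prove.

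Where you flag an obstacle in Step~3, the paper's argument is cleaner than your proposed direct geometric contradiction. It runs a simple dichotomy: the $T'$-limit arc $\ell'$ is incident to $p$; either $\ell'$ crosses infinitely many arcs of $\cald^T_{\overline p}$, in which case $\ell'$ is the required bad limit arc, or it does not---and then infinitely many $T'$-arcs converging to $\ell'$ must cross the limit arc of $\cald^T_{\overline p}$, forcing $|\bad_T T'|=\infty$ via Observation~\ref{crossing limit arc}(b), which contradicts the hypothesis. This sidesteps the delicate end-point analysis you were worried about, and in particular Observation~\ref{crossing limit arc}(b) (limit arc of $T$ meets limit arc of $T'$) is the tool you are missing.

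One small gap in your Step~2: your appeal to Observation~\ref{crossing limit arc}(c) to exclude a $T'$-zig-zag around $p$ when $\cald^T_{\overline p}$ is also a zig-zag does not apply as written, since the $T'$-zig-zag arcs are not incident to $p$ (and indeed deep $T'$-zig-zag arcs cross only finitely many $T$-zig-zag arcs, so your claimed ``infinitely many bad arcs'' fails there). The correct exclusion is that your chosen bad arc, in order to cross infinitely many arcs of a $T$-zig-zag around $p$, must itself be incident to $p$; but if $T'$ has a zig-zag around $p$ then no arc of $T'$ is incident to $p$, a contradiction. With this fix your Step~2 goes through, and the paper's dichotomy then finishes Step~3.
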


\begin{proof}
Choose an arc $\beta\in \bad_T T'$. If $\beta$ is a limit  arc of $T'$ denote $\gamma:=\beta$.
Otherwise, consider the domain $\mathcal D_\beta^T $ of $\beta$ with respect to $T$. By  Remark~\ref{rem: domain of gamma=in+zig},  $\mathcal D_\beta^T $ is a finite union of almost elementary incoming fans and almost elementary zig-zags (and as $\beta$ is a bad arc, at least one of these almost elementary domains is infinite). Let $p$ be an accumulation point in $\cald_{\beta}^T$ and $\cald_{\overline{p}}$ and $\cald'_{\overline{p}}$ be the $\overline{p}$-domains of $T|_{\cald_{\beta}^T}$ and $T'|_{\cald_{\beta}^T}$, respectively. Since $|\bad_T T'|<\infty$, Lemma~\ref{in-out-zig} implies that $\mathcal D'_{\overline{p}}$ cannot be an outgoing fan.
So, each of $\mathcal D_{\overline{p}}$ and  $\mathcal D'_{\overline{p}}$ is either an incoming fan, or a zig-zag converging to a limit arc, or a zig-zag around $p$.
Below we will see that for any of these combinations either  $\mathcal D'_{\overline{p}}$ has a bad limit arc (as required) or  $\mathcal D'_{\overline{p}}$ contains infinitely many bad arcs in contradiction to the assumption. 

More precisely, 
 suppose first that $\mathcal D'_{\overline{p}}$ has a non-vanishing limit arc $\gamma$. Then $\gamma$ is incident to $p$ and either $\gamma$ crosses infinitely many arcs of $\mathcal D_{\overline{p}}$ (and hence is a bad limit arc as required), or infinitely many arcs approaching $\gamma$ in $\mathcal D'_{\overline{p}}$ 
cross the limit arc of  $\mathcal D_{\overline{p}}$ (and hence $|\bad_T T'|=\infty$ in view of Observation~\ref{crossing limit arc}.b).
If   $\mathcal D'_{\overline{p}}$ is a zig-zag around $p$, then  $\mathcal D_{\overline{p}}$ is not a zig-zag around $p$ (as  $\mathcal D'_{\overline{p}}\subset  \mathcal D_\beta^T$),
hence   $\mathcal D_{\overline{p}}$ has a limit arc $\alpha$  incident to $p$, which implies that infinitely many arcs of $\mathcal D'_{\overline{p}}$ cross $\alpha$,
and again,  Observation~\ref{crossing limit arc}.b implies  $|\bad_T T'|=\infty$ in contradiction to the assumption. 
 
\end{proof}

The following lemma will be used in the proof of Theorem~\ref{bad are all limit}.

\begin{lemma}
\label{finite surface}
 Let $S$ be a finite triangulated surface distinct from the sphere with three punctures, let $T$ and $T'$ be triangulations of $S$. Let $\alpha\in T$ and $\gamma\in T'$ be any arcs. There exists a finite sequence of flips $\mu^{(0)}$ which transforms $T$ to $T'$ and takes $\alpha$ to $\gamma$.
\end{lemma}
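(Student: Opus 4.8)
The plan is to start from the classical connectivity of the flip graph of a finite surface (Hatcher \cite{H}, and in the tagged/punctured setting \cite{FST}): flips act transitively on the triangulations of any finite $S$ different from the sphere with three punctures. The only extra content of the lemma is the tracking requirement $\alpha\mapsto\gamma$, so I would split the argument into two phases. In Phase~1 I move the distinguished arc $\alpha$ onto $\gamma$ (reaching \emph{some} triangulation containing $\gamma$); in Phase~2 I rearrange the rest of that triangulation into $T'$ while leaving $\gamma$ untouched. Recall that any arc lies in some triangulation, obtained by completing it to a maximal compatible family, so targets such as $T_\gamma\ni\gamma$ are available.

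The heart of the argument is a \emph{loop transitivity} statement: for a fixed triangulation $T_0$ and any two arcs $\alpha_1,\alpha_2\in T_0$, there is a flip sequence from $T_0$ to $T_0$ (a loop in the flip graph) carrying $\alpha_1$ to $\alpha_2$. Granting this, Phase~1 runs as follows. Choose $T_\gamma\ni\gamma$; by \cite{H} there is a flip sequence $T\to T_\gamma$, which tracks $\alpha$ to some arc $\delta\in T_\gamma$. Applying loop transitivity at $T_\gamma$ to the pair $\delta,\gamma$ yields a further loop carrying $\delta$ to $\gamma$ (and fixing $T_\gamma$ setwise), so the composite takes $T$ to $T_\gamma$ with $\alpha\mapsto\gamma$.

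For loop transitivity I would first reduce to \emph{neighbouring} arcs, i.e. two arcs sharing a common triangle of $T_0$: since $S$ is connected, the triangles of $T_0$ form a connected complex, hence any two arcs of $T_0$ are joined by a chain in which consecutive members share a triangle, and composing the corresponding transpositions reduces the general case to swapping two neighbouring arcs. Two neighbouring arcs $\alpha_1,\alpha_2$ are incident to a common vertex, and the local picture can be completed to a pentagon-shaped region; running the five-flip loop of the $A_2$ flip graph returns $T_0$ to itself while interchanging $\alpha_1$ and $\alpha_2$, exactly as one checks by tracing the two diagonals of a pentagon around its $5$-cycle of triangulations. This is the step that can fail on the excluded surface: the thrice-punctured sphere is precisely the classical exception to flip connectivity, and it also fails to provide the room needed to realise these local pentagon loops, which is why it is excluded.

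Finally, Phase~2. Having reached $\hat T\ni\gamma$ with $\alpha\mapsto\gamma$, I would cut $S$ along $\gamma$ to obtain a finite surface $S_\gamma$ in which $\gamma$ has become boundary and in which $\hat T$ and $T'$ induce triangulations not containing $\gamma$; flips of $S_\gamma$ correspond exactly to flips of $S$ that never flip $\gamma$. Since cutting along an arc always produces nonempty boundary, no component of $S_\gamma$ is a closed sphere, so the excluded case is automatically avoided (the remaining degenerate small pieces, such as polygons, have trivially connected flip graphs). Applying \cite{H} to each component of $S_\gamma$ then gives a flip sequence $\hat T\to T'$ fixing $\gamma$ throughout, so $\gamma$ remains the image of $\alpha$; composing the Phase~1 and Phase~2 sequences yields the required $\mu^{(0)}$. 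The hard part is the loop transitivity step — in particular verifying that the local pentagon loops can always be carried out when $S$ is not the thrice-punctured sphere, and checking that the cut surface $S_\gamma$ genuinely avoids every case where flip connectivity breaks down.
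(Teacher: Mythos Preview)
Your overall architecture (Phase~1: move $\alpha$ onto $\gamma$; Phase~2: cut along $\gamma$ and apply \cite{H}) is reasonable, and Phase~2 is exactly what the paper also does once $\gamma$ has been produced. But Phase~1 is organised very differently from the paper, and the difference matters.

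The paper never invokes anything like ``loop transitivity''. Instead it works directly with the crossing pattern of $\alpha$ and $\gamma$: when $\alpha$ crosses $\gamma$ it clears all \emph{other} crossings of $\gamma$ first (always flipping an arc $\hat\alpha\neq\alpha$ nearest an endpoint of $\gamma$) so that a single flip sends the current image of $\alpha$ to $\gamma$; when $\alpha$ and $\gamma$ are disjoint it reduces to the crossing case either via an intermediate arc $\beta$ crossing both (polygon case), or by cutting along a third arc $\delta\notin\{\alpha,\gamma\}$ and inducting on the size of the surface, with the terminal analysis isolating the thrice-punctured sphere. So the paper chooses the flip path to $\gamma$ carefully, rather than taking an arbitrary path and then correcting by a loop.

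Your loop-transitivity step is a genuine gap. It is not merely hard to verify --- it is false as stated. Take $S$ to be the annulus with one marked point on each boundary component (the $\tilde A_{1,1}$/Kronecker surface). Its exchange graph is the bi-infinite path $\dots - T_{-1} - T_0 - T_1 - \dots$ with $T_k=\{a_k,a_{k+1}\}$; in particular every closed flip walk based at $T_0$ is a backtracking walk, and one checks that tracking returns each arc to itself. Hence no flip loop at $T_0$ carries $a_0$ to $a_1$, so your Phase~1 cannot be completed. Correspondingly, your pentagon move cannot be set up here: the two triangles of $T_0$ share \emph{both} arcs $a_0,a_1$, so there is no adjacent third triangle with which to form a pentagon. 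More generally, the assertion that two neighbouring arcs can always be ``completed to a pentagon-shaped region'' fails whenever the third side of the common triangle is a boundary arc, or in the presence of self-folded configurations; you have not addressed these cases. (Incidentally, the thrice-punctured sphere is not ``the classical exception to flip connectivity'' --- Hatcher's transitivity holds there too; it arises in the paper as the terminal case of the induction, not as a failure of connectivity.)

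In short: Phase~2 is fine and matches the paper, but your Phase~1 relies on a claim that is false on small surfaces; the paper avoids this by never using loops at all and instead steering the flip sequence so that its last nontrivial step \emph{is} the flip $\alpha\mapsto\gamma$.
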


\begin{proof}
We will start with proving the lemma for the case when $S$ is a polygon and then extend  the proof to the general case.

\medskip
\noindent
\emph{Step 1: Proof for a polygon.}  
First, suppose that $\alpha$ crosses $\gamma$. We will remove (by flipping arcs of $T$) all other crossings of $\gamma$ with the arcs of $T$: to  do this  we flip an arc $\hat \alpha\ne \alpha$  crossing $\gamma$ closest to one of the endpoints of $\gamma$ (we can always do so if $\alpha$ is not  the only arc crossing $\gamma$). When all crossings except for the one with $\alpha$ are resolved, we can flip $\alpha$ to $\gamma$ and then apply finitely many flips (not changing $\gamma$) to obtain the triangulation $T'$.

Now, if  $\alpha$ does not cross $\gamma$, then there exists an arc $\beta\in S$ crossing both $\alpha$ and $\gamma$. We will first mutate $T$ to take $\alpha$ to $\beta$ (obtaining any triangulation $\widetilde T$ containing $\beta$) and then mutate $\widetilde T$ to $T'$ so that $\beta$ is mapped to $\gamma$. The lemma is proved when $S$ is a polygon.

\medskip
\noindent
\emph{Step 2: General case.}  
Now, let $\cals$ be any finite surface. If $\cals$ is a punctured surface, we understand the triangulation as ideal triangulation (and then the result would hold for the corresponding tagged triangulation).

As before, first we assume that $\alpha$ crosses $\gamma$. We choose one of the intersection points and resolve all other crossings of $\gamma$ with arcs of $T$ (in particular, if $\alpha$ crosses $\gamma$ more than once, we resolve these crossings and then work with the image of $\alpha$ instead of $\alpha$ itself). Then we flip (the image of) $\alpha$ to $\gamma$ and restore the triangulation $T'$ in any way not changing $\gamma$.

Suppose now that $\alpha$ does not intersect $\gamma$. Consider any triangulation $T$ of $\cals$ including $\alpha$
and $\gamma$.  If $T$ contains an arc $\delta$ such that $\delta\notin \{\alpha,\gamma\}$ and $\delta$ does not cut $\cals$ into two connected components, then the surface $\cals\setminus \delta$ has smaller number of arcs in the triangulation than $\cals$ and we may use induction on the size of surface to find a mutation sequence as required. If $T$ contains an arc $\delta$ which cuts $\cals$ into two connected components one of which  contains both $\alpha$ and $\gamma$, then again, we use induction to find the mutation sequence. So, we may assume that every arc of $T$ (distinct from $\alpha$ and $\gamma$) cuts $\cals$ into two connected components $C_1$ and $C_2$, so that $\alpha\in C_1$, $\gamma\in C_2$. If $C_1$ is not a polygon, then  $C_1$ contains an arc $\tau$ which does not cut $C_1$ into two components and is disjoint from $\alpha\cup \gamma$, so $\cals\setminus \tau$ is connected in contradiction to the assumption above. This implies that each of $C_1$ and $C_2$ are polygons. If in addition $\delta$ (the arc which cuts $\cals$ into $C_1$ and $C_2$) has two distinct endpoints, then
$\cals$ is a polygon. If the endpoints of $\delta$ coincide then each of $C_1$ and $C_2$ are discs with one boundary marked point, and, in view of the assumption that every arc on $\cals\setminus \{\alpha, \beta \}$ is separating, 
 each of $C_1$ and $C_2$ is the once punctured disc. Hence, $\cals$ is the sphere with three punctures.
 
\end{proof}

The proof of the following theorem is a bit technical. Neither the proof nor the result will be
used later in the article, so the reader can easily skip it without any harm to the understanding
of the rest of the paper.

\begin{theorem}
\label{bad are all limit}
 Given two triangulations $T$ and $T'$ of $\cals$, suppose that $|\bad_T T'|=k<\infty$ and all bad arcs of $T'$ with respect to $T$ are limit arcs of $T'$. 
 Then there exists a composition of finitely many infinite mutations $\mun$ satisfying $\mun(T)=T'$.
Moreover, $\mun$ can be chosen so that the number of infinite mutations in $\mun$ does not exceed $k+1$.

\end{theorem}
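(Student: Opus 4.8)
The plan is to argue by induction on $k=|\bad_T T'|$. The base case $k=0$ is precisely Proposition~\ref{Prop:NoBadArc}: it produces a single infinite mutation $\muone$ with $\muone(T)=T'$, and since $0+1=1$ the bound holds. For the inductive step I assume the statement whenever the number of bad arcs is strictly smaller, and I treat a pair $T,T'$ with $1\le |\bad_T T'|=k<\infty$ all of whose bad arcs are limit arcs of $T'$. The strategy is to peel off the bad arcs near the (finitely many) accumulation points: I will produce a finite composition $\mu$ of infinite mutations, local to a suitable domain, which creates at least one bad limit arc of $T'$ as a genuine arc of the triangulation and hence strictly lowers the number of bad arcs, and then invoke the induction hypothesis on $T_1=\mu(T)$ and $T'$.

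By Lemma~\ref{bad limit arc} the set $\bad_T T'$ contains a limit arc $\gamma$; fix an accumulation point $p$ incident to $\gamma$, approached from a side $\overline p$. By Remark~\ref{rem: domain of gamma=in+zig} the domain $\cald^T_\gamma$ is a finite union of almost elementary incoming fans and zig-zags, so the $\overline p$-domain $\cald_{\overline p}$ of $T$ is of type $\mathsf{In}$ or $\mathsf{Zig}$; likewise $\cald'_{\overline p}$ is of type $\mathsf{In}$ or $\mathsf{Zig}$, since $\gamma$ is its limit arc. (Indeed $\cald_{\overline p}$ cannot be an outgoing fan, because an arc incident to the source of an outgoing fan crosses only finitely many of its arcs whereas $\gamma$ is bad; and $\cald'_{\overline p}$ is not outgoing because $\gamma$ is its limit arc. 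The remaining exclusions are governed by Lemma~\ref{in-out-zig} together with $|\bad_T T'|<\infty$.) I now match $T$ to $T'$ on $\cald_{\overline p}$ by infinite mutations, keeping every flip inside this domain so that nothing outside $\cald^T_\gamma$ is disturbed, exactly as in the locality statement of Lemma~\ref{Lem:ChangeMutatforGamma}; by that same locality no previously good arc becomes bad.

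The construction divides into cases according to the types of $\cald_{\overline p}$ and $\cald'_{\overline p}$. When both are incoming fans, the badness of $\gamma$ forces the source of the $T'$-fan to lie on the $p$-side of the source of the $T$-fan (otherwise the two fans would be nested and $\gamma$ would meet only finitely many arcs of $T$); hence $\gamma$ is created by a single source-shift in the feasible direction, as in Fig.~\ref{Fig:shiftsource}, eliminating one bad arc with one infinite mutation. When a zig-zag is involved, I use the zig-zag-to-fans conversion of Fig.~\ref{Fig:switch(zigzag-->fan}, which costs two infinite mutations; here the limit arc of such a zig-zag meets two accumulation-point sides, and Observation~\ref{crossing limit arc} is used to show that this configuration carries (at least) two bad limit arcs, both of which are created by the conversion. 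Thus, if the local step uses $m$ infinite mutations it removes at least $m$ bad arcs, so $|\bad_{T_1}T'|\le k-m$ and the induction hypothesis supplies at most $(k-m)+1$ further infinite mutations from $T_1$ to $T'$; altogether at most $m+(k-m)+1=k+1$ infinite mutations are used. Admissibility of each infinite mutation (every orbit stabilises or converges) is inherited from the constructions in Lemma~\ref{mutating to elementary} and Corollary~\ref{Cor: mutating to elementary}, since all flips take place inside shrinking $\overline p$-domains.

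The main obstacle is the bookkeeping in the previous paragraph: one must verify, by a side-by-side analysis of the pair $(\cald_{\overline p},\cald'_{\overline p})$ at each accumulation-point side touched by $\gamma$, that the number of infinite mutations spent never exceeds the number of bad limit arcs removed. The two delicate points are (i) confirming that a required source-shift is always in the direction realisable by a single infinite mutation, which is guaranteed by the nesting/crossing dichotomy forced by $\gamma\in\bad_T T'$; and (ii) certifying that whenever a zig-zag must be converted (two infinite mutations) it genuinely carries two bad limit arcs, so that the finiteness hypothesis $|\bad_T T'|<\infty$ is not contradicted. Once this charging is established the induction closes and the bound $k+1$ follows.
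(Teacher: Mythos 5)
Your induction skeleton (peel off one bad limit arc per step, finish with Proposition~\ref{Prop:NoBadArc}) matches the paper's iteration, but the inductive step has genuine gaps. The most serious is the charging argument behind the bound $k+1$: you claim that a local step spending $m$ infinite mutations removes at least $m$ bad arcs, and for $m=2$ (the zig-zag case) you justify this by saying the zig-zag's limit arc ``meets two accumulation-point sides'' and hence the configuration ``carries two bad limit arcs''. A zig-zag domain has a \emph{single} (possibly vanishing) limit arc, and an incoming fan of $T'$ contributes a single limit arc at its accumulation point; nothing in Observation~\ref{crossing limit arc} manufactures a second bad arc of $T'$ out of this configuration. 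Without that, your count can degenerate to roughly $2k+1$. Moreover your toolbox of named moves is not exhaustive: Fig.~\ref{Fig:switch(zigzag-->fan} converts a zig-zag \emph{into} fans, so it does not cover the case where it is $\cald'_{\overline p}$ (the $T'$-side) that is a zig-zag converging to the bad limit arc $\gamma$. The paper sidesteps all of this by never trying to match $T'$ locally: it builds a bespoke intermediate triangulation $\widetilde T$ containing $\gamma$ (Steps 1--3 of its proof) and shows that $T\to\widetilde T$ is realised by a \emph{single} admissible infinite sequence of elementary flips followed by one extra flip (Step 4, via the outside-in enumeration and Lemma~\ref{finite surface}); a single infinite mutation is far more flexible than one source shift.

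Two further points are asserted rather than proved. First, ``no previously good arc becomes bad'' does not follow from locality: a rearrangement confined to $\cald_\gamma^T$ can still introduce new limit arcs (of newly created incoming fans, say) that cross arcs of $T'$ which were good for $T$; the paper's belt construction $\calb_i$ in Steps 2--3 exists precisely to rule this out, and you have no substitute for it. Second, the bad limit arc $\gamma$ typically joins two accumulation points through a finite surface between two infinite domains, so producing $\gamma$ as an actual arc of the new triangulation requires coordinated work at both ends; your per-$\overline p$-domain procedure never explains how $\gamma$ itself appears (nor how admissibility of the combined sequence is checked beyond citing lemmas proved for different constructions). Relatedly, the hypothesis that \emph{all} bad arcs are limit arcs is never really used in your argument, whereas the paper needs it in Step 1 to show that each infinite $T$-domain inside $\cald_\gamma^T$ has a unique accumulation point --- the structural fact on which the whole one-infinite-mutation-per-bad-arc construction rests.
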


\begin{proof}
The idea of the proof is as follows. If $|\bad_T T'|=0$, the statement is shown in  Proposition~\ref{Prop:NoBadArc}, so we assume $|\bad_T T'|>0$. In Steps~1-4, we will choose an arc
 $\gamma\in\bad_T T'$ and will find a mutation $f_1\circ \muone_1$ such that 
\begin{itemize}
\item[-] $\muone_1$ is an infinite mutation;
\item[-] $f_1$ is an elementary mutation, i.e. a flip;
\item[-]  $f_1\circ \muone_1(T)$ is a triangulation $\widetilde T$  such that 
$\bad_T T'\subset \bad_{\widetilde T} T'$ and $\gamma\in \widetilde T$. 
\end{itemize}
All assumptions of the theorem will hold for triangulations $\widetilde T$ and $T'$ (but with smaller number of bad arcs), so we will be able to repeat  Steps 1--4 finitely many (i.e. at most $k$) times to arrive to a triangulation $\hat T$ such that $|\bad_{\hat T} T'|=0$ (see Step~5). This will produce a finite sequence of infinite mutations $\mun$  taking $T$ to $\hat T$. Finally, in Step~6 we 
will use Proposition~\ref{Prop:NoBadArc} to show the existence of an infinite mutation $\muone_{k+1}$ transforming $\hat T$ to $T'$.
Composing them we will get $\mu=\muone_{k+1}\circ \mun$  satisfying $\mu(T)=T'$. 

To construct the sequence $f_1\circ \muone_1$  transforming $T$ to $\widetilde T$ (see Steps~1--4),  
we will only change arcs lying inside the domain $\mathcal D_\gamma^T$ of $\gamma$ with respect to $T$. 
We will first discuss the structure of $\mathcal D_\gamma^T$ (Step~1) and the crossing pattern of $T'$ with $\cald^T_{\gamma}$ (Step~2). Then we will  construct a special triangulation $\widetilde T$ of $\mathcal D_\gamma^T$ satisfying  $|\bad_{\widetilde T} T'|=0$ (see Step~3).  Then, in Step~4, we will  show that there is a mutation $f_1\circ \muone_1$ transforming $T$ to $\widetilde T$.

\medskip
\noindent
{\it \underline{Step 1}: Structure of  $\mathcal D_\gamma^T$.}
Let $\gamma=p_1p_2$ be a bad arc of $T'$ with respect to $T$.
By assumption, $\gamma$ is a limit arc of $T'$. 
 Although there are infinitely many arcs of $T$ crossing $\gamma$,
    none of these arcs is a limit arc of $T$ (otherwise, Observation~\ref{crossing limit arc}.b would imply  $|\bad_T T'|=\infty$ in contradiction to the assumption).
Since   $\gamma$ does not cross any limit arc of $T$ and $\mathcal D_\gamma^T$ is a union of almost elementary fans and almost elementary zig-zags, the domain  $\mathcal D_\gamma^T$ has to be a union of at most two infinite almost elementary domains $\mathcal D_1$ and $\mathcal D_2$ and some finite surface $S$ connecting them (here,  $\mathcal D_1$ and $\mathcal D_2$ are attached to a finite triangulated surface $S$ along a boundary segment of $S$; also, one  of $\mathcal D_1$ and $\mathcal D_2$ may be absent, or $\mathcal D_1$ may coincide with $\mathcal D_2$). The endpoints $p_1$ and $p_2$ of $\gamma$
lie in  $\mathcal D_1$ and $\mathcal D_2$ respectively (or one of them, say $p_2$ lies in the finite surface $S$). Observe that $\cald_i$ cannot be an outgoing fan for $i=1,2$, since this would contradict the assumption that $|\bad_T {T'}|<\infty$.

Our next aim is to show that each of the discs  $\mathcal D_1$ and $\mathcal D_2$ has a unique accumulation point, namely the endpoint $p_1$ or $p_2$ of $\gamma$. Suppose that  $\mathcal D_1$ has more than one accumulation point. Then  $\mathcal D_1$ is a zig-zag converging to a non-vanishing limit arc $\delta$. As $\delta$ should not cross $\gamma$, one of the endpoints of $\delta$ coincides with $p_1$. Let $q_1$ be another endpoint of $\delta$. Notice that every arc $\tau$  starting from $p_1$ or $q_1$ in $T'$ crosses infinitely many arcs of $T$, and hence, is bad for $T$. As $|\bad_T T'|<\infty$ by assumption, we see that $T'$ contains only finitely many arcs starting from $p_1$ or $q_1$. Consider the triangulation $T'$ in the neighbourhood of $\gamma$ approaching $\gamma$ from the same side as $\delta=q_1p_1$ does. The arc $\gamma$ cannot be a limit arc from that side as there are finitely many arcs starting from $p_1$ and there is no accumulation of boundary marked points towards $p_1$ from that side. Hence, there is a triangle $t$ in $T'$ containing $\gamma$ as one of its sides and having an arc $\tau \ne\gamma$ starting at $p_1$. Then $\tau$ is not a limit arc: it is clearly not a limit arc on the side containing the triangle $t$, but also it cannot be a limit arc on the other side containing $q_1$ by a similar argument as above. So, $\tau$ crosses infinitely many arcs of $T$, but it is not a limit arc. This is a contradiction to the assumption that every bad arc of $T'$ with respect to $T$ is a limit arc of $T'$. The contradiction shows that  each of $\mathcal D_1$ and $\mathcal D_2$ is a disc with at most one accumulation point (one of the endpoints $p_1$ and $p_2$ of $\gamma$).
Note that the same reasoning works for the case when the discs  $\mathcal D_1$ and $\mathcal D_2$ coincide.

Observe also that the triangulation $T$ of  $\mathcal D_\gamma^T$ may be understood  as a limit of  growing finite connected surfaces $S_i\to  \mathcal D_\gamma^T$ as $i\to \infty$, where $S_i$ is a union  of  $i$ triangles of $T$ and $S_{i}\subset S_{i+1}$.

\begin{figure}[!h]
\begin{center}
\psfrag{D1}{ $\mathcal D_1$}
\psfrag{D2}{ $\mathcal D_2$}
\psfrag{S}{ $S$}
\psfrag{g}{\color{red} $\gamma$}
\psfrag{g+}{\color{Plum} $\gamma^+$}
\psfrag{g-}{\color{Plum} $\gamma^-$}
\psfrag{p1}{ $p_1$}
\psfrag{q1}{ $q_1$}
\psfrag{p2}{ $p_2$}
\epsfig{file=./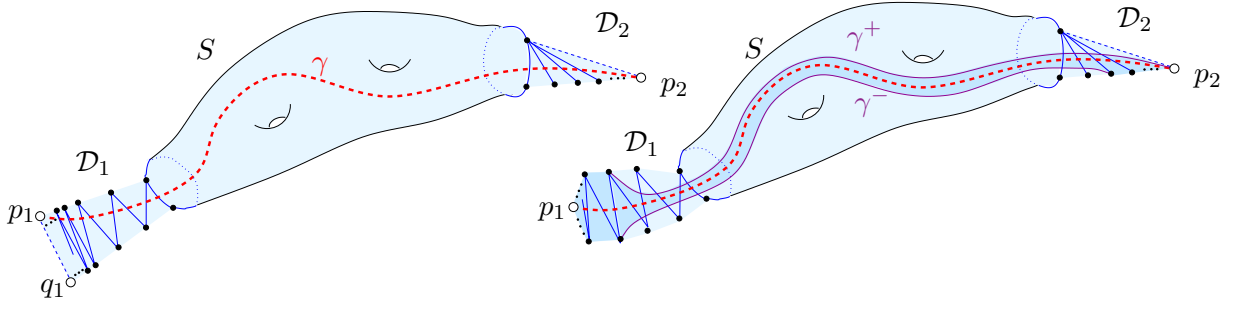,width=0.99\linewidth}
\caption{To the proof of Theorem~\ref{bad are all limit}.}
\label{belt}
\end{center}
\end{figure}

\medskip
\noindent
{\it  \underline{Step 2}: Arcs of $T'$ crossing  $\mathcal D_\gamma^T$.} 
Let  $\alpha\in T'$ be an arc not lying in $\bad_T T'$, i.e. $\alpha$  has finitely many crossings with the arcs of $T$. This means that  if $\alpha$  is crossing the domain  $\mathcal D_\gamma^T$ of $\gamma$, then the whole crossing $X=\alpha\cap \mathcal D_\gamma^T$ lies in some $S_i$ for a big enough $i$. 

From this we will show that the crossing $X$ ``stays far away'' from $\gamma$, by which we mean that there is a thin enough belt $\mathcal B_i$ along $\gamma$ containing no points of $X$. More precisely, if $X\subset S_i$, consider an arc $\gamma^+$ starting at any vertex of  $\mathcal D_1\setminus \{S_i\cup \{p_1\}\}$, then coming close to $\gamma$ inside  $\mathcal D_1$, then following $\gamma$ till almost the other endpoint $p_2$ and landing at any vertex of  $\mathcal D_2\setminus \{S_i\cup \{p_2\}\}$ (if $p_2$ is not an accumulation point of $\mathcal D_\gamma^T$, the arc $\gamma^+$ will have the second endpoint at $p_2$). If $\cald_2$ is an incoming fan, then $\gamma^+$ ends at the source of that fan.
Similarly, we construct an arc  $\gamma^-$ on the other side of $\gamma$ ($\gamma^-$ may coincide with $\gamma$).
Cutting $\mathcal D_\gamma^T$ along $\gamma^+$ and $\gamma^-$ we obtain a belt $\mathcal B_i$ along $\gamma$ (which is a disc with at most two accumulation points $p_1$ and $p_2$) and some finite connected or disconnected surface. Clearly, $\mathcal B_i$ is free of points of $X$. Notice also  that if the belts $\mathcal B_i$ are chosen as maximal possible inside $\mathcal D_\gamma^T\setminus S_i$,  then these belts are  nested: $\mathcal B_{i+1}\subset \mathcal B_i$ for each $i$.
In the case when $\cald_1=\cald_2$, the construction of the belt is very similar but  now  $\gamma^-$ coincides with $\gamma$.

\medskip
\noindent
{\it  \underline{Step 3}: Triangulation $\widetilde T$ of $\mathcal D_\gamma^T$.} 
Now, we will construct a triangulation  $\widetilde T$ of  the domain $\mathcal D_\gamma^T$ 
satisfying  $\bad_{\widetilde T} T'\subset \bad_{ T} T'$ and containing $\gamma$. 
We will choose the  triangulation  $\widetilde T$  as follows:
\begin{itemize}
\item in $\cals \setminus  \mathcal D_\gamma^T$ the new triangulation  $\widetilde T$ coincides with $T$;
\item $\gamma\in\widetilde T$;
\item $\widetilde T$ contains $\gamma^+$ and $\gamma^-$  for a belt $\mathcal B_i$ constructed in Step~2 for some choice of $S_i$, i.e. 
$\widetilde T$ contains $\gamma^+$ and $\gamma^-$, so that  $\mathcal D_\gamma^T\setminus \{\gamma^+ \cup \gamma^- \}$ is a union of a finite surface $S_0$ and a disc $\calb_i$ with at most two accumulation points (the endpoints of $\gamma$);
\item the finite surface $S_0$ is triangulated in any way;
\item each of the two connected components of $ \calb_i \setminus \gamma$ is a disc triangulated as in Fig.~\ref{constructing}, that is as 
\begin{itemize}
\item an elementary zig-zag (if the disc has two one-sided accumulation points);
\item an elementary incoming fan (if the disc has a unique one-sided accumulation point $p_i$ where $i\in \{1,2\}$), the source of this fan 
will be at the vertex closest to $p_j$, where $j\ne i$, $j\in \{1,2\}$.
\item any triangulation of a finite polygon (otherwise).

\end{itemize}

\end{itemize}

\begin{figure}[!h]
\begin{center}
\psfrag{Dj}{ $\mathcal D_j$}
\psfrag{g}{\color{red} $\gamma$}
\psfrag{pi}{ $p_i$}
\psfrag{pj}{ $p_j$}
\epsfig{file=./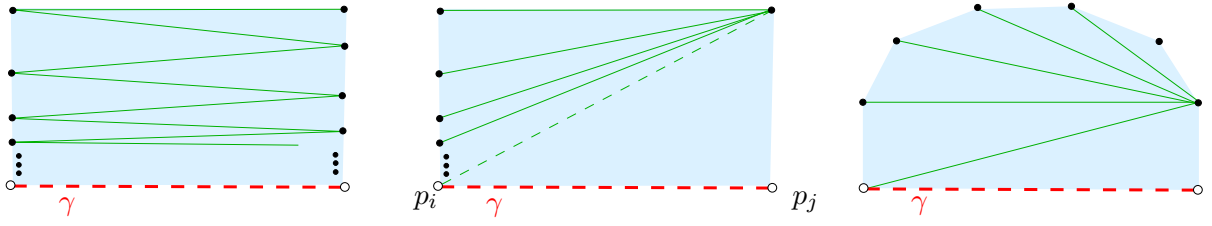,width=0.99\linewidth}
\caption{Constructing triangulation $\widetilde T$.}
\label{constructing}
\end{center}
\end{figure}

Notice that the triangulation $\widetilde T$ contains the arcs $\gamma_i^+$ and $\gamma_i^-$ cutting out the belt $\mathcal B_i$ for some $i\in\mathbb N$. Moreover, by construction it contains a belt lying inside any given belt $\mathcal B_j$, $j>i$.
This implies that for every arc $\alpha\in T'$ such that $\alpha\notin \bad_T T'$, some belt $\mathcal B_j$ is free of $\alpha$ and hence,
$\alpha\notin \bad_{\widetilde T} T'$  (here, we also use that the triangulation  $\widetilde T$ coincides with $T$ outside  $\mathcal D_\gamma^T$).   So, $\bad_T T'\subset \bad_{\widetilde T} T'$.  

Furthermore,  $\widetilde{T}$ has at most three bad arcs with respect to $T$: $\gamma$ and at most two limit arcs of incoming fans. Note that $\gamma$ is not necessarily a limit arc of $\widetilde T$.

\medskip
\noindent
{\it  \underline{Step 4}: Mutation from $T$ to $\widetilde T$.}   
Now, we will construct a mutation $\mu^{(0)}\circ \muone_1$ transforming $T$ to $\widetilde T$ inside $\cald^T_{\gamma}$.

The plan will be as follows.
We will label the non-limit arcs of $\widetilde T$ by natural numbers and will first apply a finite sequence of flips to obtain the first arc of $\widetilde T$, then the second, and so on. By this we will reconstruct all non-limit arcs of $\widetilde T$, and all the limit arcs of $\widetilde T$ will be included in the newly obtained triangulation as limits of existing arcs.

There are two difficulties with this plan:
\begin{itemize}
\item[(A)] the arc $\gamma$ may be a non-limit arc of $\widetilde T$, while $\gamma\in \bad_T \widetilde T$;
\item[(B)] the sequence of mutations described above may turn out to be not admissible 
(recall that a sequence of elementary mutations $\muone_1:=...\circ \mu_2\circ \mu_1$
 is admissible, if
 for every $\beta\in T$ there exist $n$ such that  $\mu_i\circ\dots \circ \mu_1(\beta)=\mu_n\circ\dots \circ \mu_1(\beta)$, for all $i>n$). 

\end{itemize}

To resolve the first difficulty, notice that if $\gamma$ is not a limit arc in $\widetilde T$ then the arc $\tau$ obtained by flipping $\gamma$ in $\widetilde T$ is not a bad arc with respect to $T$ (see Fig.~\ref{up side down}).  So, we can obtain $\tau$  after a finite mutation $\mu^{(0)}$. Then we will reconstruct one by one all other non-limit arcs of $\widetilde T$ (as they are not bad with respect to $T$). At the end we will flip $\tau$ back to $\gamma$ (i.e. the mutation transforming $T$ to $\widetilde T$ will be an infinite mutation followed by a single flip).


\begin{figure}[!h]
\begin{center}
\psfrag{Dj}{ $\mathcal D_j$}
\psfrag{g}{\color{red} $\gamma$}
\psfrag{pi}{ $p_i$}
\psfrag{pj}{ $p_j$}
\psfrag{t}{\color{Plum} $\tau$}
\epsfig{file=./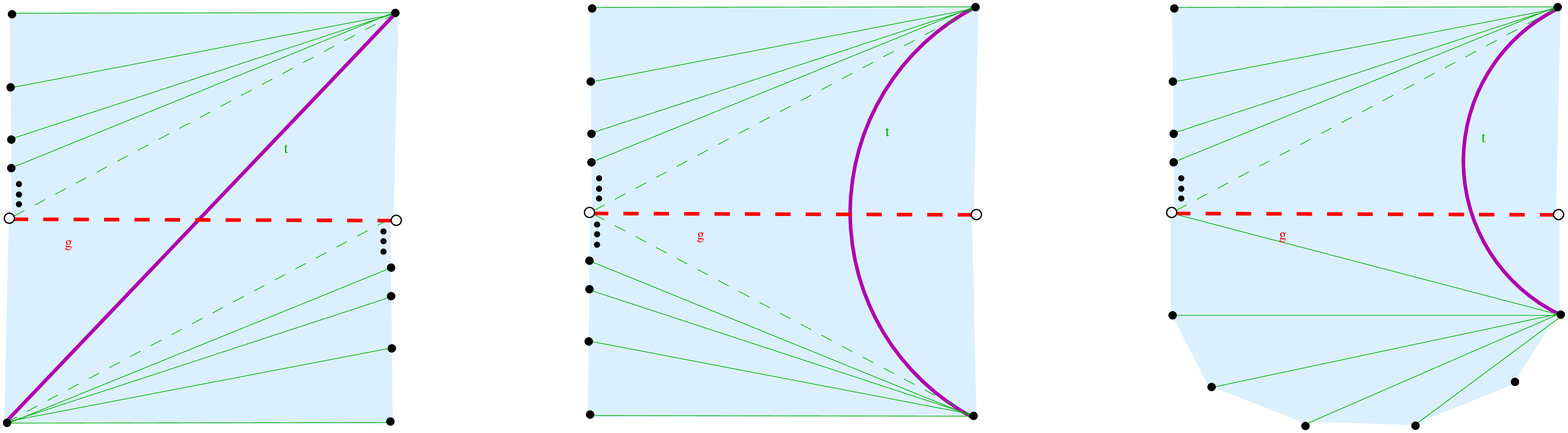,width=0.89\linewidth}
\caption{To Step~4.}
\label{up side down}
\end{center}
\end{figure}



To resolve the second difficulty, i.e. to make sure that the constructed sequence of mutations is admissible, we will be a bit more precise about the order of mutations. 
We start with  choosing some belt $\mathcal B_i$, $i\in \mathbb N$, bounded by arcs of $\widetilde T$. Then we apply a finite mutation $\mu^{(0)}_0$ to get all arcs of $\widetilde T$ lying in the finite part
$ \mathcal D_\gamma^T\setminus \mathcal B_i$  (this is possible as  $\widetilde T$ contains no bad arcs with respect to $T$ lying in $\cald^T_{\gamma}\setminus \calb_i$). Now, our aim is to find an infinite mutation inside the disc $\mathcal B_i$ which will transform  $\mu^{(0)}_0(T)$ to $\widetilde  T$.

To settle the triangulation inside $\mathcal B_i$ we 
 label the  arcs of $\mu^{(0)}_0(T)$ lying inside  $\mathcal B_i$ by  $\{\alpha_j\}$, $j\in \mathbb N$, we will also label   the non-limit arcs of  $\widetilde T$ lying inside  $\mathcal B_i$ by  $\{\beta_j \}$,  $j\in \mathbb N$. The arcs of $\mu^{(0)}(T)$ can be labelled in an arbitrary order. The arcs of $\widetilde T$ will be labelled
``from outside to inside'' so that each of the domains  $\mathcal B_i\setminus \{\beta_1,\dots,\beta_m\}$   remains connected (i.e. remains a disc) for every $m$. 

Given this labelling, we will first
find  a sequence of flips which will take the arc $\alpha_1$ to the arc $\beta_1$.
To achieve this, notice that there is a finite surface $S_i\in T$ containing $\beta_1$ (as $\beta_1$ is not bad with respect to $T$),
so one can expand $S_i$ to obtain a finite surface $\hat S_i\in T$ containing both  $\alpha_1$ and  $\beta_1$. Now, we apply Lemma~\ref{finite surface} to find a finite mutation $\mu^{(0)}_1$ which takes $\alpha_1$ to $\beta_1$ inside  $\hat S_i$.  
 Then we will find a finite sequence of flips which will take  $\mu^{(0)}_1(\alpha_2)$ to $\beta_2$ (here, we use that the arcs of $\widetilde T$ are enumerated so that  $\mathcal B_i\setminus  \{\beta_1\}$ is connected).
Similarly, we proceed for all other arcs inside $\calb_i$ (using that  $\mathcal B_i\setminus  \{\beta_1,\dots,\beta_m\}$ is connected).
The composition $\muone_1=...\circ\mu^{(0)}_2\circ\mu^{(0)}_1\circ\mu^{(0)}_0$ is an admissible infinite sequence of mutations
transforming $T$ to a triangulation which  differs from $\widetilde T$ at most by flipping $\gamma$ to $\delta$.
Composing the flip with $\muone_1$ we obtain a mutation $f_1\circ \muone_1$ taking $T$ to $\widetilde T$.

\medskip
\noindent
{\it  \underline{Step 5}: Repeating Steps~1--4 to remove all bad arcs.}   
The mutation $f_1\circ \muone_1$ constructed in Step~4 transforms $T$ to a triangulation $\widetilde T$ such that
 $\bad_T T'\subset \bad_{\widetilde T} T'$. As $\gamma\in\widetilde T$, this implies that $\bad_{\widetilde T} T'<\bad_T T'$. Also, notice that all bad arcs of $T'$ with respect to $\widetilde T$ are still limit arcs of $T'$. So, all assumptions of the theorem hold for the pair of triangulations $\widetilde T$ and $T'$ (but with smaller number of bad arcs). 
So, we can  apply Steps~1-4 again to find a mutation $f_2\circ\muone_2$ which will reduce the number of bad arcs.
Repeating this at most $k$ times we obtain a finite sequence of infinite mutations $\mun=f_k\circ\muone_k\circ\dots\circ f_1\circ\muone_1$ 
such that $\hat T=\mun(T)$ is a triangulation for which $\bad_{\hat T} T'=0$.

\medskip
\noindent
{\it  \underline{Step 6}: Mutation from $\hat T$ to $T'$.}   
Since $T'$ has no bad arcs with respect to $\hat T$,
by  Proposition~\ref{Prop:NoBadArc} there exists an infinite mutation $\muone_{k+1}$ transforming $\hat T$ to $T'$.
Composing $\muone_{k+1}$ with $\mun$ constructed in Step~5, we obtain  $\mu_{k+1}=\muone_{k+1}\circ \mun$  satisfying $\mu_{k+1}(T)=T'$. Notice, that a flip applied between two infinite mutations can be considered as a first elementary mutation in the next infinite sequence, so these flips do not affect the total number of infinite mutations we need to apply. 
Hence, we will be able to transform $T$ to $ T'$ in at most $k+1$ infinite mutations.
 
\end{proof}

\bigskip 

In a more general situation, i.e. in the presence of  non-limit bad arcs in $T'$, it turns out that the number of bad arcs does not provide the answer to the question whether there exists $\mun$ such that $\mun(T)=T'$ or not. In particular, $|\bad_T T'|<\infty$ does not imply existence of $\mun$ and   
 $|\bad_T T'|=\infty$ does not imply non-existence  of $\mun$ either (see Examples~\ref{ex:countrex1} and~\ref{ex:countrex2} below).

\begin{definition}[Stronger domain]
\label{def stronger domain} Given triangulations $T$ and $T'$ of a surface $\cals$ and an accumulation point $p$, we will say that the $\overline{p}$-domain $\mathcal D_{\overline p}^{}$  is {\it stronger} than the $\overline{p}$-domain $\mathcal D_{\overline p}'$
and write  $\mathcal D_{\overline p}^{} \succ \mathcal D_{\overline p}'$
 if one of the following holds:
\begin{itemize}
\item[(a)]   $\mathcal D_{\overline p}'$ is an outgoing fan while  $\mathcal D_{\overline p}^{}$ is not;
\item[(b)]   $\mathcal D_{\overline p}^{}$ is not an outgoing fan and $\mathcal D_{\overline p}^{}$ contains a limit arc which crosses infinitely many arcs of $\mathcal D_{\overline p}'$.

\end{itemize}
\end{definition}

\begin{remark}
An incoming fan domain always has a limit arc.
A zig-zag domain may  contain no limit arcs (when it is a zig-zag around an accumulation point $p$). In this case one can understand $p$ as the vanishing limit arc
and so Definition~\ref{def stronger domain}~(b) applies.
  
\end{remark}

\begin{remark}
Definition~\ref{def stronger domain} can be rephrased in the following way. For the pair   $(\mathcal D_{\overline p}^{},\mathcal D_{\overline p}')$
we can write whether its components are incoming fans, outgoing fans or zig-zags (denoting them $\mathsf{In},  \mathsf{Out},  \mathsf{Zig}$ respectively).
For example, $(\mathcal D_{\overline p}^{},\mathcal D_{\overline p}')\subset (\mathsf{In},\mathsf{Out})$ would mean that 
$\mathcal D_{\overline p}$ is an almost elementary incoming fan and $\mathcal D_{\overline p}'$ is an almost elementary outgoing fan.

In this notation,  $\mathcal D_{\overline p}^{} \succ \mathcal D_{\overline p}'$ if  $(\mathcal D_{\overline p}^{},\mathcal D_{\overline p}')$ is one of the following:
\begin{itemize}
\item[(a)]   $(\mathsf{In},\mathsf{Out})$ or  $(\mathsf{Zig},\mathsf{Out})$;
\item[(b)]   $(\mathsf{In},\mathsf{In})$, or  $(\mathsf{In},\mathsf{Zig})$, or  $(\mathsf{Zig},\mathsf{In})$, or  $(\mathsf{Zig},\mathsf{Zig})$
 and $\mathcal D_{\overline p}^{}$ contains a limit arc which crosses infinitely many arcs of $\mathcal D_{\overline p}'$.

\end{itemize}

\end{remark}

The next property follows immediately from the definition.

\begin{proposition}
Let $T,T',T''$ be three triangulations of $\cals.$ 
If  $\mathcal D_{\overline p}^{}\succ \mathcal D_{\overline p}'$ and  $\mathcal D_{\overline p}'\succ \mathcal D_{\overline p}''$
then 
 $\mathcal D_{\overline p}^{}\succ \mathcal D_{\overline p}''$.

\end{proposition}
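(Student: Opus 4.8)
The plan is to unwind the relation $\succ$ through the $\{\mathsf{In},\mathsf{Out},\mathsf{Zig}\}$-reformulation recorded above and to reduce everything to a single monotonicity statement about limit arcs at $p$. First I would record the one structural fact that drives the reduction: an outgoing fan has no limit arc, so in Definition~\ref{def stronger domain} the \emph{stronger} (left) domain is never an outgoing fan. Applying this to both hypotheses, $\mathcal D'_{\overline p}$ is not $\mathsf{Out}$ (from $\mathcal D'_{\overline p}\succ\mathcal D''_{\overline p}$) and $\mathcal D_{\overline p}$ is not $\mathsf{Out}$ (from $\mathcal D_{\overline p}\succ\mathcal D'_{\overline p}$). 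Since $\mathcal D'_{\overline p}$ is not $\mathsf{Out}$, the relation $\mathcal D_{\overline p}\succ\mathcal D'_{\overline p}$ cannot be of type~(a), so it is of type~(b): the domain $\mathcal D_{\overline p}$ carries a limit arc $\ell$ crossing infinitely many arcs of $\mathcal D'_{\overline p}$.

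Next I would split on the type of $\mathcal D''_{\overline p}$. If $\mathcal D''_{\overline p}$ is an outgoing fan, then since $\mathcal D_{\overline p}$ is not an outgoing fan, case~(a) of Definition~\ref{def stronger domain} applies directly and $\mathcal D_{\overline p}\succ\mathcal D''_{\overline p}$. So the only case requiring work is $\mathcal D''_{\overline p}$ not $\mathsf{Out}$, in which case $\mathcal D'_{\overline p}\succ\mathcal D''_{\overline p}$ is of type~(b): the limit arc $\ell'$ of $\mathcal D'_{\overline p}$ crosses infinitely many arcs of $\mathcal D''_{\overline p}$. It then suffices to show that the \emph{same} limit arc $\ell$ of $\mathcal D_{\overline p}$ crosses infinitely many arcs of $\mathcal D''_{\overline p}$; this yields $\mathcal D_{\overline p}\succ\mathcal D''_{\overline p}$ by type~(b) and closes the argument.

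For this last step I would pass to the local model near $p$: the boundary marked points of the $\overline p$-domain form a monotone sequence converging to $p$, and any arc incident to $p$ is determined near $p$ by the position of its free endpoint relative to this sequence. In this model the condition ``$\lambda$ crosses infinitely many arcs of a non-$\mathsf{Out}$ domain $\mathcal E$'' becomes a \emph{nesting} condition: $\lambda$ must reach past the limit arc $\ell_{\mathcal E}$ of $\mathcal E$ toward $p$ (its free endpoint lies on the $p$-side of the free endpoint of $\ell_{\mathcal E}$, or $\lambda$ crosses $\ell_{\mathcal E}$ outright). Concretely, crossing infinitely many arcs of an incoming fan with source $s$ means the free endpoint lies strictly between $p$ and $s$, and the zig-zag cases reduce to the same comparison against the (possibly vanishing) limit arc by Observation~\ref{crossing limit arc}. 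Since $\ell$ reaches past $\ell'$ toward $p$, while $\ell'$ in turn reaches past $\mathcal D''_{\overline p}$, the enclosure relation composes and $\ell$ reaches past $\mathcal D''_{\overline p}$ as well; hence $\ell$ crosses all but finitely many of the arcs of $\mathcal D''_{\overline p}$ that $\ell'$ crosses, and in particular infinitely many of them.

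The main obstacle is precisely this nesting/monotonicity lemma: one must check, uniformly across the type combinations $(\mathsf{In},\mathsf{In})$, $(\mathsf{In},\mathsf{Zig})$, $(\mathsf{Zig},\mathsf{In})$, $(\mathsf{Zig},\mathsf{Zig})$ permitted by case~(b), that ``crossing infinitely many arcs of $\mathcal E$'' genuinely corresponds to a transitive enclosure of limit arcs at $p$, and that this comparison is unaffected by the finitely many finite polygons a zig-zag may attach along its bases. Once this local comparison is in place, transitivity of $\succ$ follows immediately, as asserted.
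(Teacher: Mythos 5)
Your formal reductions are sound: in both clauses of Definition~\ref{def stronger domain} the left-hand domain is never an outgoing fan, so $\mathcal D_{\overline p}$ and $\mathcal D'_{\overline p}$ are not outgoing fans, the first hypothesis must be of type~(b), and the case where $\mathcal D''_{\overline p}$ is an outgoing fan is disposed of by clause~(a). (For comparison: the paper offers no argument at all here, stating only that the property ``follows immediately from the definition'', so the entire burden of the statement sits on your final lemma.)

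The gap is precisely in that final lemma. You correctly record that ``$\lambda$ crosses infinitely many arcs of a non-outgoing domain $\mathcal E$'' can happen in two ways: either $\lambda$ is nested inside the limit arc $\ell_{\mathcal E}$ at $p$, \emph{or} $\lambda$ crosses $\ell_{\mathcal E}$ outright away from $p$. But your composition step uses only the first alternative (``$\ell$ reaches past $\ell'$, $\ell'$ reaches past $\mathcal D''_{\overline p}$, hence $\ell$ reaches past $\mathcal D''_{\overline p}$''). If $\mathcal D'_{\overline p}\succ\mathcal D''_{\overline p}$ holds only via the second alternative --- $\ell'$ meeting the arcs of $\mathcal D''_{\overline p}$ for a global topological reason rather than because of the germ order at $p$ --- nothing forces $\ell$ to do the same, and the enclosure does not compose. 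This branch is not vacuous: on a disc two limit arcs sharing the endpoint $p$ cannot cross, so there your argument is complete, but on a surface with more topology they can. For instance, on an annulus with accumulation point $p$ and further marked points $A,B,C$ on the outer boundary (in that order moving away from the tail of marked points $q_i\to p$), let $T''$ and $T$ have incoming fans at $p$ with sources $B$ and $C$ whose arcs stay near the outer boundary, and let $T'$ have an incoming fan with source $A$ whose arcs $Aq_i$ wind once around the core. Then the limit arc $Cp$ of $\mathcal D_{\overline p}$ crosses every winding arc $Aq_i$, and every arc $Bq_j$ separates $A$ from $p$ and hence is crossed by the limit arc $Ap$ of $\mathcal D'_{\overline p}$; yet $Cp$ crosses no arc $Bq_j$ and is not nested inside $Bp$ at $p$, so the conclusion $\mathcal D_{\overline p}\succ\mathcal D''_{\overline p}$ fails to follow from your argument (and indeed appears to fail outright in this configuration). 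So the missing case cannot be absorbed into the nesting picture: you would need either an additional argument controlling essential crossings of limit arcs away from $p$, or a restriction of the statement to the situations where the germ order at $p$ governs all infinite intersection.
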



%

\begin{theorem}
\label{no mun from stronger}
Let $T$ and $T'$ be triangulations of $\cals$. 
Suppose that $\mathcal D_{\overline p}^{}\succ \mathcal D_{\overline p}'$ for some one-sided limit at an accumulation point $p$. Then there is no finite sequence of infinite mutations $\mun$ transforming $T$ to $T'$.

\end{theorem}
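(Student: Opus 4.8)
The plan is to argue by contradiction using a monotonicity (monovariant) argument at the single accumulation point $p$, in the spirit of the proof of Theorem~\ref{infinite sequences}, but refined so that it also detects the configurations hidden inside condition~(b) of Definition~\ref{def stronger domain}. I fix the one-sided limit $\overline p$ throughout and only keep track of the $\overline p$-domains. Suppose for contradiction that $\mun=\muone_n\circ\dots\circ\muone_1$ satisfies $\mun(T)=T'$, and set $T_0=T$, $T_i=\muone_i\circ\dots\circ\muone_1(T)$, so that $T_n=T'$. I would prove by induction on $i$ that $\mathcal D^{T_i}_{\overline p}\succ \mathcal D^{T'}_{\overline p}$ for every $i$; since $\succ$ is irreflexive (a limit arc of a domain never crosses the arcs of its own triangulation), the case $i=n$ gives $\mathcal D^{T'}_{\overline p}\succ \mathcal D^{T'}_{\overline p}$, the desired contradiction. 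The base case $i=0$ is the hypothesis, so everything reduces to a single inductive step: one infinite mutation cannot destroy $\succ$-dominance over the fixed target $T'$.

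Two auxiliary statements carry the argument. First, a \emph{generalised} version of Lemma~\ref{in-out-zig}: if $\mathcal D^A_{\overline p}\succ \mathcal D^B_{\overline p}$ for triangulations $A,B$, then $|\bad_A B|=\infty$. This is immediate, since in case~(a) it is exactly Lemma~\ref{in-out-zig}, while in case~(b) the limit arc $\alpha$ of $\mathcal D^A_{\overline p}$ crosses infinitely many arcs $\gamma\in B$, each of which then lies in $\bad_A B$ by Observation~\ref{crossing limit arc}(a). Combined with Proposition~\ref{prop:|bad|=>more than one}, this says that no single infinite mutation can strictly weaken a triangulation: if $\muone(A)=A'$, then $\mathcal D^A_{\overline p}\not\succ \mathcal D^{A'}_{\overline p}$.

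Second, I would establish a near-transitivity statement, which is where the real work lies: if $\mathcal D^{S}_{\overline p}\succ \mathcal D^{T'}_{\overline p}$ but $\mathcal D^{S_1}_{\overline p}\not\succ \mathcal D^{T'}_{\overline p}$, then $\mathcal D^{S}_{\overline p}\succ \mathcal D^{S_1}_{\overline p}$. Granting this, the inductive step follows: writing $S=T_{i-1}$ and $S_1=T_i=\muone_i(T_{i-1})$, if $\mathcal D^{T_i}_{\overline p}\not\succ \mathcal D^{T'}_{\overline p}$ then near-transitivity yields $\mathcal D^{T_{i-1}}_{\overline p}\succ \mathcal D^{T_i}_{\overline p}$, hence $|\bad_{T_{i-1}}T_i|=\infty$ by the generalised lemma, contradicting Proposition~\ref{prop:|bad|=>more than one} applied to the single infinite mutation $\muone_i$. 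To prove near-transitivity I would split on the type of $\mathcal D^{S_1}_{\overline p}$. If it is an outgoing fan, then $\mathcal D^{S}_{\overline p}$, being non-outgoing (as $\mathcal D^{S}_{\overline p}\succ \mathcal D^{T'}_{\overline p}$), dominates it by Definition~\ref{def stronger domain}(a). Otherwise all three domains are non-outgoing; unwinding the hypotheses forces $\mathcal D^{T'}_{\overline p}$ to be non-outgoing as well, the limit arc $\alpha$ of $\mathcal D^{S}_{\overline p}$ to cross infinitely many arcs of $\mathcal D^{T'}_{\overline p}$, and the limit arc $\beta$ of $\mathcal D^{S_1}_{\overline p}$ to cross only finitely many arcs of $\mathcal D^{T'}_{\overline p}$. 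The goal becomes to show that $\alpha$ crosses infinitely many arcs of $\mathcal D^{S_1}_{\overline p}$.

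This last point is the main obstacle and requires a genuinely geometric lemma near $p$: since all the marked points in play accumulate at $p$ from the same side, the arcs incident to $p$ acquire a linear order by their direction at $p$, and a curve $\tau$ crosses infinitely many arcs of a non-outgoing domain $D$ if and only if $\tau$ enters the angular sector swept out by $D$ at $p$ — equivalently $\tau$ crosses the limit arc $\ell_D$ into the base side (with the convention, matching Observation~\ref{crossing limit arc}(c), that for a zig-zag around $p$ the vanishing limit arc is $p$ itself and ``entering the sector'' means being incident to $p$). I would prove this characterisation from the fact that the arcs of $D$ converge to $\ell_D$ near $p$. Granting it, the three crossing hypotheses translate into the direction inequalities $\mathrm{dir}(\beta)\le \mathrm{dir}(\delta)<\mathrm{dir}(\alpha)$ at $p$, where $\delta$ is the limit arc of $\mathcal D^{T'}_{\overline p}$ and ``larger'' means closer to the base; these give $\mathrm{dir}(\beta)<\mathrm{dir}(\alpha)$, so $\alpha$ lies in the sector of $\mathcal D^{S_1}_{\overline p}$ and therefore crosses infinitely many of its arcs, as required. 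The degenerate configurations (one or more vanishing limit arcs, i.e. zig-zags around $p$) are handled directly through Observation~\ref{crossing limit arc}(c), and one checks they are consistent with the hypotheses rather than needing the directional inequality. Setting up this directional formalism carefully, and verifying it respects the non-uniqueness of almost elementary domains (it does, being invariant under removal of finitely many triangles), is the technical heart of the proof.
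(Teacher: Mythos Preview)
Your argument is correct and follows essentially the same route as the paper: both track the $\overline p$-domain along the chain $T_0,\dots,T_n$, use the implication ``$\succ$ implies $|\bad|=\infty$'' together with Proposition~\ref{prop:|bad|=>more than one} to forbid any single weakening step, and then show this monotonicity persists under composition. The only real difference is packaging: the paper states the inductive step as ``one infinite mutation either preserves the limit arc or makes the domain stronger (otherwise contradiction to the first paragraph)'', which amounts to an implicit trichotomy among non-outgoing $\overline p$-domains, whereas you make that trichotomy explicit via your near-transitivity lemma and the directional order on arcs at $p$; your version spells out precisely the geometric point that the paper compresses into a single parenthetical.
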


\begin{proof}
If  $\mathcal D_{\overline p}^{}\succ \mathcal D_{\overline p}'$ then $|\bad_T T'|=\infty$ (every arc of  $\mathcal D_{\overline p}'$ crossing the limit arc $\alpha$ of   $\mathcal D_{\overline p}$ lies in  $\bad_T T'$ by Observation~\ref{crossing limit arc}.a).
By Proposition~\ref{prop:|bad|=>more than one} this implies that if  $\mathcal D_{\overline p}^{}\succ \mathcal D_{\overline p}'$
then $T$ cannot be transformed to $T'$ in one infinite mutation.

In particular, if  $\mathcal D_{\overline p}^{}$ is not an outgoing fan, then it will not turn into an outgoing fan after applying one infinite mutation, so it will not turn into an outgoing fan after two infinite mutations, and so on: it cannot become an outgoing fan after finitely many infinite mutations. This proves the proposition for pairs $\mathcal D_{\overline p}^{}\succ \mathcal D_{\overline p}'$ coming from  Case (a)  
of Definition~\ref{def stronger domain}.

For the pairs $\mathcal D_{\overline p}^{}\succ \mathcal D_{\overline p}'$ coming from  Case (b), notice that 
 applying an infinite mutation $\muone$ either leaves the limit arc of  $\mathcal D_{\overline p}^{}$ unchanged or makes the domain stronger 
(otherwise, we get a contradiction to the first paragraph of the proof).
This means that after a finite sequence of infinite mutations the domain  $\mathcal D_{\overline p}^{}$ turns into a domain $\mathcal D$ which either has the same limit arc or is stronger than  $\mathcal D_{\overline p}^{}$. Since 
$\mathcal D_{\overline p}^{}\succ \mathcal D_{\overline p}'$, this implies that $T'$ cannot be obtained from $T$ by a finite sequence of infinite mutations.

\end{proof}

Theorem~\ref{no mun from stronger} gives a practical tool for deciding whether a triangulation $T$ can be transformed into a triangulation $T'$ in finitely many infinite mutations (in particular, we use it in Examples~\ref{ex:countrex1} and~\ref{ex infinity-gon}).
At the same time, it still cannot be turned into  an ``if and only if'' condition: in Example~\ref{ex:countrex1} we present a triangulation $T$ which cannot be transformed into $T'$  in finitely many infinite mutations, however, one cannot show it by immediate application of  Theorem~\ref{no mun from stronger}.

\bigskip

\begin{remark}
It is shown in~\cite{BG} that mutations along {\it admissible} sequences induce a preorder on the triangulations of a given surface (where $T<T'$ when there exists an admissible sequence of elementary mutations $\mu$ such that $\mu(T)=T'$). 
In our settings of infinite mutations completed with all limit arcs, this property obviously holds for the relation $<_n $  (where $T<_nT'$ if there exists a finite  sequence of infinite mutations $\mun$ such that $\mun(T)=T'$). On the other hand, the relation $<_1$ (where $T<_1T'$ if there exists an infinite mutation $\muone$ such that $\muone(T)=T'$) does not induce a preorder as a composition of two infinite mutations is not necessarily an infinite mutation.

The relation $<_n$ clearly defines a preorder on all triangulations of $\cals$ (as $T<_nT$ for every $T$ and $T_1<_nT_2 <_n T_3$ implies $T_1 <_{n} T_3$).
In the following proposition we will see that for every infinite surface there is a minimal element with respect to the preorder defined by the relation $<_n$.

\end{remark} 

\begin{proposition}
\label{distinguished triangulation}
For every infinite surface $\mathcal S$, there exists a triangulation $T$ such that 
\begin{itemize}
\item[-] for any arc $\gamma\in\cals,$ holds $|\gamma\cap T|<\infty$,  and
\item[-] for any triangulation $T'$ of $\cals$  there exists an infinite mutation $\mu^{(1)} $  satisfying   $T'=\mu^{(1)}(T)$.
\end{itemize}
\end{proposition}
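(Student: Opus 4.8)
The plan is to exhibit an explicit triangulation $T$ and then read off both properties from results already established, rather than reasoning abstractly about the preorder $<_n$. The guiding observation is Remark~\ref{rem: domain of gamma=in+zig}: the domain of any arc in a triangulation is a union of incoming fans and zig-zags and can never contain an outgoing fan, precisely because no arc crosses infinitely many arcs of an outgoing fan. Hence the natural candidate is a triangulation whose almost elementary domain around \emph{every} accumulation point is an \emph{outgoing} fan. For such a $T$ no arc can be bad, and both bullet points should then follow quickly.

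First I would construct $T$. For each accumulation point $p$ and each side from which boundary marked points accumulate to $p$, I label these points $q_1,q_2,\dots$ with $q_i\to p$ and put all spokes $pq_i$ into $T$; these arcs are pairwise compatible (they share the endpoint $p$) and triangulate a one-sided neighbourhood of $p$ as an outgoing fan with source $p$ and vanishing limit arc. Carrying this out for the finitely many accumulation points and their (at most two) sides absorbs all but finitely many boundary marked points into outgoing fans. Cutting $\cals$ along the outermost spokes leaves a complementary region $\cals_0$ of finite genus with finitely many boundary components, finitely many punctures, and finitely many marked points, and crucially with no accumulation points; I then complete $T$ by triangulating $\cals_0$ with finitely many arcs in any fashion.

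For the first bullet I would argue by contradiction. If some arc $\gamma$ satisfied $|\gamma\cap T|=\infty$, then, since $T$ restricts to a finite triangulation away from the finitely many accumulation points (and two arcs meet finitely often by Proposition~\ref{Prop:ArcsFinInter}), $\gamma$ would have to cross infinitely many arcs of some outgoing fan around an accumulation point, contradicting the defining property of outgoing fans recalled in Remark~\ref{rem: domain of gamma=in+zig}. Hence $|\gamma\cap T|<\infty$ for every arc $\gamma$. The second bullet is then immediate: for an arbitrary triangulation $T'$, every arc of $T'$ has finite intersection with $T$, so $|\bad_T T'|=0$, and Proposition~\ref{Prop:NoBadArc} supplies an infinite mutation $\muone$ with $\muone(T)=T'$.

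The step I expect to require the most care is the construction itself, together with the verification that $\cals_0$ genuinely has no accumulation points and that $T$ is a bona fide (maximal, limit-arc-free) triangulation. One must check that every accumulating marked point is consumed by exactly one outgoing fan, that the spokes together with the finite triangulation of $\cals_0$ leave no room for further compatible arcs (maximality), and that the fans produce only vanishing limit arcs so that $T$ contains no non-vanishing limit arc. Once this bookkeeping is in place, the two stated properties follow formally from Remark~\ref{rem: domain of gamma=in+zig} and Proposition~\ref{Prop:NoBadArc}, and in particular the second property shows $T<_1 T'$ for every $T'$, confirming that $T$ is minimal for $<_n$.
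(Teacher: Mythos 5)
Your proposal is correct and follows essentially the same route as the paper: both construct an outgoing fan triangulation around each accumulation point (the paper inside small disjoint disc neighbourhoods, you via spokes from each accumulation point), observe that every arc then meets $T$ only finitely often because no arc crosses infinitely many arcs of an outgoing fan, and conclude via Proposition~\ref{Prop:NoBadArc} from $|\bad_T T'|=0$. The extra bookkeeping you flag (maximality, absence of non-vanishing limit arcs) is sound but the paper treats it as immediate from the construction.
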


\begin{proof} For each accumulation point $p_i,$ choose a small disc  neighbourhood $D_i$ so that these neighbourhoods do not intersect. Inside each $D_i,$ set an outgoing fan triangulation (see Fig.~\ref{fig: best triang}). Choose any triangulation on the rest of the surface (the surface $\mathcal S\setminus \cup D_i$ has finitely many boundary marked points, so it has a finite triangulation).

Denote the triangulation constructed above by $T$ and observe that any arc $\gamma$ in $\mathcal S$ crosses $T$ only finitely many times (indeed, $\gamma$ crosses at most finitely many arcs from each $D_i$). Thus, for any triangulation $T'$, we have $|\bad_T T'|=0$ which in view of Proposition~\ref{Prop:NoBadArc} proves the result.
\end{proof}

\begin{figure}[!h]
\begin{center}
\psfrag{g}{{\color{red} $\gamma$}}
\epsfig{file=./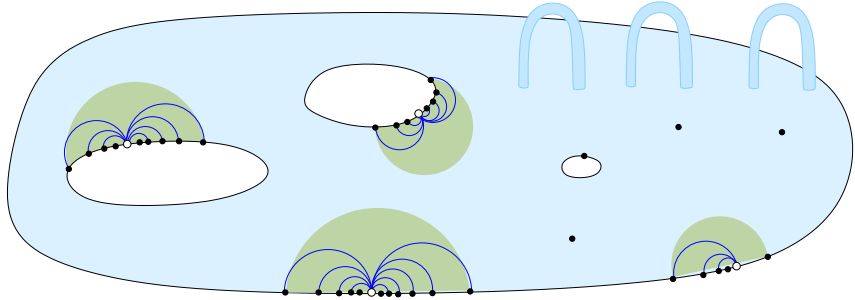,width=0.7\linewidth}
\caption{An example of an outgoing fan triangulation: neighbourhood of  each accumulation point is triangulated as a union of at most two outgoing fans, and the rest of the surface may be triangulated by finitely many  arcs.}
\label{fig: best triang}
\end{center}
\end{figure}

\begin{remark}
Proposition~\ref{distinguished triangulation} states that among all the triangulations there is a class of particularly good ones composed of almost elementary outgoing fans. We will call them  {\it outgoing fan triangulations} (see Definition~\ref{Def:FanTri}).

In Section~\ref{Sec:ClusAlg}, we will  introduce cluster algebras associated with infinite surfaces, and we will see that for an outgoing fan triangulation every cluster variable can be expressed as a Laurent polynomial in the initial cluster, see also Remark~\ref{rem: outgoing fan}. 
\end{remark}

\subsection{Examples} In this section we collect examples illustrating the statements from Section~\ref{sec hierarchy}.

\begin{example}[$|\bad_T T'|=\infty$ does not imply $\mun(T)\ne T'$]%
\label{ex:countrex2}
One could expect that if $|\bad_T T'|=\infty$ for triangulations $T$ and $T'$ of a surface, then there is no finite sequence of infinite mutations transforming $T$ to $T'$. This is not true, as illustrated in Fig.~\ref{countrex2}  by triangulations $T$ and $T'$ such that  $|\bad_T T'|=\infty$,  $|\bad_{T'} T|=\infty$, but $T'=\muone_2\circ\muone_1(T)$ where both $\muone_1$ and $\muone_2$ are shifts of sources of some fans. 

\end{example}

\begin{figure}[!h]
\begin{center}
\psfrag{T}{\color{blue} $T$}
\psfrag{T'}{\color{red} $T'$}
\psfrag{m1}{ $\muone_1$}
\psfrag{m2}{ $\muone_2$}
\epsfig{file=./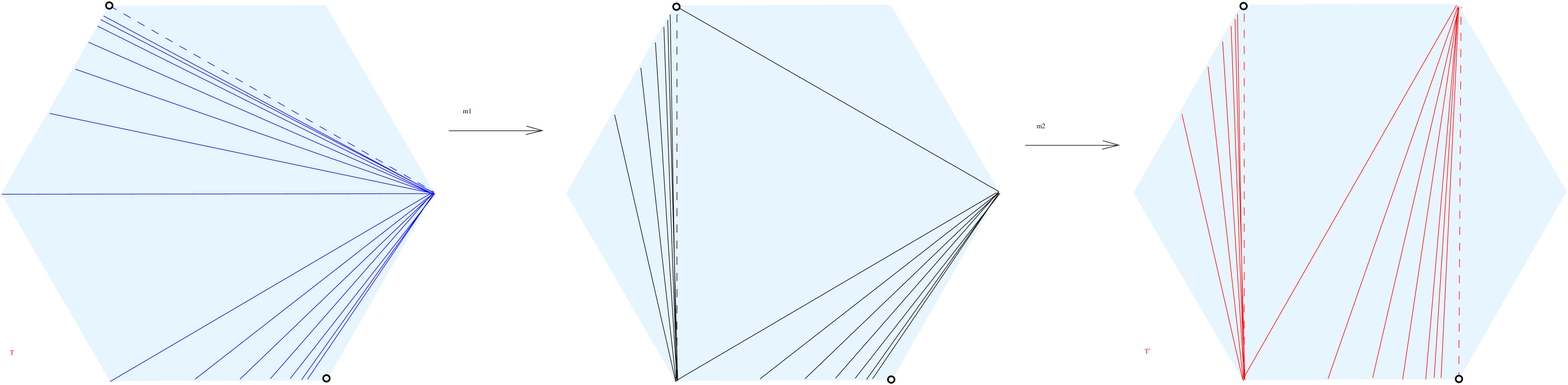,width=0.99\linewidth}
\caption{Triangulations $T$ and $T'$: $|\bad_T T'|=\infty$ and  $T'=\muone_2\circ\muone_1(T)$, see Example~\ref{ex:countrex2}.}
\label{countrex2}
\end{center}
\end{figure}

Theorem~\ref{infinite sequences} above shows that for some triangulations $T$ and $T'$ of the same surface one really needs infinite sequences of infinite mutations to transform $T$ to $T'$. 
The proof  is based on two triangulations such that $|\bad_T T'|=\infty$.
One could hope that the condition  $|\bad_T T'|<\infty$ would imply that there is a finite sequence of infinite mutations transforming $T$ to $T'$. However, the following example shows that it is not always the case.

\begin{example}[$|\bad_T T'|<\infty$ does not imply $\mun(T)=T'$]
\label{ex:countrex1}
Consider the triangulations $T$ and $T'$ shown in Fig.~\ref{countrex1}.
Notice that the set  $|\bad_T T'|$ consists of three arcs of $T'$:  two horizontal limit arcs of $T'$   and the diagonal passing through the centre of the octagon. 
We will show that it is  not possible to obtain $T'$ from  $T$  by applying finitely many infinite mutations.
The idea is very similar to the one in the proof of Theorem~\ref{infinite sequences}.

\begin{proof}
{\it Step 1: no $\muone$ transforms $T$ to $T'$.}
We will start by showing that $T$ cannot be transformed to $T'$ in one infinite mutation $\muone$.
Suppose that $\muone(T)=T'$ for an admissible infinite mutation $\muone$. 
Then every non-limit arc of $T'$  is obtained after finitely many elementary mutations (while the limit arcs can also arise in the process of completion). Consider the diagonal arc $\gamma\in T'$ passing through the centre of the octagon: $\gamma$ crosses infinitely many arcs of $T$, so it cannot arise after finitely many elementary mutations, however, $\gamma$ is not a limit arc of $T'$. This contradicts the existence of $\muone$. 

\medskip

\noindent
{\it Step 2: no $\mun$ transforms $T$ to $T'$.}
Now, suppose that there exists a finite sequence of infinite mutations $\mun$, so that $\mun(T)=T'$. Denote 
$\mun=\muone_n\circ\dots\circ\muone_1$ and $T_i:=\muone_i\circ\dots\circ\muone_1(T)$. We will also assume $T_0:=T$.
Notice that as $T'=T_n$ is a triangulation, each of $T_i$ should be a triangulation.

Let $\overline{p}$ be one of the four accumulating points $A,B,C,D$  of the octagon.
Consider the types of $\overline{p}$-domains $\mathcal D_{\overline{p}}^{(i)}$  of the triangulation $T_i$ at the one-sided limit accumulating to $\overline{p}$. Notice that  for every choice of $\overline{p}\in \{A,B,C,D\}$ the domain  $\mathcal D_{\overline{p}}^{(0)}$ is a zig-zag with the limit arc lying on the boundary of the octagon (as $T_0=T$). 

Let $k$ be  the smallest number $0<k\le n$ such that at least one of the four $\overline{p}$-domains  $\mathcal D_{\overline{p}}^{(k)}$ is not a zig-zag with the limit arc on the boundary. We will now look at the type of this $\overline p$-domain:
\begin{itemize}
\item 
 This domain  $\mathcal D_{\overline{p}}^{(k)}$ cannot be an outgoing fan (this follows from  Theorem~\ref{no mun from stronger}, but also one can directly notice that in this case $|\bad_{T_{k-1}} T_k|=\infty$ which is a contradiction to  $\muone_k(T_{k-1})=T_k$
 by  Proposition~\ref{prop:|bad|=>more than one}). 
\item
Suppose,  $\mathcal D_{\overline{p}}^{(k)}$ is an incoming fan. 
Then  for at least one other accumulation point $q\in \{A,B,C,D\}$, $q\ne p$, the $\overline q$-domain  is not a zig-zag, and hence, is an incoming fan (clearly, $q$ is one of the two neighbours of $p$ in  $ABCD$). Suppose that the limit arc of the incoming fan $\mathcal D_{\overline p}^{(k)}$ lies outside the quadrilateral $ABCD$. Then   $\mathcal D_{\overline p}^{(k)}\succ  \mathcal D_{\overline p}'$, which contradicts the assumption that $T'$ can be obtained from $T_k$ by finitely many infinite mutations (see  Theorem~\ref{no mun from stronger}). 
Hence, the limit arc of the incoming fan $\mathcal D_{\overline p}^{(k)}$ starts at $p$ and passes through the interior of $ABCD$ (it may land at $A, B, C$, or $D$, or cross through a side of $ABCD$).
Similarly, the limit arc of the incoming fan $\mathcal D_{\overline q}^{(k)}$ starts at $p$ and passes through the interior of $ABCD$.
These limit arcs are only compatible in one triangulation if one of them is a diagonal of $ABCD$  and another is a side. However, this would imply that the domains at the other two points $s,t\in \{A,B,C,D \}\setminus \{p,q \}$ are not zig-zags, but  incoming fans. The above reasoning shows that the limit arcs of these two new fans are also a diagonal and a side of $ABCD$,
however, all limit arcs of the four  fans will not be compatible.
The contradiction shows that   $\mathcal D_{\overline{p}}^{(k)}$ is not an incoming fan.

\item
So,  $\mathcal D_{\overline{p}}^{(k)}$ is a zig-zag with a limit arc not lying on the boundary of the octagon. Now, consider the other $\overline{q}$-domains $\mathcal D_{\overline{p}}^{(k)}$ for $\overline{q}\ne\overline{p}$,  $\overline{q}\in \{A,B,C,D\}$. None of them is a zig-zag with the limit arc on the boundary as such a domain is not compatible with the zig-zag  $\mathcal D_{\overline{p}}^{(k)}$. So, by a similar argument as above, all of  $\mathcal D_{\overline{q}}^{(k)}$ are also zig-zags with limit arcs not lying on the boundary of the octagon.
This means that $T_{k-1}$ looks similar to $T_0=T$ while $T_k$ looks similar to $T_n=T'$, thus the argument in Step~1 shows that $T_{k}=\muone_k(T_{k-1})$ is impossible.  

\end{itemize}

\end{proof}

\end{example}

\begin{figure}[!h]
\begin{center}
\psfrag{s}{ $s$}
\psfrag{p}{ $p$}
\psfrag{g}{\color{red} $\gamma$}
\psfrag{d}{\color{red} $\delta$}
\psfrag{T}{\color{blue} $T$}
\psfrag{T'}{\color{red} $T'$}
\psfrag{TT'}{$T\cap T'$}
\epsfig{file=./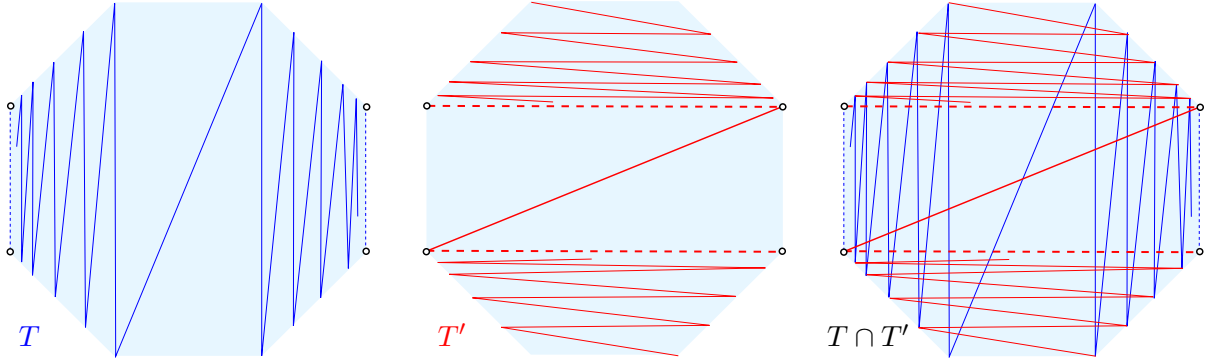,width=0.99\linewidth}
\caption{Triangulations $T$ and $T'$ with $|\bad_T T'|=3$ and no mutation $\mun$ transforming $T$ to $T'$, see Example~\ref{ex:countrex1}.}
\label{countrex1}
\end{center}
\end{figure}

\begin{example}[Infinity-gon]
\label{ex infinity-gon}
In Fig.~\ref{Fig:one-sided Infinity-gon} and \ref{Fig:two-sided Infinity-gon}, we illustrate the relations between various triangulations  
of the one-sided infinity-gon $I_1$ and the two-sided infinity-gon $I_2$, respectively, where
\begin{itemize}
\item[-] solid arrows indicate finite sequences of infinite mutations,
\item[-] dashed arrows indicate infinite sequences of infinite mutations.
\end{itemize}
Moreover, these figures can be considered as ``underlying exchange graphs'' for $I_1$ and $I_2$:
\begin{itemize}
\item[-] vertices of the graphs correspond to classes of triangulations of $I_1$ and $I_2,$ respectively, 
where classes are composed of triangulations having similar combinatorics, 
which roughly speaking means that two triangulations are in the same class if they have the same set of almost elementary domains attached to each other in the same way. 
\end{itemize}
We use direct constructions (as in Fig.~\ref{Fig:switch(outgoing-->incoming)},~\ref{Fig:shiftsource} and~\ref{Fig:switch(zigzag-->fan}) to show existence of  finite sequences of infinite mutations (where they do exist) and
 Theorem~\ref{no mun from stronger} to prove non-existence of finite sequences (otherwise).
The graphs shown in  Fig.~\ref{Fig:one-sided Infinity-gon}~and~\ref{Fig:two-sided Infinity-gon} agree with those presented in Fig.~8 and~9 of~\cite{BG}.

\begin{figure}[!h]
\begin{center}
\psfrag{j>i}{\scriptsize $j>i$}
\psfrag{i}{\tiny $i$}
\psfrag{i1}{\tiny $i+1$}
\psfrag{i2}{\tiny $i+2$}
\psfrag{i3}{\tiny $i+3$}
\psfrag{i4}{\tiny $i+4$}
\psfrag{j}{\tiny $j$}
\psfrag{j1}{\tiny $j+1$}
\psfrag{j2}{\tiny $j+2$}
\psfrag{j3}{\tiny $j+3$}
\epsfig{file=./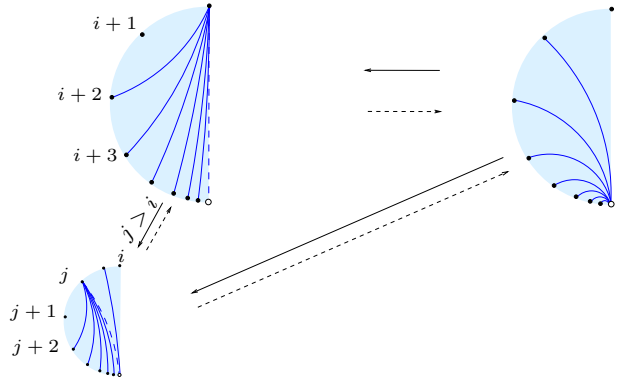,width=0.5\linewidth}
\caption{``Underlying exchange graph'' for the one-sided infinity-gon.}
\label{Fig:one-sided Infinity-gon}
\end{center}
\end{figure}

\begin{figure}[!h]
\begin{center}
\psfrag{i>j}{\tiny $i>j$}
\psfrag{i'<=i}{\tiny  $i'\le i$}
\psfrag{i}{\tiny $i$}
\psfrag{j}{\tiny $j$}
\psfrag{i'}{\tiny $i'$}
\psfrag{j'}{\tiny $j'$}
\psfrag{i+k}{\tiny $i+k$}
\psfrag{i'+k'}{\tiny $i'+k'$}
\psfrag{i'+k'>=i+k}{\tiny $i'+k'\ge i+k$}
\psfrag{k>=0}{\tiny $k\ge 0$}
\psfrag{k'>=0}{\tiny $k'\ge 0$}
\psfrag{j+k}{\tiny $j+k$}
\psfrag{j-k}{\tiny $j-k$}
\psfrag{j'>j}{\tiny $j'>j$}
\psfrag{j'<j}{\tiny $j'<j$}
\psfrag{i+k>=j}{\tiny $i+k\ge j$}
\psfrag{n}{\tiny $n$}
\psfrag{i<=j}{\tiny $i\le j$}
\epsfig{file=./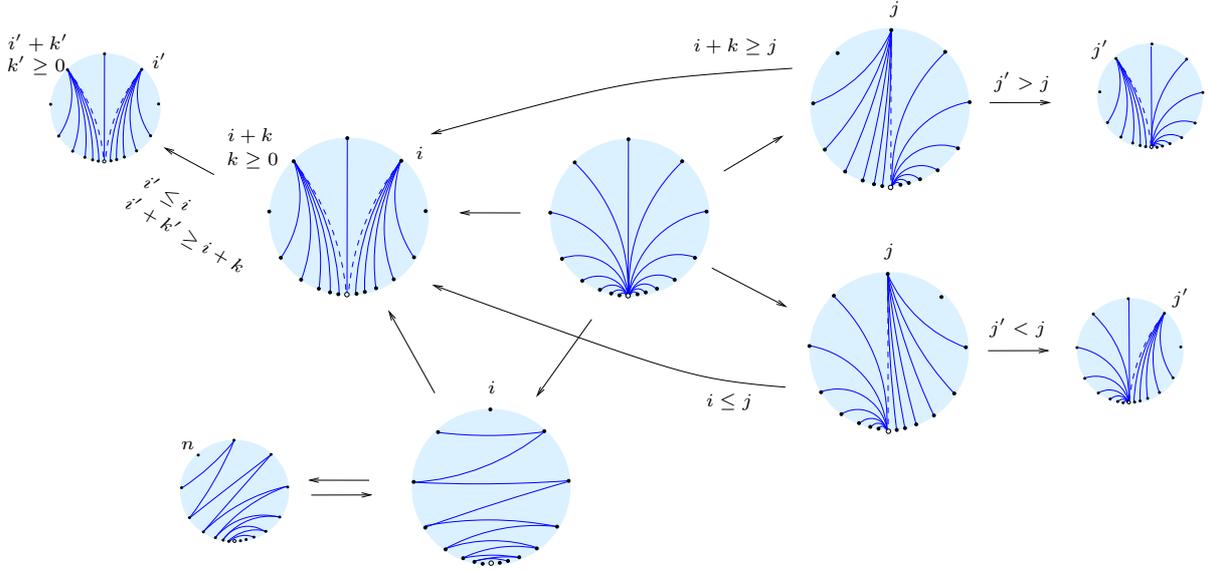,width=0.99\linewidth}
\caption{``Underlying exchange graph'' for the two-sided infinity-gon. We omit dashed arrows representing infinite sequences of infinite mutations, but they are assumed between the states where no solid arrow is shown.}
\label{Fig:two-sided Infinity-gon}
\end{center}
\end{figure}
\end{example}

\begin{remark}
In Fig.~\ref{Fig:one-sided Infinity-gon} and~\ref{Fig:two-sided Infinity-gon} one can find examples of outgoing fan triangulations for the infinity-gon: these are those triangulations shown in the middle of the figure and connected to every other triangulation by a solid arrow. 
\end{remark}
\section{Infinite rank surface cluster algebras} \label{Sec:ClusAlg}

In this section we will use hyperbolic structures on infinite surfaces to define infinite rank surface cluster algebras.
The spirit of this section follows the one of  Fomin and Thurston~\cite{FT}.

\subsection{Hyperbolic structure and converging horocycles}

Consider an infinite surface $\cals$ with a triangulation $T$. We will understand the triangles of $T$ as ideal hyperbolic triangles, i.e. hyperbolic triangles with all three vertices on the boundary of the hyperbolic plane. To each marked point $p_i$, we will assign a horocycle $h_i$  with an additional requirement that if marked points $p_j$, $j\in\mathbb{N},$ converge to an accumulation point $p_*$ then the corresponding horocycles converge to the horocycle $h_*$ at $p_*$ (see Definition~\ref{converging horocycles}). 

Similarly to the case of finite surfaces, hyperbolic triangles together with a choice of horocycles induce a lambda length assigned to each arc of the triangulation.  
We will show in  Section~\ref{compatibility} that these lambda lengths satisfy certain conditions (as long as the horocycles are converging). 
Conversely, in Section~\ref{data->hyperbolic structure} we will see that given a  set of positive numbers associated with  the arcs
of $T$ and  satisfying the above-mentioned conditions, 
there exists a hyperbolic surface with a choice of converging horocycles such that the lambda lengths of the arcs of $T$ coincide with the prescribed numbers,  see Theorem~\ref{Thm:CompatibilityWhole}. 

\subsubsection{Admissible hyperbolic structures}

\begin{definition}[Admissible hyperbolic structure]
By an {\it admissible hyperbolic structure} on an infinite surface $\cals$ 
we mean a locally hyperbolic metric such that
\begin{itemize} 
\item[-] all boundary arcs are complete (i.e. bi-infinite) geodesics, and
\item[-] all interior marked points are  cusps.

\end{itemize}
A surface with admissible hyperbolic structure will be called an {\it admissible hyperbolic surface}.       

\end{definition}

This means that having all vertices of triangles at interior or boundary marked points of a triangulation of $\mathcal S$, every triangle is isometric to a hyperbolic triangle with all vertices at the boundary of the hyperbolic plane.

\begin{lemma}
Let $\mathcal S$ be an infinite surface with an admissible hyperbolic structure.
Then the universal cover of $\mathcal S$ is contained in the hyperbolic plane $\bH^2$ as a proper subset.

\end{lemma}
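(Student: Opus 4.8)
The plan is to construct the developing map of the hyperbolic structure on the universal cover and then exploit the fact that $\cals$ has non-empty boundary. First I would lift the admissible hyperbolic structure along the covering $\pi\colon\widetilde{\cals}\to\cals$, so that $\widetilde{\cals}$ becomes a simply connected surface carrying a locally hyperbolic metric whose boundary is a disjoint union of complete geodesics (the $\pi$-preimages of the boundary arcs of $\cals$) and whose interior marked points are cusps. Analytic continuation of local isometric charts into $\bH^2$ then yields a developing map $D\colon\widetilde{\cals}\to\bH^2$, which is a local isometry; since $\pi_1(\widetilde{\cals})=1$ the associated holonomy is trivial, so $D$ is single-valued and globally defined on all of $\widetilde{\cals}$.

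Next I would show that $D$ is an embedding with convex image. The cleanest route is doubling: glue two copies of $\cals$ along the geodesic boundary $\partial\cals$ to obtain a hyperbolic surface $\cals^{d}$ without boundary (the gluing is smooth because the boundary arcs are complete geodesics, and each boundary marked point where two boundary arcs meet asymptotically becomes a cusp of $\cals^{d}$). As $\cals^{d}$ is a complete boundaryless hyperbolic surface, its developing map restricts on the universal cover to an isometry $\widetilde{\cals^{d}}\xrightarrow{\ \sim\ }\bH^2$. Since the geodesic boundary of $\cals$ is incompressible, the inclusion $\cals\hookrightarrow\cals^{d}$ is $\pi_1$-injective, hence lifts to an isometric embedding $\widetilde{\cals}\hookrightarrow\widetilde{\cals^{d}}\cong\bH^2$ identifying $\widetilde{\cals}$ with the closed region lying over $\cals\subset\cals^{d}$. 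This region is an intersection of closed half-planes bounded by the images of the boundary geodesics, hence convex, and it coincides with $D(\widetilde{\cals})$.

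Finally, properness is immediate from non-emptiness of the boundary: $\cals$ carries infinitely many boundary marked points on its finitely many boundary components, so $\partial\cals\neq\varnothing$, whence $\partial\widetilde{\cals}\neq\varnothing$ and the convex image $D(\widetilde{\cals})$ is bounded by at least one complete geodesic $\gamma$. The open half-plane of $\bH^2$ on the side of $\gamma$ opposite to $D(\widetilde{\cals})$ is then disjoint from the image, so $D$ realises $\widetilde{\cals}$ as a proper subset $D(\widetilde{\cals})\subsetneq\bH^2$, as claimed.

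The main obstacle I anticipate is justifying the embedding and convexity step in the presence of the accumulation points of boundary marked points: at such a point infinitely many ideal vertices accumulate, so before invoking that the universal cover of $\cals^{d}$ is $\bH^2$ one must verify that $\cals^{d}$ is genuinely complete there, i.e.\ that the developing image does not degenerate or ``wrap around'' near an accumulation point. I would treat this locally, using that by the standing hypotheses each accumulation point has a neighbourhood meeting only finitely many boundary components and developing to a horocyclic neighbourhood of a single ideal point of $\bH^2$, so that completeness and injectivity of $D$ are preserved in that region.
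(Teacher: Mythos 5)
Your doubling argument is a genuinely different route from the paper's, and it has a gap at exactly the point you flag in your last paragraph. The step ``$\cals^{d}$ is a complete boundaryless hyperbolic surface, \emph{hence} $\widetilde{\cals^{d}}\cong\bH^2$'' is not available: an admissible hyperbolic structure is only required to be \emph{locally} hyperbolic, with complete geodesic boundary arcs and cusps at interior punctures, and no global completeness is imposed by the definition. Near an accumulation point of boundary marked points the double acquires infinitely many cusps accumulating at a single end, and whether the developed image there is an embedded, non-degenerate region --- equivalently, whether the structure is complete there --- is essentially the content of the lemma, not an input to it. Your proposed local repair, that a neighbourhood of each accumulation point ``develops to a horocyclic neighbourhood of a single ideal point,'' asserts the injectivity and non-degeneracy of the developing map in the only region where it is in doubt; it is the conclusion dressed as a hypothesis (and it is not even an accurate local picture in general: for an incoming fan or a zig-zag the developed neighbourhood of the accumulation point has infinitely many ideal vertices accumulating at a point of $\partial\bH^2$, not a single ideal point with a horoball). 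Without completeness of $\cals^{d}$ you cannot conclude that $\widetilde{\cals^{d}}\cong\bH^2$ or that the developing map of the double is injective, and the convexity/embedding step collapses.

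The paper sidesteps all of this by arguing directly and combinatorially: it fixes an \emph{outgoing fan} triangulation (which exists by Proposition~\ref{distinguished triangulation} and has no limit arcs), develops the ideal triangles into $\bH^2$ one at a time, each glued to a previously placed one along a common side with the gluing prescribed by the shear, and observes that at each stage the finitely many triangles already placed lie on one side of the gluing geodesic while the new triangle lies on the other, so no overlap can occur; the absence of limit arcs guarantees that every triangle of the triangulation is reached after finitely many gluings. Properness then follows from non-emptiness of the boundary exactly as in your final step, which is correct. If you wanted to keep the doubling strategy you would first have to prove injectivity and non-degeneracy of the developing map near each accumulation point by hand --- which is essentially the paper's triangle-by-triangle argument --- at which point the doubling buys you nothing.
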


\begin{proof}
We start by choosing an outgoing fan triangulation $T$ of $\cals$ as in Proposition~\ref{distinguished triangulation}.
Let $t_1$ be a triangle in $T$. We can embed $t_1$ isometrically into the hyperbolic plane $\bH^2$ (uniquely up to hyperbolic isometry) and we denote the obtained triangle $\tilde t_1$. The triangle $t_1$ has at most 3 adjacent triangles $t_2,t_3,t_4$ in $\mathcal S$ (some of them may coincide with others or with $t_1$). The gluing of two adjacent triangles is determined by the choice of hyperbolic structure on $\mathcal S$ and it may be described by shear coordinates as in Fig~\ref{fig: shear} (a): we drop the perpendiculars from the vertices of the adjacent triangles to the common side and look at the hyperbolic distance between the feet of the two perpendiculars. This dictates how the triangles   $t_2,t_3,t_4$ will lift to the universal cover. We add the triangles to the universal cover one by one. Notice that we will  always  be able to embed the next triangle $\tilde t_{n+1}$ (attached to the previous triangle $\tilde t_{n}$ by the common side $\gamma$, see  Fig~\ref{fig: shear} (b))  into the hyperbolic plane without intersecting the previous ones. Indeed, the finite union of already placed triangles $\tilde t_1, \dots, \tilde t_n$ lies on one side from the line $\gamma=\tilde t_{n}\cap\tilde t_{n+1}$, and the  new triangle $\tilde t_{n+1}$ lies on the other side.
Lifting adjacent triangles of each triangle $t_i\in T$  we will reach every triangle of $ T$ in finitely many steps:
indeed $T$ was chosen as an outgoing fan triangulation, in particular, it contains no limit arcs.
The fact that the boundary of $\cals$ is non-empty implies
 that the lifts will not cover the whole hyperbolic plane.

\begin{figure}[!h]
\begin{center}
\psfrag{g}{{\color{red} $\gamma$}}
\psfrag{a}{\small (a)}
\psfrag{b}{\small (b)}
\psfrag{t}{\scriptsize $\tilde t_{n}$}
\psfrag{t1}{\scriptsize $\tilde t_{n+1}$} 
\epsfig{file=./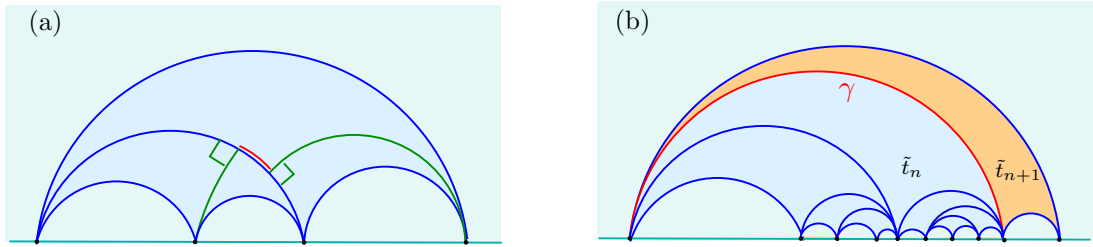,width=0.9\linewidth}
\caption{Lift of $\mathcal S$ to the universal cover: (a) gluing of  adjacent triangles (shear coordinates); (b) attaching the next triangle.}
\label{fig: shear}
\end{center}
\end{figure}

\end{proof}

In this section, we will often consider the surface $\mathcal S$ as lifted to the hyperbolic plane $\bH^2$. More precisely, we will often lift $\mathcal S$ to the {\it upper half-plane model} $\mathcal H$ of the hyperbolic plane $\bH^2$,
i.e. to the set of points $\{z\in \bC \mid    \textup{Im }z>0  \}$ with hyperbolic distance defined by $$\cosh d(z_1,z_2)=1+ \frac{|z_1-z_2|}{2 \text{Im}(z_1) \text{Im}(z_2)}. $$ In this model the lines are represented by half-lines and half-circles orthogonal to $\partial \mathcal H$. The cusps, as well as the boundary marked points, lift to the points on the boundary $\partial \mathcal H$. 

\subsubsection{Converging horocycles}
A horocycle centred at a cusp $p$ is a curve perpendicular to every geodesic incident to $p$. 
Lifted to the universal cover 
$\mathcal H$, a horocycle centred at $p\ne \infty$ is represented by a circle tangent to $\partial \mathcal H$ at $p$.
A horocycle centred at $p=\infty$ is represented by a horizontal line $\textup{Im } z=const$. A horocycle centred at $p$ may be understood as a set of points ``on the same (infinite) distance from $p$'', and it is also a limit of a sequence of circles through a fixed point with the centres converging to $p$. 

\begin{figure}[!h]
\begin{center}
\epsfig{file=./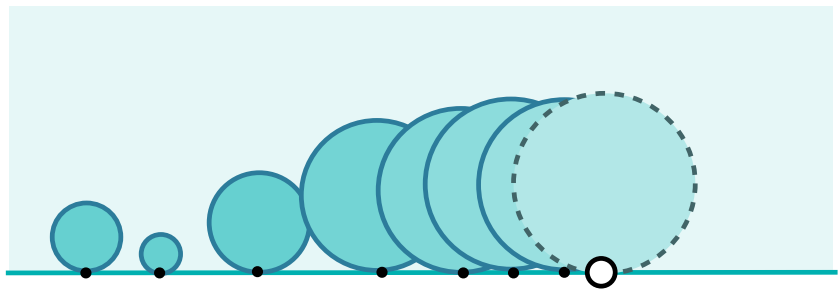,width=0.7\linewidth}
\caption{Converging horocycles in the upper half-plane.}
\label{fig: converging horocycles}
\end{center}
\end{figure}

\begin{definition}[Converging horocycles in $\mathcal H$]
\label{converging horocycles} 
Let $\mathcal H$ be the upper half-plane model of the hyperbolic plane and let $p_i\in \partial \mathcal H$, $i\in \bN,$ be a sequence of points on the boundary. Suppose that the points $p_i$ converge to  some point $p_*$, denote it by $p_i\to p_*$.  Let $h_i$ be a horocycle centred at $p_i$ and let $h_*$ be a horocycle centred at $p_*$. We say that the horocycles $h_i$ {\it converge} to $h_*$ (and denote it by $h_i\to h_*$) if the Euclidean circles representing $h_i$ converge to the Euclidean circle representing $h_*$.  

\end{definition}

\begin{remark}
\label{rem converging}
(a) If a point $p_*$ in Definition~\ref{converging horocycles} is represented  by $\infty$ in $\mathcal H$, we  apply an isometry of the hyperbolic plane (i.e. any linear-fractional transformation $f(z)=\frac{az+b}{cz+d}$ with $a,b,c,d\in \bR$, $ad-bc> 0$) which takes $\infty$ to another point of $\partial \mathcal H$  and then we use the definition above. 

(b) Definition~\ref{converging horocycles}  refers to the Euclidean shapes in the upper half-plane model. So, we need to check the convergence of horocycles (i.e. the property that they can be represented 
 by a converging sequence of Euclidean circles) does not change under hyperbolic isometries. This follows since the property can  also be reformulated in purely hyperbolic terms (see  part (c) below).

(c) Hyperbolic interpretation of convergence of horocycles: the horocycles $h_i$ at $p_i$ {\it converge} to the horocycle $h_*$ at $p_*$ if they pointwise converge in the hyperbolic plane. More precisely, for every point $q\in \partial \mathcal H$, let $\xi_i(q)=qp_i \cap h_i$ be the point of intersection of the horocycle $h_i$ and  the hyperbolic line $qp_i$ connecting $q$ to $p_i$. Then $\xi_i(q)$ converges to $\xi_*(q)=qp_*\cap h_*$ as $p_i\to p_*$.

\end{remark}

Part (c) of Remark~\ref{rem converging} allows us to introduce   the notion of converging horocycles in any infinite hyperbolic surface.

\begin{definition}[Converging horocycles in $\cals$, surface with converging horocycles]
Let $\mathcal S$ be an infinite surface with an admissible hyperbolic metric. 
Fix a horocycle at every marked point.
\begin{itemize}
\item[(a)]
Let $p_i$ be a sequence of boundary marked points converging to an accumulation point $p_*$.
We say that the horocycles $h_i$ at $p_i$  {\it converge} to the horocycle  $h_*$ at $p_*$ if there are lifts of $h_i$ to the universal cover converging to a lift of $h_*$. 

\item[(b)]
We say that the horocycles in $\mathcal S$  {\it converge} if for any converging sequence $p_i$ of  boundary marked points the horocycles $h_i$ at $p_i$ converge to the horocycle $h_*$ at the accumulation point $p_*$, i.e. $h_i\to h_*$ as $i\to \infty$.
\end{itemize}
\end{definition}

Similarly to the case of finite surfaces, we will use horocycles to define lambda lengths of arcs and boundary segments.

\begin{definition}(Lambda length of an arc,~\cite{P})
Given an admissible hyperbolic structure in $\mathcal S$ together with a choice of converging horocycles, consider an arc or boundary arc $\gamma$  in $\cals$. The horocycles at the both endpoints of $\gamma$ define the {\it signed length} $l(\gamma)$ of $\gamma$,
where $l(\gamma)$ is negative if the horocycles overlap, zero if they are tangent and positive if they  do not intersect. The {\it lambda length} $\lambda(\gamma)$ is defined by
$$\lambda(\gamma)=e^{\frac{l(\gamma)}{2}}.$$
\end{definition}

\begin{remark}
To be able to work with punctured surfaces, the authors of~\cite{FST} introduced the notion of tagged arcs and tagged triangulations. \cite{FT} gives the notion of the lambda length of a tagged arc using the idea of conjugate horocycles. We will follow exactly the same definitions without presenting them here.   
\end{remark}

\begin{remark}
(Ptolemy relation in a quadrilateral)
Let $\mathcal S$ be a finite hyperbolic surface with a choice of horocycles at all punctures and boundary marked points. Let $pqrs$ be a quadrilateral in $\mathcal S$ (where each of $p,q,r,s$ is a  puncture or a boundary marked point), see Fig.~\ref{fig: ptolemy}.  It is shown in~\cite{P} that the lambda lengths of the arcs connecting these marked points satisfy the {\it Ptolemy relation}
$$\lambda_{pr}\lambda_{qs}=\lambda_{pq}\lambda_{rs}+\lambda_{qr}\lambda_{ps}, 
$$
where $\lambda_{ij}$ is the lambda length of the arc connecting $i$ to $j$ for $i,j\in \{p,q,r,s\}$. 

In the case of an infinite surface, the same relation obviously holds, as all elements of the relation are just a part of one quadrilateral.
\end{remark}

\begin{figure}[!h]
\begin{center}
\psfrag{p}{$p$}
\psfrag{q}{$q$}
\psfrag{r}{$r$}
\psfrag{s}{$s$}
\epsfig{file=./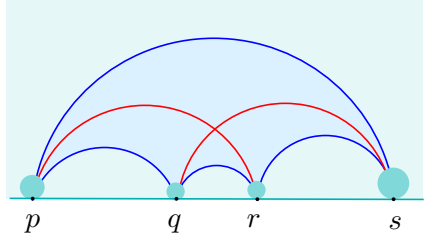,width=0.35\linewidth}
\caption{Ptolemy relation in a quadrilateral: $\lambda_{pr}\lambda_{qs}=\lambda_{pq}\lambda_{rs}+\lambda_{qr}\lambda_{ps}.$}
\label{fig: ptolemy}
\end{center}
\end{figure}

\subsubsection{Decorated Teichm\"uller space}
\begin{definition}[Decorated Teichm\"uller space]
Let $\cals$ be an infinite surface. 
A point in the {\it decorated Teichm\"uller space  $\widetilde{ \mathcal T}(\mathcal S)$ of $\mathcal S$}
is an admissible hyperbolic structure on $\mathcal S$ together with a choice of converging horocycles.

\end{definition}

It is shown in~\cite{FT} that given a triangulation $T$  of a finite hyperbolic surface $\mathcal S$, lambda lengths of  the arcs of $T$ uniquely define a point of the decorated Teichm\"uller space $\widetilde{\mathcal T}(\mathcal S)$.
We will obtain a similar result for  triangulations containing no infinite zig-zag domains, see Theorem~\ref{Thm:CompatibilityWhole}.

In the case of a finite surface, the construction (implying both existence and uniqueness) was based on the  
following simple observation (which will also play the key role for infinite surfaces).

\begin{observation}[\cite{P}, Corollary 4.8]
\label{Lem:L}
For every  $x_1,x_2,x_3\in \bR_+$, there exists a unique hyperbolic triangle $\Delta$ with all vertices at $\partial \mathcal H$ (modulo isometry of hyperbolic plane) and a unique choice of horocycles at its vertices such that $x_1, x_2, x_3$ are the lambda lengths of the sides of $\Delta$. 
\end{observation}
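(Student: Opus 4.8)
The plan is to pin down the ideal triangle once and for all using the transitivity of hyperbolic isometries, and then to recover the horocycles by solving three explicit lambda-length equations. First I would recall that $\mathrm{PSL}(2,\bR)$ acts simply transitively on ordered triples of distinct points of $\partial\mathcal H$, so every ideal triangle is isometric to the one with vertices $A=\infty$, $B=0$, $C=1$, and the pointwise stabiliser of this triple is trivial. Hence, up to isometry, there is a \emph{single} ideal triangle, and the only remaining freedom is the choice of one horocycle at each vertex. I parameterise these by the height $t>0$ of the horocycle at $\infty$ and the Euclidean diameters $d_B,d_C>0$ of the horocycles tangent to $\partial\mathcal H$ at $0$ and $1$; since the stabiliser is trivial, these are honest coordinates and the count of parameters ($3$) matches the number of prescribed data ($x_1,x_2,x_3$).

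Next I would compute the signed length $l(\gamma)$ between two decorated cusps in these coordinates. Along the vertical geodesic from $0$ to $\infty$ the two horocycles meet the geodesic at heights $d_B$ and $t$, and because the hyperbolic length between heights $y_1,y_2$ on a vertical line is $|\ln(y_2/y_1)|$, the definition $\lambda(\gamma)=e^{l(\gamma)/2}$ gives $\lambda(AB)=\sqrt{t/d_B}$; likewise $\lambda(AC)=\sqrt{t/d_C}$ (the sign convention of the excerpt is matched automatically). For the side $BC$ joining the two finite cusps, invariance of lambda lengths under isometry lets me send one endpoint to $\infty$, reducing to the previous computation and yielding $\lambda(BC)=|0-1|/\sqrt{d_Bd_C}=1/\sqrt{d_Bd_C}$. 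Writing $x_1=\lambda(BC)$, $x_2=\lambda(AC)$, $x_3=\lambda(AB)$, this produces the system
\[
x_3^{2}=\frac{t}{d_B},\qquad x_2^{2}=\frac{t}{d_C},\qquad x_1^{2}=\frac{1}{d_Bd_C}.
\]

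Taking logarithms turns this into a linear system in $(\ln t,\ln d_B,\ln d_C)$ whose coefficient matrix has determinant $-2\neq0$, so for every $(x_1,x_2,x_3)\in\bR_+^3$ there is exactly one solution, and it is positive: $t=x_2x_3/x_1$, $d_B=x_2/(x_1x_3)$, $d_C=x_3/(x_1x_2)$. This gives both existence and uniqueness of the horocycles for the normalised triangle. To upgrade uniqueness to ``modulo isometry'', I would take any decorated ideal triangle realising $(x_1,x_2,x_3)$, move its vertices to $0,1,\infty$ by an isometry (matching the sides to the prescribed labels); as lambda lengths are isometry invariant, the image horocycles satisfy the same three equations and hence coincide with the unique solution above, so the original configuration is isometric to the explicit one.

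\textbf{Main obstacle.} The argument has no real depth beyond careful bookkeeping: the two points that need genuine care are verifying that pinning the three vertices leaves no residual isometry (so the horocycle parameters really are free coordinates and the system closes up as $3=3$), and deriving the coordinate formula for $\lambda(BC)$ with the correct sign so that it agrees with the signed-length definition of the excerpt. Once these are in place, invertibility of the logarithmic linear system delivers existence and uniqueness simultaneously.
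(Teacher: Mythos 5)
Your proof is correct, and it takes a somewhat different route from the paper's. The paper also normalises the triangle to $01\infty$, but then argues in one parameter: it fixes an arbitrary horocycle $h_\infty$ at $\infty$, observes that $h_0$ and $h_1$ are then uniquely determined by the requirement that the lambda lengths of $0\infty$ and $1\infty$ take the prescribed values, and finally notes that the resulting lambda length of $01$ varies monotonically in the height of $h_\infty$ and sweeps out all of $(0,\infty)$, so a unique choice of $h_\infty$ matches the third value. You instead write down the explicit coordinate formulas $\lambda(0\infty)=\sqrt{t/d_B}$, $\lambda(1\infty)=\sqrt{t/d_C}$, $\lambda(01)=1/\sqrt{d_Bd_C}$ and invert the resulting log-linear system, whose determinant is $-2$. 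Your version buys closed-form expressions for the decoration ($t=x_2x_3/x_1$, etc.), which is more information than the statement requires; the paper's version trades that for a softer monotonicity argument that never needs the formula for the lambda length between two finite cusps. One small imprecision: $\mathrm{PSL}(2,\bR)$ is simply transitive only on \emph{positively oriented} triples of boundary points; for arbitrary ordered triples one should invoke the full isometry group (including reflections), in which the pointwise stabiliser of three boundary points is still trivial since a reflection fixes only the two endpoints of its axis. This does not affect the argument.
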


\begin{proof}
Up to an isometry of hyperbolic plane (i.e. up to a linear-fractional map of $\mathcal H$) all triangles with vertices on $\partial \mathcal H$ are equivalent to the triangle $01\infty$. Choose any horocycle $h_\infty$ at $\infty$ (it is represented in $\mathcal H$ by a horizontal line $\textup{Im } z=C$ for some $C\in \mathbb R$). Then there exists a unique horocycle $h_0$ at $0$ such that the lambda length of the arc $0\infty$ is $x_1$. Similarly, there is a unique horocycle $h_1$ at 1 such that the lambda length of the arc $1\infty$ is $x_2$. The lambda length of the arc $01$ is determined by  the horocycles $h_0$ and $h_1$, but in most cases is not equal to $x_3$. However, it changes monotonically depending on the initial choice of $h_\infty$ and it tends to 0 or $\infty$ when the constant $C$ in the definition of the horocycle $h_\infty$ is  very large or very small, respectively. This implies that for every value of $x_3$ there exists a unique choice of suitable horocycle $h_\infty$, which in turn gives a unique choice of the horocycles $h_0$ and $h_1$.   
\end{proof}

\subsection{Conditions induced by the geometry of the surface}
\label{compatibility}
We will now analyse   the behaviour of infinite sequences of arcs in an infinite surface $\cals.$ 
As a first easy result in this direction we will show that the lambda length of a limit arc equals to the limit of the lambda lengths of the arcs.

\begin{proposition}
\label{Prop:LimitCond} 
Let $\cals$ be an infinite surface with an admissible hyperbolic structure and with converging horocycles. Let $\{\gamma_i\}$ be a sequence of arcs in $\cals$ such that $\gamma_i \rightarrow \gamma_{\ast}$ as $i\to \infty$. Let  $x_{i}$ be the lambda length of $\gamma_i$ for $i\in \bN\cup \{*\}$.
Then $x_{i} \rightarrow x_{\ast}$ as $i\to \infty$.
\end{proposition}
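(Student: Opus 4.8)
The plan is to transfer the computation to the universal cover $\mathcal H$ and to exploit the explicit Euclidean formula for lambda lengths, which depends continuously on the positions of the ideal endpoints and on the sizes of the horocycles. Write $p_i,q_i$ for the endpoints of $\gamma_i$ and $p_*,q_*$ for the endpoints of $\gamma_*$. By Definition~\ref{Def:LimitArc} we have $p_i\to p_*$ and $q_i\to q_*$, and for all $i$ greater than some $N$ the curve $\gamma_i$ becomes isotopic to $\gamma_*$ after shifting its endpoints inside the neighbourhoods $U_{p_*}$ and $U_{q_*}$.

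First I would fix a lift $\hat\gamma_*$ of the geodesic representative of $\gamma_*$ to $\mathcal H$, with ideal endpoints $\hat p_*,\hat q_*\in\partial\mathcal H$; note that these two points are always distinct, even when $p_*=q_*$ in $\cals$, since the two ideal endpoints of a lifted geodesic never coincide. Because $U_{p_*}$ contains no connected component of $\partial\cals$, the covering projection restricts to a homeomorphism from the component $\hat U_{p_*}\ni\hat p_*$ of the preimage of $U_{p_*}$ onto $U_{p_*}$, and similarly for $q_*$. For $i>N$ the almost-isotopy of $\gamma_i$ to $\gamma_*$ lifts to $\mathcal H$ and lets me select the lift $\hat\gamma_i$ of $\gamma_i$ whose ideal endpoints satisfy $\hat p_i\in\hat U_{p_*}$ and $\hat q_i\in\hat U_{q_*}$. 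Since $p_i\to p_*$, $q_i\to q_*$ and the projection is a homeomorphism on these neighbourhoods, this forces $\hat p_i\to\hat p_*$ and $\hat q_i\to\hat q_*$ in $\partial\mathcal H$.

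Next I would invoke the converging horocycles hypothesis: the lifted horocycles at $\hat p_i$ and $\hat q_i$ converge, as Euclidean circles, to the lifted horocycles at $\hat p_*$ and $\hat q_*$. To finish I would pass to coordinates. After applying a real linear-fractional isometry (as in Remark~\ref{rem converging}) we may assume that $\hat p_*,\hat q_*$ are finite points of $\partial\mathcal H=\bR\cup\{\infty\}$, and we represent a horocycle centred at a real point $x$ by its Euclidean diameter $d>0$. For two horocycles at distinct real points $x\neq y$ with diameters $d_x,d_y>0$ the signed length of the connecting geodesic segment is
\[
 l=\log\frac{|x-y|^2}{d_x\,d_y},\qquad\text{so that}\qquad \lambda=e^{l/2}=\frac{|x-y|}{\sqrt{d_x\,d_y}},
\]
which is a continuous function of $(x,y,d_x,d_y)$ on the region $x\neq y$, $d_x,d_y>0$. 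Applying this to $\hat\gamma_i$ and letting $i\to\infty$, the convergence $\hat p_i\to\hat p_*$, $\hat q_i\to\hat q_*$ together with the convergence of the Euclidean diameters of the horocycles yields $x_i=\lambda(\gamma_i)\to\lambda(\gamma_*)=x_*$, as required.

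The main obstacle is the bookkeeping of the lifts: one must check that the almost-isotopy condition really permits a choice of lifts $\hat\gamma_i$ whose ideal endpoints converge to those of the \emph{fixed} lift $\hat\gamma_*$, rather than to the endpoints of some other translate of $\hat\gamma_*$ under the deck group, and one must dispose of the exceptional case in which $\gamma_*$ has an endpoint at $\infty$ (handled by the preliminary isometry). Once the lifts are pinned down, the continuity of the Euclidean lambda-length formula makes the limit immediate.
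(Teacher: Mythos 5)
Your proof is correct and follows essentially the same route as the paper's: the paper's (much terser) argument simply observes that since the horocycles converge, the intersection points of $\gamma_i$ with the horocycles at its endpoints converge to those of $\gamma_*$, whence the signed lengths and hence the lambda lengths converge. Your version makes the same continuity argument explicit via the Euclidean formula $\lambda=|x-y|/\sqrt{d_xd_y}$ in the upper half-plane and via a careful choice of lifts, which is a reasonable (if more laborious) way to pin down the same fact.
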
 

\begin{proof}
Since the horocycles on $\cals$ are converging by construction, 
 the intersection points of $\gamma_i$ with corresponding horocycles at both endpoints of $\gamma_i$  converge to the intersection points of $\gamma_{\ast}$ with horocycles at both endpoints of $\gamma_{\ast}$.
\end{proof} 

\begin{proposition}\label{Prop:HyperbolicEuclidean} Choose marked points $\{p_i\ | \  i\in\bN\cup \{* \}\}$ on the boundary of the upper half-plane $\calh$ so that $p_i\rightarrow p_{\ast}$ with $p_*\neq\infty,$ and assign horocycles $h_i$ at each marked point $p_i$ so that $h_i\rightarrow h_{\ast}$ as $i\rightarrow\infty.$ Let $x_{i,i+1}$ be the lambda length of the geodesic arc from $p_i$ to $p_{i+1}$ and let $s_{i,i+1}$ be the Euclidean distance between $p_i$ and $p_{i+1},$ for each $i$. Then
\begin{itemize}
\item[(1)] $\frac{x_{i,i+1}}{s_{i,i+1}} \rightarrow \frac{1}{2},$
\item[(2)] $\sum\limits_{i=1}^{\infty}x_{i,i+1} < \infty, $ and
\item[(3)] if $x_{i}$ is the lambda length of the arc starting at $p_*$ and ending at $p_i$, for $i\in\bN$,  then $x_i\rightarrow 0$ as $i \rightarrow \infty.$
\end{itemize}
\end{proposition}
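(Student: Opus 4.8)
The plan is to reduce all three claims to the explicit description of lambda lengths in the upper half-plane. Recall Penner's formula: if two boundary points $u,v\in\partial\mathcal H$ are decorated by horocycles whose representing Euclidean circles have diameters $d_u$ and $d_v$, then the lambda length of the geodesic $uv$ equals $\lambda(u,v)=|u-v|/\sqrt{d_ud_v}$. I would take this as known (it is essentially the computation in~\cite{P} underlying Observation~\ref{Lem:L}), or derive it in one line: apply the isometry $z\mapsto 1/(u-z)$, which sends $u$ to $\infty$ and carries the horocycle at $u$ to the line $\{\mathrm{Im}\,z=1/d_u\}$ and the horocycle at $v$ to a circle of diameter $d_v/(u-v)^2$; the formula then drops out of the elementary vertical computation $\lambda(0,\infty)=\sqrt{T/D}$ for the line $\{\mathrm{Im}\,z=T\}$ and the circle of diameter $D$ tangent at $0$.

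Next I record the single analytic input coming from the hypotheses. Since $p_i\to p_*$ with $p_*\neq\infty$ and $h_i\to h_*$ as Euclidean circles, the diameters converge: $d_i\to d_*$, where $d_*>0$ is the diameter of $h_*$. Because the half-plane model is only fixed up to a hyperbolic isometry, I may first apply the dilation $z\mapsto \tfrac{2}{d_*}(z-p_*)+p_*$ (an isometry fixing $p_*$ that scales every Euclidean length, hence every horocycle diameter, by the same factor while leaving all lambda lengths unchanged), and thus assume $d_*=2$. Then part~(1) is immediate: by the formula
\[
\frac{x_{i,i+1}}{s_{i,i+1}}=\frac{|p_i-p_{i+1}|/\sqrt{d_id_{i+1}}}{|p_i-p_{i+1}|}=\frac{1}{\sqrt{d_id_{i+1}}}\longrightarrow\frac{1}{d_*}=\frac12 .
\]

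For part~(2) I would combine part~(1) with a telescoping estimate. As $d_i\to d_*>0$, the factors $1/\sqrt{d_id_{i+1}}$ are bounded, say by a constant $C$, so $x_{i,i+1}\le C\,s_{i,i+1}=C\,|p_i-p_{i+1}|$ for all $i$. The marked points $p_i$ are consecutive boundary points accumulating at the one-sided accumulation point $p_*$, hence they form a monotone sequence converging to $p_*$; therefore $\sum_{i}|p_i-p_{i+1}|$ telescopes to $|p_1-p_*|<\infty$, giving $\sum_i x_{i,i+1}\le C\,|p_1-p_*|<\infty$. Part~(3) needs no normalization and follows straight from the formula: $x_i=\lambda(p_*,p_i)=|p_*-p_i|/\sqrt{d_*d_i}\to 0$, since $|p_*-p_i|\to0$ while $\sqrt{d_*d_i}\to d_*>0$.

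The computations themselves are routine; the two points requiring care are the analytic claim $d_i\to d_*$ (which is precisely "convergence of horocycles" read off in the Euclidean picture, cf.\ Definition~\ref{converging horocycles}) and, for~(2), the monotonicity of the sequence $\{p_i\}$ that makes the telescoping valid, which holds because the $p_i$ are linearly ordered along $\partial\mathcal H$ as they approach a one-sided accumulation point. I expect the main obstacle to be essentially bookkeeping: pinning down the normalization so that the stated constant $\tfrac12$ appears (the unnormalized limit being $1/d_*$), while noting that for the application in~(2) only the positivity and finiteness of this limiting constant are actually used.
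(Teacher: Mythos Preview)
Your proof is correct and follows essentially the paper's approach: an explicit upper-half-plane computation for (1), comparison with the telescoping sum $\sum s_{i,i+1}$ for (2), and a direct limit for (3) (the paper phrases (3) geometrically via overlapping horoballs, but it is the same computation). Your observation that the constant $\tfrac12$ requires the normalization $d_*=2$ is in fact sharper than the paper, which glosses over this point; as you correctly note, the applications only use that the limit in (1) is positive and finite, and your explicit identification of the monotonicity needed for the telescoping in (2) is likewise implicit in the paper's argument.
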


\begin{proof} Part (1) is a straightforward computation in the upper half-plane (we write the equation of the hyperbolic line explicitly, find its intersections with horocycles, compute the lambda length, and take the limit). 

To see part (2), notice that since $p_*\neq \infty$ and $p_i \to p_*$ as $i \to \infty$, the condition $s_{i,i+1}=|p_{i+1}-p_i|>0$ implies  that the positive number series $\sum\limits_{i=1}^{\infty}s_{i,i+1}$ converges. By (1), $\frac{x_{i,i+1}}{s_{i,i+1}} \to \frac{1}{2}$, so there exists $N$ such that for $i > N$ we have $0 \leq x_{i,i+1}<s_{i,i+1}$.  So, $\sum\limits_{i=N}^{\infty}x_{i,i+1} < \sum\limits_{i=N}^{\infty}s_{i,i+1}< \infty$, and hence $\sum\limits_{i=1}^{\infty}x_{i,i+1}<\infty$.

Finally, to see part (3), notice that as the horocycles $h_i$ converge to some horocycle $h_*$ and the points $p_i$ converge to $p_*$ then, for large $i$, the horocycles $h_i$ and $h_*$ at the ends of the arc $p_ip_*$ produce the larger and larger intersections $\hat h_i\cap \hat h_*\cap p_ip_*$,
so that the lambda lengths $x_i$ tend to $0$ as $i \to \infty$ (here by  $\hat h_i$ and $\hat h_*$ we mean the horoballs bounded by the horocycles $h_i$ and $h_*$).

\end{proof}

\begin{figure}[!h]
\begin{center}
\psfrag{*}{ $p_*$}
\psfrag{x*}{\scriptsize $x_*$}
\psfrag{x_i,i+1}{\scriptsize $x_{i,i+1}$}
\psfrag{x1}{\scriptsize $x_{1}$}
\psfrag{x2}{\scriptsize $x_{2}$}
\psfrag{x3}{\scriptsize $x_{3}$}
\psfrag{xn}{\scriptsize $x_{n}$}
\psfrag{xn1}{\scriptsize $x_{n+1}$}
\psfrag{x12}{\scriptsize $x_{1,2}$}
\psfrag{x23}{\scriptsize $x_{2,3}$}
\psfrag{xnn1}{\scriptsize $x_{n,n+1}$}
\psfrag{1}{\tiny $p_1$}
\psfrag{2}{\tiny $p_2$}
\psfrag{3}{\tiny $p_3$}
\psfrag{n}{\tiny $p_n$}
\psfrag{n1}{\tiny $p_{n+1}$}
\psfrag{0}{\tiny $p_0$}
\psfrag{a}{\small (a)}
\psfrag{b}{\small (b)}
\psfrag{c}{\small (c)}
\psfrag{d}{\small (d)}
\epsfig{file=./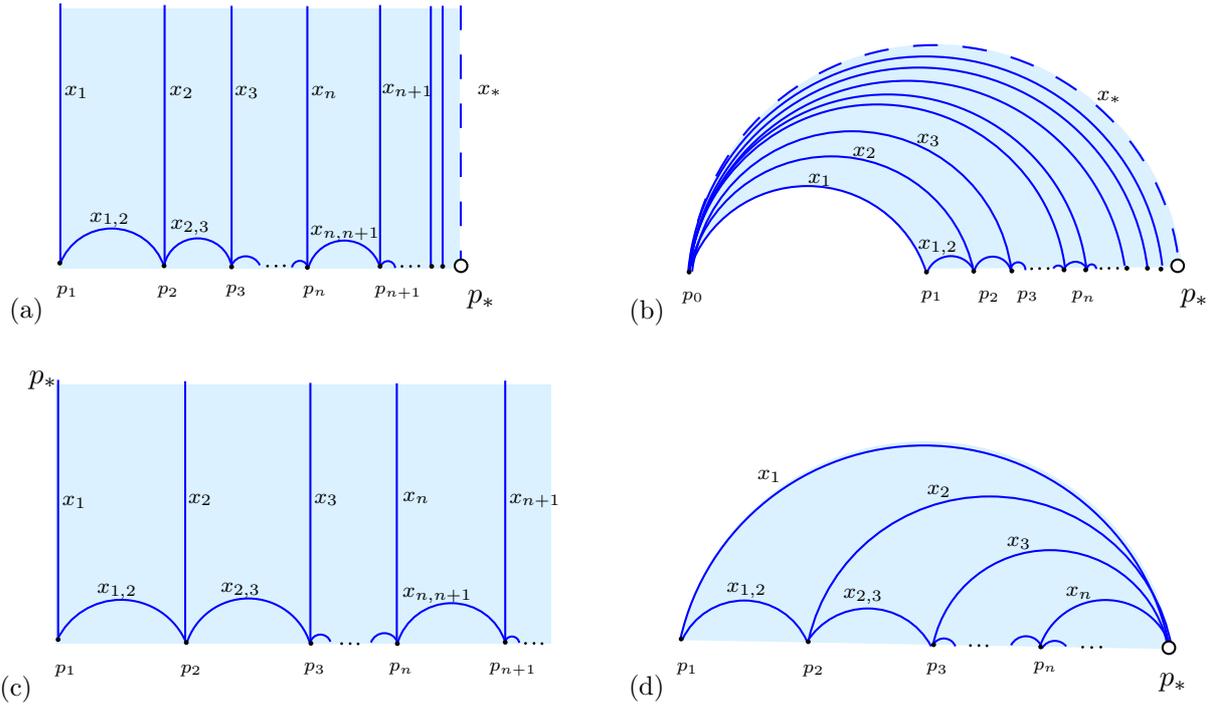,width=0.99\linewidth}
\caption{Notation for an incoming fan (on the top) and for an outgoing fan (at the bottom), respectively.
On the left, each fan is drawn  with the source at infinity, whereas the same fan is viewed with the source at a finite point on the right.}
\label{fig: notation for fans}
\end{center}
\end{figure}

Combining Propositions~\ref{Prop:LimitCond}  and~\ref{Prop:HyperbolicEuclidean}(3), we obtain the following corollary.

\begin{corollary}
\label{limit labmda length}
If $\{\gamma_i\}$ is a converging sequence of arcs in $\cals$, then the corresponding lambda lengths   $\{x_{\gamma_i}\}$ converge. Moreover,
\begin{itemize}
\item[-] if  $\{\gamma_i\}$ converges to a limit arc $\gamma_*$ on $\cals$ then  $x_{\gamma_i}\to x_{\gamma_*}$, and 
\item[-]
if $\{\gamma_i\}$ converges to an accumulation point, then  $x_{\gamma_i}\to 0$.
\end{itemize}
\end{corollary}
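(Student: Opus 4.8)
The plan is to reduce the statement to the two cases already singled out in Definition~\ref{Def:LimitArc}. Indeed, by that definition a convergent sequence of arcs $\{\gamma_i\}$ either converges to a genuine arc (or boundary arc) $\gamma_*$, or its two endpoints both converge to a single accumulation point $p_*$ and the $\gamma_i$ contract to $p_*$; these are exactly the two alternatives appearing in the corollary. So after recording this dichotomy I would treat the two items separately, and in each case the convergence of the lambda lengths follows from one of the two preceding propositions.

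In the first case, where $\gamma_i \to \gamma_*$ with $\gamma_*$ an (ordinary or boundary) arc, I would simply invoke Proposition~\ref{Prop:LimitCond}: since the hyperbolic structure is admissible and the horocycles converge, the intersection points of $\gamma_i$ with the horocycles at its endpoints converge to those of $\gamma_*$, so the signed lengths and hence the lambda lengths converge, giving $x_{\gamma_i}\to x_{\gamma_*}$. Nothing beyond this citation is needed here.

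The second case, $\gamma_i \to p_*$, is where the only real work lies, because Proposition~\ref{Prop:HyperbolicEuclidean}(3) is phrased for arcs with one fixed endpoint at $p_*$, whereas now both endpoints $p_i$ and $q_i$ of $\gamma_i$ move towards $p_*$. I would lift the configuration to the upper half-plane $\mathcal H$, arranging (after an isometry, as in Remark~\ref{rem converging}(a)) that $p_*\neq\infty$. Writing $h_{p_i}, h_{q_i}$ for the horocycles at the endpoints, convergence of horocycles gives $h_{p_i}, h_{q_i}\to h_*$, so their Euclidean diameters stay bounded away from $0$. Repeating for the pair $p_i, q_i$ the explicit computation used in the proof of Proposition~\ref{Prop:HyperbolicEuclidean}(1) (rather than for consecutive fan vertices) shows that $x_{\gamma_i}$ is comparable to the Euclidean distance $|p_i-q_i|$ up to a factor bounded away from $0$ and $\infty$. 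Since $|p_i-q_i|\le |p_i-p_*|+|q_i-p_*|\to 0$, this yields $x_{\gamma_i}\to 0$.

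I expect the main (and essentially only) obstacle to be this bridging step in the second case: checking that the Euclidean estimate of Proposition~\ref{Prop:HyperbolicEuclidean} is insensitive to which endpoint is held fixed, so that it applies to arcs both of whose endpoints tend to the accumulation point. Once that is in place --- equivalently, once one records the upper half-plane formula expressing a lambda length as $|p_i-q_i|$ divided by the geometric mean of the two horocycle diameters --- both limits are immediate, and the corollary follows by combining the two cases.
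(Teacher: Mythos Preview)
Your proposal is correct and follows essentially the same route as the paper, which simply states that the corollary is obtained by combining Propositions~\ref{Prop:LimitCond} and~\ref{Prop:HyperbolicEuclidean}(3). Your treatment of the first item is exactly this citation.

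For the second item you are in fact being more careful than the paper: you correctly note that Proposition~\ref{Prop:HyperbolicEuclidean}(3) is stated for arcs with one endpoint fixed at $p_*$, whereas ``$\gamma_i\to p_*$'' in Definition~\ref{Def:LimitArc} allows both endpoints $p_i,q_i$ to move. Your bridge via the Euclidean comparison of part~(1) is valid. An equally short alternative, closer in spirit to the paper's proof of part~(3), is to observe that since $h_{p_i},h_{q_i}\to h_*$ the two horoballs at the endpoints of $\gamma_i$ eventually overlap along $\gamma_i$ over an interval whose length tends to infinity, so the signed length $l(\gamma_i)\to-\infty$ and $x_{\gamma_i}=e^{l(\gamma_i)/2}\to 0$. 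Either argument fills the small gap the paper leaves implicit.
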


\begin{definition}
By a \emph{Laurent series} in an infinite variable set $\sX = \{x_i : i \in \bN \}$ we mean an infinite sum $\sum\limits_{i=1}^{\infty}m_i$ where each $m_i$ is a Laurent monomial in finitely many variables $\{x_{j_1}, x_{j_2}, \dots, x_{j_{k_i}}\} \subset \sX$ for some $k_i \in \bN$.
\end{definition}

\begin{proposition}\label{Rem:IncomingFan} 
Let  $\overline{\calf}=\{\gamma_i,\gamma_{i,i+1} \mid i\in\bN\}\cup\{\gamma_{\ast}\}$ 
be an elementary incoming fan  with converging horocycles. Let $x_{i}$   be the  lambda lengths of the arcs $\gamma_i$, $i\in \bN,$ and $x_{i,j}$ be the lambda length of the geodesic arc between marked points $p_i$ and $p_j$, for $i\neq j$,  where $i,j\in \mathbb N\cup \{ *\}$, see Fig.~\ref{fig: notation for fans}(a), (b).  Then 
\begin{itemize}
\item[(1)] $x_{i}\rightarrow x_{\ast}$ as $i\rightarrow\infty$; 
\item[(2)] $\sum\limits_{i=1}^{\infty}x_{i,i+1}<\infty$;
\item[(3)]  $x_{s,n}=x_sx_n\sum\limits_{i=s}^{n-1} \displaystyle \frac{x_{i,i+1}}{x_ix_{i+1}}$ for $s\in\bN$, $n>s$;
\item[(4)]  $x_{s,\ast}=x_sx_{\ast}\sum\limits_{i=s}^{\infty}  \displaystyle \frac{x_{i,i+1}}{x_ix_{i+1}}$ for $s\in\bN$;
in particular,   $x_{s,\ast}$ is an absolutely converging Laurent series in $\{x_i,x_{i,i+1} \mid i\in \bN\}$.
\end{itemize}
\end{proposition}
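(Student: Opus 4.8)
The plan is to dispatch parts (1) and (2) by invoking the work already done, to establish (3) by an induction resting on a single Ptolemy relation, and then to obtain (4) by passing to the limit $n\to\infty$ in (3).

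First I would observe that (1) and (2) require no new argument. Since $\gamma_i\to\gamma_*$ by hypothesis, the convergence $x_i\to x_*$ in (1) is precisely the first assertion of Corollary~\ref{limit labmda length}. For (2), the base vertices $p_i$ of the fan converge to the accumulation point $p_*$ and carry converging horocycles, so Proposition~\ref{Prop:HyperbolicEuclidean}(2) gives $\sum_i x_{i,i+1}<\infty$ verbatim.

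The heart of the matter is (3), which I would prove by induction on $n$ using the Ptolemy relation. Let $o$ be the source of the fan, so that $x_i=\lambda(op_i)$ and $x_*=\lambda(op_*)$. Fix $s$ and induct on $n\ge s+1$; the base case $n=s+1$ is the tautology $x_{s,s+1}=x_sx_{s+1}\cdot\frac{x_{s,s+1}}{x_sx_{s+1}}$. For the inductive step I would apply the Ptolemy relation to the ideal quadrilateral with vertices $p_s,p_{n-1},p_n,o$ taken in this cyclic order: its diagonals are $p_sp_n$ and $op_{n-1}$, and its four sides are $op_s$, $p_sp_{n-1}$, $p_{n-1}p_n$, $p_no$. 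Ptolemy then reads
$$x_{n-1}\,x_{s,n}=x_s\,x_{n-1,n}+x_n\,x_{s,n-1},$$
so $x_{s,n}=\big(x_s\,x_{n-1,n}+x_n\,x_{s,n-1}\big)/x_{n-1}$. Substituting the inductive expression for $x_{s,n-1}$ and factoring out $x_sx_n$, the two resulting terms combine into the single sum $x_sx_n\sum_{i=s}^{n-1}\frac{x_{i,i+1}}{x_ix_{i+1}}$, which is the claim. The only point needing care here is that $p_s,p_{n-1},p_n,o$ genuinely bound an embedded ideal quadrilateral inside the fan, so that the six arcs invoked really are its sides and diagonals; this holds because the $p_i$ are linearly ordered along the base and every spoke emanates from $o$.

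Finally, for (4) I would let $n\to\infty$ in the identity $\frac{x_{s,n}}{x_sx_n}=\sum_{i=s}^{n-1}\frac{x_{i,i+1}}{x_ix_{i+1}}$ supplied by (3). The arcs $p_sp_n$ converge to the arc $p_sp_*$, so $x_{s,n}\to x_{s,*}$ by Corollary~\ref{limit labmda length}; combined with $x_n\to x_*>0$ from part (1), this shows the partial sums on the right converge, with limit $\frac{x_{s,*}}{x_sx_*}$, which yields the asserted formula. Since each term $\frac{x_{i,i+1}}{x_ix_{i+1}}$ is a positive Laurent monomial in finitely many of the variables $\{x_i,x_{i,i+1}\}$, convergence of the series is automatically absolute, so $x_{s,*}$ is an absolutely converging Laurent series in the required sense. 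I expect the main (though modest) obstacle to be exactly this limit step: one must ensure that the geometric convergence $p_sp_n\to p_sp_*$ passes to convergence of lambda lengths, which is precisely what Corollary~\ref{limit labmda length} guarantees.
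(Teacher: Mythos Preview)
Your argument is correct and for parts (1)--(3) coincides with the paper's proof almost verbatim: the paper also cites Proposition~\ref{Prop:LimitCond} (equivalently your Corollary~\ref{limit labmda length}) for (1), Proposition~\ref{Prop:HyperbolicEuclidean}(2) for (2), and runs the same Ptolemy induction on the quadrilateral $p_s p_{n-1} p_n o$ for (3).

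For (4) the approaches diverge slightly. You pass to the limit in the identity $\frac{x_{s,n}}{x_sx_n}=\sum_{i=s}^{n-1}\frac{x_{i,i+1}}{x_ix_{i+1}}$ by invoking the geometric fact $x_{s,n}\to x_{s,*}$ (Corollary~\ref{limit labmda length}) together with $x_n\to x_*$, which immediately gives both convergence of the series and its value. The paper instead proves directly, via an explicit $\varepsilon$--estimate using only (1) and (2), that $\big|x_sx_*\sum_{i=s}^\infty\frac{x_{i,i+1}}{x_ix_{i+1}}-x_{s,n}\big|\to 0$. Your route is shorter and perfectly rigorous; the paper's has the minor advantage of exhibiting convergence of the series as a consequence of the two conditions (1) and (2) alone, without a separate appeal to the convergence $x_{s,n}\to x_{s,*}$ --- a point that becomes relevant later when these conditions are taken as the \emph{definition} of compatible data (Proposition~\ref{Prop:CompatibilityIncomingFan}).
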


\begin{figure}[!h]
\begin{center}
\psfrag{1}{ \tiny $p_1$}
\psfrag{0}{ \tiny $p_s$}
\psfrag{i}{ \tiny $p_i$}
\psfrag{i+1}{  \tiny$p_{i+1}$}
\psfrag{*}{ \tiny $p_*$}
\psfrag{x*}{\scriptsize $x_*$}
\psfrag{x_1,*}{\color{red} \scriptsize $x_{1,*}$}
\psfrag{xi,i+1}{\scriptsize $x_{i,i+1}$}
\psfrag{x_i}{\scriptsize $x_{i}$}
\psfrag{x_i+1}{\scriptsize $x_{i+1}$}
\includegraphics[width = 0.55\textwidth]{./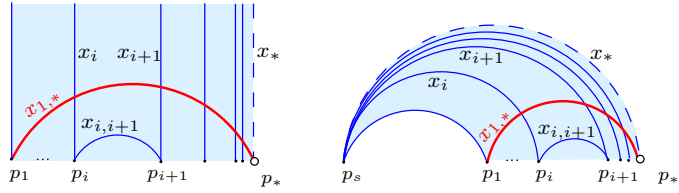}
\caption{An arc crossing an infinite incoming fan (left: viewed with the source at $\infty$; right: viewed with the source at  a finite point).}
\label{fig: notation for fans-arc}
\end{center}
\end{figure}

\begin{proof}
\begin{itemize}
\item[(1)] Follows from Proposition~\ref{Prop:LimitCond}.
\item[(2)] Follows from Proposition~\ref{Prop:HyperbolicEuclidean} (2).
\item[(3)] Without loss of generality, we will set $s=1$. We will show by induction on $n$ that 
\begin{align*}
x_{1,n}=x_1x_n\sum\limits_{i=1}^{n-1}\frac{x_{i,i+1}}{x_ix_{i+1}}.
\end{align*} 
 The base step is evident since
\begin{align*}
x_{1,2}&=x_{1}x_{2}\sum\limits_{i=1}^1\frac{x_{i,i+1}}{x_{i}x_{i+1}}.
\end{align*}
Assume now the identity holds for all $n<k$ for some $k\in\bN.$ By the Ptolemy relation in the quadrilateral $p_1p_{k-1}p_kp_{*}$,
\begin{align*}
x_{1,k}&= \frac{x_{k}x_{1,k-1}+x_{1}x_{k-1,k}}{x_{k-1}}\\
&=\frac{x_{k}}{x_{k-1}}x_{1}x_{k-1}\sum\limits_{i=1}^{k-2}\frac{x_{i,i+1}}{x_{i}x_{i+1}}+\frac{x_{1}x_{k-1,k}}{x_{k-1}}\\
&=x_{1}x_{k}\sum\limits_{i=1}^{k-1}\frac{x_{i,i+1}}{x_{i}x_{i+1}}.
\end{align*} 

\item[(4)] It remains to show $x_{1,n} \to x_1x_{\ast}\sum\limits_{i=1}^{\infty}  \displaystyle \frac{x_{i,i+1}}{x_ix_{i+1}}$ as $n \to \infty$, or in other words that   
$$\delta_k:= |x_1x_{\ast}\sum\limits_{i=1}^{\infty}  \displaystyle \frac{x_{i,i+1}}{x_ix_{i+1}} - x_{1,k}|\to 0 \quad  \text{as} \quad  k\to \infty.$$

By part (1) we have  $x_i \to x_{\ast}$, and  by part (2) we know that  $\sum\limits_{i=1}^{\infty}x_{i,i+1}$ converges. 
Therefore, for any  $\varepsilon>0,$ we can choose $k\in\bN$ such that  $\lvert x_i-x_{\ast} \rvert < \varepsilon$, and  $\sum\limits_{i=k}^{\infty}x_{i,i+1}<\varepsilon$ for all $i>k$. Also, as  $x_i>0$ for all $i$ and $x_i \to x_{\ast}$ with $x_i>0$ and $x_*>0$,  there exists $K\in \bR$ such that $\frac{1}{x_i}<\frac{1}{x_*}+\varepsilon$ for $i>K$ and $\big\vert \frac{1}{x_j} \big\vert <K$  for all $j$. Thus,  
\begin{align*}
\delta_k=\lvert x_1x_{\ast}\sum\limits_{i=1}^{\infty}  \displaystyle \frac{x_{i,i+1}}{x_ix_{i+1}} - x_{1,k}\rvert 
&= \bigg\vert x_1(x_{\ast}-x_k) \sum\limits_{i=1}^{k-1}  \displaystyle \frac{x_{i,i+1}}{x_ix_{i+1}}
+ x_1x_{\ast} \sum\limits_{i=k}^{\infty} \displaystyle \frac{x_{i,i+1}}{x_ix_{i+1}} \bigg\vert \\
&< x_1 \varepsilon K^2\sum\limits_{i=1}^{k-1}{x_{i,i+1}} + x_1x_{\ast}(\frac{1}{x_{\ast}}+\varepsilon)^2\sum\limits_{i=k}^{\infty} {x_{i,i+1}}.
\end{align*}
By Proposition~\ref{Prop:HyperbolicEuclidean} (2), $\sum\limits_{i=1}^{\infty}x_{i,i+1}$ converges, say to $C$. Therefore,
\begin{align*}
\delta_k
&<x_1 \varepsilon K^2C+x_1x_{\ast}(\frac{1}{x_{\ast}}+\varepsilon)^2\varepsilon.
\end{align*}
Hence, $\delta_k$ is bounded by a constant times $\varepsilon,$ which completes the proof.
\end{itemize}
\end{proof}

\begin{proposition}
\label{Prop:OutgoingFanConditions} 
Let $\calf=\{\gamma_i,\gamma_{i,i+1}\mid i\in\bN\}$ be an elementary outgoing fan with converging horocycles  and let $\{x_i,x_{i,i+1}\mid i\in\bN\}$ be the corresponding lambda lengths, $i\in \bN$  (see Fig.~\ref{fig: notation for fans} (c)-(d)). Then
\begin{itemize}
\item[(1)] $x_{i}\rightarrow 0$ as $i\rightarrow\infty$;
\item[(2)] $x_{n} \sum\limits_{i=1}^{n-1}\displaystyle \frac{x_{i,i+1}}{x_{i} x_{i+1}} \to 1$ as $n\rightarrow\infty.$
\end{itemize}
\end{proposition}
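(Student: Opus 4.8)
The plan is to derive both statements from results already obtained for converging horocycles and fans, reducing each to a statement about convergence of lambda lengths. Throughout I use the notation of Figure~\ref{fig: notation for fans}(c)-(d): the source of the outgoing fan is the accumulation point $p_*$, the arc $\gamma_i$ joins $p_*$ to $p_i$ and has lambda length $x_i$, the points $p_i$ accumulate to $p_*$, and $x_{i,i+1}$ is the lambda length of the base arc $p_ip_{i+1}$.

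For part~(1) I would argue as follows. Since the source of an outgoing fan coincides with the accumulation point $p_*$, each arc $\gamma_i=p_*p_i$ has one endpoint fixed at $p_*$ and the other at $p_i$, with $p_i\to p_*$. Hence the constant sequence at $p_*$ and the sequence $\{p_i\}$ have the same limit $p_*$, and the arcs $\gamma_i$ shrink to $p_*$, so $\{\gamma_i\}$ converges to the accumulation point $p_*$ in the sense of Definition~\ref{Def:LimitArc}. The second case of Corollary~\ref{limit labmda length} then forces $x_i\to 0$, which is exactly~(1). (Equivalently one may invoke Proposition~\ref{Prop:HyperbolicEuclidean}(3) after moving $p_*$ away from $\infty$ by a hyperbolic isometry, using Remark~\ref{rem converging}.)

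For part~(2) the first step is to establish the same telescoping identity as in Proposition~\ref{Rem:IncomingFan}(3). That identity is a purely local consequence of iterated Ptolemy relations in the quadrilaterals $p_*p_1p_{k-1}p_k$, and it does not use any limiting behaviour of the fan; hence the induction in the proof of Proposition~\ref{Rem:IncomingFan}(3) applies verbatim to the outgoing fan and gives, for every $n>1$,
\[
x_{1,n}=x_1x_n\sum\limits_{i=1}^{n-1}\frac{x_{i,i+1}}{x_ix_{i+1}},
\]
where $x_{1,n}=\lambda(p_1p_n)$. Dividing by $x_1$ rewrites the quantity appearing in~(2) as
\[
x_n\sum\limits_{i=1}^{n-1}\frac{x_{i,i+1}}{x_ix_{i+1}}=\frac{x_{1,n}}{x_1},
\]
so it remains only to prove that $x_{1,n}\to x_1$ as $n\to\infty$. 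This is where the geometry enters: as $n\to\infty$ we have $p_n\to p_*$, so the arcs $p_1p_n$ (with fixed endpoint $p_1$ and moving endpoint $p_n$) converge to the arc $p_1p_*=\gamma_1$, and since the horocycles converge, Proposition~\ref{Prop:LimitCond} yields $x_{1,n}=\lambda(p_1p_n)\to\lambda(\gamma_1)=x_1$. Therefore $x_{1,n}/x_1\to 1$, which is precisely~(2); note this is consistent with part~(1), as it shows the sum behaves like $1/x_n\to\infty$.

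I expect the only genuinely delicate point to be the justification of the limit $x_{1,n}\to x_1$, namely verifying that $\{p_1p_n\}$ is indeed a sequence of arcs converging to $\gamma_1$ in the sense of Definition~\ref{Def:LimitArc}, so that Proposition~\ref{Prop:LimitCond} applies. Once this is granted, everything else is bookkeeping together with a reuse of the Ptolemy telescoping already carried out for the incoming fan.
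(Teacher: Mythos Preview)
Your argument is correct and follows essentially the same route as the paper: part~(1) via Proposition~\ref{Prop:HyperbolicEuclidean}(3) (your use of Corollary~\ref{limit labmda length} is just a repackaging of this), and part~(2) by combining the telescoping identity of Proposition~\ref{Rem:IncomingFan}(3) with $x_{1,n}\to x_1$ from Proposition~\ref{Prop:LimitCond}. The only addition you make is the explicit remark that Proposition~\ref{Rem:IncomingFan}(3) is a finite Ptolemy computation and hence applies to any fan, which is a welcome clarification but not a different idea.
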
 

\begin{proof}
Part (1) follows from Proposition~\ref{Prop:HyperbolicEuclidean} (3).
To show part (2), notice that  $x_{1,n}\to x_1$ as $n\to \infty$ in view of Proposition~\ref{Prop:LimitCond}. 
On the other hand, by Proposition~\ref{Rem:IncomingFan} (3), we  have
$x_{1,n}=x_{1}x_{n}\sum\limits_{i=1}^{n-1}\frac{x_{i,i+1}}{x_{i}x_{i+1}}$ for $n\in\bN.$ Hence, in order to have $x_{1,n} \to x_{1}$ as $n \to \infty,$ we need  $x_{n} \sum\limits_{i=1}^{n-1}\displaystyle \frac{x_{i,i+1}}{x_{i} x_{i-1}} \to 1.$ 
\end{proof} 

The proof of part (2) of Proposition~\ref{Prop:OutgoingFanConditions} implies the following corollary.

\begin{corollary}
\label{Cor:Lim} Let $\calf=\{\gamma_i,\gamma_{i,i+1}\mid i\in\bN\}$ be an elementary outgoing fan on the hyperbolic plane. Let $\{p_i\}$, $i\in \bN,$ be the marked points in $\calf$ such that $p_i\to p_*$ for some $p_*$, and let $h_i$ be a horocycle at $p_i$ and $h_*$ be a horocycle at $p_*$.   Let $\{x_i,x_{i,i+1}\mid i\in\bN\}$ be the corresponding lambda lengths. Then $h_i\to h_*$ if and only if $x_{n} \sum\limits_{i=1}^{n-1}\displaystyle \frac{x_{i,i+1}}{x_{i} x_{i+1}} \rightarrow 1$ as $n\rightarrow\infty$.
\end{corollary}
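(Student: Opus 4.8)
The plan is to prove the two implications separately, exploiting the fact that the forward direction is already contained in Proposition~\ref{Prop:OutgoingFanConditions}, while the converse needs one genuinely new ingredient: a monotonicity statement relating lambda lengths to the sizes of the horocycles. For the forward direction, suppose $h_i\to h_*$. Then $\calf$ is an elementary outgoing fan \emph{with converging horocycles}, so Proposition~\ref{Prop:OutgoingFanConditions}(2) applies verbatim and yields $x_{n}\sum_{i=1}^{n-1}\frac{x_{i,i+1}}{x_ix_{i+1}}\to 1$ as $n\to\infty$.

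For the converse I would first invoke the Ptolemy identity $x_{1,n}=x_1x_n\sum_{i=1}^{n-1}\frac{x_{i,i+1}}{x_ix_{i+1}}$ of Proposition~\ref{Rem:IncomingFan}(3); this identity is purely a consequence of the Ptolemy relations in the quadrilaterals $p_1p_{k-1}p_kp_*$ and therefore holds whether or not the horocycles converge. Dividing by the fixed positive number $x_1$ shows that the hypothesis $x_n\sum_{i=1}^{n-1}\frac{x_{i,i+1}}{x_ix_{i+1}}\to 1$ is \emph{equivalent} to $x_{1,n}\to x_1$. Thus the whole converse reduces to the following local statement: given $p_n\to p_*$ and $h_1$ fixed, the convergence $x_{1,n}\to x_1$ forces $h_n\to h_*$.

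The key step is then an explicit computation in the upper half-plane. Applying a suitable isometry (Möbius maps carry circles to circles and preserve convergence of horocycles, cf.\ Remark~\ref{rem converging}) I may assume $p_1=\infty$, so that $h_1$ is a horizontal line $\operatorname{Im}z=C_1$, each $h_n$ is the Euclidean circle tangent to $\partial\mathcal H$ at $p_n$ of diameter $d_n$, and $h_*$ is the circle at $p_*$ of diameter $d_*$. The geodesic $p_1p_n$ is vertical, and a direct computation (of the same type as in Observation~\ref{Lem:L} and Proposition~\ref{Prop:HyperbolicEuclidean}(1)) shows that $x_{1,n}$ is a strictly decreasing continuous function of $d_n$, explicitly $x_{1,n}=\sqrt{C_1/d_n}$, and likewise $x_1=\sqrt{C_1/d_*}$. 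Hence $x_{1,n}\to x_1$ is equivalent to $d_n\to d_*$; since $p_n\to p_*$ is assumed, $d_n\to d_*$ is exactly the statement that the Euclidean circles $h_n$ converge to $h_*$, i.e.\ $h_n\to h_*$ in the sense of Definition~\ref{converging horocycles}. (Read in the other direction, $h_n\to h_*\Rightarrow x_{1,n}\to x_1$, this also reproves the forward implication and is consistent with Proposition~\ref{Prop:LimitCond}.)

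I expect the main obstacle to be precisely this last step: arguing that convergence of the \emph{single} sequence of lambda lengths $x_{1,n}$ is enough to pin down the limit of the horocycles $h_n$. This works because a horocycle at a fixed boundary point is a one-parameter family and the lambda length depends on that parameter strictly monotonically and continuously; the normalization $p_1=\infty$ makes this transparent via the formula $x_{1,n}=\sqrt{C_1/d_n}$. The care required is in checking that the normalization is legitimate (so that the reduction to a vertical geodesic loses nothing) and that the monotone dependence is uniform as $p_n\to p_*$, so that convergence of $x_{1,n}$ cleanly transfers to convergence of the diameters $d_n$.
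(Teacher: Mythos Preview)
Your proof is correct and follows exactly the same route as the paper: the forward direction via Proposition~\ref{Prop:OutgoingFanConditions}(2), and the converse by rewriting the hypothesis as $x_{1,n}\to x_1$ (using Proposition~\ref{Rem:IncomingFan}(3)) and then deducing $h_n\to h_*$. The paper's own proof compresses this last step into the phrase ``and hence the horocycles converge''; your explicit upper-half-plane computation with $p_1=\infty$ and the formula $x_{1,n}=\sqrt{C_1/d_n}$ is precisely the missing justification, and it is correct.
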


\begin{proof}
If  $h_i\to h_*$ then  $x_{n} \sum\limits_{i=1}^{n-1}\displaystyle \frac{x_{i,i+1}}{x_{i} x_{i+1}} \rightarrow 1$ by 
 Proposition~\ref{Prop:OutgoingFanConditions}(2).

Conversely, if   $x_{n} \sum\limits_{i=1}^{n-1}\displaystyle \frac{x_{i,i+1}}{x_{i} x_{i+1}} \rightarrow 1$,  then 
$x_{1,n}\to x_1$, and hence the horocycles converge.
\end{proof}

Now, we will introduce a metric characteristic of a fan which will allow us to distinguish incoming fans from outgoing ones.

\begin{definition}[Width of fan]
\label{def:width}
Let $\calf$ (resp. $\overline \calf$) be an elementary outgoing (resp. incoming) fan with the source $p_*$ (resp. $p_{0}$) and the arcs $\gamma_{n}=p_*p_n$, for $n\in \bN$ (resp. the arcs  $\gamma_{n}=p_0p_n$, for $n\in \bN$,  and a limit arc $\gamma_{*}=p_0p_*$). We will define the \textit{width} of $\calf$ (resp. $\overline \calf$) as follows:
\begin{itemize}
\item[-] drop a perpendicular from $p_1$ to every arc  $\gamma_{n}$   for $n\in \bN$ and denote by $q_n$ the foot of the perpendicular;
\item[-] let $l_n$ be the (signed) hyperbolic distance from $q_n$ to the horocycle $h_1$ centred at $p_1$ (in other words, $l_n$ is the hyperbolic length of the portion of the perpendicular lying outside of the horocycle $h_1$);
\item[-] $w_n=exp(l_n)$  will be called the {\it width} of a finite fan with $n-1$ triangles;
\item[-] the \emph{width $w_{\calf}$} of an infinite fan $\calf$ (resp. $\overline \calf$) is defined by the (finite or infinite) limit of $w_n$ as $n\rightarrow\infty$ (see Lemma~\ref{width} for the proof of the existence of the limit).
\end{itemize}
See Fig.~\ref{fig: width of incoming fan} for an illustration. We will simply use the notation $w$ for the width of a fan $\calf$ when the fan is clear from the context.

\end{definition}

\begin{lemma} 
\label{width}
The width of an infinite elementary fan is well-defined, i.e. the (finite or infinite) limit  in Definition~\ref{def:width} does exist. Moreover, the width of an incoming fan is finite and the width of an outgoing fan is infinite.

\end{lemma}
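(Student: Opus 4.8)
The plan is to lift the fan to the upper half--plane model $\mathcal H$, place the source at $\infty$, and reduce the entire statement to one elementary computation whose outcome is the single formula $w_n=b_n/r$, where $b_n$ is the Euclidean coordinate of $p_n$ and $r$ is the Euclidean radius of the horocycle $h_1$ at $p_1$. Concretely, I would fix an isometry of $\mathcal H$ sending the source of the fan to $\infty$ and the point $p_1$ to $0$; after a reflection if necessary we may assume the remaining marked points $p_n$ have positive Euclidean coordinates $b_n>0$, ordered monotonically as in the fan. Then each arc $\gamma_n$ (joining the source to $p_n$) becomes the vertical geodesic $\{b_n+it:t>0\}$, and $h_1$ is the Euclidean circle of radius $r>0$ tangent to $\bR$ at $0$.

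The core computation proceeds as follows. The perpendicular dropped from $p_1=0$ onto the vertical geodesic $\gamma_n$ is the geodesic through $0$ meeting $\gamma_n$ orthogonally, that is, the semicircle centred at $b_n$ of radius $b_n$; parametrising it by $z(\theta)=b_n+b_ne^{i\theta}$ one locates the foot $q_n$ at $\theta=\pi/2$. Intersecting this semicircle with $h_1$ and simplifying via the half--angle identity gives the crossing at the angle $\theta_n$ with $\tan(\theta_n/2)=b_n/r$. Since the hyperbolic arc length along a semicircle is $\int d\theta/\sin\theta=\ln\tan(\theta/2)$, the unsigned distance from $q_n$ (at $\theta=\pi/2$) to the horocycle equals $|\ln(b_n/r)|$; inspecting whether $q_n$ lies inside or outside the horoball shows that the sign prescribed in Definition~\ref{def:width} is exactly that of $\ln(b_n/r)$, so $l_n=\ln(b_n/r)$ and hence $w_n=e^{l_n}=b_n/r$ in all cases.

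It then remains to pass to the limit. Because the $b_n$ are monotone, they converge in $[0,\infty]$, so $w_n=b_n/r$ always has a limit in $[0,\infty]$, which settles well--definedness. For an incoming fan the points $p_n$ accumulate at the endpoint $p_*$ of the non--vanishing limit arc $\gamma_*=p_0p_*$, which is distinct from the source; hence $b_n\to b_*$ for a finite $b_*>0$ and $w_{\overline\calf}=b_*/r<\infty$. For an outgoing fan the endpoints $p_n$ accumulate at the source $p_*$ itself (equivalently, by Proposition~\ref{Prop:OutgoingFanConditions}(1) together with Corollary~\ref{limit labmda length} the lambda lengths $x_n\to 0$, so $\gamma_n$ converges to the source); since the source is placed at $\infty$, this forces $b_n\to\infty$ and therefore $w_{\calf}=\infty$. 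The main obstacle in carrying this out is not the limits but the explicit geometric computation of $l_n$ together with its sign: one must correctly identify the foot of the perpendicular and the horocycle intersection, and verify that the ``portion of the perpendicular lying outside the horocycle'' convention yields precisely $l_n=\ln(b_n/r)$ rather than its negative or a shifted quantity.
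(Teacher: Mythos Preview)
Your proof is correct and takes a genuinely different, more computational route than the paper. The paper argues qualitatively: $l_n$ is the distance from the horocycle $h_1$ to the geodesic $\gamma_n$, and as $n\to\infty$ this tends either to the finite distance from $h_1$ to the limit arc $\gamma_*$ (incoming case) or to the infinite distance from $h_1$ to the ideal point $p_*$ (outgoing case); monotonicity is left implicit. You instead normalise the picture and carry out the explicit hyperbolic computation to obtain the closed formula $w_n=b_n/r$, after which everything reduces to the behaviour of the Euclidean coordinates $b_n$. Your approach buys an exact expression for $w_n$ (which dovetails nicely with, and indeed anticipates, the formula in Lemma~\ref{L width}), at the cost of the somewhat delicate sign/intersection check you flag. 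One minor point: your appeal to Proposition~\ref{Prop:OutgoingFanConditions}(1) and Corollary~\ref{limit labmda length} to conclude $b_n\to\infty$ is the wrong direction of implication --- those results give $x_n\to 0$ from geometric convergence, not the converse. The clean justification is the one you state first: in an outgoing fan the $p_n$ accumulate at the source $p_*$ by definition, and since the developing map of the simply connected fan extends continuously to the boundary circle, placing $p_*$ at $\infty$ forces $b_n\to\infty$.
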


\begin{proof}
In the case of an incoming fan, the length $l_n$ tends to the distance between the horocycle $h_1$ centred at $p_1$ and the limit arc, and this distance is finite (see Fig.~\ref{fig: width of incoming fan}~{(a)-(b)}). In the case of an outgoing fan, the length $l_n$ is the distance between the horocycle $h_1$ and the line $\gamma_{n}$, which grows as $n$ gets larger and tends to the infinite distance between the horocycle $h_1$ and the accumulation point $p_*$, see Fig.~\ref{fig: width of incoming fan}~{(c)-(d)}. 

\end{proof}

\begin{figure}[!h]
\begin{center}
\psfrag{*}{\tiny $p_*$}
\psfrag{0}{\tiny $p_0$}
\psfrag{1}{\tiny $p_1$}
\psfrag{2}{\tiny $p_2$}
\psfrag{3}{\tiny $p_3$}
\psfrag{n}{\tiny $p_n$}
\psfrag{w12}{\tiny \color{RedOrange} $w_2$}
\psfrag{w13}{\tiny  \color{RedOrange} $w_3$}
\psfrag{w1n}{\tiny  \color{RedOrange} $w_n$}
\psfrag{w1*}{{\color{red} \tiny $w$}}
\psfrag{a}{\small (a)}
\psfrag{b}{\small (b)}
\psfrag{c}{\small (c)}
\psfrag{d}{\small (d)}
\epsfig{file=./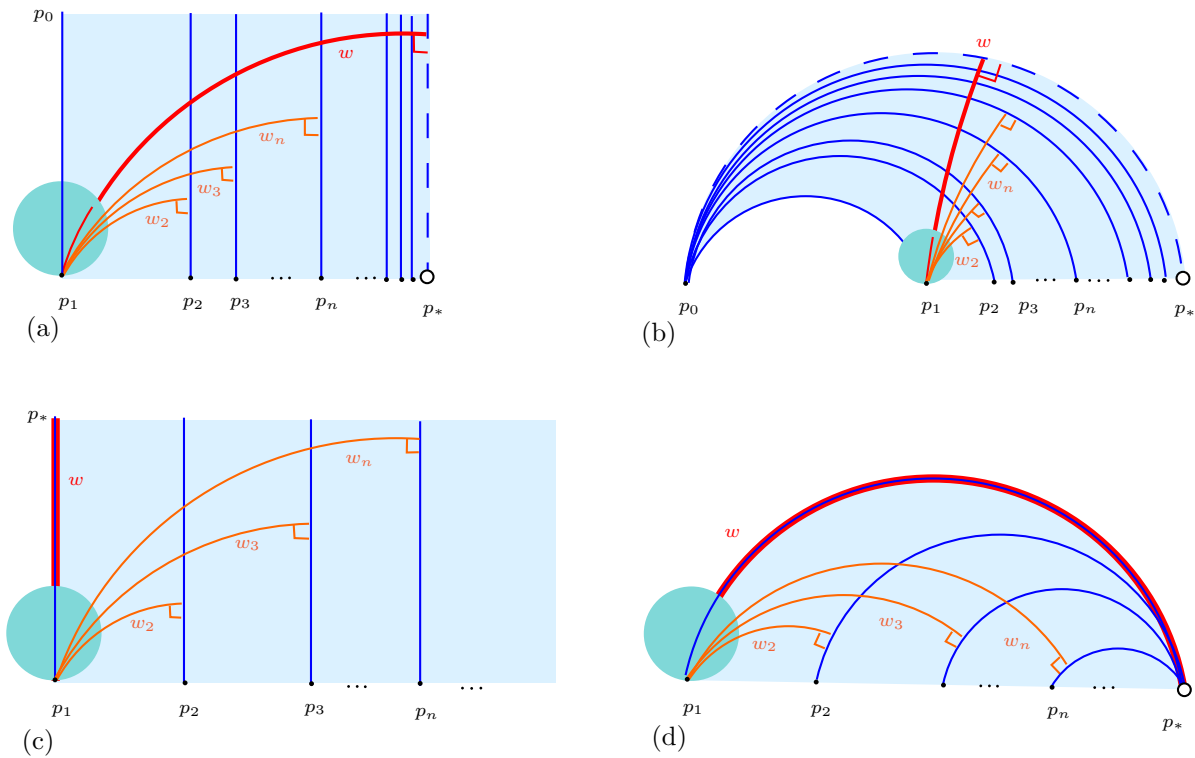,width=0.99\linewidth}
\caption{Width of an incoming fan ((a) and (b)) and of an outgoing fan ((c) and (d)). For each type of fans, we show the fan with the source at $\infty$ on the left and the same fan viewed with the source not at $\infty$ on the right. }
\label{fig: width of incoming fan}
\end{center}
\end{figure}

\begin{lemma}
\label{L width}
Let $\calf_n$ be a finite fan $\{\gamma_i,\gamma_{i,i+1} \ | \ 1\le i \le n-1\}\cup \{\gamma_n \}$, let $x_i$ and $x_{i,i+1}$  be the corresponding lambda lengths of arcs and boundary arcs of $\calf$. Then the width $w_n$ of $\calf_n$ satisfies
 $w_{n} =2x^2_{1} \sum\limits_{i=1}^{n-1}\frac{x_{i,i+1}}{x_{i}x_{i+1}}.$
\end{lemma}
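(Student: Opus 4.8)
The plan is to reduce the claimed formula to the summation identity already established in Proposition~\ref{Rem:IncomingFan}(3), after carrying out a single genuinely geometric computation of the width. Writing $x_{1,n}$ for the lambda length of the geodesic $p_1p_n$ between the base vertices $p_1$ and $p_n$, I would first prove the intermediate identity
\[
w_n=\frac{2\,x_1\,x_{1,n}}{x_n},
\]
and then substitute $x_{1,n}=x_1x_n\sum_{i=1}^{n-1}\frac{x_{i,i+1}}{x_ix_{i+1}}$ (Proposition~\ref{Rem:IncomingFan}(3) with $s=1$, whose proof is a finite induction via Ptolemy in the quadrilaterals $p_1p_{k-1}p_kp_*$ and so applies verbatim to the finite fan $\calf_n$ with its source in the role of $p_*$). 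This immediately yields $w_n=\frac{2x_1}{x_n}\cdot x_1x_n\sum_{i=1}^{n-1}\frac{x_{i,i+1}}{x_ix_{i+1}}=2x_1^2\sum_{i=1}^{n-1}\frac{x_{i,i+1}}{x_ix_{i+1}}$, which is exactly the assertion.

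To set up the intermediate identity I would lift the fan to the upper half-plane $\mathcal H$ with the source of $\calf_n$ placed at $\infty$. Then each arc $\gamma_i=(\mathrm{source})\,p_i$ becomes the vertical geodesic through $p_i=a_i\in\partial\mathcal H$, the horocycle at the source is a horizontal line $\mathrm{Im}\,z=H$, and the horocycle $h_i$ at $p_i$ is a Euclidean circle of some diameter $d_i$ tangent to $\partial\mathcal H$ at $a_i$. A direct vertical-line length computation gives $x_i=\sqrt{H/d_i}$, while Penner's formula~\cite{P} gives $x_{1,n}=\lvert a_n-a_1\rvert/\sqrt{d_1d_n}$ for the finite endpoints $p_1,p_n$ (this finite-finite formula can, if desired, be re-derived in the same coordinates by intersecting the geodesic semicircle with the two horocycles and integrating $d\phi/\sin\phi$).

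The heart of the argument is the width itself. The perpendicular dropped from $p_1$ to $\gamma_n$ is the geodesic through $p_1$ meeting the vertical line $x=a_n$ orthogonally, i.e.\ the Euclidean semicircle centred at $a_n$ of radius $\lvert a_n-a_1\rvert$, with foot $q_n$ at its topmost point $(a_n,\lvert a_n-a_1\rvert)$. Parametrising this semicircle by the Euclidean angle $\theta$ (so that arclength is $d\theta/\sin\theta$ and $q_n$ sits at $\theta=\pi/2$), I would locate the second intersection of the semicircle with the horocycle $h_1$ by solving the two circle equations $x^2+y^2=2a_nx$ and $x^2+y^2=d_1y$ (after translating $a_1$ to the origin), obtaining an exit angle $\theta_E$ with $\tan(\theta_E/2)=2\lvert a_n-a_1\rvert/d_1$. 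Since $\int d\theta/\sin\theta=\ln\lvert\tan(\theta/2)\rvert$ and $\tan(\pi/4)=1$, the signed length of the portion of the perpendicular lying outside $h_1$ is $l_n=\ln\tan(\theta_E/2)$, whence $w_n=e^{l_n}=\tan(\theta_E/2)=2\lvert a_n-a_1\rvert/d_1$. (Equivalently, the isometry $z\mapsto-1/(z-a_1)$ sends $p_1$ to $\infty$, turns all these perpendiculars into vertical lines and $h_1$ into a horizontal line, from which $w_n$ is read off directly; the only extra input is that a horocycle of diameter $d$ tangent at $0$ maps to $\mathrm{Im}\,z=1/d$ under $z\mapsto-1/z$.) Feeding $d_1=H/x_1^2$, $d_n=H/x_n^2$ and $\lvert a_n-a_1\rvert=x_{1,n}\sqrt{d_1d_n}$ into $w_n=2\lvert a_n-a_1\rvert/d_1$ makes all dependence on the normalisation $H$ cancel and gives $w_n=2x_{1,n}\sqrt{d_n/d_1}=2x_1x_{1,n}/x_n$, the intermediate identity.

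I expect the main obstacle to be precisely this geometric step: verifying that the signed length of the part of the perpendicular outside $h_1$ equals $2\lvert a_n-a_1\rvert/d_1$, and in particular that the sign convention of Definition~\ref{def:width} is correctly matched (one should check that $q_n$ lying inside the horoball $h_1$, which happens exactly when $2\lvert a_n-a_1\rvert\le d_1$, correctly produces $w_n\le 1$). Everything downstream is a substitution followed by the already-proved summation identity, so once the width is identified with $2x_1x_{1,n}/x_n$ the result is immediate; it is also worth recording that, since the final expression involves only lambda lengths, it is manifestly independent of the auxiliary height $H$ and of the chosen half-plane lift.
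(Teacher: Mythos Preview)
Your argument is correct, and the intermediate identity $w_n=2x_1x_{1,n}/x_n$ together with Proposition~\ref{Rem:IncomingFan}(3) does yield the lemma. Your route, however, differs from the paper's. The paper avoids any coordinate computation by \emph{doubling} the fan: it reflects $\calf_n$ across the last arc $\gamma_n$, producing a larger fan with base vertices $p_1,\dots,p_{2n-1}$ (where $p_{n+i}$ is the mirror image of $p_{n-i}$ and carries the mirrored horocycle). The perpendicular from $p_1$ to $\gamma_n$, when extended past $q_n$, is precisely the geodesic $p_1p_{2n-1}$, and by symmetry $q_n$ is its midpoint, so the signed length between the two horocycles is $2l_n$ and hence $w_n=e^{l_n}=x_{1,2n-1}$. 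One then applies Proposition~\ref{Rem:IncomingFan}(3) to the doubled fan and uses the symmetry $x_{n+i}=x_{n-i}$, $x_{n+i,n+i+1}=x_{n-i-1,n-i}$ to collapse the $2n-2$ summands into twice the original $n-1$. What the reflection trick buys is that the width is identified \emph{as} a lambda length with no integration or circle equations, so the whole proof stays inside the Ptolemy/lambda-length formalism. What your direct half-plane computation buys is a clean closed form $w_n=2\lvert a_n-a_1\rvert/d_1$ (and the useful byproduct $w_n=2x_1x_{1,n}/x_n$, which in the incoming-fan limit reads $w=2x_1x_{1,*}/x_*$) without needing the doubling insight; the price is the explicit geometric calculation you flagged, which you have carried out correctly, including the sign check.
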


\begin{proof}
By definition, the width $w_n$ is the lambda length of the double of the arc from $p_1$ to $q_n$,  namely, in order to compute $w_{n}$, we double the fan by taking a symmetric copy of it (see Fig.~\ref{fig: width}). If we set $p_{n+i}=p_n+(p_n-p_{n-i})$ for $1\leq i\leq n-1,$ then $w_n=x_{1,2n-1}$ and by Proposition~\ref{Rem:IncomingFan}~(3) we have
$$
w_{n}=x_{1,2n-1}=x_{1}x_{2n-1}\sum\limits_{i=1}^{2n-2}\frac{x_{i,i+1}}{x_{i}x_{i+1}}=2x_{1}^2 \sum\limits_{i=1}^{n-1}\frac{x_{i,i+1}}{x_{i}x_{i+1}}, 
$$
where the last equality holds since $x_{n+i}=x_{n-i}$  and $x_{i,i+1}=x_{2n-i-1,2n-i}$ for $i=1,\dots, n-1$. 
\end{proof}

\begin{figure}[!h]
\begin{center}
\psfrag{*}{ \tiny $p_*$}
\psfrag{x12}{\tiny $x_{1,2}$}
\psfrag{x13}{\tiny $x_{2,3}$}
\psfrag{x_3}{\tiny $x_{3}$}
\psfrag{x_n}{\tiny $x_{n}$}
\psfrag{xn1n}{\tiny $x_{n-1,n}$}
\psfrag{1}{\tiny $p_1$}
\psfrag{n-1}{\tiny $p_{n-1}$}
\psfrag{2}{\tiny $p_2$}
\psfrag{3}{\tiny $p_3$}
\psfrag{n}{\tiny $p_n$}
\psfrag{2n}{\tiny $p_{2n-1}$}
\psfrag{n+1}{\tiny $p_{n+1}$}
\psfrag{2n-1}{\tiny $p_{2n-2}$}
\psfrag{2n-2}{\tiny $p_{2n-3}$}
\psfrag{x1}{\tiny $x_1$}
\psfrag{xn1}{\tiny $x_{n-1}$}
\psfrag{x2}{\tiny $x_2$}
\psfrag{x3}{\tiny $x_3$}
\psfrag{xn}{\tiny $x_n$}
\psfrag{wn}{\scriptsize $ \color{RedOrange} w_n$}
\epsfig{file=./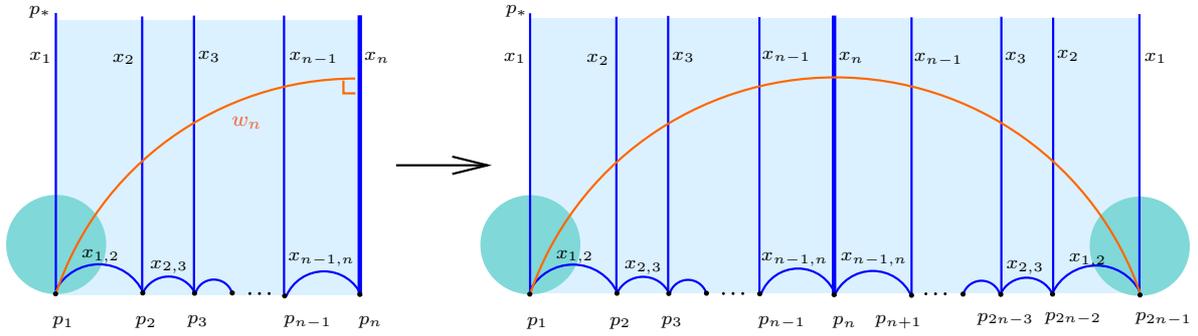,width=0.99\linewidth}
\caption{Computing the width of a finite fan by using two symmetric copies of it.}
\label{fig: width}
\end{center}
\end{figure}

\subsection{Geometry induced by compatible data}
\label{data->hyperbolic structure}

In this section, we will start with combinatorial data (triangulation $T$ of $\cals$) 
and numerical data (numbers associated with arcs of $T$) 
and show that under certain conditions the data  is realisable by a decorated hyperbolic surface with converging horocycles. The data sets $(\calf,\cald)$ and $(\overline{\calf},\overline{\cald})$ will be considered  for an outgoing and incoming elementary fans  and will have the following ingredients:
\begin{itemize}
\item[-] a collection of arcs 
\begin{center}
\begin{tabular}{ccc}
$\calf=\{\gamma_i,\gamma_{i,i+1} \mid i\in\bN\}$ & or & $\overline{\calf}=\{\gamma_i,\gamma_{i,i+1} \mid i\in\bN\}\cup\{\gamma_{\ast}\}$ 
\end{tabular}
\end{center}
forming an elementary outgoing or incoming fan (together with its base), respectively;
\item[-] a collection of positive real numbers
\begin{center}
\begin{tabular}{ccc}
$\cald=\{x_{i},x_{i,i+1}\in\mathbb{R}_{>0} \mid i\in\bN\}$ & or & $\overline{\cald}=\{x_{i},x_{i,i+1}\in\mathbb{R}_{>0}\mid i\in\bN\}\cup\{x_{\ast}\in\mathbb{R}_{>0}\}$
\end{tabular}
\end{center}
associated with each arc in the outgoing or  incoming fan, respectively, see Fig.~\ref{fig: notation for fans}.
\end{itemize}

\begin{definition}[Compatibility of combinatorial and numerical data] We say that $\cald$  
is \emph{compatible} with $\calf$ (resp.  $\overline{\cald}$ is \emph{compatible} with $\overline{\calf}$) if there is an admissible hyperbolic surface $\cals$ with converging horocycles triangulated according to $\calf$ (resp. $\overline{\calf}$) 
 and such that the lambda lengths of the geodesic arcs in $\cals$ coincide with the numerical data given in $\cald$ (resp. $\overline{\cald}$).
\end{definition}

\begin{proposition}\label{Prop:CompatibilityIncomingFan} Let $\overline \calf=\{\gamma_i, \gamma_{i,i+1} \mid i\in\bN\}\cup \{\gamma_* \}$ be an elementary incoming fan  and let $\overline{\cald}=\{x_{i},x_{i,i+1}\in\mathbb{R}_{>0}\mid i\in\bN\}\cup\{x_{\ast}\in\mathbb{R}_{>0}\}$, see Fig.~\ref{fig: notation for fans}~(a)-(b). Then $\overline{\calf}$ is compatible with $\overline{\cald}$ if and only if the following two conditions hold:
\begin{itemize}
\item[(1)] $x_n \to x_{{\ast}}$ as $n\to\infty,$ and 
\item[(2)] $\sum\limits_{i=1}^{\infty} x_{i,i+1} < \infty.$  
\end{itemize}
\end{proposition}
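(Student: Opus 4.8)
The plan is to prove the two implications separately, with the nontrivial content lying in sufficiency. Necessity is immediate from the fan estimates already established: if $\overline{\cald}$ is compatible with $\overline{\calf}$, then by definition there is an admissible hyperbolic surface with converging horocycles realising the given numbers as lambda lengths, so $\overline{\calf}$ genuinely is an incoming fan with converging horocycles and Proposition~\ref{Rem:IncomingFan}(1)--(2) apply verbatim, yielding $x_n\to x_*$ and $\sum_i x_{i,i+1}<\infty$. These are exactly conditions (1) and (2).

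For sufficiency I would construct the decorated realisation explicitly in the upper half-plane model $\calh$. Normalise the source to $p_0=\infty$ with horocycle $h_0=\{\operatorname{Im}z=1\}$, so that a vertical geodesic from a finite boundary point carrying a horocycle of Euclidean diameter $t$ has lambda length $\sqrt{1/t}$, while a geodesic between two finite boundary points $a,b$ with horocycle diameters $t_a,t_b$ has lambda length $|a-b|/\sqrt{t_a t_b}$ (the standard upper-half-plane formulas, cf.~\cite{P}). Guided by these, I place the horocycle $h_i$ at $p_i$ of diameter $t_i=1/x_i^2$ (which forces $\lambda(\gamma_i)=x_i$) and position the base points by $p_1=0$ and $p_n=\sum_{i=1}^{n-1}\frac{x_{i,i+1}}{x_ix_{i+1}}$, so that $\lambda(\gamma_{i,i+1})=|p_i-p_{i+1}|\,x_ix_{i+1}=x_{i,i+1}$ by direct substitution. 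Condition (1) gives $x_i\to x_*>0$, hence the products $x_ix_{i+1}$ are bounded away from $0$; combined with condition (2) this makes $\sum_i\frac{x_{i,i+1}}{x_ix_{i+1}}$ comparable to $\sum_i x_{i,i+1}$ and therefore convergent, so the $p_n$ converge to a finite $p_*\neq\infty$ and the configuration is indeed an incoming fan.

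It then remains to verify the two genuinely global requirements: convergence of the horocycles and the correct value on the limit arc. Since $p_n\to p_*$ and, by (1), $t_n=1/x_n^2\to 1/x_*^2$, the Euclidean circles representing $h_n$ converge to the circle $h_*$ tangent at $p_*$ of diameter $1/x_*^2$; by Definition~\ref{converging horocycles} this is precisely $h_i\to h_*$. Finally the limit arc $\gamma_*=p_0p_*$ is the vertical geodesic from $\infty$ to $p_*$, whose lambda length against $h_0$ and $h_*$ is $\sqrt{1/(1/x_*^2)}=x_*$, matching the prescribed datum. This produces an admissible hyperbolic surface, triangulated as $\overline{\calf}$, with converging horocycles realising all of $\overline{\cald}$.

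The one delicate point, and the place where hypothesis (1) does essential work beyond merely making $\{p_n\}$ converge, is this last compatibility. The horocycle at $p_*$ is not free to be chosen: the demand that the horocycles converge forces $h_*=\lim h_n$, and one must check that this forced horocycle yields exactly the prescribed lambda length $x_*$ on $\gamma_*$. This holds precisely because $t_n=1/x_n^2\to 1/x_*^2$, i.e.\ because $x_n\to x_*$. Thus (1) simultaneously controls the base series (guaranteeing an incoming, rather than degenerate, fan) and pins down the limiting horocycle, which explains why both (1) and (2) are exactly the conditions needed.
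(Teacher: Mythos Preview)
Your proof is correct and follows essentially the same strategy as the paper: realise the fan in the upper half-plane with the source at $\infty$, then use conditions (1) and (2) to show that the base points $p_n$ converge to a finite $p_*$ and that the horocycles converge. The difference is one of presentation rather than substance. The paper builds the fan triangle by triangle via Observation~\ref{Lem:L} and then, to pass from $\sum x_{i,i+1}<\infty$ to $\sum s_{i,i+1}<\infty$, invokes the asymptotic $x_{i,i+1}/s_{i,i+1}\to 1/2$ from Proposition~\ref{Prop:HyperbolicEuclidean}(1) after a shift argument. You instead write down closed formulas for the positions $p_n=\sum_{i<n}\frac{x_{i,i+1}}{x_ix_{i+1}}$ and the horocycle diameters $t_n=1/x_n^2$, which makes the convergence of $p_n$ an immediate comparison test (since $x_ix_{i+1}$ is bounded away from $0$ by (1)) and bypasses the appeal to Proposition~\ref{Prop:HyperbolicEuclidean}. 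Your route is slightly more self-contained; the paper's route reuses machinery already in place.

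One point you handle explicitly that the paper leaves implicit: you check that the \emph{forced} limiting horocycle $h_*=\lim h_n$ actually yields the prescribed value $x_*$ on the limit arc $\gamma_*$. In the paper this is tacitly covered by Proposition~\ref{Prop:LimitCond} (lambda lengths of converging arcs converge), but spelling it out, as you do, clarifies exactly where hypothesis (1) enters a second time.
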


\begin{proof} By Proposition~\ref{Rem:IncomingFan}, the conditions are necessary for the existence of the corresponding admissible hyperbolic structure with  converging horocycles. To show they are also sufficient, we will assume that conditions (1) and (2) hold and construct an embedding of the incoming fan $\overline \calf$ to the upper half-plane.
We will map the source of the fan to $\infty$.

We start with  embedding the first triangle of the fan by mapping  it to $(0,1,\infty)=(p_1,p_2,p_0)$. Then we can glue each successive triangles $\Delta_i$ in $\overline \calf$ one by one so that each triangle satisfy $\Delta_i=p_ip_{i+1}p_0$. Indeed, Observation~\ref{Lem:L} implies that  given $p_i\in \partial \mathcal H$ (together with the choice of horocycles at $p_i$ and at $p_0$)  there exists a unique position for $p_{i+1}$ and a unique choice of horocycle $h_{i+1}$ at $p_{i+1}$ allowing to match the numerical data assigned to $\gamma_{i,i+1}$ and $\gamma_i$. It is left to show that the constructed points $p_i$ converge to some $p_*\ne \infty$ (i.e. we obtain an incoming fan as required and not an outgoing one) and that the constructed horocycles $h_i$ at $p_i$ converge to a horocycle at $p_*$.

By condition (1), $x_i\to x_*$ as $i\to \infty$, which implies that the Euclidean sizes of horocycles $h_i$ converge as $i\to \infty$. Condition (2) implies that $x_{i,i+1}\to 0$ as $i\to \infty$, which (as Euclidean sizes of horocycles converge) is only possible  if   
 $(p_{i+1}-p_i)\to 0$.

Now, shift each of the triangles $\Delta_i$ to $p_1$ by the map $f_i(x)=x-p_i+p_1$, $i\in\bN$ (the corresponding horocycles are shifted together with the triangles). Since $p_{i+1}-p_i\rightarrow 0$, we have $f_i(p_{i+1})\rightarrow p_1$ as $i\rightarrow\infty.$ Note also that the shifted horocycles $f_i(h_{i+1})$ at $f_i(p_{i+1})$ converge. Hence, we can use Proposition~\ref{Prop:HyperbolicEuclidean} (1) to see that $\frac{x_{i,i+1}}{s_{i,i+1}}\rightarrow \frac{1}{2}$ where $s_{i,i+1}=p_{i+1}-p_i.$ Since $\sum x_{i,i+1} <\infty$ by assumption, this implies that $\sum s_{i,i+1} <\infty$. Thus, $p_i\to p_*$ for some point $p_*\ne \infty$. As Euclidean sizes of the horocycles $h_i$ converge, we conclude that the horocycles $h_i$ converge.
\end{proof}

\begin{proposition} \label{Prop:CompatibilityOutgoingFan}
Let $\calf=\{\gamma_i,\gamma_{i,i+1}\mid i\in\bN\}$ be an elementary outgoing fan and let $\cald=\{x_{i},x_{i,i+1}\in\mathbb{R}_{>0} \mid i\in\bN\}$, see Figure~\ref{fig: notation for fans}~(c)-(d). Then $\calf$ is compatible with $\cald$ if and only if the following two conditions hold:
\begin{itemize}
\item[(1)] $x_n\to 0$ as $n\to \infty$, and
\item[(2)] $x_{n} \sum\limits_{i=1}^{n-1}\displaystyle \frac{x_{i,i+1}}{x_{i} x_{i-1}} \to 1$ as $n\to\infty$.  
\end{itemize}
\end{proposition}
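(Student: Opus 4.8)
This is the outgoing-fan counterpart of Proposition~\ref{Prop:CompatibilityIncomingFan}, and my plan is to mirror that argument using the outgoing-fan facts already in hand. The necessity of conditions (1) and (2) requires no new work: it is exactly the content of Proposition~\ref{Prop:OutgoingFanConditions}. Indeed, if $\cald$ is compatible with $\calf$, then by definition there is an admissible hyperbolic surface with converging horocycles whose lambda lengths along $\calf$ are the prescribed numbers, and Proposition~\ref{Prop:OutgoingFanConditions} asserts that any such configuration satisfies both limit conditions.

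For sufficiency I would construct the embedding directly. Place the source $p_*$ at $\infty$ in the upper half-plane, fix a horocycle $h_\infty$, and realise the first triangle as $(p_1,p_2,p_*)=(0,1,\infty)$ with horocycles matching $x_1,x_2,x_{1,2}$. Then glue the triangles $\Delta_i=p_ip_{i+1}p_*$ one at a time: by Observation~\ref{Lem:L}, once $p_i$ and the horocycles at $p_i$ and $p_*$ are fixed, there is a unique position for $p_{i+1}$ and a unique horocycle $h_{i+1}$ realising $x_{i,i+1}$ and $x_{i+1}$. Taking the $p_i$ to be increasing along the real axis, this produces a candidate fan; what remains is to check that it is genuinely an \emph{outgoing} fan (the points accumulate at the source) and that the horocycles converge.

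The crux is to rule out the possibility that the $p_i$ converge to a finite point, which would instead yield an incoming fan with limit arc through $p_*$. Here I would use the width. By Lemma~\ref{L width} the finite widths satisfy $w_n=2x_1^2\sum_{i=1}^{n-1}\frac{x_{i,i+1}}{x_ix_{i+1}}$, and conditions (1)--(2) force the partial sums to diverge: since $x_n\to 0$ while $x_n\sum_{i=1}^{n-1}\frac{x_{i,i+1}}{x_ix_{i+1}}\to 1$, the sum $\sum_{i=1}^{n-1}\frac{x_{i,i+1}}{x_ix_{i+1}}$ tends to $\infty$, whence $w_n\to\infty$. On the other hand, if the $p_i$ converged to a finite point $q$, the vertical geodesics $\gamma_n=p_*p_n$ would converge to $p_*q$, so the feet of the perpendiculars dropped from $p_1$ would converge and their distances to the fixed horocycle $h_1$ would stay bounded, giving a finite width --- a contradiction. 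The decisive point is that width is measured against the single fixed horocycle $h_1$ (see Definition~\ref{def:width} and Lemma~\ref{width}), so this step needs no prior control over the other horocycles. Therefore $p_i\to\infty$, i.e. the marked points accumulate at the source and the configuration is an outgoing fan.

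It then remains to verify that the horocycles converge. Applying an isometry as in Remark~\ref{rem converging}(a) to move the source to a finite point, so that $p_i\to p_*$, I would invoke Corollary~\ref{Cor:Lim}, which says precisely that the horocycles of an outgoing fan converge if and only if $x_n\sum_{i=1}^{n-1}\frac{x_{i,i+1}}{x_ix_{i+1}}\to 1$; this is condition (2). Thus the embedding yields an admissible hyperbolic surface with converging horocycles realising $\cald$, which is compatibility. I expect the main obstacle to be exactly the width step: turning the analytic divergence of the partial sums into the geometric statement that the points recede to the source rather than to a finite limit. Using width (Lemmas~\ref{width} and~\ref{L width}) is what keeps this clean, since it localises the whole argument to a single, fixed horocycle.
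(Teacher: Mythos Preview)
Your proposal is correct and follows essentially the same route as the paper: necessity via Proposition~\ref{Prop:OutgoingFanConditions}, then build the fan triangle-by-triangle with source at $\infty$, use conditions (1)--(2) to force the partial sums (and hence the width, via Lemma~\ref{L width}) to diverge so the fan is outgoing, and finally invoke Corollary~\ref{Cor:Lim} for horocycle convergence. Your contradiction argument for the width step just spells out what the paper's appeal to Lemma~\ref{width} encodes, and your remark about moving the source to a finite point before applying Corollary~\ref{Cor:Lim} is a reasonable extra detail the paper leaves implicit.
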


\begin{proof} The conditions are necessary in view of  Proposition~\ref{Prop:OutgoingFanConditions}. To prove they are also sufficient, we suppose that the assumptions (1) and (2) hold and construct an embedding of the outgoing fan to the upper half-plane (with the source of the fan mapped to $\infty$). 

We start by embedding the first triangle of the fan and then we glue each successive triangle while assigning the appropriate horocycles at its vertices at each step. We construct some infinite fan in $\mathcal H$. Combining assumptions (1) and (2), we have   
$\sum\limits_{i=1}^{n-1}\displaystyle \frac{x_{i,i+1}}{x_{i} x_{i-1}} \to \infty$ as $n\to \infty$. In view of Lemma~\ref{L width} this implies that
 the width of the constructed fan is infinite, so the fan is outgoing. This implies $p_i\rightarrow\infty$ as $i\rightarrow\infty.$ Assumption (2) together with Corollary~\ref{Cor:Lim} implies that horocycles at marked points $p_i$ converge, as required. 
\end{proof}

In the presence of almost elementary fans (i.e. in the case of an elementary fan with infinitely many finite polygons attached to it), some additional conditions will be required. To introduce them we will need  the following notation.

\begin{notation}\label{notation:bits} Let $\calf$ be an almost elementary fan and $p_1, p_2,\dots$ be the marked points at the base of the elementary fan contained in $\calf$, see Fig.~\ref{Fig: bits}.  We may assume without loss of generality that the marked points $p_1,p_2, \dots$ lie at the same boundary component of $\cals$ (up to shifting the base of the fan by finitely many marked points, i.e. by relabelling $p_n$ by $p_{n-k}$ for some $k\in\bN$). Let $s_n$ be any boundary marked point lying between $p_n$ and $p_{n+1}$. Denote by $x_{n,s_n}$ and $x_{s_n,n+1}$ the lambda lengths of the arcs $p_ns_n$ and $s_n,p_{n+1}$. Then the following condition holds.
\end{notation}

\begin{proposition}  In the Notation~\ref{notation:bits}, the following additional condition holds for almost elementary domains:
\[
\frac{x_{n,s_n}+x_{s_n,n+1}}{x_{n,n+1}} \to 1.
\]
\end{proposition}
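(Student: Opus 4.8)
The plan is to reduce the claim to Proposition~\ref{Prop:HyperbolicEuclidean}(1), which compares the lambda length of a short boundary arc with the Euclidean distance between its endpoints. First I would lift the almost elementary fan $\calf$ to the upper half-plane $\mathcal H$ with the source placed at $\infty$, so that the base marked points $p_1,p_2,\dots$ and the intermediate points $s_n$ all lie on $\partial\mathcal H=\mathbb{R}$. Since $p_n\to p_*$ with $p_*\neq\infty$ and each $s_n$ lies between $p_n$ and $p_{n+1}$ along the boundary, we also have $s_n\to p_*$, and this betweenness is preserved by the lift; consequently the Euclidean distances are additive, $d(p_n,p_{n+1})=d(p_n,s_n)+d(s_n,p_{n+1})$.

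Next I would verify the hypotheses needed to invoke Proposition~\ref{Prop:HyperbolicEuclidean}(1). As the horocycles of $\cals$ converge, the merged boundary sequence $p_1,s_1,p_2,s_2,\dots$ (all converging to $p_*$) has horocycles converging to the horocycle $h_*$ at $p_*$, and the same holds for the subsequence $p_1,p_2,\dots$. Applying Proposition~\ref{Prop:HyperbolicEuclidean}(1) to these sequences and reading off the relevant consecutive pairs yields three ratios tending to $\tfrac12$, namely
\[
\frac{x_{n,s_n}}{a_n}\to\tfrac12,\qquad
\frac{x_{s_n,n+1}}{b_n}\to\tfrac12,\qquad
\frac{x_{n,n+1}}{a_n+b_n}\to\tfrac12,
\]
where $a_n=d(p_n,s_n)$ and $b_n=d(s_n,p_{n+1})$.

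Finally I would conclude by an elementary estimate. Writing $\alpha_n,\beta_n,\gamma_n$ for the three ratios above, additivity of the Euclidean distances gives
\[
\frac{x_{n,s_n}+x_{s_n,n+1}}{x_{n,n+1}}
=\frac{1}{\gamma_n}\left(\alpha_n\,\frac{a_n}{a_n+b_n}+\beta_n\,\frac{b_n}{a_n+b_n}\right).
\]
The bracketed expression is a convex combination of $\alpha_n$ and $\beta_n$ (the weights are nonnegative and sum to $1$), so it lies between $\alpha_n$ and $\beta_n$ and therefore also tends to $\tfrac12$; dividing by $\gamma_n\to\tfrac12$ then gives the limit $1$, as required.

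The only technical points that require care are the convergence of the horocycles at the intermediate points $s_n$ (which follows from the standing assumption that horocycles on $\cals$ converge, applied to the sequence $\{s_n\}$) and the preservation of betweenness under the lift to $\mathcal H$ (which secures additivity of the Euclidean distances). Once these are recorded, the statement is a direct consequence of Proposition~\ref{Prop:HyperbolicEuclidean}(1) together with the convex-combination estimate, so I do not expect any serious obstacle.
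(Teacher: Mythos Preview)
Your argument is correct and takes a genuinely different route from the paper's. The paper applies the Ptolemy relation in the quadrilateral with vertices at the source of the fan and at $p_n,s_n,p_{n+1}$, obtaining
\[
x_{n,n+1}\,x_{s_n}=x_{s_n,n+1}\,x_n+x_{n,s_n}\,x_{n+1},
\]
where $x_{s_n}$ is the lambda length of the arc from the source to $s_n$; since each of $x_n,x_{n+1},x_{s_n}$ tends to the same limit $x_*$, the claimed ratio follows by dividing through. Your approach via Proposition~\ref{Prop:HyperbolicEuclidean}(1) together with additivity of Euclidean distances along the boundary is more elementary and handles incoming and outgoing fans uniformly, without ever mentioning the source or the arcs emanating from it. The paper's Ptolemy argument, by contrast, is intrinsic (no upper half-plane computation), but as written it is cleanest for incoming fans, where the common limit $x_*$ is strictly positive.

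One small correction: placing the source at $\infty$ is fine for an incoming fan, but for an outgoing fan the source \emph{is} the accumulation point $p_*$, so your choice would force $p_*=\infty$ and the hypothesis of Proposition~\ref{Prop:HyperbolicEuclidean} would fail. Since your argument never actually uses the position of the source, simply lift so that the accumulation point $p_*$ lands at a finite point of $\partial\mathcal H$; everything else goes through unchanged.
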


\begin{proof}
Denote by $x_{s_n}$ the lambda length of the arc $p_*s_n$. By Ptolemy relation, we have
\[
x_{n,n+1}x_{s_n} = x_{s_n,n+1}x_n+x_{n,s_n}x_{n+1}.
\]
As the horocycles at the boundary marked points accumulating to $p_*$ converge, each of $x_{s_n},x_n, x_{n+1}$ tends to $x_*$. So, we have $x_{n,s_n}+x_{s_n,n+1} \to x_{n,n+1}$ as $n \to \infty$.
\end{proof}

We collect compatibility conditions for elementary and almost elementary fan domains in Table~\ref{Table:Compatibility}.

\begin{remark}
For each of the two types of fans it is easy to check that none of the two conditions  is sufficient alone without the other condition.
\end{remark}

\begin{table}[!h]
\caption{Compatibility conditions elementary fans}
\label{Table:Compatibility}
\begin{center}
\begin{tabular}{|c|c|c||c|}
\hline
& \raisebox{-9pt}{Domain} &  \raisebox{-9pt}{Conditions} &  \raisebox{-2pt}{Additional conditions}\\
&&& \raisebox{2pt}{for almost elementary fans}\\
  \hline \hline

\rotatebox{90}{ incoming fan } & \raisebox{0.pt}{
\psfrag{1}{\tiny $p_0$}
\psfrag{*}{\tiny $p_*$}
\psfrag{x_*}{\tiny $x_*$}
\psfrag{i}{\tiny $p_i$}
\psfrag{i+1}{\tiny $p_{i+1}$}
\psfrag{x_i}{\tiny $x_i$}
\psfrag{x_i+1}{\tiny $x_{i\!+\!1}$}
\psfrag{x_i,i+1}{\tiny $x_{i,i\!+\!1}$}
\includegraphics[scale=1.2]{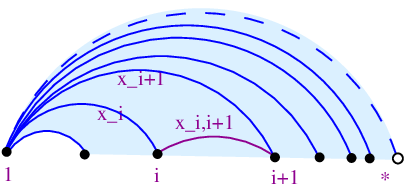}} 
& 
\raisebox{30pt}{
\begin{tabular}{l}
\raisebox{10pt}{$x_n\to x_{\ast}$}\\ \raisebox{0pt}{$\sum\limits_{i=1}^\infty x_{i,i+1} < \infty$} 
\end{tabular}
}
& {\Large $\frac{x_{n,s_n}+x_{s_n,{n+1}}}{x_{n,n+1}}$  } $\to 1 $ 
\\
\cline{1-3} 
\cline{1-3}
\rotatebox{90}{ outgoing fan }&  \raisebox{0.pt}{
\psfrag{*}{\tiny $p_*$}
\psfrag{1}{\tiny $p_1$}
\psfrag{i}{\tiny $p_i$}
\psfrag{i+1}{\tiny $p_{i+1}$}
\psfrag{x_i,*}{\tiny $x_i$}
\psfrag{x_i+1,*}{\tiny $x_{i+1}$}
\psfrag{x_i,i+1}{\tiny $x_{i,i+1}$}
\includegraphics[scale=1.2]{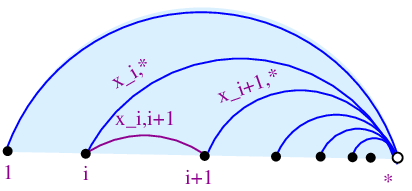}}  & 
\raisebox{30pt}{
\begin{tabular}{l}
\raisebox{10pt}{$x_n\to 0$}\\ \raisebox{0pt}{$x_{n}\sum\limits_{i=1}^\infty\frac{x_{i,i+1}}{x_ix_{i-1}} \to 1$} 
\end{tabular}
}
& 
\\
\hline
\end{tabular}
\end{center}
\end{table}

\begin{proposition}\label{Prop:CompatibilityAlmostElemFan} The compatibility conditions for almost elementary incoming and outgoing fans are exactly the same as the ones  given in  Table~\ref{Table:Compatibility} (this comprises two conditions for incoming / outgoing fans together with one additional condition for almost elementary fans). 
\end{proposition}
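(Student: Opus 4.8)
The plan is to establish Proposition~\ref{Prop:CompatibilityAlmostElemFan} by reducing the almost elementary case to the elementary case already settled in Propositions~\ref{Prop:CompatibilityIncomingFan} and~\ref{Prop:CompatibilityOutgoingFan}, with the extra Ptolemy-type identity serving exactly as the bookkeeping needed to control the finitely many extra triangles attached along the base. The necessity direction is immediate: if the data is compatible, i.e. realised by an admissible hyperbolic surface with converging horocycles, then the two main conditions follow from Propositions~\ref{Prop:LimitCond} and~\ref{Prop:HyperbolicEuclidean} (restricting attention to the underlying elementary fan), and the additional condition follows from the Ptolemy argument already given in the Proposition preceding Table~\ref{Table:Compatibility}. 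So the substance is in the sufficiency direction.

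For sufficiency, suppose the listed conditions hold. First I would extract the underlying elementary fan $\calf'$ from the almost elementary fan $\calf$, whose arcs $\gamma_i = p_*p_i$ (outgoing) or $p_0 p_i$ (incoming) have lambda lengths $x_i$ and whose base segments have \emph{composite} lambda lengths $x_{i,i+1}$. The plan is to embed $\calf'$ into the upper half-plane $\calh$ using Observation~\ref{Lem:L} and the construction in the proof of Proposition~\ref{Prop:CompatibilityIncomingFan} (resp.~\ref{Prop:CompatibilityOutgoingFan}): the two main conditions guarantee that this embedding produces genuinely an incoming (resp.\ outgoing) fan with $p_i\to p_*\neq \infty$ (resp.\ $p_i\to\infty$) and with the horocycles $h_i$ converging. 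Once the elementary fan is realised, each base segment $p_ip_{i+1}$ bounds (together with $\gamma_i,\gamma_{i+1}$) an ideal triangle $\Delta_i$ in which finitely many further marked points $s_n^{(i)}$ must be inserted. By Observation~\ref{Lem:L} applied inside $\Delta_i$, for each finite collection of prescribed positive lambda lengths of the sub-arcs subdividing $\Delta_i$ there is a unique placement of these interior base points together with horocycles at them realising the given numerical data; this is purely local to a single (finite) triangle and needs no limiting argument.

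The one genuinely global point—and what I expect to be the main obstacle—is verifying that the horocycles assigned to the inserted points $s_n$ along the base also \emph{converge} to the horocycle $h_*$ at $p_*$, since the definition of compatibility demands converging horocycles at \emph{all} boundary marked points, not merely at the $p_i$. This is precisely where the additional condition $\frac{x_{n,s_n}+x_{s_n,n+1}}{x_{n,n+1}}\to 1$ enters. The idea is: the Ptolemy relation $x_{n,n+1}x_{s_n} = x_{s_n,n+1}x_n + x_{n,s_n}x_{n+1}$ expresses $x_{s_n}$, the lambda length from the source to $s_n$, in terms of known quantities; using $x_n,x_{n+1}\to x_*$ (incoming) or the corresponding outgoing asymptotics, together with the additional condition, one deduces that $x_{s_n}$ has the same limiting behaviour as $x_n$ (tending to $x_*$ in the incoming case, to $0$ in the outgoing case). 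Combined with the already-established convergence of the $p_i$ and the fact that the $s_n$ lie between $p_n$ and $p_{n+1}$ (hence $s_n\to p_*$ as well), Corollary~\ref{limit labmda length} and Remark~\ref{rem converging}(c) then force the horocycles at $s_n$ to converge to $h_*$.

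I would organise the write-up as: (i) necessity, citing the three earlier results; (ii) sufficiency for the elementary skeleton, invoking Propositions~\ref{Prop:CompatibilityIncomingFan} and~\ref{Prop:CompatibilityOutgoingFan} verbatim; (iii) local insertion of the finitely many extra points per triangle via Observation~\ref{Lem:L}; and (iv) the convergence check for the interior horocycles, which is the crux and which the additional condition is tailor-made to handle. The remaining verifications are routine: the sub-arc lambda lengths inside each $\Delta_i$ are consistent with the boundary data by construction, and the finitely-many-triangles-per-$\Delta_i$ structure means no new limiting subtleties arise beyond the single convergence statement for the base horocycles.
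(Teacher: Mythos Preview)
Your proposal is correct and follows essentially the same approach as the paper: build the elementary skeleton via Propositions~\ref{Prop:CompatibilityIncomingFan} and~\ref{Prop:CompatibilityOutgoingFan}, attach the finite polygons locally, and then use the Ptolemy relation $x_{n,n+1}x_{s_n}=x_{s_n,n+1}x_n+x_{n,s_n}x_{n+1}$ together with the additional condition to deduce that $x_{s_n}$ has the same limiting behaviour as $x_n$, which forces the horocycles at the inserted points to converge. One small remark: in step~(iv) you cite Corollary~\ref{limit labmda length}, but that corollary runs in the opposite direction (converging horocycles $\Rightarrow$ converging lambda lengths); what you actually need---and what the paper also uses without further comment---is the direct observation that, with the source placed at $\infty$, the Euclidean diameter of the horocycle at a base point is determined by the lambda length of the arc to the source, so convergence of $x_{s_n}$ together with $s_n\to p_*$ gives convergence of the Euclidean circles.
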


\begin{proof} We use Propositions~\ref{Prop:CompatibilityIncomingFan} and~\ref{Prop:CompatibilityOutgoingFan} 
to construct elementary incoming and outgoing fans embedded into almost elementary fans. We also construct separately all polygons attached to almost elementary fans, see Fig.~\ref{Fig: bits}. Then we glue them to elementary fans to construct a surface for almost elementary incoming and outgoing fans.  We have constructed an admissible surface with a choice of horocycles at marked points so that the lambda lengths of the arcs are equal to the given numerical data. It remains to show that the horocycles given by this construction converge.

Notice that by Propositions~\ref{Prop:CompatibilityIncomingFan} and \ref{Prop:CompatibilityOutgoingFan} we know that the horocycles at $p_i$ converge to the one at $p_*$, so we only need to show that the horocycles at the marked points $s_i$  converge to the one at $p_*$. Assuming the condition $\frac{x_{n,s_n}+x_{s_n,n+1}}{x_{n,n+1}} \to 1$ is satisfied and using Ptolemy relation $x_{n,n+1}x_{s_n} = x_{s_n,n+1}x_n+x_{n,s_n}x_{n+1}$ as in Proposition~\ref{Prop:CompatibilityAlmostElemFan} together with the condition $x_n\to x_*$ which follows from Propositions~\ref{Prop:CompatibilityIncomingFan} and \ref{Prop:CompatibilityOutgoingFan}, we compute
\[
x_{s_n}=\frac{x_{s_n,n+1}x_{n}+x_{n,s_n}x_{n+1}}{x_{n,n+1}}\to x_*.
\]
This implies $x_{s_n}\to x_*$, hence the horocycles at $x_{s_n}$ converge to the horocycle at $p_*$.
\end{proof}

\begin{figure}[!h]
\begin{center}
\psfrag{xi}{\scriptsize $x_i$}
\psfrag{xi1}{\scriptsize $x_{i+1}$}
\psfrag{xi,i1}{\scriptsize $x_{i,i+1}$}
\psfrag{pst}{\scriptsize $p_\ast$}
\psfrag{pi}{\scriptsize $p_i$}
\psfrag{p0}{\scriptsize $p_0$}
\psfrag{si}{\scriptsize $s_i$}
\psfrag{pi1}{\scriptsize $p_{i\!+\!1}$}
\psfrag{pi-1}{\scriptsize $p_{i\!-\!1}$}
\epsfig{file=./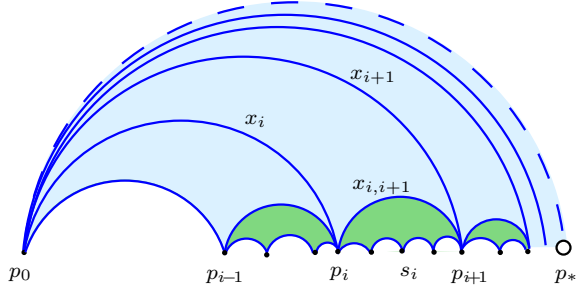,width=0.5\linewidth}
\caption{An almost elementary fan as a union of an elementary fan and an infinite number of finite polygons, attached to its base.}
\label{Fig: bits}
\end{center}
\end{figure} 

\begin{remark} We do not know how to derive compatibility conditions for elementary zig-zag pieces. This leads us to the following definition.
\end{remark}

\begin{definition}[Fan triangulations, incoming/outgoing fan triangulations] 
\label{Def:FanTri}
We say a  triangulation $T$ of $\cals$ is
\begin{itemize}
\item  a \emph{fan triangulation} if 
$T$ is a finite union of almost elementary fan domains;
\item an {\it incoming} (resp. {\it outgoing}) fan triangulation if all almost elementary domains are incoming (resp. outgoing) fans.
\end{itemize}
\end{definition}

\begin{example}
The triangulation used in the proof of Theorem~\ref{distinguished triangulation} is an outgoing fan triangulation (see Fig.~\ref{fig: best triang}).
\end{example}

\begin{remark}
\label{curve in gan T}
Let $T$ be a fan triangulation of $\cals$ and $\gamma\in \cals$ be  an arc. Then the domain $\cald_\gamma^T$ of $\gamma$ in $T$ is a finite union of elementary fans (compare to Remark~\ref{fan is elementary}).
\end{remark}

\begin{theorem}\label{Thm:CompatibilityWhole} Given a fan triangulation $T$ of an infinite surface $\cals$ together with a collection of positive real numbers associated with arcs and boundary arcs of $T$, if the compatibility conditions listed in Table~\ref{Table:Compatibility} are satisfied for all almost elementary domains of $T$ then the numerical data is compatible with the  combinatorial data in the whole surface $\cals.$
\end{theorem}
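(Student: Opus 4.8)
The plan is to reduce the global statement to the per-domain compatibility already supplied by Proposition~\ref{Prop:CompatibilityAlmostElemFan} and then to glue the resulting hyperbolic pieces along the finitely many arcs separating them. First I would use the hypothesis that $T$ is a fan triangulation: by Definition~\ref{Def:FanTri} together with Proposition~\ref{Lem:FinArcs-->Discs}, the surface $\cals$ is a finite union of almost elementary fan domains $\cald_1,\dots,\cald_k$ glued to one another along a finite collection of separating arcs $\beta_1,\dots,\beta_m$ of $T$. The infinitely many arcs of $T$ accumulating at a given accumulation point from one side all lie in a single $\cald_j$, and there are only finitely many such one-sided accumulations (namely $\acc(\cals)$). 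Since the compatibility conditions of Table~\ref{Table:Compatibility} are assumed to hold for every almost elementary domain of $T$, Proposition~\ref{Prop:CompatibilityAlmostElemFan} provides, for each $j$, an admissible hyperbolic structure on $\cald_j$ with a choice of converging horocycles whose lambda lengths coincide with the prescribed numerical data.

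Next I would glue these pieces along the arcs $\beta_r=pq$. The prescribed lambda length of $\beta_r$ is the same viewed from either adjacent domain, so the two isometric copies of the geodesic $\beta_r$ may be identified; the remaining freedom is the one-parameter group of hyperbolic translations along $\beta_r$. As the explicit computation in the upper half-plane (used already in the proof of Observation~\ref{Lem:L}) shows, such a translation preserves the lambda length of $\beta_r$ while rescaling the horocycles at $p$ and $q$, so I may use it to make the horocycle at $p$ agree on both sides. Once this is done the horocycle at $q$ agrees automatically, since it is the unique horocycle at $q$ yielding lambda length $x_{\beta_r}$ together with the now-matched horocycle at $p$; this is exactly the rigidity in Observation~\ref{Lem:L}. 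Performing the identifications in the order of a spanning tree of the gluing graph matches all horocycles along the tree edges with no obstruction.

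The crux, and the step I expect to be the main obstacle, is that the finitely many remaining cycle-closing identifications must also be consistent, i.e. that horocycle matching has no monodromy obstruction around cycles of the gluing graph. Because all separating arcs $\beta_r$ lie in the finite ``core'' of $\cals$, this is purely a finite-surface matter: truncating the infinite tail of each fan produces a finite triangulated surface whose arcs include all the $\beta_r$, and the fact that lambda lengths form global coordinates on its decorated Teichm\"uller space is precisely the theorem of Fomin--Thurston~\cite{FT}, which guarantees the consistency of the cycle-closing gluings. Finally I would verify that the glued object is the required decorated structure: all triangles are ideal and the metric is locally hyperbolic, so the structure is admissible; the lambda lengths equal the data by construction; and the horocycles converge, since each accumulation point sits inside a single $\cald_j$ on which convergence was established in Proposition~\ref{Prop:CompatibilityAlmostElemFan}, and there are only finitely many domains. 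Thus the infinite, analytic difficulty has been isolated inside the individual fan domains by the earlier propositions, while the global consistency is handled by the finite decorated Teichm\"uller theory.
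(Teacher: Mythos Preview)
Your proposal is correct and follows the same approach as the paper: decompose $\cals$ into finitely many almost elementary fan domains, realise each via Proposition~\ref{Prop:CompatibilityAlmostElemFan}, and glue in finitely many steps. The paper's proof is two sentences and does not spell out the gluing mechanics; your version is more careful, explicitly handling horocycle matching along the separating arcs and the potential cycle-closing monodromy, which you rightly reduce to the finite decorated Teichm\"uller theory of~\cite{FT} after truncating the infinite tails.
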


\begin{proof}
We cut the surface $\cals$ into finitely many almost elementary domains and realise each domain using Proposition~\ref{Prop:CompatibilityAlmostElemFan}. We then glue these domains in finitely many steps.
\end{proof}

\subsection{Teichm\"uller space}
\begin{definition}\label{Def:Comp} We say numerical values $\{x_i\in \bR_{>0}  \mid i\in\bN\}$ are \emph{compatible} with a fan triangulation $T=\{\gamma_i\mid i\in\bN\}$ of an infinite surface $\cals$ if compatibility conditions hold for every almost elementary domain in $T$.
\end{definition}

In view of Definition~\ref{Def:Comp}, we reformulate Theorem~\ref{Thm:CompatibilityWhole} as follows.

\begin{corollary}
Let $T$ be a fan triangulation of an infinite surface $\cals.$ Collections of values compatible with $T$ are in bijection with points in the decorated Teichm\"uller space $\widetilde{\calt}(\cals)$. 

\end{corollary}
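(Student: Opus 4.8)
The plan is to establish the bijection by exhibiting two mutually inverse maps between the set of collections compatible with $T$ and the decorated Teichm\"uller space $\widetilde{\calt}(\cals)$: one direction is supplied by the existence statement of Theorem~\ref{Thm:CompatibilityWhole}, and the other by the rigidity encoded in Observation~\ref{Lem:L} together with the necessity of the conditions in Table~\ref{Table:Compatibility}.

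First I would define a map $\Phi\colon\widetilde{\calt}(\cals)\to\{\text{collections compatible with }T\}$. Given a point of $\widetilde{\calt}(\cals)$, that is an admissible hyperbolic structure with a choice of converging horocycles, assign to each arc and boundary arc $\gamma_i$ of $T$ its lambda length $x_i$. I claim the collection $\{x_i\}$ is compatible with $T$ in the sense of Definition~\ref{Def:Comp}. Since, by Definition~\ref{Def:FanTri}, $T$ is a finite union of almost elementary fan domains, it suffices to check the conditions of Table~\ref{Table:Compatibility} on each such domain; for an incoming fan this is the necessity direction of Proposition~\ref{Prop:CompatibilityIncomingFan} (resting on Propositions~\ref{Prop:LimitCond},~\ref{Prop:HyperbolicEuclidean} and~\ref{Rem:IncomingFan}), for an outgoing fan it is that of Proposition~\ref{Prop:CompatibilityOutgoingFan}, and the extra condition for attached polygons is the one verified inside Proposition~\ref{Prop:CompatibilityAlmostElemFan}. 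Hence $\Phi$ is well defined.

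Conversely, Theorem~\ref{Thm:CompatibilityWhole} assigns to every collection compatible with $T$ an admissible hyperbolic surface with converging horocycles whose lambda lengths coincide with the prescribed values, giving a map $\Psi$ in the opposite direction. The identity $\Phi\circ\Psi=\mathrm{id}$ is then immediate, since by its defining property the structure produced by $\Psi$ realizes exactly the input lambda lengths, which $\Phi$ merely reads back.

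The substantive point, and the step I expect to be the main obstacle, is $\Psi\circ\Phi=\mathrm{id}$, namely the \emph{uniqueness} of a decorated structure with given lambda lengths. I would argue this by propagating rigidity domain by domain. Within a single almost elementary fan domain, fix its first triangle and lift it to $\calh$; by Observation~\ref{Lem:L} the three side lambda lengths determine this triangle and the horocycles at its vertices up to an isometry of $\calh$, and attaching the successive triangles one by one, each along an already placed side, determines each of them uniquely by the same observation. The marked points $p_i$ then converge to an accumulation point $p_*$ whose position and horocycle are forced as the limits $\lim p_i$ and $\lim h_i$ (Definition~\ref{converging horocycles}), so the whole decorated domain is pinned down by its lambda lengths up to isometry. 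Because $\cals$ is glued from finitely many such domains along finitely many arcs of $T$, and every gluing arc has both of its endpoints, including any accumulation point, already placed once the adjacent domain is fixed, Observation~\ref{Lem:L} determines each subsequent domain relative to the previous one; as $\cals$ is connected, the entire decorated structure is thereby determined by $\{x_i\}$ up to a single global isometry of $\calh$, which is precisely the ambiguity quotiented out in $\widetilde{\calt}(\cals)$. The delicate part is exactly ensuring that this inductive placement stays globally consistent across the infinite surface and that the limiting decoration at accumulation points is genuinely forced; both follow from the facts that $T$ decomposes into finitely many almost elementary domains and that the horocycles converge, so no analytic input beyond Section~\ref{compatibility} is required, and we conclude that two decorated structures with equal lambda lengths coincide as points of $\widetilde{\calt}(\cals)$, giving $\Psi\circ\Phi=\mathrm{id}$.
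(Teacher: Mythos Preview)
Your proposal is correct and follows the same route as the paper, just made explicit: the paper presents the Corollary as a direct reformulation of Theorem~\ref{Thm:CompatibilityWhole} without a separate proof, relying on the fact that the existence constructions in Propositions~\ref{Prop:CompatibilityIncomingFan}, \ref{Prop:CompatibilityOutgoingFan} and~\ref{Prop:CompatibilityAlmostElemFan} are step-by-step determined by Observation~\ref{Lem:L} and hence already encode uniqueness. Your explicit splitting into $\Phi$ and $\Psi$ and the domain-by-domain rigidity argument is exactly this implicit reasoning written out.
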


Since the compatibility conditions for incoming fan triangulations are linear, the Teichm\"uller space has the following nice properties.

\begin{theorem} The set of points  in the decorated Teichm\"uller space $ \widetilde{\calt}$ of an infinite surface $\cals$ forms a convex cone.
\end{theorem}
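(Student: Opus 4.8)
The plan is to read off the cone structure in the lambda-length coordinates attached to an \emph{incoming} fan triangulation, where the compatibility conditions become stable under nonnegative linear combinations. First I would fix an incoming fan triangulation $T$ of $\cals$ (Definition~\ref{Def:FanTri}); such a triangulation exists by a construction parallel to Proposition~\ref{distinguished triangulation}, placing an incoming fan in a small disc neighbourhood of each accumulation point and triangulating the remaining finite part arbitrarily. By the corollary above (which is Theorem~\ref{Thm:CompatibilityWhole} reformulated), the points of $\widetilde{\calt}(\cals)$ are in bijection with the collections $\{x_\gamma>0 \mid \gamma\in T\}$ compatible with $T$ in the sense of Definition~\ref{Def:Comp}. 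This bijection is exactly the chart that equips $\widetilde{\calt}(\cals)$ with the linear structure in which the statement is to be read, so it suffices to prove that the set $\calc_T\subset \bR_{>0}^{T}$ of compatible collections is closed under $(x),(y)\mapsto \lambda (x)+\mu (y)$ for all $\lambda,\mu\ge 0$ with $\lambda+\mu>0$.

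Next I would record the defining conditions of $\calc_T$. The positivity constraint $x_\gamma>0$ cuts out the positive orthant, itself a convex cone. For each of the finitely many almost elementary incoming fan domains of $T$, Propositions~\ref{Prop:CompatibilityIncomingFan} and~\ref{Prop:CompatibilityAlmostElemFan} (collected in Table~\ref{Table:Compatibility}) impose
\begin{itemize}
\item[(1)] $x_n\to x_*$ as $n\to\infty$;
\item[(2)] $\sum_{i=1}^{\infty} x_{i,i+1}<\infty$;
\item[(3)] $\dfrac{x_{n,s_n}+x_{s_n,n+1}}{x_{n,n+1}}\to 1$ (for the polygons attached along the base, in the almost elementary case).
\end{itemize}
Conditions (1) and (2) are visibly stable under nonnegative combinations: if $x_n\to x_*$ and $y_n\to y_*$ then $\lambda x_n+\mu y_n\to \lambda x_*+\mu y_*$ (so the limit of the combination is again the $*$-coordinate of the combination), and a nonnegative combination of two summable nonnegative series is summable. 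Since a finite intersection of convex cones is a convex cone, it then remains only to check condition (3).

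The only place needing an estimate, and the main technical point, is the stability of the ratio condition (3), which looks nonlinear at first glance. Writing $\varepsilon_n=(x_{n,s_n}+x_{s_n,n+1}-x_{n,n+1})/x_{n,n+1}\to 0$ for $(x)$ and $\varepsilon_n'$ for $(y)$, the numerator deficit of $\lambda(x)+\mu(y)$ equals $\lambda\varepsilon_n x_{n,n+1}+\mu\varepsilon_n' y_{n,n+1}$ while its denominator is $\lambda x_{n,n+1}+\mu y_{n,n+1}$. As all quantities are positive, $\lambda x_{n,n+1}\le \lambda x_{n,n+1}+\mu y_{n,n+1}$ and likewise for the $y$-term, so the deficit-over-denominator ratio is bounded in absolute value by $|\varepsilon_n|+|\varepsilon_n'|\to 0$ (the degenerate cases $\lambda=0$ or $\mu=0$ being immediate). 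Hence condition (3) passes to $\lambda(x)+\mu(y)$, and $\calc_T$ is the intersection of the positive orthant with finitely many convex cones, thus a convex cone; transporting this through the bijection proves the theorem. I would also note that strict positivity forces the exclusion of the apex, so $\widetilde{\calt}(\cals)$ is more precisely an \emph{open} convex cone, and that the conclusion is tied to the incoming-fan chart (lambda coordinates of another triangulation are related by nonlinear Ptolemy relations and need not exhibit convexity).

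I expect the genuinely delicate step to be not the cone computation but the geometric input: establishing that an incoming fan triangulation exists for an arbitrary infinite surface, including two-sided accumulation points and nontrivial genus or several boundary components. This is the analogue of Proposition~\ref{distinguished triangulation} and is where the argument must be carried out with care. A secondary point to verify is that the finitely many arcs in the finite ``core'' joining the fans carry unconstrained positive lambda lengths (Penner's realisation theorem for the finite pieces), so they contribute only the positive-orthant factor and do not interfere with the cone structure.
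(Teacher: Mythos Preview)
Your proposal is correct and follows essentially the same approach as the paper: fix an incoming fan triangulation, identify $\widetilde{\calt}(\cals)$ with the set of compatible collections via the corollary to Theorem~\ref{Thm:CompatibilityWhole}, and verify that the compatibility conditions for incoming fans are preserved under positive scaling and addition. You are in fact more careful than the paper, which simply asserts that the conditions in the top row of Table~\ref{Table:Compatibility} are ``linear'' without treating the ratio condition~(3) separately or discussing the existence of an incoming fan triangulation; your estimate for condition~(3) and your flagging of the existence question are genuine improvements over the paper's terse argument.
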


\begin{proof}
Let $T=\{\gamma_i\mid i\in\bN\}$ be a triangulation formed as a union of incoming fan triangulations and $\underline{x}_T=\{x_i \mid  i\in \bN\}$ be the associated collection of values.
Since the compatibility conditions (see top row of Table~\ref{Table:Compatibility}) are linear in the case of an incoming fan, it is immediate that, given $\underline{x}\in\widetilde\calt$ and $\lambda>0,$  we have $\lambda \underline{x}\in\widetilde{\calt}$ and given $\underline{x},\underline{y}\in\widetilde{\calt}$ we have $\underline{x}+\underline{y}\in\widetilde{\calt}.$ Also $\lambda\underline{x}+(1-\lambda)\underline{y}\in\widetilde{\calt}$ for any scalar $\lambda,$ where $0<\lambda<1.$ Hence, $\widetilde \calt$ is a convex cone.
\end{proof}

\subsection{Lambda lengths as Laurent series}
Given a triangulated infinite surface $\cals$ with an admissible hyperbolic structure and with converging horocycles,  
consider the lambda lengths $\{x_i\}$ of the arcs $\gamma_i$ of $T$ as variables (depending on the point in $\widetilde \calt$).

\begin{proposition}
\label{prop Laurent}
Let $\cals$ be an unpunctured infinite surface. Let  $T$ be a fan triangulation of $\cals$ and $\underline x=\{x_i\in \bR_{>0} \mid i\in \bN\}$ be variables (including boundary variables) compatible with $T.$ If $\gamma$ is an arc in $\cals$, then the lambda length of $\gamma$ is a  Laurent series in $\{x_1,x_2,\dots\}$ converging absolutely for any $\underline x$ compatible with $T$.
\end{proposition}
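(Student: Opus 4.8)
The plan is to reduce the statement about a general arc $\gamma$ to the already-established results about individual fans. By Remark~\ref{curve in gan T}, the domain $\cald_\gamma^T$ of $\gamma$ in a fan triangulation $T$ is a \emph{finite} union of elementary fans; since $\cals$ is unpunctured, all the relevant almost elementary domains crossed by $\gamma$ are (by Remark~\ref{rem: domain of gamma=in+zig}) incoming fans, and by Remark~\ref{fan is elementary} each such fan is genuinely an elementary incoming fan after removing finitely many triangles. First I would fix a decomposition of $\cald_\gamma^T$ into finitely many elementary incoming fans $\calf_1,\dots,\calf_m$, and cut $\gamma$ into the finitely many pieces in which it crosses each $\calf_j$ (finiteness of pieces comes from Proposition~\ref{Prop:ArcsFinInter}). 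The arcs of $T$ lying outside $\cald_\gamma^T$ do not cross $\gamma$, so they contribute only boundary data and finitely many honest exchanges.

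The key computational input is Proposition~\ref{Rem:IncomingFan}(4), which expresses the lambda length of an arc crossing an elementary incoming fan, namely
\[
x_{s,\ast}=x_sx_{\ast}\sum\limits_{i=s}^{\infty}\frac{x_{i,i+1}}{x_ix_{i+1}},
\]
as an absolutely convergent Laurent series in the initial variables $\{x_i,x_{i,i+1}\}$ of that fan. The strategy is to express $\lambda(\gamma)$ by repeatedly applying the Ptolemy relation (the quadrilateral identity recalled in the Ptolemy remark) to resolve the crossings of $\gamma$ with $T$ one elementary fan at a time. Concretely, I would induct on the number $m$ of elementary fan domains crossed by $\gamma$. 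For $m=1$, the result is exactly Proposition~\ref{Rem:IncomingFan}(4) (possibly after multiplying by finitely many boundary factors coming from the polygons removed in Remark~\ref{fan is elementary}). For the inductive step, I would use a Ptolemy relation at the arc of $T$ separating the ``current'' fan from the rest of $\cald_\gamma^T$ to write $\lambda(\gamma)$ as a finite $\bR_{>0}$-combination of products of lambda lengths of shorter arcs, each of which crosses strictly fewer elementary fans; by the induction hypothesis each of these is an absolutely convergent Laurent series.

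The remaining point is that the \emph{product} and the finite \emph{sum} of absolutely convergent Laurent series (in the sense of the Laurent-series definition given before Proposition~\ref{Rem:IncomingFan}) is again an absolutely convergent Laurent series. A finite sum is immediate. For the product of two absolutely convergent series $\sum_i m_i$ and $\sum_j n_j$, the Cauchy product $\sum_{i,j} m_i n_j$ converges absolutely because $\sum_{i,j}|m_i n_j|=\big(\sum_i |m_i|\big)\big(\sum_j|n_j|\big)<\infty$ when all variables are assigned their (positive) lambda-length values; and each term $m_i n_j$ is a Laurent monomial in finitely many variables. Here I would use that the variables $\underline{x}$ are compatible with $T$, so all the series in Proposition~\ref{Rem:IncomingFan} converge at the given point of $\widetilde\calt$, guaranteeing absolute convergence of every intermediate expression.

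The main obstacle I anticipate is purely bookkeeping: organising the finitely many Ptolemy resolutions so that the induction on ``number of fans crossed'' is clean, and verifying that the boundary arcs introduced when separating consecutive fans are themselves handled by the elementary-fan formula rather than creating new infinite families of crossings. Since $\cald_\gamma^T$ is a finite union of fans and $\gamma$ has finitely many pieces in each, this is genuinely a finite induction, and the only analytic subtlety --- absolute convergence of the resulting Laurent series --- is controlled entirely by Proposition~\ref{Rem:IncomingFan}(4) together with the elementary fact that products of absolutely convergent series converge absolutely.
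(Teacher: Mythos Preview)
Your proposal is correct and follows essentially the same route as the paper: induct on the number of elementary fans in $\cald_\gamma^T$, with the base case handled by Proposition~\ref{Rem:IncomingFan}(4), and the inductive step handled by a single Ptolemy relation at the arc $\gamma'\in T$ separating the last fan from the rest (so that $x_\gamma=(x_{\beta_1}x_{\beta_3}+x_{\beta_2}x_{\beta_4})/x_{\gamma'}$ with each $\beta_i$ crossing fewer fans). The only point the paper makes explicit that you leave implicit is the passage to the universal cover, which ensures the domain really is a disjoint chain of fans and that the Ptolemy quadrilateral is embedded; your extra remarks on absolute convergence of products are a welcome addition that the paper takes for granted.
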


\begin{proof}
Consider the lift $D$ of the domain $\cald_\gamma^T$ of $\gamma$ to the universal cover. By Proposition~\ref{Lem:FinArcs-->Discs}  together with Remark~\ref{fan is elementary},  $D$ is a finite union of elementary fans (as $T$ contains no infinite zig-zags).  
The proof of the statement is by induction on the number of elementary fans in $D$. The base case is  when $\gamma$ lies in  a single fan and it follows from Proposition~\ref{Rem:IncomingFan}.

To show that the statement holds for an arc crossing $k$ elementary fans, we assume that it holds for all arcs crossing less than $k$ fans and apply the Ptolemy relation (see Fig.~\ref{Fig: Ptolemy-induction} for notation): 
$$ x_\gamma x_{\gamma'}=x_{\beta_1}x_{\beta_3}+x_{\beta_2}x_{\beta_4},$$
where the $k$-th elementary fan of $D$ is attached to the $(k-1)$-th one along the arc $\gamma'\in T$. 
Notice that the arcs $\beta_1$, $\beta_2$, $\beta_3$, $\beta_4$ satisfy the induction assumption, which implies that the variables $x_{\beta_j}$ are converging Laurent series in $\{x_i\}$. As $\gamma'\in T$, dividing both sides by $x_{\gamma'}$ we obtain the required expression for $x_\gamma$. 
\end{proof}

\begin{figure}[!h]
\begin{center}
\psfrag{g}{\color{red}  $\gamma$}
\psfrag{*}{\color{blue}  $\gamma'$}
\psfrag{1}{\color{dgreen}  $\beta_1$}
\psfrag{2}{\color{dgreen}  $\beta_2$}
\psfrag{3}{\color{dgreen}  $\beta_3$}
\psfrag{4}{\color{dgreen}  $\beta_4$}
\epsfig{file=./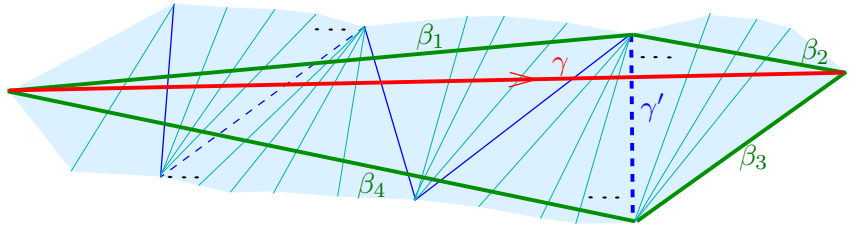,width=0.7\linewidth}
\caption{Induction on the number of fans crossed by $\gamma$: Ptolemy relation.}
\label{Fig: Ptolemy-induction}
\end{center}
\end{figure} 

\begin{remark} In the proof of Proposition~\ref{prop Laurent}, the fans adjacent to the arc $\gamma'$ may be finite or infinite, they may  either share the same source or have the sources at different endpoints of $\gamma'$; however, the proof does not depend on any of these factors.
\end{remark}

We will extend the result of  Proposition~\ref{prop Laurent}  to the case of punctured surfaces in Theorem~\ref{Laurent for punctures}.
For that matter we will need the following definitions and a technical lemma.

\begin{definition}[Ordinary and conjugate pair punctures with respect to $T$]
Let $T$ be a triangulation of an infinite surface $\cals$. We say that a puncture $p$ of $\cals$ 
\begin{itemize}
\item is an {\it ordinary puncture} with respect to $T$ if all arcs of $T$ incident to $p$ are tagged the same way at $p$;
\item otherwise, $p$ is the endpoint of two copies of the same arc with different taggings at $p$ (these arcs are called a {\it conjugate pair}). In this case, we  will  call $p$  a {\it conjugate pair puncture} for $T$  (see Fig.~\ref{fig: conjugate puncture}(a) for an example).
\end{itemize}
\end{definition}

\begin{definition}[Right and wrong tagging]
Let $T$ be a triangulation of a punctured surface and let $p$ be an ordinary puncture with respect to $T$. An arc $\gamma$ with an end point at $p$ has  a {\it right tagging}  (resp. wrong tagging) at $p$ if it is tagged the same (resp. tagged oppositely) at $p$ as the arcs of $T$.    
\end{definition}

\begin{figure}[!h]
\begin{center}
\psfrag{ps}{\color{blue} \scriptsize $\psi$}
\psfrag{ps'}{\color{blue} \scriptsize $\psi'$}
\psfrag{m}{\color{DarkOrchid} \scriptsize $\mu$}
\psfrag{p}{\scriptsize  $p$}
\psfrag{a}{\small (a)}
\psfrag{b}{\small (b)}
\epsfig{file=./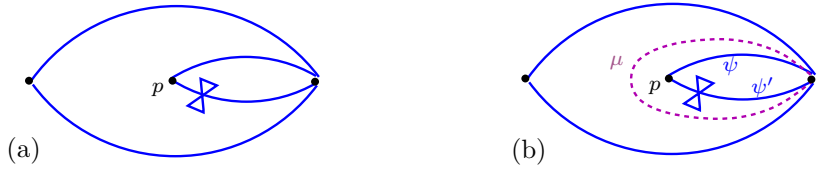,width=0.7\linewidth}
\caption{A conjugate pair puncture: $x_\mu=x_\psi x_{\psi'}$.}
\label{fig: conjugate puncture}
\end{center}
\end{figure} 

\begin{lemma}
\label{l:arc at puncture}
Let $T$ be a triangulation of an infinite surface $\cals$ and let $p$ be a puncture on $\cals.$ Then there exists an arc $\psi\in T$ having exactly one endpoint at $p$.
\end{lemma}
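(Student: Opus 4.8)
The plan is to argue by contradiction. Since $p$ is an interior marked point, no boundary arc is incident to $p$, so an arc with exactly one endpoint at $p$ is the same thing as an arc of $T$ incident to $p$ that is \emph{not} a loop based at $p$. Thus it suffices to rule out the possibility that \emph{every} arc of $T$ incident to $p$ is a loop at $p$ (that is, has both endpoints at $p$); I will assume this and derive a contradiction from the fact that $\cals$ has (infinitely many) boundary marked points.

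The key object is the $1$-skeleton $G$ of $T$: the graph whose vertices are the marked points of $\cals$ and whose edges are all arcs of $T$ together with all boundary arcs. I would first record two facts. (i) Every puncture and every non-accumulation boundary marked point is a vertex of some triangle of $T$: a puncture is surrounded by triangles having it as a vertex, and a non-accumulation boundary marked point is an endpoint of two boundary arcs, each of which bounds a triangle. (ii) The graph $G$ is connected. For (ii) the argument is the usual one: within a single triangle the three vertices are joined by its sides, which are edges of $G$; two triangles sharing a side have overlapping vertex sets; and the dual graph of $T$ (triangles, with adjacency given by a shared interior arc) is connected because $\cals$ is connected and is covered by triangles glued along their sides. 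Together with (i), this places $p$ and all non-accumulation boundary marked points in a single connected component of $G$.

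To finish, I would pick a boundary marked point $w\ne p$ that is not an accumulation point; such $w$ exists because $\cals$ has infinitely many boundary marked points but only finitely many accumulation points, and by (i) both $p$ and $w$ are vertices of $G$. By (ii) there is a path in $G$ from $p$ to $w$; since $w\ne p$, along this path there is a first edge $e$ joining $p$ to a vertex different from $p$, so $e$ is not a loop at $p$. As $p$ is interior, $e$ cannot be a boundary arc, hence $e$ is an arc $\psi\in T$ with exactly one endpoint at $p$, contradicting the standing assumption. This proves the lemma. The same reasoning applies verbatim to tagged triangulations, since ``incident to $p$'' and ``endpoint at $p$'' refer only to the underlying arc and are insensitive to the tagging. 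The only real point requiring care (and the main potential obstacle) is justifying the connectedness of $G$, equivalently of the dual graph, in the infinite setting; but the presence of limit arcs and accumulation points merely adds further vertices and edges to $G$ and cannot disconnect it, so this reduces to the standard finite argument applied componentwise.
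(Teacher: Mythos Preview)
Your proof is correct and follows essentially the same strategy as the paper's: assume every arc of $T$ incident to $p$ is a loop at $p$, then propagate this to reach a contradiction with the existence of boundary marked points. The paper phrases the propagation slightly more directly --- it observes that any triangle incident to $p$ has all three vertices at $p$ (its two sides at $p$ being loops), and that any triangle sharing a side with such a triangle inherits the same property, so \emph{every} triangle has all vertices at $p$; this contradicts the existence of other marked points. Your version packages the same propagation as connectedness of the $1$-skeleton, which is equivalent. One small remark: your justification of connectedness via the dual graph is the one place where the infinite setting genuinely matters (a limit arc that is a two-sided limit is not the side of any triangle, so the dual graph could in principle fail to be connected there), but this cannot occur under the standing hypothesis, since loops based at the puncture $p$ have constant endpoints and hence do not converge to a limit arc; so the component of triangles around $p$ is genuinely clopen and the argument goes through.
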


\begin{proof}
Suppose that all arcs of $T$ incident to $p$ have {\it both} endpoints at $p$. Then all triangles incident to $p$ have all three vertices at $p$. The neighbours of those triangles also have the same property, namely all their vertices coincide with $p$. This implies that all vertices of all triangles in $T$ coincide with $p$. However, an infinite surface $\cals$ should contain infinitely many boundary marked points, which implies that  there should be some vertices different from $p$.
\end{proof}

\begin{theorem}
\label{Laurent for punctures}
The statement of Proposition~\ref{prop Laurent}  holds  for punctured infinite surfaces, i.e. if $T$ is a fan triangulation of an infinite surface $\cals$ and $\underline x=\{x_i\mid i\in \bN\}$ are variables (including boundary variables) compatible with $T,$ and $\gamma$ is an arc in $\cals$, then the lambda length of $\gamma$ is a  Laurent series in $\{x_1,x_2,\dots\}$ converging absolutely for any $\underline x$ compatible with $T$.
\end{theorem}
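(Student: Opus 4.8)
The plan is to reduce the punctured case to the unpunctured case already settled in Proposition~\ref{prop Laurent}, exploiting the fact that $\cals$ has only finitely many punctures and that tagging is a purely local phenomenon at each of them. The backbone of the argument is the same Ptolemy induction as in Proposition~\ref{prop Laurent}: by Remark~\ref{curve in gan T} the domain $\cald_\gamma^T$ is a finite union of elementary fans even in the tagged setting (here one uses the tagged crossing conventions of~\cite{FST} recalled in Fig.~\ref{elem_crossings}), so it suffices to explain how to assign, and control, the lambda lengths of the tagged arcs that occur. The only arcs whose lambda lengths are affected by tagging are those incident to the finitely many punctures, and these are the arcs I would treat separately.

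First I would reduce to the situation where every puncture is \emph{ordinary} with respect to $T$. If $p$ is a conjugate pair puncture of $T$, with conjugate pair $\psi,\psi'$ and enclosing loop $\mu$, then the relation $x_\mu=x_\psi x_{\psi'}$ of Fig.~\ref{fig: conjugate puncture} shows that the two variable sets $\{x_\psi,x_{\psi'}\}$ and $\{x_\psi,x_\mu\}$ span the same group of Laurent monomials, the change between them being the invertible substitution $x_{\psi'}=x_\mu/x_\psi$. Such a monomial substitution preserves both the property of being a Laurent series and its absolute convergence (all variables take positive values on compatible data), so we may replace the conjugate pair $\{\psi,\psi'\}$ in $T$ by $\{\psi,\mu\}$; after this replacement $p$ becomes an ordinary puncture sitting inside the once-punctured monogon bounded by $\mu$, with $\psi$ its unique incident arc (such a $\psi$ exists by Lemma~\ref{l:arc at puncture}). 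Since there are finitely many punctures, finitely many such substitutions make every puncture ordinary.

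With all punctures ordinary, I would split according to the tagging of $\gamma$ at each puncture it meets. If $\gamma$ carries the \emph{right} tagging at a puncture $p$, then $p$ behaves exactly as an ordinary source of a fan (equivalently, as a boundary marked point): the relevant quadrilaterals are honest ideal quadrilaterals, the fan-crossing computations of Propositions~\ref{Rem:IncomingFan} and~\ref{Prop:OutgoingFanConditions} and the Ptolemy induction of Proposition~\ref{prop Laurent} go through verbatim, and $x_\gamma$ is an absolutely convergent Laurent series. If instead $\gamma$ carries the \emph{wrong} tagging at $p$, then by the tagged crossing convention (Fig.~\ref{elem_crossings}(c)) $\gamma$ crosses every arc of the fan of $T$ based at $p$; choosing an arc $\psi\in T$ with a single endpoint at $p$ (Lemma~\ref{l:arc at puncture}) and using the conjugate-horocycle description of notched lambda lengths from~\cite{FT}, I would write $x_\gamma$ as a Laurent monomial in the initial variables times the lambda length of the corresponding right-tagged arc, thereby reducing to the previous case.

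The main obstacle is precisely this last, wrong-tagging step when $p$ is the source of an \emph{infinite} fan: there $\gamma$ crosses infinitely many arcs of $T$, so $x_\gamma$ is a genuine infinite Laurent series rather than a polynomial, and absolute convergence must be argued rather than merely asserted. This is where the quantitative fan estimates already in hand do the work: the absolute convergence of the series produced by an infinite incoming or outgoing fan is guaranteed by Proposition~\ref{Rem:IncomingFan}(4), together with the summability $\sum_i x_{i,i+1}<\infty$ and the limit behaviour $x_i\to x_*$ (resp.\ $x_i\to 0$) from Proposition~\ref{Prop:HyperbolicEuclidean}, while the conjugate-horocycle modification at the single puncture $p$ contributes only one local monomial factor and hence leaves the convergence intact.
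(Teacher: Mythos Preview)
Your overall strategy---reduce to the right-tagged, ordinary-puncture situation and then run the Ptolemy induction of Proposition~\ref{prop Laurent}---matches the paper's, but two of your reduction steps contain genuine gaps.

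First, the wrong-tagging correction at an ordinary puncture $p$ is not a Laurent \emph{monomial} in the initial variables. The conjugate-horocycle argument gives $x_\gamma/x_{\gamma'}=x_{\psi^{\bowtie}}/x_\psi$ for $\psi\in T$ with one end at $p$ and $\psi^{\bowtie}$ its opposite-tagged twin; but $\psi^{\bowtie}\notin T$, so $x_{\psi^{\bowtie}}$ is itself an unknown lambda length and you have not explained how to express it through initial variables. This is the crux of the matter, and the paper's solution is different from what you sketch: when $T$ is locally finite at $p$ one invokes the finite theory, but when $p$ is the source of an infinite fan the paper chooses the arc in $T$ to be the \emph{limit arc} $\psi'\in T$ at $p$, lets $\mu$ be the loop around it, observes that $\mu$ has both endpoints on the boundary so that $x_\mu$ is already a convergent Laurent series by the previously handled case, and then uses $x_\mu=x_\psi x_{\psi'}$ to obtain $x_\psi/x_{\psi'}=x_\mu/x_{\psi'}^2$. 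Your final paragraph locates the obstacle correctly but misdiagnoses it as a convergence question; the difficulty is expressing the tagging ratio through initial variables at all, and the loop $\mu$ is the missing device.

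Second, your monomial substitution $\{\psi,\psi'\}\mapsto\{\psi,\mu\}$ at a conjugate pair puncture trades the tagged picture for an ideal triangulation with a self-folded triangle around $p$, and you then assert that the unpunctured Ptolemy induction applies ``verbatim'' to right-tagged arcs ending there. It does not: any such arc must cross $\mu$ into the once-punctured monogon, and the quadrilateral on $\mu$ is degenerate (two of its four sides coincide with $\psi$), so the standard Ptolemy step is unavailable. The explicit computation needed in that degenerate configuration---introducing the auxiliary arc $\sigma$ inside the digon, deriving $x_\sigma=(x_\alpha+x_\beta)/x_{\psi'}$, and then applying Ptolemy to the outer crossing---is precisely what the paper carries out directly in its Case~2, without ever changing $T$. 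Your change of variables therefore does not bypass this work; it only relocates it to a step you have left unwritten.
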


\begin{proof} 
The proof remains the same for arcs with no endpoints at punctures and for arcs having one or two endpoints at ordinary punctures with the right tagging at these punctures.
This implies that we are left to prove the statement in two cases:

\begin{itemize}
\item when  $\gamma$  has a wrongly tagged endpoint at an ordinary  puncture $p$;
\item when  $\gamma$ has an endpoint at a conjugate pair puncture $p$.
\end{itemize}

\medskip
\noindent
{\bf Case 1: wrong tagging at an ordinary puncture.}
Let $\gamma$ be an arc with a wrongly tagged endpoint at an ordinary puncture $p$. We will assume that the other endpoint of   $\gamma$ is not a conjugate puncture. We will consider the three cases given below: 

\begin{itemize}
\item[1.a:] {\bf only one endpoint of   $\gamma$ has wrong tagging.}
Let $\gamma'$ be the same arc as $\gamma$ but with the opposite tagging at $p$. As it is shown above,   $x_{\gamma'}$   is a Laurent series in $\{x_1,x_2,\dots\}$. We will use $x_{\gamma'}$ to find  $x_{\gamma}$.

Notice that given two curves $\varphi$ and $\psi$ both with exactly one endpoint at $p$ and tagged the same at $p$, and given the arcs $\varphi'$ and $\psi'$ which coincide with $\psi$ and $\varphi$ except that they are tagged oppositely at $p$, one has 
$$
x_{\varphi}/x_{\varphi'}=x_{\psi}/x_{\psi'}
$$
since the ratio above is $e^{\pm d/2}$, where $d$ is the distance between the two conjugate horocycles  at $p$, see~\cite[Section 7]{FT}.
Hence, it is sufficient to show that the ratio $x_{\psi}/x_{\psi'}$ is a Laurent series for some specifically  chosen arc $\psi'\in T$ and get 
$$x_{\gamma}=x_{\gamma'}x_{\psi}/x_{\psi'},$$
which is a Laurent series as  $x_{\gamma'}$ is.

If the triangulation $T$ is locally finite at $p$ (i.e. there are finitely many triangles incident to $p$), then we choose 
$\psi'\in T$ to be an arc having exactly one end at $p$ (there is such an arc in view of Lemma~\ref{l:arc at puncture}). Notice that $\psi'$ is tagged oppositely to $\gamma$ at $p$ by the assumption. Then
$x_{\psi}$ is a Laurent polynomial in view of classical theory for finite surfaces, and hence $x_{\psi}/x_{\psi'}$ is also a Laurent polynomial in $\{x_1,x_2,\dots \}$. 

If $T$ is not locally finite around $p,$ then there are infinitely many arcs of $T$  incident to $p$. Hence, there is a limit arc $\psi'\in T$ incident to $p$, i.e. an arc with one endpoint at $p$ and another endpoint $q\in\partial \cals$ (since a limit arc should have at least one endpoint at the boundary). Again by the assumption, $\psi'$ is tagged oppositely to $\gamma$ at $p$. Let $\psi$ be the arc parallel to $\psi'$ but  tagged the same as $\gamma$ at $p$, and let $\mu$ be the loop going around $\psi'$, see Fig.~\ref{fig: conjugate puncture}(b).
Then
 $x_\mu$ is a Laurent series in $\{x_1,x_2,\dots\}$ as it is shown above (since this arc has no endpoints at punctures). 
By~\cite[Lemma~7.10]{FT},  $x_\mu=x_\psi x_{\psi'}$. 
This implies that $x_{\psi}/x_{\psi'}=x_\mu/(x_{\psi'})^2$ is also a converging Laurent series since $\psi'\in T$. 

\item[1.b:] {\bf two endpoints of  $\gamma$ are distinct ordinary punctures, both with wrong taggings.}
In this case we apply the same reasoning as in Case~1.a, changing the tagging of ${\gamma}$ first in one endpoint and then at the other.

\item[1.c:] {\bf both endpoints of ${\gamma}$  are at the same ordinary puncture $p$, tagged wrongly at $p$.}
In this case, $x_{\gamma'}/x_{\gamma}=(e^{\pm d/2})^2$ since we add or remove a segment of length $d$ from both ends of $\gamma$
which implies that 
$$x_{\gamma}=x_{\gamma'} (x_{\psi}/x_{\psi'})^2,$$
where $\psi'\in T$ is chosen in the same way as in Case 1.a.
\end{itemize}

\medskip
\noindent
{\bf Case 2: conjugate pair puncture.} Let $p$ be a puncture and let $\rho,\rho'\in T$ be two curves forming a conjugate pair incident to $p$. Let $\alpha,\beta\in T$ be the sides of the digon containing the conjugate pair, and let $\gamma$ be an arc coming to $p$ through the arc $\beta$ (see Fig.~\ref{fig: conjugate}(a)). Without loss of generality, we may assume that $\gamma$ is tagged at $p$ in the same way as $\rho$. 

We will consider the following three cases:

\begin{itemize}
\item[2.a:] {\bf only one endpoint of $\gamma$ is a conjugate pair puncture}. 
Let $\sigma$  be the arc inside the digon tagged the same as $\rho$ at $p$. By the Ptolemy relation, we have
$$
x_\gamma x_\beta= x_\sigma x_\psi+x_\rho x_\varphi,
$$
see Fig.~\ref{fig: conjugate}(a).
Notice that none of the endpoints of the arcs $\alpha$ and $\beta$ can be a conjugate pair puncture in $T$. This implies that  $x_\psi$ and $x_\varphi$ are Laurent series in $\{x_1,x_2,\dots \}$ by Case 1. As $\rho\in T$, $x_\rho$ is one of $\{x_1,x_2, \dots \}$.
The lambda length $x_{\sigma}$ may be computed by the Ptolemy relation in a (self-folded) quadrilateral lying inside the digon:
$$
x_\sigma x_{\mu}=x_{\rho}( x_\alpha+ x_\beta),
$$
where $\mu$ is a loop around $\rho$. By~\cite[Lemma~7.10]{FT}, $x_\mu=x_\rho x_{\rho'},$ 
which implies
$$
x_\sigma=\frac{x_\alpha+x_\beta}{x_{\rho'}}.
$$
Finally, as $\beta\in T$ we see that  
$$
x_\gamma = \frac{x_\sigma x_\psi+x_\rho x_\varphi}{x_\beta}
$$
is a converging Laurent series.

\begin{figure}[!h]
\begin{center}
\psfrag{g}{\color{red}  \scriptsize $\gamma$}
\psfrag{r}{\color{blue} \scriptsize  $\rho$}
\psfrag{r'}{\color{blue} \scriptsize  $\rho'$}
\psfrag{a}{\color{blue} \scriptsize  $\alpha$}
\psfrag{b}{\color{blue} \scriptsize  $\beta$}
\psfrag{m}{\color{DarkOrchid} \scriptsize  $\mu$}
\psfrag{s}{\color{dgreen} \scriptsize  $\sigma$}
\psfrag{p}{\color{dgreen} \scriptsize  $\psi$}
\psfrag{f}{\color{dgreen} \scriptsize  $\varphi$}
\psfrag{1}{\color{dgreen} \scriptsize  $\delta_1$}
\psfrag{2}{\color{dgreen} \scriptsize  $\delta_2$}
\psfrag{3}{\color{dgreen} \scriptsize  $\delta_3$}
\psfrag{4}{\color{dgreen} \scriptsize  $\delta_4$}
\psfrag{a_}{\small (a)}
\psfrag{b_}{\small (b)}
\psfrag{c_}{\small (c)}
\epsfig{file=./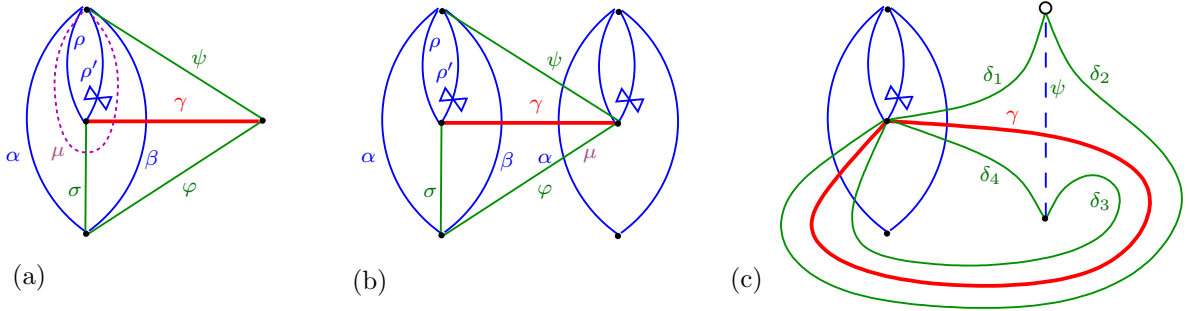,width=0.98\linewidth}
\caption{Arc $\gamma$ with an endpoint in a conjugate puncture. }
\label{fig: conjugate}
\end{center}
\end{figure} 

\item[2.b:] {\bf two endpoints of $\gamma$ are at two distinct conjugate pair punctures $p$ and $q$.}
The proof is the same as in  Case~2.a. We apply the Ptolemy relation as in Fig.~\ref{fig: conjugate}(b),
and  since the arcs $\psi$ and $\varphi$ have only one of their endpoints at a conjugate pair puncture, 
$x_\psi$ and $x_\varphi$ are converging Laurent series by  Case~2.a.

\item[2.c:] {\bf both endpoints of $\gamma$ are at the same conjugate pair puncture $p$.} First suppose that $\gamma$ intersects finitely many arcs of $T$. Then the domain $\cald_\gamma^T$ of $\gamma$ for $T$ consists of finitely many triangles, which implies that $x_\gamma$ is a Laurent polynomial in terms of the lambda lengths of the finitely many arcs contained in the finite surface $\cald_\gamma^T$.

Now, suppose that $\gamma$ intersects infinitely many arcs. As both endpoints of $\gamma$ are at the conjugate puncture $p$ (i.e. there are finitely many arcs intersecting $\gamma$ at the endpoints), there exists a limit arc $\psi\in T$ crossing $\gamma$.
Notice that both endpoints of $\psi$ are distinct from $p$ (as $p$ is a conjugate pair puncture), i.e. for any arc $\varphi$ starting from $p$ and terminating at an endpoint of $\psi$, the lambda length $x_\varphi$ is a Laurent series in $\{x_1,x_2,\dots \}$ in view of Cases~2.a and~2.b. Consider a lift  of $\gamma$ to the universal cover (together with a lift of $\psi$ intersecting the lift of $\gamma$). Applying the Ptolemy relation as in  Fig.~\ref{fig: conjugate}(c), we obtain
$$x_\gamma x_\psi=x_{\delta_1} x_{\delta_3} +x_{\delta_2}  x_{\delta_4}, $$
which implies that $x_\gamma$ is a Laurent series as each of $x_{\delta_i}$ is for $i=1,2,3,4$; and thus, $\psi\in T$
(here $\delta_i$, $i=1,\dots,4$, is obtained by resolving the crossing of $\psi$ and $\gamma$). 
\end{itemize}

\end{proof}

\begin{corollary}
\label{positivity}
The coefficients of the Laurent series obtained in Theorem~\ref{Laurent for punctures} are positive integers.

\end{corollary}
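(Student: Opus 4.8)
The plan is to follow verbatim the inductive structure of the proof of Theorem~\ref{Laurent for punctures} (and of Proposition~\ref{prop Laurent}), and to verify that at every step the operations used preserve the property of being an absolutely convergent Laurent series whose coefficients, after collecting like monomials, are positive integers. Call this property $(\star)$. It then suffices to check that $(\star)$ holds in every base case of that proof and is preserved by each recursive step.

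For the base case, namely an arc contained in a single elementary fan, Proposition~\ref{Rem:IncomingFan}(4) gives
$$x_{s,\ast}=x_s x_\ast \sum_{i=s}^\infty \frac{x_{i,i+1}}{x_i x_{i+1}},$$
which is a sum of pairwise distinct Laurent monomials each with coefficient $1$; hence $(\star)$ holds, and the finite sum of Proposition~\ref{Rem:IncomingFan}(3) handles the finite-fan case identically. For the recursive step in the unpunctured case, the Ptolemy relation $x_\gamma x_{\gamma'}=x_{\beta_1}x_{\beta_3}+x_{\beta_2}x_{\beta_4}$ with $\gamma'\in T$ yields $x_\gamma=(x_{\beta_1}x_{\beta_3}+x_{\beta_2}x_{\beta_4})/x_{\gamma'}$. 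By induction each $x_{\beta_j}$ satisfies $(\star)$; a product of two series with positive integer coefficients again has positive integer coefficients, as does their sum; and dividing by the single initial variable $x_{\gamma'}$ merely lowers the exponent of $x_{\gamma'}$ in each monomial, so $(\star)$ is preserved. The punctured cases are handled the same way: in Cases~1.a and~1.c one has $x_\gamma=x_{\gamma'}(x_\psi/x_{\psi'})^{k}$ with $x_\psi/x_{\psi'}=x_\mu/(x_{\psi'})^2$, $x_\mu=x_\psi x_{\psi'}$ and $\psi'\in T$; in Case~2.a one has $x_\sigma=(x_\alpha+x_\beta)/x_{\rho'}$ and $x_\gamma=(x_\sigma x_\psi+x_\rho x_\varphi)/x_\beta$ with $\rho,\rho',\beta\in T$; and in Case~2.c the Ptolemy relation $x_\gamma x_\psi=x_{\delta_1}x_{\delta_3}+x_{\delta_2}x_{\delta_4}$ with $\psi\in T$ applies. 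In every instance the only operations are multiplication, addition, and division by a monomial in the initial cluster and boundary variables of $T$, each of which preserves both positivity and integrality of the coefficients.

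The one point requiring genuine care, and the step I expect to be the main obstacle, is that the product of two infinite series in the sense of the paper's definition has \emph{well-defined} coefficients: one must know that each output monomial arises from only finitely many pairs of contributing monomials, so that its coefficient is a finite sum of positive integers and hence a genuine positive integer rather than a divergent sum. This is where I would invoke the absolute convergence already established in Theorem~\ref{Laurent for punctures}: on the nonempty cone of data compatible with $T$ each factor converges absolutely, so the Cauchy product converges absolutely as well, and the collection of contributions to any fixed monomial (of bounded multidegree in the finitely many variables occurring in it) is finite.

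For a fully independent confirmation I would remark that the snake-graph expansion of Theorem~\ref{Thm:MSW-main} expresses $x_\gamma$ as a sum over perfect matchings of the (infinite) snake graph of $\gamma$, with each matching contributing a single Laurent monomial; this presentation is manifestly a Laurent series with positive integer coefficients, and it agrees with the series produced by the inductive argument above. Either route establishes the corollary.
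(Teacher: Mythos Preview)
Your argument is correct and follows exactly the paper's approach: the proof in the paper is the one-paragraph observation that the inductive construction uses only Ptolemy relations, involving addition, multiplication, and division by an initial variable, and never subtraction, so positive integrality propagates. You have simply unpacked each case explicitly and added a caution about well-definedness of coefficients in products of infinite series that the paper does not mention; note, however, that your snake-graph alternative via Theorem~\ref{Thm:MSW-main} is stated only for \emph{unpunctured} surfaces, so it does not by itself cover the full scope of the corollary.
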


\begin{proof}
The Laurent series were constructed by applying Ptolemy relations repeatedly, where the only division performed was  by the variable corresponding to an arc lying in the triangulation. Also, we have not used subtraction in the construction. So, the coefficients of the terms are positive integers by induction.
\end{proof}

\begin{theorem}
\label{variables are distinct}
Let $T$ be a fan triangulation of an infinite surface $\cals$. Let $\beta,\gamma \in \cals$ be two distinct arcs
and $x_\beta$ and $x_\gamma$  be the Laurent series associated with $\beta$ and $\gamma$  with respect to the triangulation $T$. Then $x_\gamma\ne x_\beta$. 
\end{theorem}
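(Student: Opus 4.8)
The plan is to use the geometric meaning of lambda lengths to separate the two series. The key point is that $x_\gamma$ and $x_\beta$ are not merely formal Laurent series but are honest functions on the decorated Teichm\"uller space $\widetilde{\calt}(\cals)$: at each point of $\widetilde{\calt}(\cals)$ the value $x_\gamma$ is the lambda length of the specific geodesic $\gamma$ in the corresponding decorated hyperbolic structure. So it suffices to exhibit a single admissible hyperbolic structure with converging horocycles in which $\lambda(\gamma)\neq\lambda(\beta)$, since equality of the Laurent series would force equality of their values at every point of $\widetilde{\calt}(\cals)$.

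First I would reduce to a finite subsurface. Since $\beta\neq\gamma$ are distinct arcs, by Proposition~\ref{Prop:ArcsFinInter} they have only finitely many intersections, and each meets only finitely many limit arcs of $T$ by Corollary~\ref{Cor:FiniteLimArc}. Cutting $\cals$ along finitely many arcs of $T$ (as in Proposition~\ref{Lem:FinArcs-->Discs}) I can enclose both $\beta$ and $\gamma$ in a finite triangulated subsurface $S_0$, chosen large enough that its triangulation is induced by $T$. On $S_0$ the classical theory of Penner and Fomin--Thurston applies: lambda lengths of distinct arcs are distinct coordinate functions, and in particular one can freely vary the finitely many lambda lengths of the arcs of $T|_{S_0}$ independently. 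Concretely, I would pick values on $S_0$ for which $\lambda(\beta)\neq\lambda(\gamma)$ — this is possible because on a finite surface the two arcs have genuinely different expressions as functions of the finite cluster, so they cannot agree identically.

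Next I would extend this finite decorated structure to the whole infinite surface. By Theorem~\ref{Thm:CompatibilityWhole}, any assignment of positive real numbers to the arcs of the fan triangulation $T$ satisfying the compatibility conditions of Table~\ref{Table:Compatibility} in every almost elementary domain is realised by an admissible hyperbolic structure with converging horocycles. Since $T$ is a fan triangulation, I can complete the finite data already fixed on $T|_{S_0}$ to a globally compatible collection: on each almost elementary (incoming or outgoing) fan reaching to an accumulation point I simply choose the remaining lambda lengths tending to the appropriate limit (tending to $x_*$ with $\sum x_{i,i+1}<\infty$ for an incoming fan, tending to $0$ with the condition of Proposition~\ref{Prop:OutgoingFanConditions} for an outgoing fan), which can always be arranged without disturbing the finitely many values already fixed inside $S_0$. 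This yields a genuine point of $\widetilde{\calt}(\cals)$ at which, by construction, $\lambda(\gamma)\neq\lambda(\beta)$.

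Finally, I would invoke Theorem~\ref{Laurent for punctures} (and Proposition~\ref{prop Laurent} in the unpunctured case) to conclude. These results say $x_\gamma$ and $x_\beta$ are absolutely converging Laurent series in the initial variables $\{x_i\}$, and their values at any compatible collection $\underline{x}$ are exactly the lambda lengths of $\gamma$ and $\beta$ in the corresponding decorated structure. Since we have produced a compatible $\underline{x}$ where these values differ, the two Laurent series cannot be equal. The main obstacle I anticipate is the bookkeeping in the extension step: one must verify that fixing finitely many lambda lengths inside $S_0$ is genuinely compatible with completing the data on the surrounding infinite fans so that all limit conditions of Table~\ref{Table:Compatibility} hold simultaneously. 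This is where the choice of $T$ as a \emph{fan} triangulation (no zig-zags) is essential, because only for fans do we have the clean compatibility criteria of Propositions~\ref{Prop:CompatibilityIncomingFan} and~\ref{Prop:CompatibilityOutgoingFan}; the separation of values inside the finite core then propagates without interference to the tails.
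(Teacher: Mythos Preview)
Your overall strategy is exactly the paper's: since $x_\beta$ and $x_\gamma$ are honest functions on $\widetilde{\calt}(\cals)$, it suffices to exhibit a single decorated hyperbolic structure in which $\lambda(\beta)\neq\lambda(\gamma)$. The issue is in how you build that structure.

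The gap is in your reduction to a finite subsurface. You claim that by cutting along finitely many arcs of the \emph{given} fan triangulation $T$ you can enclose both $\beta$ and $\gamma$ in a finite triangulated subsurface $S_0$ with triangulation $T|_{S_0}$. This fails whenever $T$ contains an incoming fan and one of the arcs, say $\gamma$, crosses infinitely many arcs of that fan (e.g.\ $\gamma$ runs from a point on the base of the fan to the accumulation point). Then the domain $\cald_\gamma^T$ is genuinely infinite, so $\gamma$ cannot sit inside any finite union of triangles of $T$; your appeal to the classical finite-surface theory on $S_0$ never gets off the ground, and your extension step (which is phrased as completing $T|_{S_0}$ to a compatible collection on all of $T$) has nothing to extend from. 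The facts you cite --- Proposition~\ref{Prop:ArcsFinInter} and Corollary~\ref{Cor:FiniteLimArc} --- only bound the number of intersections with a single arc or with the finitely many limit arcs, not with all arcs of $T$.

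The paper's fix is simple but decisive: it does \emph{not} try to cut along arcs of the given $T$. Instead it passes to an auxiliary \emph{outgoing} fan triangulation $T'$ (as in Proposition~\ref{distinguished triangulation}). With respect to $T'$ every arc crosses only finitely many arcs, so one can genuinely enclose $\beta$ and $\gamma$ in a finite subsurface $\cals'$ bounded by arcs of $T'$, invoke the finite theory there to get $\lambda(\beta)\neq\lambda(\gamma)$, and then extend by attaching outgoing fans (whose compatibility conditions are easy to satisfy with the boundary values already fixed). The point is that the triangulation used to \emph{construct} the point of $\widetilde{\calt}(\cals)$ is irrelevant; once that point exists, the Laurent series with respect to the \emph{original} $T$ evaluate at it to $\lambda(\beta)$ and $\lambda(\gamma)$, which differ. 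Your argument becomes correct if you make exactly this switch.
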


\begin{proof}
First, notice that given a fan triangulation $T$, the Laurent series (in terms of the initial lambda lengths $\{x_1,x_2,\dots \}$ of the internal and boundary arcs of $T$) does not depend on the choice of initial numerical data compatible with the combinatorial structure of the triangulation. Hence, it is sufficient to show that for any two arcs $\beta$ and $\gamma$  in the combinatorial infinite surface $\cals$, there is an admissible hyperbolic structure with converging horocycles such that the lambda lengths of $\beta$ and $\gamma$  are different. In what follows, we construct such a hyperbolic structure on the (combinatorial) surface $\cals$.

Fix two arcs $\beta,\gamma\in \cals$, and let $T$ be an outgoing fan triangulation (as in Definition~\ref{Def:FanTri}). 
Then each of the arcs $\beta$ and $\gamma$ crosses only  finitely many arcs of $T$. 
Hence, it is possible to choose finitely many arcs $\alpha_1,\dots,\alpha_n$ (where $n= Acc(\cals)$ is the total number of accumulation points, see Definition~\ref{Defn:Acc(S)}) so that 
\begin{itemize}
\item[-] none of the arcs $\alpha_i$ crosses $\beta$ or $\gamma$, and 
\item[-] $\{\alpha_i\}$ cuts $\cals$ into finitely many pieces one of which is a finite subsurface $\cals'$ containing only finitely many triangles of $T$ and all the others 
 being  elementary outgoing fans. 
\end{itemize}

Without loss of generality (i.e. by changing the choice of $\alpha_i$ if needed), we may assume that $\beta,\gamma\in \cals'$. Then the lambda lengths of $\gamma$ and $\beta$ are expressed through finitely many of $\{x_i\}$ (more precisely, both of them are Laurent polynomials in  $\{x_i \mid i\in I(\cals')\}$, where $I(\cals')$ is a finite index set of the arcs lying inside or on the boundary of $\cals'$).
In particular, $x_\gamma\ne x_\beta$ in view of the corresponding result for finite surfaces (see~\cite{FST}, proof of Proposition~9.21). This implies that for some choice of  $\{x_i \mid i\in I(\cals')\}$ the corresponding lambda lengths of $\beta$ and $\gamma$ are different.

Now, fix a hyperbolic structure and a choice of horocycles on $\cals'$ such that the lambda lengths of $\beta$ and $\gamma$ are different. To get the required structure on $\cals$, we are left to attach finitely many outgoing fans with a unique condition that the lambda lengths of arcs $\alpha_i$ cutting the fans are given from the boundary of $\cals'$. This can be easily done by constructing the corresponding fans in the upper half-plane.
\end{proof}

\subsection{Cluster algebras from infinite surfaces}
The definition of cluster algebras from infinite surfaces is an immediate generalisation of the one associated with finite surfaces.

\begin{definition}[Cluster algebra from infinite triangulated surface] Let $\cals$ be an infinite surface together with an admissible hyperbolic structure and a choice of  converging horocycles, and let $T$ be a \emph{fan} triangulation of $\cals$. Denote by $\{\gamma_i\}$, $i\in \bN,$ the set of arcs of $T$ including infinitely many  boundary arcs $\{ \beta_j \}$ as a subset. Let $\{x_i \}$ be the lambda lengths of $\{ \gamma_i\}$. Then
\begin{itemize}
\item[-] for any (internal) arc $\gamma$ in $\cals$, the Laurent series giving the lambda length of $\gamma$ in terms of   $\{ x_i \}$
will be called the {\it cluster variable} associated with $\gamma$;
\item[-] denote by $\mathfrak X$ the {\it set of all cluster variables};
\item[-] the lambda lengths (considered as independent variables) of the boundary arcs will be called {\it boundary coefficients}; 
\item[-] by a {\it cluster} we mean a collection of cluster variables associated with arcs lying in one triangulation together with the boundary coefficients;
\item[-] by an {\it initial cluster} we mean the set  $\{x_i \}$ corresponding to the arcs of an {\it initial triangulation} $T$ (notice that we require an initial triangulation to be a fan triangulation);
\item[-] by a {\it seed} we mean a triangulation of $\cals$ together with the corresponding cluster;
\item[-] {\it mutations} of triangulations are defined as in Section~\ref{Sec:Combinatorics}, and \emph{mutations of clusters} are corresponding transformation of the set of functions.
\end{itemize}
Finally,
\begin{itemize}
 
\item[-] the {\it cluster algebra} $\mathcal A(\cals,T)$  associated with $(\cals,T)$ is 
the $\bZ$-algebra   generated by all cluster variables, i.e. 
$$
\cala(\cals, T)=\left\{ \displaystyle{\sum\limits_{k=1}^t} \prod\limits_{i=1}^{n_k} x_{j_i}^{m_i} \mid n_k, j_i, t\in \bN, \ m_i\in\bZ, x_{j_i}\in\mathfrak X \right\}.
$$
\end{itemize}
\end{definition}

\begin{remark}
A cluster variable may be defined as an infinite Laurent series, however when generating a cluster algebra we only consider {\it finite  sums of finite products} of cluster variables. 
\end{remark}

\begin{remark} 
As usual, in the case of punctured surfaces by ``triangulations'' we mean ``tagged triangulations''.
\end{remark}

\begin{remark}[Independence of triangulation]
\label{independence}
The definition of cluster algebra  $\mathcal A(\cals,T)$ given above depends on the choice of a fan triangulation,
however one can check that given two fan triangulations $T_1$ and $T_2$ of the  surface $\cals$, there exists a homomorphism $f$  of algebras  $\cala_1 =\mathcal A(\cals,T_1)$  and $\cala_2=\mathcal A(\cals,T_2)$. 

Indeed, by Theorem~\ref{variables are distinct} we see that a cluster variable is uniquely determined by the corresponding arc $\gamma\in \cals$. Construct the bijection between the sets of cluster variables of $\cala_1$ and $\cala_2$ by mapping the cluster variable of $\cala_1$  associated  to an arc $\gamma$  to the cluster variable of $\cala_2$ associated with $\gamma$. 

To see that the bijection $f$ defines an algebra homomorphism, notice that  cluster variables of both algebras satisfy the same relations coming from the geometry of $\cals$, i.e.

\begin{itemize}
\item[-] Ptolemy relations, and
\item[-] limit relations (see Corollary~\ref{limit labmda length} stating that the lambda lengths of the converging sequence of arcs converge).
\end{itemize}

Moreover, for any given initial fan triangulation $T$, every cluster variable may be obtained from the initial cluster variables in finitely many steps (where at every step we apply either a Ptolemy relation or a limit relation to the variables obtained in the previous steps). 
\end{remark}

Remark~\ref{independence} leads to the following definition.

\begin{definition}[Cluster algebra from an infinite surface]
By a {\it cluster algebra $\cala(\cals)$ from an infinite surface $\cals$} we mean the cluster algebra obtained from any fan triangulation of $\cals$.
\end{definition}
\section{Infinite staircase snake graphs} \label{Sec:SG}
In this section, we will generalise the construction of snake graphs, introduced by Musiker, Schiffler and Williams in \cite{MSW}, for arcs in an infinite surface and use them to give an expansion formula for cluster variables. Since we consider cluster algebras with initial variables associated with fan triangulations,  we will introduce the construction of snake graphs in this setting for simplicity, though infinite snake graphs associated with non-fan triangulations can be given in a similar way.
\subsection{Construction of snake graphs associated with fan triangulations}
Let $\cals$ be an infinite surface with a fan triangulation $T$ and $\gamma$ be an arc in $\cals.$ Consider the domain $\cald_{\gamma}^T$ of $\gamma$ lifted to the universal cover. The domain $\cald_{\gamma}^T$ is a finite union of elementary fans $\calf_i$ where $1\leq i \leq k$ for some $k\in\bN.$ Fix an orientation on $\gamma.$ 

We will construct the snake graph of $\gamma$ in several steps depending on the complexity of the collection of crossings of $\gamma$ with $T$. In the case of finitely many crossings, our construction coincides with that of \cite{MSW}. Suppose $\{\tau_i\mid i\in I\}$ is the collection of arcs in $T$ that $\gamma$ crosses in the universal cover (remark that $\tau_i$ may be a lift of the same arc as $\tau_j$,  for some $i$ and $j$). 

\bigskip
\noindent\textit{Step I. Finitely many crossings  of $\gamma$ with $T$:}

\begin{itemize}
\item[-] to each crossing of $\gamma$ in $T$, associate a weighted tile which is a square of a fixed side length:
\begin{itemize}
\item[--] if $\gamma$ crosses an arc $\tau_i,$ consider the quadrilateral $Q_i$ in $T$ which contains $\tau_i$ as a diagonal; the corresponding tile $G_i$ will have $\tau_i$ in the diagonal and weights on the edges induced by  the labelling on $Q_i$, see Figure~\ref{Fig:Cross_SG};
\end{itemize}
\item[-] glue successive tiles as follows: the tiles $G_i$ and $G_{i+1}$ are glued along the edge with weight $\tau_b$, where $\tau_b$ is the third side of the triangle bounded by $\tau_i$ and $\tau_{i+1}$; arrange the gluing so  that the diagonals $\tau_i$ and $\tau_{i+1}$  of the tiles $G_i$ and $G_{i+1}$ connect the top-left to the bottom-right corners of the tiles, see Figure~\ref{Fig:Cross_SG}.
\end{itemize}

\begin{figure}[!h]
\begin{center}
\psfrag{ti-1}{\scriptsize $\tau_{i-1}$}
\psfrag{ti}{\scriptsize $\tau_{i}$}
\psfrag{ti+1}{\scriptsize $\tau_{i+1}$}
\psfrag{ti+2}{\scriptsize $\tau_{i+2}$}
\psfrag{ti+3}{\scriptsize $\tau_{i+3}$}
\psfrag{ti+4}{\scriptsize $\tau_{i+4}$}
\psfrag{ta}{\scriptsize $\tau_{a}$}
\psfrag{tb}{\scriptsize $\tau_{b}$}
\psfrag{tc}{\scriptsize $\tau_{c}$}
\psfrag{td}{\scriptsize $\tau_{d}$}
\psfrag{te}{\scriptsize $\tau_{e}$}
\psfrag{i-1}{\scriptsize ${i-1}$}
\psfrag{i}{\scriptsize ${i}$}
\psfrag{i+1}{\scriptsize ${i+1}$}
\psfrag{i+2}{\scriptsize ${i+2}$}
\psfrag{i+3}{\scriptsize ${i+3}$}
\psfrag{i+4}{\scriptsize ${i+4}$}
\psfrag{a}{\scriptsize ${a}$}
\psfrag{b}{\scriptsize ${b}$}
\psfrag{c}{\scriptsize ${c}$}
\psfrag{d}{\scriptsize ${d}$}
\psfrag{e}{\scriptsize ${e}$}
\epsfig{file=./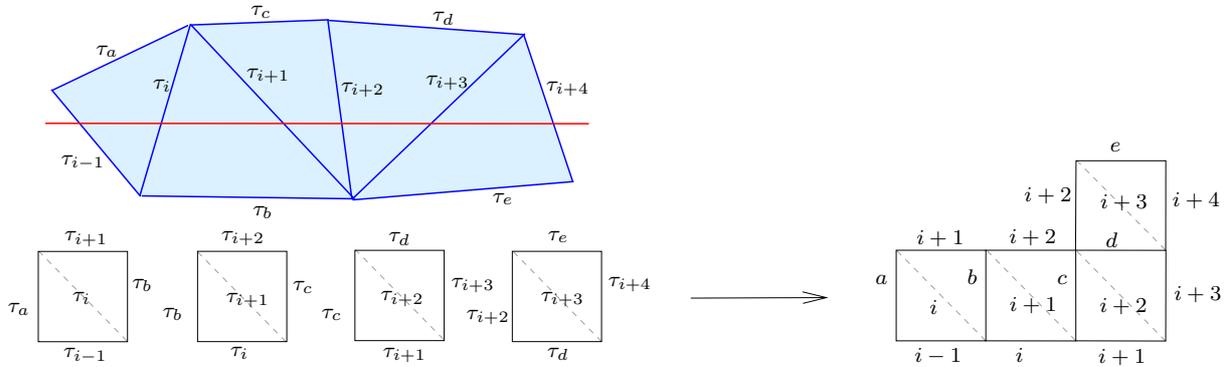,width=0.99\linewidth}
\caption{Construction of snake graphs. Notice that two successive tiles have opposite orientations relative to the orientation of the surface.}
\label{Fig:Cross_SG}
\end{center}
\end{figure} 

\noindent\textit{Step II. Crossing   an infinite elementary fan:}
\begin{itemize}
\item[-] associate a tile to the first crossing of an almost elementary fan;
\item[-] glue each tile associated with successive crossings as in the finite case; 
\item[-] obtain an infinite graph $\overline{\calg}$;
\item[-] associate a  \emph{limit tile} $G_{lim}$ at the end of $\overline{\calg}$ to indicate that we approach to the limit arc $\gamma_{\ast}$ of the fan:
\begin{itemize}
\item $G_{lim}$  is a dotted square of the same side length; 
\item  $\overline{\calg}$ approaches $G_{lim}$  from the bottom;
\item 
associate the weight $*$ to the top edge of  $G_{lim}$, see left of Fig.~\ref{Fig:Inf_zigzag_SG}.
\end{itemize}
\end{itemize}

\begin{figure}[!h]
\centering
\psfrag{*}{\color{blue} \scriptsize $*$}
\psfrag{g}{\color{red} \scriptsize $\gamma$}
\epsfig{file=./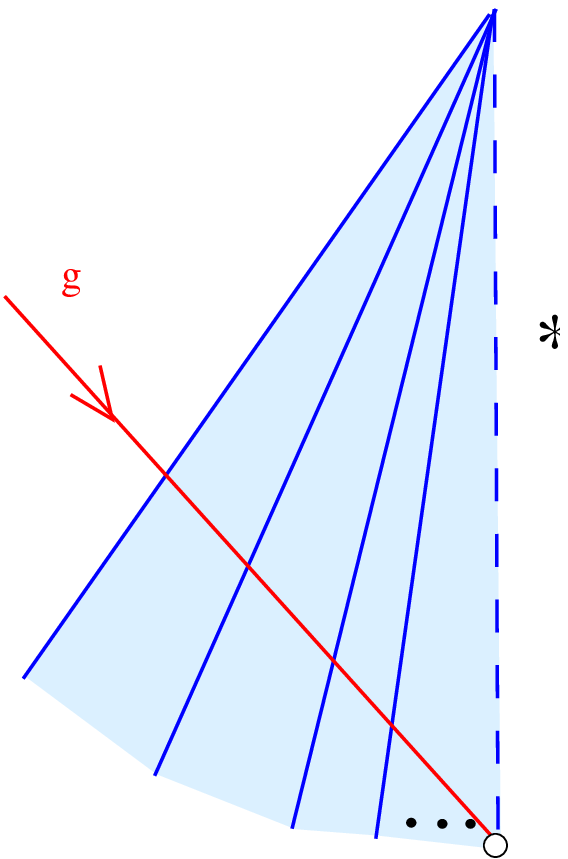,width=0.085\linewidth}
\hspace{.2in}
\begin{tikzpicture}[scale=.4]
\draw (0,0)--(2,0)--(2,1)--(0,1)--(0,0) (1,0)--(1,1)
(1,1)--(1,2)--(2,2)--(2,1) (2,1)--(3,1)--(3,2)--(2,2)--(2,3)--(3,3)--(3,2);
\draw[thick,loosely dotted] (3.2,2.5)--(4.5,3.8);
\draw[thick,densely dotted] (4,4)--(5,4)--(5,5)--(4,5)--(4,4);
\node[scale=.7,color=blue] at (4.5,5.3){$\ast$};
\end{tikzpicture}
\hspace{1.1in}
\epsfig{file=./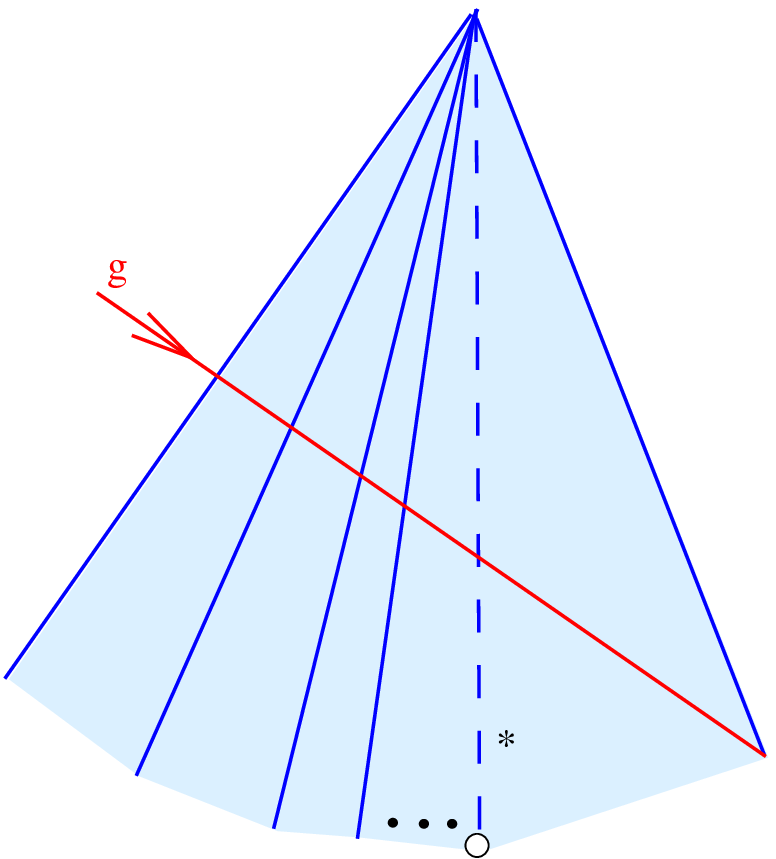,width=0.117\linewidth}
\hspace{.2in}
\begin{tikzpicture}[scale=.4]
\draw (7,0)--(9,0)--(9,1)--(7,1)--(7,0) (8,0)--(8,1)
(8,1)--(8,2)--(9,2)--(9,1) (9,1)--(10,1)--(10,2)--(9,2)--(9,3)--(10,3)--(10,2);
\draw[thick,loosely dotted] (10.2,2.5)--(11.5,3.8);\draw[thick,densely dotted] (11,4)--(12,4)--(12,5)--(11,5)--(11,4);
\node[scale=.7,color=blue] at (11.5,5.3){$\ast$};
\node[scale=.7] at (12.5,4.5){$\ast$};
\draw (12,4)--(13,4)--(13,5)--(12,5);
\end{tikzpicture}
\caption{Infinite one-sided zig-zag snake graph associated with an infinite elementary fan on the left and infinite snake graph associated with an arc crossing an elementary fan and the limit arc of the fan on the right.} 
\label{Fig:Inf_zigzag_SG}
\end{figure}

\noindent\textit{Step III. Crossing a limit arc:}
\begin{itemize}
\item[-] if an arc $\gamma$ crosses a limit arc $\gamma_{\ast}$, we associate  a regular tile to this crossing and glue it to  $\overline{\calg}$ according to the local configuration of $\gamma$ and $\gamma_{\ast}$ in $(S,T)$, see Fig.~\ref{Fig:Inf_zigzag_SG} (right).
\end{itemize}

\noindent\textit{Step IV. Crossing   a finite union of  infinite elementary fans and polygons:}

\begin{itemize}
\item[-] construct finitely many (finite or one-sided infinite zig-zag) snake graphs $\{\calg_i\}_{i=1}^{k}$   for elementary fans $\calf_i$ in $\cald_{\gamma}$ by Step I and II; 
\item[-] glue each $\calg_i$ and $\calg_{i+1}$ at the common ends according to the local configuration of the change of the fans from $\calf_{i}$ to $\calf_{i+1}$, see Fig.~\ref{snake-example} (i.e. attaching the last tile of $\calg_i$ and the first tile of $\calg_{i+1}$ to the tile $G_{0}$ associated with $\gamma_0=\calf_i\cap\calf_{i+1}$ in a zig-zag way when $\calf_i$ has the same source as $\calf_{i+1}$ and in a straight way otherwise, compare to Fig.~\ref{Fig:Fan_vs_Zigzag});
\end{itemize}

\noindent\textit{Step V. Completing the construction of snake graph $\ga$ associated with $\gamma$:}
\begin{itemize}
\item[-] remove the diagonals in all tiles.
\end{itemize}

\noindent
After this process, we obtain an infinite snake graph $\ga$ associated with $\gamma$, see Fig.~\ref{snake-example} for an  example.  

\begin{figure}[!h]
\begin{center}
\psfrag{1}{\color{Orange} \scriptsize $1$}
\psfrag{2}{\color{Orange} \scriptsize $2$}
\psfrag{3}{\color{Orange} \scriptsize $3$}
\psfrag{*}{  $*$}
\psfrag{**}{  $**$}
\psfrag{cast}{\scriptsize $\circledast$}
\psfrag{1_}{\color{brown(traditional)} \scriptsize $\tilde 1$}
\psfrag{2_}{\color{brown(traditional)} \scriptsize $\tilde 2$}
\psfrag{3_}{\color{brown(traditional)} \scriptsize $\tilde 3$}
\psfrag{1'}{\color{blue} \scriptsize $1'$}
\psfrag{2'}{\color{blue} \scriptsize $2'$}
\psfrag{3'}{\color{blue} \scriptsize $3'$}
\psfrag{2''}{\color{dgreen} \scriptsize $2''$}
\psfrag{3''}{\color{dgreen} \scriptsize $3''$}
\psfrag{4''}{\color{dgreen} \scriptsize $4''$}
\psfrag{1'''}{\color{DarkOrchid} \scriptsize $1'''$}
\psfrag{2'''}{\color{DarkOrchid} \scriptsize $2'''$}
\psfrag{3'''}{\color{DarkOrchid} \scriptsize $3'''$}
\epsfig{file=./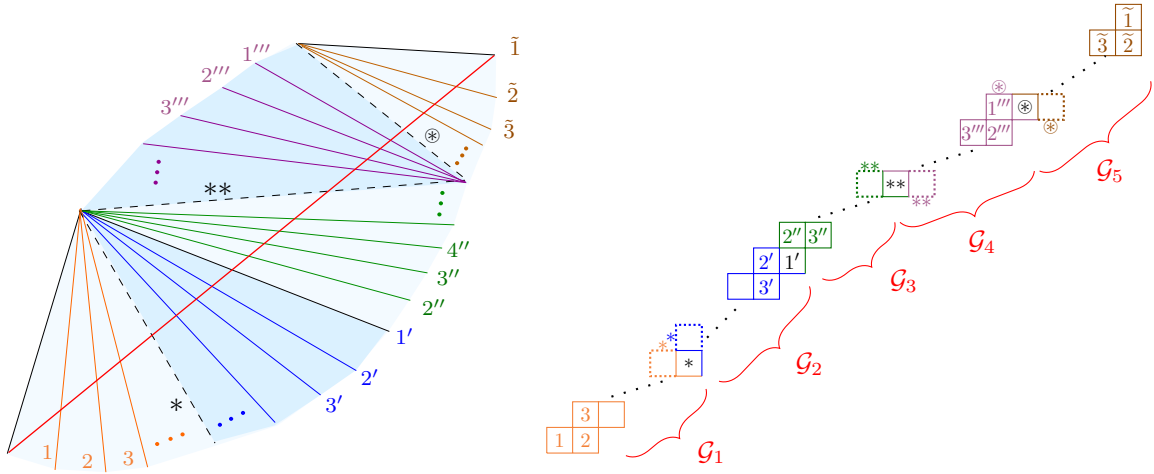,width=0.43\linewidth}
\raisebox{0pt}{
\begin{tikzpicture}[scale=.34]
{
\color{Orange}

\draw (0,0)--(2,0)--(2,1)--(0,1)--(0,0) (1,0)--(1,1)
(1,1)--(1,2)--(3,2)--(3,1)--(2,1)--(2,2) 
(6,3)--(5,3)--(5,4);

\draw[thick,densely dotted] (5,3)--(4,3)--(4,4)--(5,4);

\node[scale=.7] at (0.5,0.5){$1$}; 
\node[scale=.7] at (1.5,0.5){$2$};
\node[scale=.7] at (1.5,1.5){$3$};
 
\node[scale=.7] at (4.5,4.2){$*$};

}
\draw[thick,loosely dotted] (2.5,2.2)--(4.5,2.8);
\node[scale=.7] at (5.5,3.5){$*$};

{
\color{blue}
\node[scale=.7] at (4.8,4.5){$*$}; 

\draw (6,3)--(6,4)--(5,4)  
(10,7)--(8,7)--(8,8)--(9,8)--(9,6)--(7,6)--(7,7)--(8,7)--(8,6);
\draw[thick,densely dotted] (5,4)--(5,5)--(6,5)--(6,4);
\node[scale=.7] at (8.5,7.5){$2'$}; 
\node[scale=.7] at (8.5,6.5){$3'$}; 

}

\draw[thick,loosely dotted] (6.2,4.5)--(7.5,5.8);
\node[scale=.7] at (9.5,7.5){$1'$};

{
\color{dgreen}
\node[scale=.7] at (9.5,8.5){$2''$}; 
\node[scale=.7] at (10.5,8.5){$3''$}; 
\draw (10,7)--(10,9)--(9,9)--(9,8)--(11,8)--(11,9)--(10,9);
\draw[thick,densely dotted] (13,10)--(12,10)--(12,11)--(13,11);
\draw (13,11)--(13,10)--(14,10);
\node[scale=.7] at (12.5,11.2){$**$}; 

}

\draw[thick,loosely dotted] (10.5,9.2)--(12.5,9.8);
\node[scale=.7] at (13.5,10.5){$**$};

{
\color{DarkOrchid}
\node[scale=.7] at (14.5,9.8){$**$}; 
\draw (13,11)--(14,11)--(14,10);
\draw[thick,densely dotted] (14,11)--(15,11)--(15,10)--(14,10);
\draw (18,13)--(16,13)--(16,12)--(18,12)--(18,14)--(17,14)--(17,12) (18,13)--(19,13);
\node[scale=.7] at (16.5,12.5){$3'''$}; 
\node[scale=.7] at (17.5,12.5){$2'''$}; 
\node[scale=.7] at (17.5,13.5){$1'''$}; 
\node[scale=.7] at  (17.5,14.28){$\circledast$};
}

{
\color{brown(traditional)}
\draw (18,14)--(19,14)--(19,13);
\draw[thick,densely dotted] (19,13)--(20,13)--(20,14)--(19,14);
\node[scale=.7] at  (19.5,12.72){$\circledast$};
\draw (21,15.5)--(23,15.5)--(23,17.5)--(22,17.5)--(22,15.5) (21,15.5)--(21,16.5)--(23,16.5);
\node[scale=.7] at (21.5,16){$\widetilde 3$}; 
\node[scale=.7] at (22.5,16){$\widetilde 2$}; 
\node[scale=.7] at (22.5,17){$\widetilde 1$}; 
}

\node[scale=.7] at  (18.5,13.5){$\circledast$};

\draw[thick,loosely dotted] (14.5,11.2)--(16.5,11.8)
(19.5,14.1)--(21.62,15.49);

\draw[color=red,decorate,decoration={brace,amplitude=8pt},xshift=-4pt,yshift=-0pt]
 (6.3,2.5)--(3.2,-0.1) node [color=red,midway,xshift=.6cm,yshift=-.4cm] 
{\small $\calg_1$};

\draw[color=red,decorate,decoration={brace,amplitude=8pt},xshift=-4pt,yshift=-0pt]
 (10.3,6.5)--(6.8,2.8) node [color=red,midway,xshift=.6cm,yshift=-.4cm] 
{\small $\calg_2$};

\draw[color=red,decorate,decoration={brace,amplitude=8pt},xshift=-4pt,yshift=-0pt]
 (13.6,9.0)--(10.8,6.8) node [color=red,midway,xshift=.6cm,yshift=-.4cm] 
{\small $\calg_3$};

\draw[color=red,decorate,decoration={brace,amplitude=8pt},xshift=-4pt,yshift=-0pt]
 (19.0,11)--(13.8,9.0) node [color=red,midway,xshift=.2cm,yshift=-.6cm] 
{\small $\calg_4$};

\draw[color=red,decorate,decoration={brace,amplitude=8pt},xshift=-4pt,yshift=-0pt]
 (23.5,14.5)--(19.2,11) node [color=red,midway,xshift=.2cm,yshift=-.6cm] 
{\small $\calg_5$};

\end{tikzpicture}
}
\caption{The snake graph for an arc crossing five elementary fans $\calf_1,\dots, \calf_5$ is composed out of five one-sided zig-zag snake graphs   $\calg_1,\dots, \calg_5$ (we drop the boundary weights on the snake graph except the ones coming from limit arcs). }
\label{snake-example}
\end{center}
\end{figure}

\begin{remark} 
\label{rem:orient}
\begin{itemize}
\item[(a)] The limit tile may be approached by an infinite zig-zag from the left or from the bottom and there is no difference how we draw this; however, its effect is detected on the snake graph by sign function introduced in Definition~\ref{defn:signfunction}.
\item[(b)] In Step IV, by a one-sided infinite zig-zag we mean an infinite zig-zag snake graph together with a limit tile associated with a fan in the triangulation, see left of Figure~\ref{Fig:Inf_zigzag_SG}. 
\item[(c)] Changing the orientation of an arc $\gamma$ gives rise to isomorphic snake graphs, one obtained from the other by a reflection. On the other hand, the choice of orientation of $\gamma$ induces a total order on the tiles of $\ga.$ 
\item[(d)] A limit tile of a snake graph has no face weight and  only one of its edges has a weight. This edge is labelled by the limit arc.
\end{itemize}
\end{remark}

\begin{observation} 
\label{Obs:LimitTile}
\begin{itemize}
\item[(a)] If an arc $\gamma\in\cals$ crosses arcs forming a zig-zag in a triangulation, then the corresponding piece in the snake graph is straight and if it forms a fan, then the corresponding piece in the snake graph is zig-zag, see Fig.~\ref{Fig:Fan_vs_Zigzag} (we acknowledge that this terminology is a bit confusing but it will be clear from the context which meaning of {\it zig-zag} we refer to). 
\item[(b)] By construction, there is no tile in a snake graph which is a limit tile for two consecutive infinite subgraphs of an infinite staircase  snake graph (approaching the tile from two distinct sides). Indeed, if an arc $\gamma$ crosses an elementary fan leaving it through the limit arc, then $\gamma$ crosses at least a limit arc $\gamma_*$ for which we associate a regular tile $G_*$ in $\ga$. Hence, a tile associated with a limit arc separates the limit tile from the rest of the snake graph. Therefore, $\ga$ cannot have a two-sided limit tile.
\end{itemize}
\end{observation}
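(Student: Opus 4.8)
Both parts of the observation are local to the snake-graph construction, so the plan is to verify each by inspecting the gluing rules of Steps~I--V rather than by any global argument.

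For part (a) I would reduce the claim to three consecutive crossings. Fix the orientation of $\gamma$ (which by Remark~\ref{rem:orient}(c) totally orders the tiles) and look at three consecutive arcs $\tau_{i-1},\tau_i,\tau_{i+1}$ of $T$ that $\gamma$ meets, together with the triangle $\Delta^{-}$ cut out by $\tau_{i-1},\tau_i$ and the triangle $\Delta^{+}$ cut out by $\tau_i,\tau_{i+1}$; these are glued along $\tau_i$. By Step~I the tiles $G_{i-1},G_i,G_{i+1}$ are glued along the third sides of $\Delta^{-}$ and $\Delta^{+}$, with each successive tile reflected relative to the previous one (Fig.~\ref{Fig:Cross_SG}) and with every diagonal drawn from the top-left to the bottom-right corner. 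Since the position of $G_{i+1}$ relative to $G_i$ is determined solely by the triangle $\Delta^{+}$, the computation is identical to the finite-surface case and so follows from \cite{MSW,CS}; concretely I would check the two configurations directly. In the fan case $\tau_{i-1},\tau_i,\tau_{i+1}$ share the source of the fan, so the vertex common to $\Delta^{-}$ and $\Delta^{+}$ opposite to $\tau_i$ stays on one fixed side of $\gamma$; carrying the diagonal convention through the reflection then forces $G_{i-1}$ and $G_{i+1}$ to be attached to $G_i$ along perpendicular edges, i.e.\ $G_i$ is a zig-zag tile. In the zig-zag case the three arcs have no common endpoint, the opposite vertices alternate sides, and the same bookkeeping forces $G_{i-1}$ and $G_{i+1}$ onto parallel edges of $G_i$, i.e.\ $G_i$ is straight. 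Running this over every interior index of a single elementary fan (resp.\ zig-zag) domain yields the asserted global shape of the corresponding piece.

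For part (b) I would argue by the separating tile already indicated in the statement. By Step~II a limit tile occurs only as the cap of an infinite staircase $\overline{\calg}$ recording the crossings of $\gamma$ with an elementary fan $\calf$ as $\gamma$ approaches the limit arc $\gamma_{\ast}$ of $\calf$. Suppose, for contradiction, that some tile $G$ is a limit tile approached by infinite staircases from two distinct sides; then there is a second elementary fan $\calf'$ whose staircase also limits to $G$. By Remark~\ref{curve in gan T} (see also Remark~\ref{fan is elementary}) the domain $\cald_\gamma^T$ is a \emph{finite} union of elementary fans glued along arcs of $T$, and the only arc through which $\gamma$ can pass from $\calf$ into the neighbouring fan $\calf'$ is their shared limit arc $\gamma_{\ast}$. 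Hence $\gamma$ crosses $\gamma_{\ast}$, and by Step~III this crossing is recorded as an ordinary (non-dotted) tile $G_{\ast}$ carrying a genuine face weight. Since $G_{\ast}$ sits between $\overline{\calg}$ and the staircase of $\calf'$, the two infinite staircases are separated by $G_{\ast}$ and cannot share a common limiting tile; in particular the dotted limit tile always has an infinite approach on at most one side, with the continuing side, if any, opened by a regular tile. This contradicts the existence of a two-sided limit tile.

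The routine content --- the case analysis in part (a) and the ``the only way to leave a fan is through its limit arc'' step in part (b) --- is straightforward. The hard part will be the orientation bookkeeping in part (a): one must keep the alternating reflection of successive tiles consistent with the fixed top-left--to--bottom-right diagonal convention, so that ``perpendicular attachment'' and ``parallel attachment'' are correctly identified with the fan and zig-zag configurations. A sign-function formulation (cf.\ Definition~\ref{defn:signfunction}) would make this precise and remove the ambiguity of approach direction noted in Remark~\ref{rem:orient}(a).
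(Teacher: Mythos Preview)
Your proposal is correct and follows the same reasoning the paper uses, only spelled out in more detail. In the paper this is stated as an \emph{observation} with no separate proof: part~(a) is justified solely by reference to Fig.~\ref{Fig:Fan_vs_Zigzag}, and the entire argument for part~(b) is the inline sentence already contained in the statement (the regular tile $G_*$ associated to the limit arc separates the limit tile from the rest of the snake graph). Your three-consecutive-crossings analysis for~(a) and your contradiction argument for~(b) are exactly the unpacking of these hints; the paper simply regards both as immediate from the construction in Steps~I--III.
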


\begin{figure}[!h]
\begin{center}
\epsfig{file=./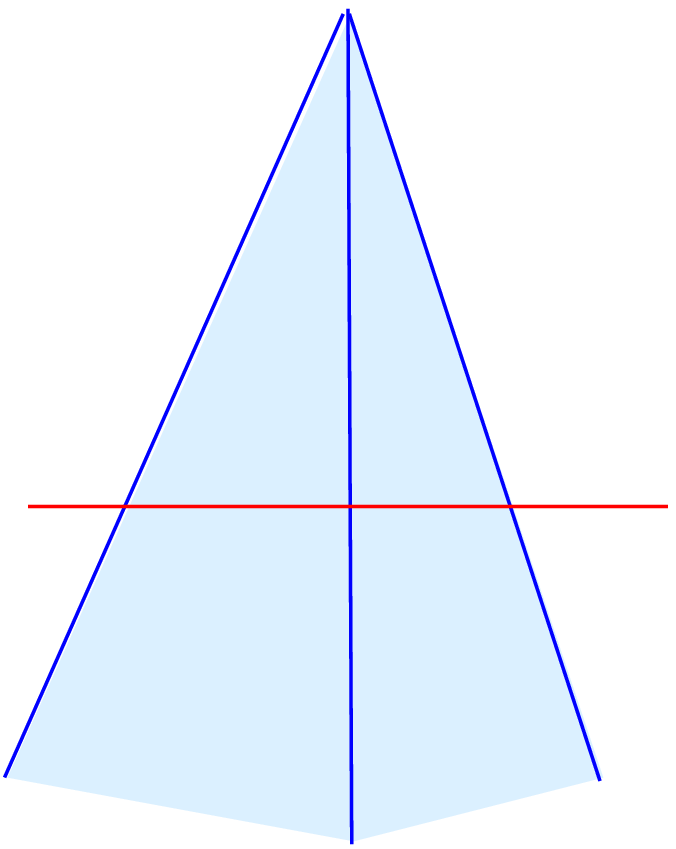,width=0.06\linewidth}
\quad \raisebox{9pt}{$\to$} \quad 
\raisebox{0pt}{
\begin{tikzpicture}[scale=.4]
\draw (0,0)--(2,0)--(2,1)--(0,1)--(0,0) (1,0)--(1,1)
(1,1)--(1,2)--(2,2)--(2,1);
\end{tikzpicture}
}
\qquad \qquad \qquad
\epsfig{file=./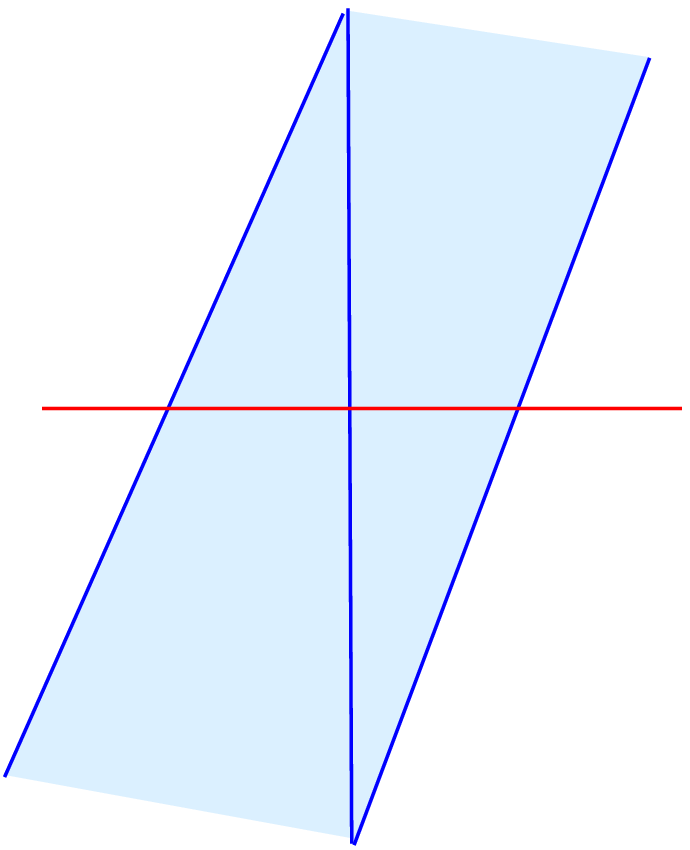,width=0.06\linewidth}
\quad  \raisebox{9pt}{$\to$} \quad
\raisebox{5pt}{
\begin{tikzpicture}[scale=.4]
\draw (4,.5)--(7,.5)--(7,1.5)--(4,1.5)--(4,.5) (5,.5)--(5,1.5) (6,.5)--(6,1.5);
\end{tikzpicture}
}
\caption{Crossing a fan in a surface gives rise to a zig-zag in the  snake graph and crossing a zig-zag in a surface gives rise to a straight piece in the snake graph.}
\label{Fig:Fan_vs_Zigzag}
\end{center}
\end{figure}

The next proposition follows directly from the construction of snake graphs.

\begin{proposition} 
\label{Prop:InfSG}
Let $T$ be a fan triangulation of $\cals$ and $\gamma$ be an arc. Then $\ga$ is either
\begin{itemize}
\item[(a)] a finite snake graph, or
\item[(b)] a finite union of finite and infinite one-sided zig-zag snake graphs.
\end{itemize}
\end{proposition}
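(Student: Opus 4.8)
The plan is to read off the structure of $\ga$ directly from the construction in Steps~I--V together with the description of the domain $\cald_\gamma^T$ coming from the earlier structural results. First I would invoke Remark~\ref{curve in gan T} (equivalently Remark~\ref{fan is elementary} applied to the fan triangulation $T$), which guarantees that the domain $\cald_\gamma^T$ of $\gamma$ is a \emph{finite} union of elementary fans $\calf_1,\dots,\calf_k$. This is the key input: it tells us both that there are finitely many constituent fans and that each is an elementary fan (not a zig-zag), since a fan triangulation contains no infinite zig-zag domains. The snake graph is built in Step~IV by constructing a subgraph $\calg_i$ for each $\calf_i$ and gluing the $\calg_i$ in order along their common ends.

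Next I would classify each $\calg_i$ according to whether the corresponding fan $\calf_i$ is crossed finitely or infinitely many times by $\gamma$. If $\calf_i$ is crossed in only finitely many places, then by Step~I the piece $\calg_i$ is a finite snake graph. If $\calf_i$ is crossed infinitely many times, then $\gamma$ must cross infinitely many arcs emanating from the source of the fan and approach its limit arc; by Step~II the piece $\calg_i$ is a one-sided infinite zig-zag snake graph terminated by a limit tile $G_{lim}$. (Here I would use Observation~\ref{Obs:LimitTile}(a): crossing a fan in the surface produces a zig-zag-shaped piece in the snake graph, so the infinite piece really is a one-sided \emph{zig-zag} in the sense used in the statement.) Thus every $\calg_i$ is either finite or a finite/infinite one-sided zig-zag snake graph.

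Finally I would assemble the cases. If $\gamma$ crosses $T$ in only finitely many places, then every $\calg_i$ is finite and, since $k$ is finite, the whole $\ga$ is a finite snake graph built exactly as in~\cite{MSW}; this is alternative~(a). Otherwise at least one $\calg_i$ is an infinite one-sided zig-zag, and $\ga$ is the finite concatenation $\calg_1,\dots,\calg_k$ of finite and one-sided-infinite zig-zag snake graphs glued along common edges as in Step~IV; this is alternative~(b). The only point requiring a little care is that the gluing in Step~IV does not create any pathological configuration excluded by the statement --- in particular that no limit tile is shared by two infinite pieces. I would rule this out by citing Observation~\ref{Obs:LimitTile}(b), which shows that whenever $\gamma$ leaves an elementary fan through its limit arc it first crosses a limit arc for which a \emph{regular} tile is inserted, so a tile associated with a limit arc always separates a limit tile from the rest of the graph and no two-sided limit tile can occur. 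I expect the main (though modest) obstacle to be exactly this bookkeeping at the junctions between consecutive fans: verifying that each infinite piece $\calg_i$ is genuinely terminated on its infinite side by a single limit tile and attached to the neighbouring pieces only along finite portions, so that the decomposition into finitely many finite and one-sided-infinite zig-zag pieces is clean.
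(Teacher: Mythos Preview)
Your proposal is correct and follows exactly the approach the paper intends: the paper does not give a separate proof but simply states, just before the proposition, that it ``follows directly from the construction of snake graphs,'' and your argument is precisely the unpacking of Steps~I--IV together with Remark~\ref{curve in gan T}. Your invocation of Observation~\ref{Obs:LimitTile}(b) to rule out two-sided limit tiles is a bit more than the bare statement requires, but it is a natural and harmless addition.
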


\begin{definition}[Infinite staircase snake graph] Let $T$ be a fan triangulation and $\gamma$ be an arc such that $| \gamma\cap T | =\infty$. We will call the infinite snake graph $\ga$ associated with the arc $\gamma$,  the \emph{infinite staircase snake graph $\calg_{\gamma}$ associated with $\gamma$} (see Fig.~\ref{snake-example} and~\ref{Fig:MatchingNoMatching}~(d) for examples).
\end{definition}

We now extend the notion of sign function of \cite{CS} to infinite staircase snake graphs.

\begin{definition}[Sign function]\label{defn:signfunction} A \emph{sign function} $f$ of an infinite staircase snake graph $\calg$ is a map from the edges of $\calg$ to $\{+,-\}$ such that 
\begin{itemize}
\item[-] the bottom and the right edges of a tile $G_i$ have the same sign which is opposite to that of the top and the left edges;
\item[-] fixing a sign function on a single edge of a snake graph induces a sign function on the whole snake graph by extending with the rule $\raisebox{-2ex}{\begin{tikzpicture}[scale=.5]\draw (0,0)--(1,0)--(1,1)--(0,1)--(0,0);\node[scale=.8] at (.5,-.3){$\varepsilon$}; \node[scale=.8] at (.5,1.3){$-\varepsilon$}; \node[scale=.8] at (-.45,.5){$-\varepsilon$}; \node[scale=.8] at (1.3,.5){$\varepsilon$}; 
\end{tikzpicture}
}$ where $\varepsilon\in\{+,-\}$ and applying the limit rules when needed; in particular, the sign function on a limit tile is induced by the sign of the interior edges of the zig-zag approaching the limit tile;
\item[-] the sign function of a  limit tile induces a sign function in the infinite zig-zag following this tile.
\end{itemize}
\end{definition}

\begin{remark} 
\begin{itemize}
\item[(a)] In accordance with Remark~\ref{rem:orient}~(a), there is no way to determine whether we approach to a limit tile from the left or bottom. In terms of the sign function, this does not create any ambiguity since we associate the same sign on the left or the bottom of the limit tile as the sign in all interior edges in the infinite zig-zag approaching to this limit tile.
\item[(b)] A sign function of an infinite zig-zag snake graph $\calg$ has either all $+$ or all $-$ assigned to the interior edges of $\calg.$ 

\end{itemize}
\end{remark}

By construction,  two successive tiles of $\calg$ have different orientations relative to the orientation of the surface $\cals$. We will use this to define {\it edge positions} in the tiles relative to the orientation of the surface and to the sign function.

\begin{definition}[Orientation of a limit tile]
The {\it orientation of a limit tile} is induced by the orientation of the tiles in the zig-zag approaching it, i.e. the orientation of the limit tile coincides with the orientation of the tiles having the previous tile attached at the bottom edge, 
see Fig.~\ref{or}.
\end{definition}

\begin{figure}[!h]
\begin{center}
\epsfig{file=./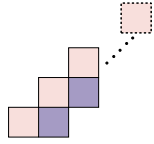,width=0.12\linewidth}
\caption{The induced orientation of the limit tile: two different shadings represent two different orientations of the tiles with respect to the surface.}
\label{or}
\end{center}
\end{figure} 

\begin{definition}[Edge positions in the tiles]\label{Def:EdgePos}
Let $G_i$ be a tile of an infinite staircase snake graph $\calg$ together with its  orientation induced  from $\calg$ and $f$ be a sign function of $\calg$. The  edges $a,b,c,d$ of $G_i$ are {\it in positions}
I, II, III, IV, respectively if 
\begin{itemize}
\item[-] $a,b,c,d$ follow each other in the direction of the orientation of $G_i$, and
\item[-] $a$ is the first edge of $G_i$ with sign ``$-$'' in $f(\calg)$ (when walked in the direction of the orientation).
\end{itemize}
See Fig.~\ref{orient}.
\end{definition}

\begin{figure}[!h]
\begin{center}
\psfrag{+}{\scriptsize\color{red}  $\mathbf +$}
\psfrag{-}{\scriptsize\color{red}  $\mathbf -$}
\psfrag{1}{\scriptsize I}
\psfrag{2}{\scriptsize II}
\psfrag{3}{\scriptsize III}
\psfrag{4}{\scriptsize IV}
\psfrag{a}{\small (a)}
\psfrag{b}{\small (b)}
\epsfig{file=./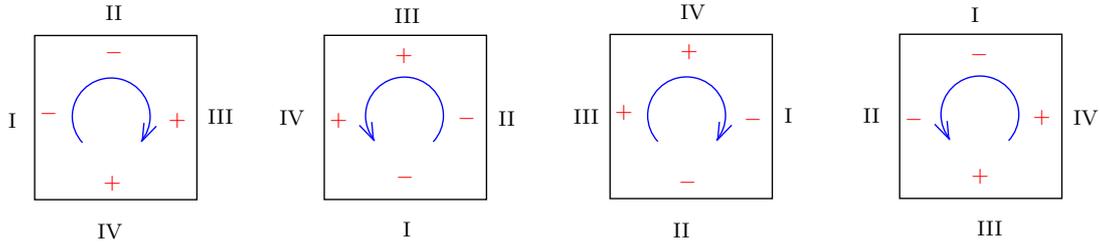,width=0.9\linewidth}
\caption{Edge positions defined depending on the orientation and the sign function. }
\label{orient}
\end{center}
\end{figure} 

\begin{convention}[On drawing snake graphs] 
\label{limit tile}
We always assume that the first tile is oriented clockwise and that the second tile of a snake graph  is attached to the first from the right.
\end{convention}

Next, we will extend the notion of perfect matchings  to infinite staircase snake graphs.

\begin{definition} 
\label{def:perfect matching}
A \emph{perfect matching} $P$ of an infinite staircase snake graph $\calg$ is a set of edges of $\calg$ such that each vertex in $\calg$ is incident to precisely one edge in $P$ and an edge of a limit tile is in the perfect matching if one of the following applies:
\begin{itemize}
\item[-] a limit tile $G_{lim}$ is followed by a regular tile $G_{\ast}$ and the edge between $G_{lim}$ and $G_{\ast}$ is in the matching of $G_{\ast}$;
\item[-] \emph{limit edge condition:} if $G_{lim}$ is a limit tile of the tiles $G_i$, for $i\in\bN$,  and there exists $k\in \bN$ such that an edge
of $G_i$ at the position $Y\in \{I,II,III,IV\}$ is in $P$ for all $i>k$, then the edge of $G_{lim}$ at the position $Y$ is also in $P$, see Example~\ref{Ex:NoMatching}.
\end{itemize}
\end{definition}

\begin{remark}
\label{Rem:LimTile} 
\begin{itemize}
\item[-] By definition, the vertices of a limit tile are either matched by the limit condition or by an edge of a successive tile (if such tile exists).  We cannot add any other edge of a limit tile to complete  a set of edges to a perfect matching of an infinite snake graph.
\item[-] By Observation~\ref{Obs:LimitTile} (b), a limit tile cannot be a two-sided limit which implies that two opposite \emph{boundary} edges of a limit tile are never in a matching of an infinite staircase snake graph, compare with Example~\ref{Ex:NoMatching}~(b).
\end{itemize} 

\end{remark}

\begin{example}
\label{Ex:NoMatching} 
\begin{itemize}
\item[(a)] The set of edges $P$ in Fig.~\ref{Fig:MatchingNoMatching}~(a) is a perfect matching of the snake graph $\calg$ since the top edges of every odd tile and the right edge of every even tile is in the matching which induces the top edge of the limit tile in the matching (since it has the same orientation as the odd tiles). Indeed, comparing to Fig.~\ref{orient}, we see that the edge at  position II is in the matching of $G_i$ for all $i$. Therefore, by the  limit edge condition the edge at  position II of the limit tile is also in the matching, i.e.  
the top edge of the limit tile is in the matching.  Hence,  every vertex of $\calg$ is incident to precisely one edge in $P$  and thus $P$ is a perfect matching. 
\item[(b)] (A non-perfect matching example) We consider the set of edges $P'$ consisting of the edge at position I of $G_i$ for each $i$ together with the edge at position I of the limit tile (obtained by the limit rule) as illustrated in  Fig.~\ref{Fig:MatchingNoMatching}~(b). Then the top-right vertex of the limit tile $G_{lim}$ is not incident to any edge in $P'$, and hence $P'$ is {\bf not} a perfect matching of $\calg.$
\item[(c)] The set of edges of $P''$ in Fig.~\ref{Fig:MatchingNoMatching}(c) is a perfect matching of the  bi-infinite zig-zag snake graph $\calg'$ since two vertices of each  limit tiles are matched by the limit condition and  other vertices of the limit tiles are matched by an edge in the adjacent tile $G_*$.
\item[(d)]  The set of edges of $P'''$ in Fig.~\ref{Fig:MatchingNoMatching}(d) forms a perfect matching of an infinite staircase snake graph.
\end{itemize} 
\end{example}

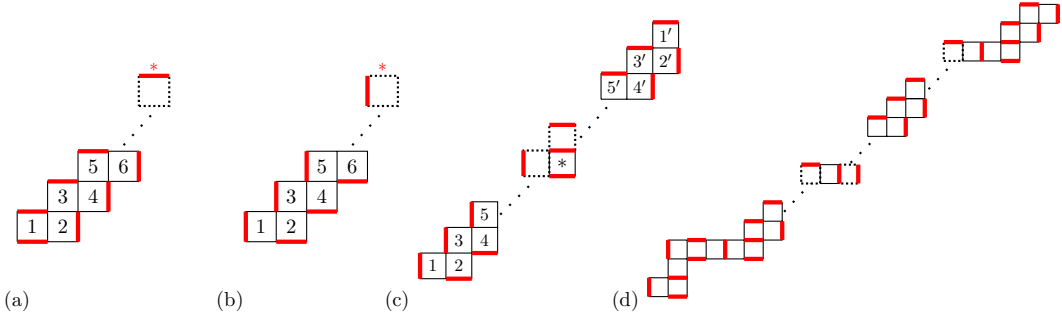
\begin{figure}[!h]
\centering
\begin{tikzpicture}
\node at (0,0){$\begin{tikzpicture}[scale=.4]
\draw (0,0)--(2,0)--(2,1)--(0,1)--(0,0) (1,0)--(1,1)
(1,1)--(1,2)--(2,2)--(2,1) (2,2)--(3,2)--(3,1)--(2,1) (2,2)--(2,3)--(4,3)--(4,2)--(3,2)--(3,3);
\draw[thick,loosely dotted] (3.5,3.2)--(4.5,4.3);
\draw[thick,densely dotted] (4,4.5)--(5,4.5)--(5,5.5)--(4,5.5)--(4,4.5);
\node[scale=.7] at (.5,.5){$1$};
\node[scale=.7] at (1.5,.5){$2$};
\node[scale=.7] at (1.5,1.5){$3$};
\node[scale=.7] at (2.5,1.5){$4$};
\node[scale=.7] at (2.5,2.5){$5$};
\node[scale=.7] at (3.5,2.5){$6$};
\draw[color=red,line width=1.6] (0,0)--(1,0) (0,1)--(1,1) (2,0)--(2,1) (1,2)--(2,2) (3,1)--(3,2) (2,3)--(3,3) (4,2)--(4,3) (4,5.5)--(5,5.5);
\node[scale=.6,color=red] at (4.5,5.8){$\ast$};
\end{tikzpicture}$};
\node at (3,0){$\begin{tikzpicture}[scale=.4]
\draw (0,0)--(2,0)--(2,1)--(0,1)--(0,0) (1,0)--(1,1)
(1,1)--(1,2)--(2,2)--(2,1) (2,2)--(3,2)--(3,1)--(2,1) (2,2)--(2,3)--(4,3)--(4,2)--(3,2)--(3,3);
\draw[thick,loosely dotted] (3.5,3.2)--(4.5,4.3);
\draw[thick,densely dotted] (4,4.5)--(5,4.5)--(5,5.5)--(4,5.5)--(4,4.5);
\node[scale=.7] at (.5,.5){$1$};
\node[scale=.7] at (1.5,.5){$2$};
\node[scale=.7] at (1.5,1.5){$3$};
\node[scale=.7] at (2.5,1.5){$4$};
\node[scale=.7] at (2.5,2.5){$5$};
\node[scale=.7] at (3.5,2.5){$6$};
\draw[color=red,line width=1.6] (0,0)--(0,1) (1,1)--(1,2) (2,2)--(2,3)  
(1,0)--(2,0) (2,1)--(3,1) (3,2)--(4,2) 
(4,4.5)--(4,5.5);
\node[scale=.6,color=red] at (4.5,5.8){$\ast$};
\end{tikzpicture}
$};
\node at (6,0){$\begin{tikzpicture}[baseline,scale=.34] 
\draw (1,0)--(1,-1)--(-1,-1)--(-1,0)--(0,0)--(0,-1) (0,0)--(2,0)--(2,1)--(0,1)--(0,0) (1,0)--(1,1)
(1,1)--(1,2)--(2,2)--(2,1); 
\draw[thick,loosely dotted] (2.1,1.5)--(3.5,2.9);
\draw[thick,densely dotted] (3,3)--(4,3)--(4,4)--(3,4)--(3,3);
\draw[thick,densely dotted] (5,4)--(5,5)--(4,5)--(4,4)--(5,4);
\draw (4,3)--(5,3)--(5,4)--(4,4)--(4,3);
\draw[thick,loosely dotted] (5.2,4.5)--(6.5,6);
\draw (6,6)--(8,6)--(8,7)--(6,7)--(6,6) (7,6)--(7,8)--(8,8)--(8,7)--(9,7)--(9,9)--(8,9)--(8,8)--(9,8); 
\draw[color=red,line width=1.6]  (-1,-1)--(-1,0) (0,-1)--(1,-1) (0,0)--(0,1) (1,0)--(2,0) (1,1)--(1,2)  (3,4)--(3,3) (4,3)--(5,3) (4,4)--(5,4) (4,5)--(5,5) (6,7)--(7,7) (8,6)--(8,7) (7,8)--(8,8) (9,7)--(9,8) (8,9)--(9,9);
\node[scale=.6] at (-.5,-.5){$1$};
\node[scale=.6] at (.5,-.5){$2$};
\node[scale=.6] at (.5,.5){$3$};
\node[scale=.6] at (1.5,.5){$4$};
\node[scale=.6] at (1.5,1.5){$5$};
\node[scale=.7] at (4.5,3.5){${\ast}$};
\node[scale=.6] at (6.5,6.5){$5'$};
\node[scale=.6] at (7.5,6.5){$4'$};
\node[scale=.6] at (7.5,7.5){$3'$};
\node[scale=.6] at (8.5,7.5){$2'$};
\node[scale=.6] at (8.5,8.5){$1'$};
\end{tikzpicture}
$};
\node at (10,0){$\begin{tikzpicture}[baseline,scale=.25] 

\draw  (-4,-1)--(-4,-3)--(-3,-3)--(-3,-1) (-3,-2)--(-5,-2)--(-5,-3)--(-4,-3) (-1,-1)--(-4,-1)--(-4,0)--(-1,0) (-2,-1)--(-2,0) (-3,-1)--(-3,0) (1,0)--(1,-1)--(-1,-1)--(-1,0)--(0,0)--(0,-1);
\draw (0,0)--(2,0)--(2,1)--(0,1)--(0,0) (1,0)--(1,1)
(1,1)--(1,2)--(2,2)--(2,1); 
\draw[thick,loosely dotted] (2.1,1.5)--(3.5,3);
\draw[thick,densely dotted] (3,3)--(4,3)--(4,4)--(3,4)--(3,3);
\draw[thick,densely dotted] (5,4)--(6,4)--(6,3)--(5,3);
\draw (4,3)--(5,3)--(5,4)--(4,4)--(4,3);
\draw[thick,loosely dotted] (5.6,4.1)--(7,5.5);
\draw (6.5,5.5)--(8.5,5.5)--(8.5,6.5)--(6.5,6.5)--(6.5,5.5) (7.5,5.5)--(7.5,7.5)--(8.5,7.5)--(8.5,6.5)
(8.5,7.5)--(9.5,7.5)--(9.5,6.5)--(8.5,6.5)
(8.5,7.5)--(8.5,8.5)--(9.5,8.5)--(9.5,7.5)--(8.5,7.5);
\draw[thick,loosely dotted] (9.7,8)--(11,9.4);
\draw[thick,densely dotted] (10.5,9.5)--(11.5,9.5)--(11.5,10.5)--(10.5,10.5)--(10.5,9.5);
\draw (11.5,10.5)--(14.5,10.5)--(14.5,9.5)--(11.5,9.5)--(11.5,10.5) (12.5,9.5)--(12.5,10.5) (13.5,9.5)--(13.5,10.5) (13.5,10.5)--(13.5,11.5)--(15.5,11.5)--(15.5,10.5)--(14.5,10.5)--(14.5,12.5)--(16.5,12.5)--(16.5,11.5)--(15.5,11.5)--(15.5,12.5);
\draw[color=red,line width=1.6] (-5,-3)--(-5,-2) (-4,-3)--(-3,-3) (-4,-2)--(-3,-2) (-4,-1)--(-4,0) (-3,-1)--(-2,-1) (-3,0)--(-2,0) (-1,0)--(-1,-1) (0,-1)--(1,-1) (0,0)--(1,0) (0,1)--(1,1) (2,0)--(2,1) (1,2)--(2,2) (3,4)--(4,4) (5,3)--(5,4) (6,3)--(6,4) (6.5,6.5)--(7.5,6.5) (8.5,5.5)--(8.5,6.5) (7.5,7.5)--(8.5,7.5) (9.5,6.5)--(9.5,7.5) (8.5,8.5)--(9.5,8.5)
(10.5,10.5)--(11.5,10.5) (12.5,10.5)--(12.5,9.5) (13.5,9.5)--(14.5,9.5) (13.5,10.5)--(14.5,10.5) 
(13.5,11.5)--(14.5,11.5) (15.5,11.5)--(15.5,10.5)
(15.5,12.5)--(14.5,12.5) (16.5,12.5)--(16.5,11.5);
\end{tikzpicture}$};
\node[scale=.7] at (-1,-2){(a)};
\node[scale=.7] at (1.8,-2){(b)};
\node[scale=.7] at (4,-2){(c)};
\node[scale=.7] at (7,-2){(d)};
\end{tikzpicture}
\caption{ (a), (c), (d)  are examples of perfect matchings; (b) is {\bf not} a perfect matching (see Example~\ref{Ex:NoMatching}).}
\label{Fig:MatchingNoMatching}
\end{figure}

\begin{notation} Given a snake graph $\calg$, 
\begin{itemize}
\item[-] we denote by $\match \calg$ the set of all perfect matchings of $\calg$;
\item[-] we  sometimes use the shorthand notation  $P \vdash \calg$ to indicate that  $P\in \match \calg$.
\end{itemize}
\end{notation}

\begin{lemma}\label{Lem:UniquePMexcept_1tile} 
Let $\calg=\cup_{i\in\bN} G_i$ be an infinite one-sided zig-zag snake graph (as in Fig.~\ref{or}) and let $P\in \match \calg$ be a perfect matching.  Then there exists a unique $i\in\bN$ such that $i$ is the first tile of $\calg$ with the property that the restriction $P|_{G_i}$ is a perfect matchings of $G_i.$  Moreover, the matching $P$ is uniquely determined by the matching $P|_{G_i}$ of $G_i.$
\end{lemma}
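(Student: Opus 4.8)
The plan is to analyze the structure of a perfect matching $P$ on an infinite one-sided zig-zag snake graph $\calg = \cup_{i\in\bN} G_i$ by examining how $P$ restricts to each tile and how consecutive tiles interact through their shared edges. The key observation is that in a one-sided zig-zag, each tile $G_i$ shares exactly one edge with $G_{i-1}$ and one edge with $G_{i+1}$ (except the first tile and the limit tile), and by Remark~\ref{Rem:LimTile}~(b) together with Observation~\ref{Obs:LimitTile}~(b), the limit tile can never have two opposite boundary edges in the matching. First I would set up the notion: say that $P|_{G_i}$ is a perfect matching of $G_i$ if the two edges of $P$ incident to the four vertices of $G_i$ themselves constitute a perfect matching of the single square $G_i$ (i.e. they use neither the shared edge with $G_{i-1}$ nor the shared edge with $G_{i+1}$).

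The main argument I would carry out is a local propagation analysis. In a single square tile with four vertices, a perfect matching uses either the two horizontal edges or the two vertical edges. For a tile $G_i$ glued to its neighbours along two of its edges, I would show that precisely one of the following holds: either $P|_{G_i}$ is a self-contained perfect matching of $G_i$ (using the two edges not shared with neighbours), or the matching ``passes through'' $G_i$ using a shared edge. The crucial combinatorial fact is that in a one-sided zig-zag the shared edges with $G_{i-1}$ and $G_{i+1}$ are adjacent (share a vertex) rather than opposite, so they cannot both be in $P$ simultaneously. I would then argue by tracking the matching from the limit-tile end: the limit edge condition in Definition~\ref{def:perfect matching} forces the edges at a fixed position $Y$ to be in $P$ for all $i$ past some threshold, and this compatibility near the limit tile, combined with the adjacency of shared edges, propagates uniquely down the zig-zag until one reaches the first tile $G_i$ where the matching ``closes off'' into a genuine perfect matching of $G_i$ alone.

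For existence and uniqueness of the index $i$, I would show that there must be some tile where $P$ restricts to a perfect matching of that tile: if no such tile existed, then every tile would have the matching passing through via a shared edge, which (by the adjacency of the two shared edges forcing opposite behaviour) cannot be consistent all the way down to the first tile $G_1$, since $G_1$ has only one neighbour and so its ``incoming'' vertices must be matched internally. Taking $i$ minimal then gives a well-defined first such tile. For the final assertion that $P$ is determined by $P|_{G_i}$, I would argue that once $P|_{G_i}$ is fixed as a perfect matching of $G_i$, the shared edge with $G_{i+1}$ is not in $P$, so the vertices of $G_{i+1}$ on that edge must be matched within $G_{i+1}$ or beyond, and induction up the zig-zag (using that at each stage only one choice is consistent with the adjacency constraint and the limit edge condition) shows every subsequent tile's matching is forced; similarly the absence of earlier closing tiles below $G_i$ forces the matching on $G_1,\dots,G_{i-1}$ uniquely.

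The hard part will be handling the limit tile rigorously, since the propagation runs from the limit-tile end downward rather than from $G_1$ upward, and one must verify that the limit edge condition genuinely pins down a consistent sign/edge pattern near the limit rather than leaving freedom. Specifically, I expect the main obstacle to be showing that the matching is forced to eventually stabilise to a single edge-position $Y$ for all large $i$ (so that the limit edge condition applies and the limit tile is matched compatibly), ruling out a matching that oscillates between positions infinitely often. I would address this by observing that in a one-sided zig-zag each interior edge carries a constant sign by Remark~after~Definition~\ref{defn:signfunction}~(b), so a perfect matching respecting the limit structure cannot oscillate, and hence stabilisation is automatic. Once stabilisation is established, the uniqueness claims follow from the deterministic local propagation described above.
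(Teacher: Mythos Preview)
Your proposal contains a geometric misconception that derails the argument. You write that $P|_{G_i}$ being a perfect matching of $G_i$ means the two edges ``use neither the shared edge with $G_{i-1}$ nor the shared edge with $G_{i+1}$'', and later that ``once $P|_{G_i}$ is fixed as a perfect matching of $G_i$, the shared edge with $G_{i+1}$ is not in $P$''. But in a zig-zag the two shared edges of $G_i$ are \emph{adjacent}, so each of the two perfect matchings of the square $G_i$ (the pair of horizontal edges, or the pair of vertical edges) contains exactly \emph{one} of the shared edges. Your parenthetical definition is therefore vacuous for $i\ge 2$, and your propagation claim is only true for one of the two possible matchings of $G_i$. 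The paper handles this by showing that the ``wrong'' choice (the one containing the edge shared with $G_{i-1}$) forces $G_{i-1}$ to be completely matched as well, contradicting the minimality of $i$; you never address this dichotomy.

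Your existence argument also puts the contradiction at the wrong end. You claim that if no tile is completely matched, the pattern ``cannot be consistent all the way down to the first tile $G_1$''. In fact the forced pattern (left edge of $G_1$, bottom edge of $G_2$, left edge of $G_3$, \dots) \emph{is} perfectly consistent at every finite tile including $G_1$; the failure occurs at the \emph{limit tile}, where one vertex is left unmatched (this is exactly Example~\ref{Ex:NoMatching}(b)). The paper's proof runs from $G_1$ upward and invokes the limit-tile condition to obtain the contradiction. Your proposed route from the limit tile downward relies on first establishing stabilisation of the edge-position for large $i$, but your justification (``each interior edge carries a constant sign \dots so a perfect matching cannot oscillate'') does not follow: constancy of the sign function on interior edges says nothing about which edges of $P$ are selected, and the limit edge condition is an implication from stabilisation to the limit tile, not the converse you need.
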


\begin{proof} Suppose that no tile of $\calg$ is completely matched. The left edge of the first tile $G_1$ must be in the matching because otherwise in order to cover the bottom-left vertex we would have to take the bottom edge in the matching of $G_1$ which would require to take the top edge giving rise to a complete matching on $G_1$. Similarly, this forces the bottom edge of $G_2$ to be in the matching.  Continuing in this manner successively, we construct the pattern in Fig.~\ref{Fig:MatchingNoMatching}~(b) which does not lead to a perfect matching as it is shown in  Example~\ref{Ex:NoMatching}~(b). Hence, there exists $i\in\bN$ such that $P|_{G_i}$ is a perfect matching of $G_i.$  
We can choose $i$ so that no tile $G_k$ is completely matched for $k<i$.

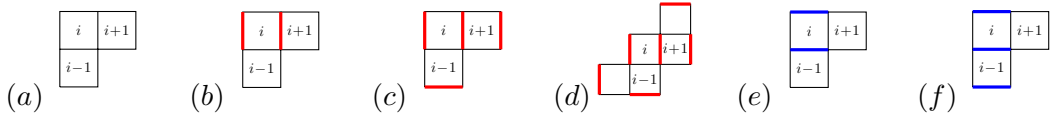
\begin{figure}[!h]
\centering
\begin{tikzpicture}[scale=.8]
\node at (0,0){$\begin{tikzpicture}[scale=.5]
\draw (0,0)--(1,0)--(1,2)--(0,2)--(0,0) (0,1)--(2,1)--(2,2)--(1,2);
\node[scale=.5] at (.5,.5){$i\!-\!1$};
\node[scale=.5] at (.5,1.5){$i$};
\node[scale=.5] at (1.5,1.5){$i\!+\!1$};
\end{tikzpicture}$};
\node at (3,0){$\begin{tikzpicture}[scale=.5]
\draw (0,0)--(1,0)--(1,2)--(0,2)--(0,0) (0,1)--(2,1)--(2,2)--(1,2);
\node[scale=.5] at (.5,.5){$i\!-\!1$};
\node[scale=.5] at (.5,1.5){$i$};
\node[scale=.5] at (1.5,1.5){$i\!+\!1$};
\draw[color=red, line width=1.2] (0,1)--(0,2) (1,1)--(1,2);
\end{tikzpicture}$};

\node at (6,0){$\begin{tikzpicture}[scale=.5]
\draw (0,0)--(1,0)--(1,2)--(0,2)--(0,0) (0,1)--(2,1)--(2,2)--(1,2);
\node[scale=.5] at (.5,.5){$i\!-\!1$};
\node[scale=.5] at (.5,1.5){$i$};
\node[scale=.5] at (1.5,1.5){$i\!+\!1$};
\draw[color=red, line width=1.2] (0,1)--(0,2) (1,1)--(1,2)
(0,0)--(1,0) (2,1)--(2,2);
\end{tikzpicture}$};

\node at (9,0){$\begin{tikzpicture}[scale=.4]
\draw  (0,0)--(-1,0)--(-1,1)--(0,1) (0,0)--(1,0)--(1,2)--(0,2)--(0,0) (0,1)--(2,1)--(2,2)--(1,2) (2,2)--(2,3)--(1,3)--(1,2);
\node[scale=.5] at (.5,.5){$i\!-\!1$};
\node[scale=.5] at (.5,1.5){$i$};
\node[scale=.5] at (1.5,1.5){$i\!+\!1$};
\draw[color=red, line width=1.2] (0,1)--(0,2) (1,1)--(1,2)
(0,0)--(1,0) (2,1)--(2,2) (-1,0)--(-1,1) (1,3)--(2,3);
\end{tikzpicture}$};

\node at (12,0){$\begin{tikzpicture}[scale=.5]
\draw (0,0)--(1,0)--(1,2)--(0,2)--(0,0) (0,1)--(2,1)--(2,2)--(1,2);
\node[scale=.5] at (.5,.5){$i\!-\!1$};
\node[scale=.5] at (.5,1.5){$i$};
\node[scale=.5] at (1.5,1.5){$i\!+\!1$};
\draw[color=blue, line width=1.2] (0,1)--(1,1) (0,2)--(1,2);
\end{tikzpicture}$};

\node at (15,0){$\begin{tikzpicture}[scale=.5]
\draw (0,0)--(1,0)--(1,2)--(0,2)--(0,0) (0,1)--(2,1)--(2,2)--(1,2);
\node[scale=.5] at (.5,.5){$i\!-\!1$};
\node[scale=.5] at (.5,1.5){$i$};
\node[scale=.5] at (1.5,1.5){$i\!+\!1$};
\draw[color=blue, line width=1.2] (0,0)--(1,0) (0,1)--(1,1) (0,2)--(1,2);
\end{tikzpicture}$};

\node at (-1.2,-0.8){$(a)$};
\node at (1.8,-0.8){$(b)$};
\node at (4.8,-0.8){$(c)$};
\node at (7.8,-0.8){$(d)$};
\node at (10.8,-0.8){$(e)$};
\node at (13.8,-0.8){$(f)$};
\end{tikzpicture}
\caption{Corner snake graph.}
\label{Fig:corner_SG}
\end{figure}

Suppose that $i>1$.
Without loss of generality, suppose $G_i$ is placed on the top-left of the corner piece $(G_{i-1}, G_i,G_{i+1}),$ see Fig.~\ref{Fig:corner_SG}~(a) (bottom-right case may by considered similarly). Then the matching $P$ contains either the two vertical or the two horizontal edges of $G_i$ as in  Fig.~\ref{Fig:corner_SG}~(b) and (e). First, assume that the two vertical edges of $G_i$ are in the matching. This enforces to have the bottom edge of the tile $G_{i-1}$ and the right edge of $G_{i+1}$ in the matching, see Fig.~\ref{Fig:corner_SG}~(c). The matching on $G_{i-1}$ enforces the left edge of $G_{i-2}$ and the matching on $G_{i+1}$ enforces the top edge of $G_{i+2},$ see Fig.~\ref{Fig:corner_SG}~(d). Continuing in this manner, all the bottom edges of the tiles $G_{i-2k+1}$  and all the left edges of the tiles $G_{i-2k}$ for $1\leq k\leq \left \lfloor{\frac{i}{2}}\right \rfloor $ are in the matching of $\calg.$ Similarly, all the right edges of the tiles $G_{i+2j-1}$  and all the top edges of the tiles $G_{i+2j}$ for $ j\in\bN$ are in the matching of $\calg.$ Moreover, the limit tile $G_{lim}$ have the top edge in the matching by the limit condition, hence this is a perfect matching of $\calg$ which is uniquely determined by a matching on the tile $G_i.$

If $G_i$ has two horizontal edges in the matching (see Fig.~\ref{Fig:corner_SG}~(f)), then this forces the bottom of the tile $G_{i-1}$ in the matching, see Fig.~\ref{Fig:corner_SG}~(f). But then $P|_{G_{i-1}}$  is a matching of $G_{i-1}$ and  $i$ is not the first position with a complete matching, hence we argue as above for $i-1$.  

In the case of $i=1$, each of two possible perfect matchings of $G_i=G_1$ (see Fig.~\ref{Fig:Height_01}) extends uniquely to a perfect matching of $\calg$.
\end{proof} 

\begin{figure}[!h]
\centering
\begin{tikzpicture}[scale=.7]
\node at (0,0){$\begin{tikzpicture}[scale=.5]
\draw (0,0)--(2,0)--(2,2)--(1,2)--(1,0) (0,0)--(0,1)--(2,1);
\node[scale=.6] at (.5,.5){$1$};
\node[scale=.6] at (1.5,.5){$2$};
\node[scale=.6] at (1.5,1.5){$3$};
\draw[thick,loosely dotted] (2.1,1.5)--(3.5,2.9);
\draw[color=red, line width=1.4] (0,0)--(1,0) (0,1)--(1,1) (2,0)--(2,1)  (1,2)--(2,2);
\end{tikzpicture}$};
\node at (5,0){$\begin{tikzpicture}[scale=.5]
\draw (0,0)--(2,0)--(2,2)--(1,2)--(1,0) (0,0)--(0,1)--(2,1);
\node[scale=.6] at (.5,.5){$1$};
\node[scale=.6] at (1.5,.5){$2$};
\node[scale=.6] at (1.5,1.5){$3$};
\draw[thick,loosely dotted] (2.1,1.5)--(3.5,2.9);
\draw[color=red, line width=1.4] (0,0)--(0,1) (1,0)--(1,1) (2,0)--(2,1) (2,2)--(1,2);
\end{tikzpicture}$};
\node at (-1.8,-1.2){$(a)$};
\node at (3.2,-1.2){$(b)$};
\end{tikzpicture}
\caption{Initial segments of two perfect matchings of infinite one-sided zig-zag snake graph with two complete matchings of the first tile $G_1$. }
\label{Fig:Height_01}
\end{figure}
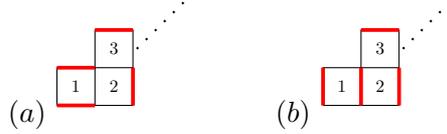

\begin{definition}[Height of a perfect matching] \label{Def:height}
 Let $\calg$ be a one-sided zig-zag snake graph, let $P$ be a matching of $\calg$, and let $i$ be the first position of a tile of $\calg$ such that $P|_{G_i}$ is a perfect matching of $G_i$ (this tile exists and is unique by Lemma~\ref{Lem:UniquePMexcept_1tile}). 
If $i>1$, we say that $P$ is a perfect matching of {\it height} $i$.
If $i=1$,  
we will associate {\it height $0$}  and  {\it height $1$} to the perfect matchings  of the form shown in Fig.~\ref{Fig:Height_01}~(a) and~(b), respectively.
\end{definition}

\subsection{Laurent series associated with snake graphs} 

\cite{MSW} introduced an explicit formula for cluster variables of surface cluster algebras which is parametrised by perfect matchings of snake graphs associated with arcs in a finite surface. In this section, we aim to generalise this formula for infinite surfaces with fan triangulations.

\begin{definition}[Laurent series associated with $\calg_\gamma$]
Let $\cals$ be an infinite surface and $T$ be a fan triangulation of $\cals$.
Let $\gamma$ be an arc in  $\cals$ and $\calg_{\gamma}$ its snake graph. Let $P$ be a perfect matching of $\calg_\gamma$. Then
\begin{itemize}
\item[-] assign a formal variable $x_{i}$ to every arc $\tau_i$ (including the limit arcs and the boundary arcs) of the triangulation $T$;
\item[-]  the (possibly infinite) {\it weight monomial} $x(P)$ of 
$P$ is given by
 $$x(P)=\prod\limits_{e_i\in P}x_{e_i},$$
where $x_{e_i}=x_j$ for an edge $e_i\in P$ with weight $\tau_j$; 
\item[-]
the {\it crossing monomial } of $\gamma$ with $T$ is given by $$\cross(\gamma,T)=\prod\limits_{j\in J}x_{j},$$  where $J$ is the index set of the tiles in $\calg_\gamma$, i.e. 
$\ga=\displaystyle\cup_{j\in J} G_j$ and 
$$
x_j=\begin{cases}
x_k & \text{if $G_j$ has weight $\tau_k$,}\\
1&  \text{if $G_j$ is a limit tile;}
\end{cases}
$$

\item[-] the  \emph{Laurent series associated with $\ga$} is defined by
\begin{equation*} 
x_{\calg_{\gamma}}=\sum\limits_{P \in \match \calg_{\gamma}}  \displaystyle \frac{x(P)}{\cross (\gamma, T)},
\end{equation*}
where $x(P)$ is the weight monomial of $P$ and $\cross(\gamma,T)$ is the crossing monomial of $\gamma$ with $T$.
\end{itemize}
\end{definition}

As in the finite case, $x_{\calg}$ gives an explicit formula for cluster variables. We will first show this for arcs lying inside a single fan.

\begin{proposition}\label{Thm:MSWforInfZigzag}
Let $\calf=\{\gamma_i,\gamma_{i,i+1}\mid i\in\bZ_{\geq 0}\}\cup\{\gamma_{\ast}\}$ be an infinite incoming elementary fan and $\gamma$ be the arc which crosses $\gamma_i$ for $i\in\bN,$ see Fig.~\ref{Fig:PM_pi}. 
 Let $x_\gamma$ be the cluster variable associated with $\gamma$ and $\calg_{\gamma}$ be the snake graph of $\gamma$. Then $x_{\gamma}=x_{\calg_{\gamma}}.$ 
\end{proposition}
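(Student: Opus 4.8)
The plan is to compute the matching sum $x_{\calg_\gamma}$ explicitly by enumerating the perfect matchings according to their height, and then to identify the result with the closed-form expression for $x_\gamma$ already established in Proposition~\ref{Rem:IncomingFan}(4).

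First I would record the shape of the snake graph. Since $\gamma$ crosses every spoke of the incoming fan and terminates at the accumulation point $p_*$ (so that it does not cross the limit arc $\gamma_*$ itself), Proposition~\ref{Prop:InfSG} together with the construction shows that $\calg_\gamma$ is an infinite one-sided zig-zag snake graph capped by a single limit tile whose unique weighted edge carries the label of $\gamma_*$, i.e. the variable $x_*$. By Lemma~\ref{Lem:UniquePMexcept_1tile} every perfect matching $P$ is uniquely determined by the first tile $G_h$ it matches completely, so the matchings are indexed by the height $h\in\{0,1,2,\dots\}$ (Definition~\ref{Def:height}); write $P_h$ for the matching of height $h$. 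Thus
\[
x_{\calg_\gamma}=\sum_{h\ge 0}\frac{x(P_h)}{\cross(\gamma,T)}.
\]

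Second I would evaluate each summand, using truncation as the main device. Let $\gamma^{(n)}$ be the finite arc crossing only $\gamma_1,\dots,\gamma_n$, that is the arc sharing the near-source endpoint of $\gamma$ but terminating at $p_{n+1}$ instead of $p_*$. Then $\gamma^{(n)}\to\gamma$, its snake graph $\calg_{\gamma^{(n)}}$ is exactly $\calg_\gamma$ with the limit tile replaced by a genuine tile of weight $x_{n+1}$, and the finite expansion formula of \cite{MSW} applies, giving $x_{\gamma^{(n)}}=x_{\calg_{\gamma^{(n)}}}$. Combining this with Proposition~\ref{Rem:IncomingFan}(3) identifies the height-$h$ matching of $\calg_{\gamma^{(n)}}$ as contributing $x_1 x_{n+1}\tfrac{x_{h,h+1}}{x_h x_{h+1}}$. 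For fixed $h$ the matchings $P_h$ of $\calg_\gamma$ and of $\calg_{\gamma^{(n)}}$ agree on all tiles up to and including $G_h$ and differ only in the forced alternating tail beyond $G_h$; in that tail the matched spoke-edges cancel against the spoke-weights appearing in $\cross(\gamma,T)$, so that only the weight of the capping tile survives. Hence $\tfrac{x(P_h)}{\cross(\gamma,T)}$ is the limit of $x_1 x_{n+1}\tfrac{x_{h,h+1}}{x_h x_{h+1}}$, and since $x_{n+1}\to x_*$ by Proposition~\ref{Rem:IncomingFan}(1) this equals $x_1 x_*\tfrac{x_{h,h+1}}{x_h x_{h+1}}$.

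Finally I would sum. The series $\sum_i\tfrac{x_{i,i+1}}{x_i x_{i+1}}$ converges absolutely by Proposition~\ref{Rem:IncomingFan}(2),(4), so the matching sum may be summed term by term, yielding
\[
x_{\calg_\gamma}=x_1 x_*\sum_{h}\frac{x_{h,h+1}}{x_h x_{h+1}}=x_{1,*}=x_\gamma,
\]
where the last equalities are Proposition~\ref{Rem:IncomingFan}(4); Corollary~\ref{limit labmda length} guarantees $x_{\gamma^{(n)}}\to x_\gamma$ and thereby legitimises the passage to the limit. The main obstacle I expect is the bookkeeping at the limit tile: one must check that the forced tail of each $P_h$ contributes exactly the single factor $x_*$ relative to $\cross(\gamma,T)$, i.e. that the two infinite products $x(P_h)$ and $\cross(\gamma,T)$ telescope to a genuine finite Laurent monomial. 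This is precisely the content of the limit edge condition in Definition~\ref{def:perfect matching}, and the truncation argument reduces its verification to the finite, already-known computation followed by the elementary limit $x_{n+1}\to x_*$.
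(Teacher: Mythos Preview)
Your proof is correct and follows the same architecture as the paper's: index the matchings by height via Lemma~\ref{Lem:UniquePMexcept_1tile}, evaluate each summand, and sum using Proposition~\ref{Rem:IncomingFan}(4). The only difference is that the paper obtains each summand $\frac{x(P_i)}{\cross(\gamma,T)}=\frac{x_0 x_* x_{i,i+1}}{x_i x_{i+1}}$ by a one-line direct cancellation of the two infinite monomials, whereas you reach the same value through the truncation-and-limit detour via finite arcs $\gamma^{(n)}$ and \cite{MSW}; this is valid but unnecessary (and note your description of $\calg_{\gamma^{(n)}}$ as ``$\calg_\gamma$ with the limit tile replaced'' is not literally right---it is simply the first $n$ tiles---though your subsequent comparison of the matchings is correct regardless).
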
 

\begin{proof} The snake graph $\calg_\gamma$ associated with $\gamma$ is an infinite one-sided zig-zag.
By Definition~\ref{Def:height}, perfect matchings of $\ga$ are indexed by $\bZ_{\geq 0}$ corresponding to the height  $i$ of a perfect matching $P_i$. We will show 
\begin{align*}
\frac{x(P_i)}{\cross{(\gamma,T)}}=\frac{x_0x_{\ast}x_{i,i+1}}{x_{i}x_{i+1}},
\end{align*}
for $i\in \bZ_{\ge 0}$, which will imply
\begin{align*}
x_{\ga}&=x_0x_{\ast}\sum\limits_{i\in\bZ_{\geq 0}}\frac{x_{i,i+1}}{x_{i}x_{i+1}}=x_{\gamma},
\end{align*}
where the last equality follows from Proposition~\ref{Rem:IncomingFan}~(4). To show the claim, consider the perfect matching $P_i$ given in  Fig.~\ref{Fig:PM_pi}. By definition, we have 

\begin{align*}
\frac{x(P_i)}{\cross{(\gamma,T)}}&=\frac{x_0\crossout{x_1}\crossout{x_2}\ldots \crossout{x_{i\!-\!1}}x_{i,i\!+\!1}\crossout{x_{i\!+\!2}}\ldots x_{\ast}}{\crossout{x_1}\crossout{x_2}\ldots\crossout{x_{i\!-\!1}} x_{i}x_{i\!+\!1}\crossout{x_{i\!+\!2}}\ldots} \\
&=\frac{x_0x_{\ast}x_{i,i+1}}{x_{i}x_{i+1}},
\end{align*}
as required. 
\end{proof}

\begin{figure}[!h]
\centering
\psfrag{g}{\scriptsize \color{red} $x_\gamma$}
\psfrag{0}{\scriptsize \color{blue} $x_0$}
\psfrag{1}{\scriptsize \color{blue} $x_1$}
\psfrag{2}{\scriptsize \color{blue} $x_2$}
\psfrag{3}{\scriptsize \color{blue} $x_3$}
\psfrag{*}{\scriptsize \color{blue} $x_*$}
\psfrag{12}{\scriptsize \color{blue} $x_{0,1}$}
\psfrag{23}{\scriptsize \color{blue} $x_{1,2}$}
\psfrag{34}{\scriptsize \color{blue} $x_{2,3}$}
\raisebox{5pt}{
\epsfig{file=./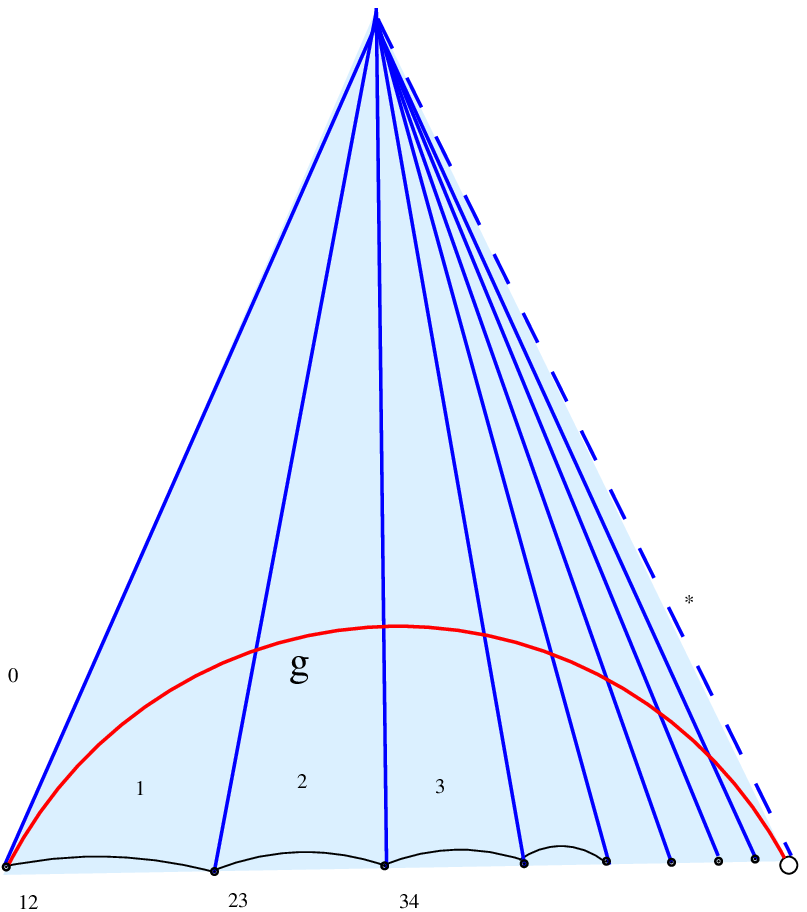,width=0.3\linewidth}
}
\qquad \qquad 
\begin{tikzpicture}[scale=.6]
\draw (0,0)--(2,0)--(2,1)--(0,1)--(0,0) (1,0)--(1,1)
(1,1)--(1,2)--(2,2)--(2,1);
\draw[thick,loosely dotted] (2.2,1.5)--(3.5,2.9);
\draw[thick,loosely dotted] (6.2,4.5)--(7.5,5.9);
\draw (3,3)--(5,3)--(5,5)--(4,5)--(4,3) (3,3)--(3,4)--(6,4) (5,5)--(6,5);
\draw[thick,densely dotted] (7,6)--(8,6)--(8,7)--(7,7)--(7,6);
\node[scale=.7] at (.5,.5){$1$};
\node[scale=.7] at (1.5,.5){$2$};
\node[scale=.7] at (1.5,1.5){$3$};
\node[scale=.6] at (3.5,3.5){$i\!-\!1$};
\node[scale=.7] at (4.5,3.5){$i$};
\node[scale=.6] at (4.5,4.5){$i\!+\!1$};
\node[scale=.6] at (5.5,4.5){$i\!+\!2$};
\draw[color=red,line width=1.6] (0,0)--(0,1) (1,0)--(2,0) (1,1)--(1,2) (3,3)--(3,4) (4,3)--(5,3) (4,4)--(5,4) (4,5)--(5,5) (7,7)--(8,7) (6,4)--(6,5);
\node[scale=.6,color=red] at (-.3,.5){$0$};
\node[scale=.6,color=red] at (1.5,-.3){$1$};
\node[scale=.6,color=red] at (.7,1.5){$2$};
\node[scale=.55,color=red] at (2.5,3.5){$i\!-\!2$};
\node[scale=.55,color=red] at (4.5,2.7){$i\!-\!1$};
\node[scale=.55,color=red] at (4.5,5.2){$i\!+\!2$};
\node[scale=.55,color=red] at (5.9,3.6){$(i,i\!+\!1)$};
\node[scale=.55,color=red] at (6.6,4.3){$i\!+\!3$};
\draw[color=red] (4.65,3.9)--(5.2,3.6); 
\node[scale=.6,color=red] at (7.5,7.2){$\ast$};
\end{tikzpicture}
\caption{Arc $\gamma$ inside an incoming fan (left) and the corresponding one-sided infinite snake graph $\calg_\gamma$ (right). $\calg_\gamma$ is given with a perfect matching  of height $i$.}
\label{Fig:PM_pi}
\end{figure}
We are now ready to present the main theorem of this section.

\begin{theorem}
\label{Thm:MSW-main}
Let $T$ be a fan triangulation of an infinite unpunctured surface $\cals$ and let $\gamma$ be an arc. Let $x_{\gamma}$ be the cluster variable associated with $\gamma$ and $\ga$ be the snake graph of $\gamma.$ Then $x_{\gamma}=x_{\ga}.$
\end{theorem}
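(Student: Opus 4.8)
The plan is to reduce the statement, by induction on the number $k$ of elementary fans crossed by $\gamma$, to the single-fan case already settled in Proposition~\ref{Thm:MSWforInfZigzag}, matching the \emph{geometric} recursion (the Ptolemy relation used in Proposition~\ref{prop Laurent}) against the \emph{combinatorial} recursion of snake graphs under grafting. First I would lift the domain $\cald_\gamma^T$ to the universal cover and write it, via Remark~\ref{curve in gan T}, as a finite union of elementary fans $\calf_1,\dots,\calf_k$; correspondingly Proposition~\ref{Prop:InfSG} expresses $\ga$ as a union of finite and one-sided infinite zig-zag snake graphs $\calg_1,\dots,\calg_k$ glued at their common ends as in Fig.~\ref{snake-example}. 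The base case $k=1$ is exactly Proposition~\ref{Thm:MSWforInfZigzag} (together with the classical finite-surface MSW formula of \cite{MSW} when the fan is crossed finitely many times).

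For the inductive step I would isolate the arc $\gamma'\in T$ along which the last fan $\calf_k$ is attached to $\calf_{k-1}$, and apply the Ptolemy relation
\begin{equation*}
x_\gamma x_{\gamma'}=x_{\beta_1}x_{\beta_3}+x_{\beta_2}x_{\beta_4}
\end{equation*}
exactly as in the proof of Proposition~\ref{prop Laurent} (see Fig.~\ref{Fig: Ptolemy-induction}), where each $\beta_j$ crosses strictly fewer than $k$ fans. By the induction hypothesis each $x_{\beta_j}$ equals the Laurent series $x_{\calg_{\beta_j}}$ of its own snake graph. The crux is then a purely combinatorial identity: the snake graph $\ga$ must decompose so that its perfect matchings are recorded precisely by the right-hand side of the Ptolemy relation after dividing by $x_{\gamma'}$. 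Concretely, I would invoke the grafting/self-gluing behaviour of snake graphs — the analogue of the finite ``skein''/resolution calculus of \cite{CS,MS} — to show $x_{\ga}x_{\gamma'}=x_{\calg_{\beta_1}}x_{\calg_{\beta_3}}+x_{\calg_{\beta_2}}x_{\calg_{\beta_4}}$, matching the geometric relation termwise. This requires verifying that every perfect matching of $\ga$ splits uniquely according to the position of the tile $G_{\gamma'}$, and conversely that each pair of matchings on the two pieces glues back, using Lemma~\ref{Lem:UniquePMexcept_1tile} to control the height of a matching across the boundary tile and to guarantee no matching is lost or double-counted at the junction.

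The main obstacle, and the step deserving the most care, is the \textbf{convergence and limit-tile bookkeeping} at the infinite ends. In the finite case the matching/resolution identity is a finite combinatorial bijection, but here I must ensure that (i) the infinite sums $x_{\calg_{\beta_j}}$ converge absolutely (granted by Theorem~\ref{Laurent for punctures}/Proposition~\ref{prop Laurent}, so rearrangement in the product is legitimate), and (ii) the limit-edge condition of Definition~\ref{def:perfect matching} is respected by the gluing — i.e.\ the limit tile $G_{lim}$ of each one-sided zig-zag contributes the correct boundary weight $x_*$ and is never wrongly promoted to a two-sided limit, which is exactly guaranteed by Observation~\ref{Obs:LimitTile}(b). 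I would therefore carry out the base-case verification with explicit attention to how the height-$i$ matchings of Fig.~\ref{Fig:PM_pi} reproduce the series $x_0x_*\sum_i \tfrac{x_{i,i+1}}{x_ix_{i+1}}$, and then argue that the inductive Ptolemy step only permutes finitely many tiles near $G_{\gamma'}$, so the infinite tails (and their limit tiles) are transported unchanged, preserving both convergence and the matching correspondence. Finally, since $\gamma'\in T$, dividing by $x_{\gamma'}$ yields the desired $x_\gamma=x_{\ga}$.
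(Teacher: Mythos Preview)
Your proposal is correct and follows essentially the same strategy as the paper: induction on the number $k$ of elementary fans in the lifted domain $\cald_\gamma^T$, with the base case given by Proposition~\ref{Thm:MSWforInfZigzag} (or \cite{MSW} in the finite case), and the inductive step via the Ptolemy relation at the arc $\gamma'=\gamma_*\in T$ separating $\calf_{k-1}$ from $\calf_k$.

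The only substantive difference is in how the combinatorial side of the inductive step is executed. You propose to invoke the grafting/resolution calculus of \cite{CS,MS} as a (suitably adapted) black box to obtain the snake-graph identity matching Ptolemy. The paper instead performs this computation by hand: it first distinguishes two geometric cases, (I) the fans $\calf_{k-1},\calf_k$ share a source, and (II) they have sources at opposite ends of $\gamma_*$, which dictate whether $\calg_{k-1},G_*,\calg_k$ are glued in a zig-zag or a straight piece. In each case the paper decomposes $\match\ga$ according to which boundary edge(s) of the tile $G_*$ lie in the matching, and observes that on the infinite zig-zag tails this choice forces a \emph{unique} matching $\widetilde P$ (resp.\ $\widetilde{\widetilde P}$) via exactly the mechanism of Lemma~\ref{Lem:UniquePMexcept_1tile}. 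The resulting explicit weight computation then cancels telescopically against $\cross(\gamma,T)$ to yield $x_{\ga}=(x_{\beta_1}x_{0'}+x_{\bar\gamma}x_{\beta_2})/x_*$ (Case I) or the analogous expression in Case II, matching the Ptolemy relation directly. So rather than establishing an infinite grafting lemma and then applying it, the paper carries out the specific instance needed---which is precisely the verification you flag as ``the step deserving the most care''. Your convergence and limit-tile remarks are apt and are handled implicitly by this explicit computation.
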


\begin{proof} 
Abusing the notation, we will denote by $\gamma$ the lift of an arc $\gamma$ to the universal cover. Let $\cald_\gamma^T$ be the domain of $\gamma$ in the universal cover.
By Remark~\ref{fan is elementary}, the domain $\cald_{\gamma}^T$  is a finite union of elementary fans. Similarly to the proof of Theorem~\ref{Laurent for punctures}, we will argue by induction on the number of  elementary fans in $\cald_\gamma^T$.

For the base  of the induction, we will consider the case when  $\cald_\gamma^T$ is a fan. Then it is either a finite fan or an infinite elementary fan. In the first case, the snake graph is finite, hence the statement follows from~\cite{MSW}. In the latter, the snake graph is a one-sided infinite zig-zag and  $x_{\gamma}=x_{\ga}$ by Proposition~\ref{Thm:MSWforInfZigzag}.

Assume that $x_{\calg_{\delta}}=x_{\delta}$ for every arc $\delta$ crossing strictly less than $k$ fans. Now suppose $\gamma$ is an arc crossing $k$ fans in the universal cover. We will denote these fans by $\calf^{(i)}$ for $1\leq i \leq k.$ Fix an orientation on $\gamma$ and let $\gamma_{\ast}$ be the arc separating the fans $\calf^{(k-1)}$ and $\calf^{(k)}$. Note that $\gamma_{\ast}$ might be a limit arc. We have one of the following main configurations in the neighbourhood of $\gamma_{\ast}$ in $\cald_{\gamma}$  (see  also Fig.~\ref{Fig:MSW-main}):

\begin{itemize}
\item[(I)] the fans $\calf^{(k-1)}$ and $\calf^{(k)}$ have sources at the same end of $\gamma_{\ast}$;

\item[(II)] the fans $\calf^{(k-1)}$ and $\calf^{(k)}$ have  sources at opposite ends of $\gamma_{\ast}.$
\end{itemize}

\begin{figure}[!h]
\begin{center}
\psfrag{g}{{\color{red} $\gamma$}}
\psfrag{a}{\small (I)}
\psfrag{b}{\small (II)}
\epsfig{file=./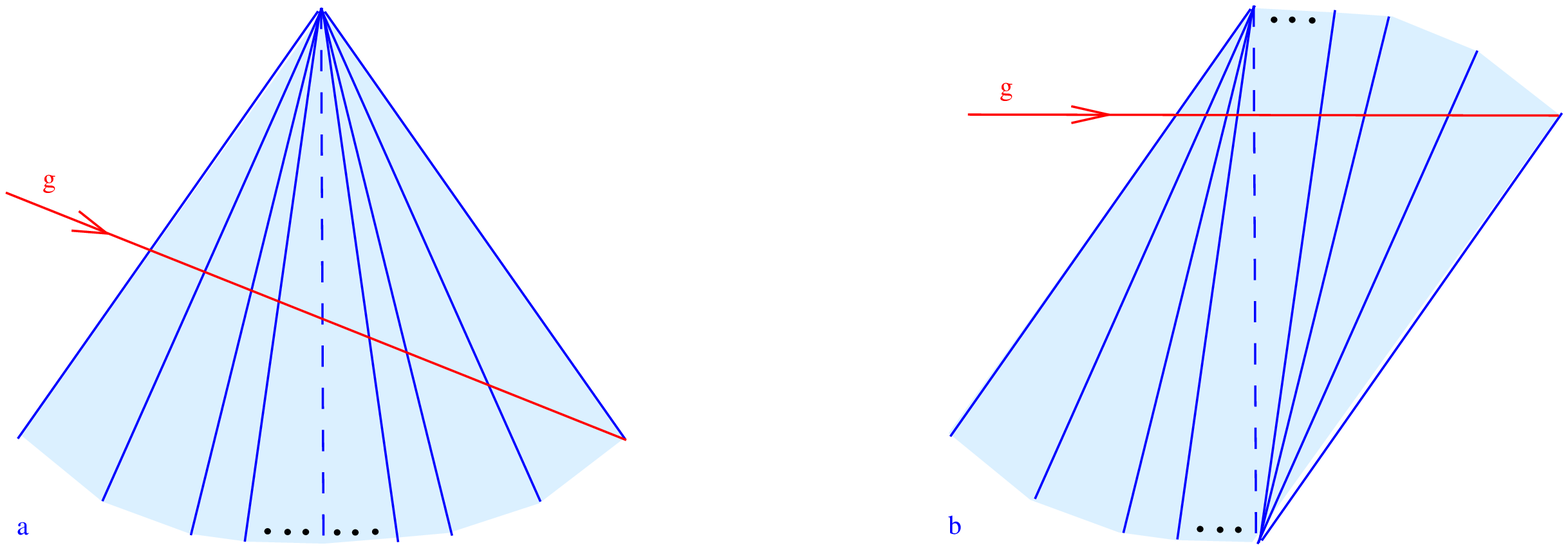,width=0.8\linewidth}
\caption{Two principle ways to change an elementary fan (each of the
two fans in the figures may be finite or infinite).}
\label{Fig:MSW-main}\end{center}
\end{figure}

\bigskip

\noindent
{\textbf{Case (I):}} Suppose that the last change of fan in  $\cald_\gamma$ is of type (I). We will also assume that both of the fans $\calf_{k-1}$ and $\calf_{k}$ are infinite. Then the snake graph of $\gamma$ is of the form given in Fig.~\ref{Fig:MSW-type(I)}. Denote the arcs in   $\cald_\gamma$  as shown in the figure,
denote also the snake subgraph corresponding to the arc $\overline{\gamma}$ by $\overline{\calg}$ and the snake subgraph corresponding to the crossing of $\gamma$ with the fan $\calf_{i}$ by $\calg_{i}$. 

\begin{figure}[!h]
\psfrag{g}{{\color{red} $\gamma$}}
\psfrag{bg}{{\color{dgreen}  $\bar \gamma$}}
\psfrag{b1}{{\color{dgreen}  $\beta_1$}}
\psfrag{b2}{{\color{dgreen}  $\beta_2$}}
\psfrag{0'}{{\color{dgreen}  $0'$}}
\psfrag{1'}{{\color{blue} \scriptsize $1'$}}
\psfrag{2'}{{\color{blue}\scriptsize  $2'$}}
\psfrag{1}{{\color{blue}\scriptsize  $1$}}
\psfrag{2}{{\color{blue}\scriptsize  $2$}}
\psfrag{3}{{\color{blue}\scriptsize  $3$}}
\psfrag{*}{{\color{magenta} \large $*$}}
\epsfig{file=./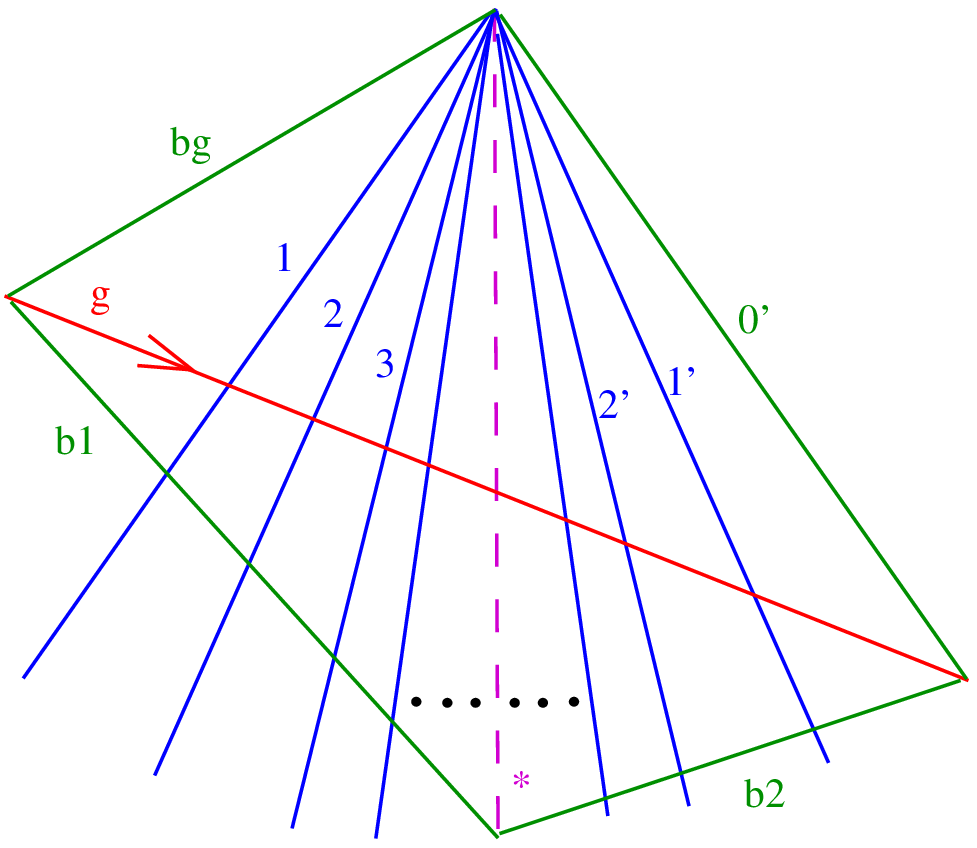,width=0.5\linewidth}
\hspace{.5in}
\raisebox{+6.5ex}{
\begin{tikzpicture}[baseline,scale=.4] 
\draw (0,0)--(2,0)--(2,1)--(0,1)--(0,0) (1,0)--(1,1)
(1,1)--(1,2)--(2,2)--(2,1) (0,0)--(-1,0) (0,1)--(-1,1); 
\draw  plot[smooth, tension=.7] coordinates {(-1,1) (-1.5,.8) (-2,.6) (-2.5,.3) (-3,-.5) (-2.7,-2) (-1.5,-1) (-1,0)};
\node[scale=.8] at (-1.8,-.4){$\overline{\calg}$};
\draw[thick,loosely dotted] (2,1.5)--(3.5,3);
\draw[thick,densely dotted] (3,3)--(4,3)--(4,4)--(3,4)--(3,3);
\draw[thick,densely dotted] (5,4)--(5,5)--(4,5)--(4,4)--(5,4);
\draw (4,3)--(5,3)--(5,4)--(4,4)--(4,3);
\draw[thick,loosely dotted] (5.2,4.5)--(6.5,6);
\draw (6,6)--(8,6)--(8,7)--(6,7)--(6,6) (7,6)--(7,8)--(8,8)--(8,7); 
\node[scale=.7] at (.5,.5){$1$};
\node[scale=.7] at (1.5,.5){$2$};
\node[scale=.7] at (1.5,1.5){$3$};
\node[scale=.8] at (4.5,3.5){${\ast}$};
\node[scale=.7] at (6.5,6.5){$3'$};
\node[scale=.7] at (7.5,6.5){$2'$};
\node[scale=.7] at (7.5,7.5){$1'$};
\end{tikzpicture}
}
\caption{Snake graph associated with $\gamma$ in Case (I).}
\label{Fig:MSW-type(I)}
\end{figure}

The snake graphs $\calg_{k-1}$ and $\calg_{k}$ are one-sided infinite zig-zag snake graphs since they are associated with infinite zig-zag fans $\calf_{k-1}$ and $\calf_{k}$.   Note that since the fans $\calf_{k-1}$ and $\calf_{k}$ have the same source, the corresponding glueing in $\ga$ between $\calf_{k-1},\gamma_{\ast}$ and $\calf_{k}$ is a zig-zag and $\calg=\overline{\calg} \cup \calg_{k-1} \cup G_{\ast} \cup \calg_{k}$.

By Ptolemy relations, we have the identity
\begin{equation} 
\label{eq1}
x_{\gamma}x_{\ast}=x_{\beta_1}x_{0'}+x_{\beta_2}x_{\overline{\gamma}} 
\end{equation}
\noindent 
in terms of lambda lengths of the arcs $\gamma, \gamma_{\ast}, \beta_1,\gamma_{0'},\beta_2,$ and $\overline{\gamma}$, see Fig.~\ref{Fig:MSW-type(I)}. We will show that the corresponding identity

\begin{align} 
\label{eq5}
x_{\ga}x_{\calg_{\ast}}=x_{\calg_{\beta_1}}x_{\calg_{0'}}+x_{\calg_{\beta_2}}x_{\calg_{\overline{\gamma}}} 
\end{align}
\noindent 
in terms of  expansion formula for the associated snake graphs holds. Observe that we have $x_{\beta_i}=x_{\calg_{\beta_i}},$ for $i=1,2$, $x_{\overline{\gamma}}=x_{\calg_{\overline{\gamma}}},$  $x_*=x_{\calg_*}$ and $x_{0'}=x_{\calg_{0'}}$  by the induction hypothesis. 

In order to show the identity~(\ref{eq5}), we will reorganise the set of perfect matchings of $\ga$ by considering those that have the vertical boundary edge  $a$
 or the horizontal boundary edge  $b$ of the tile $G_{\ast}$ in a matching of $\ga$: 
\begin{align} \match \ga  
&= \match \left(
\raisebox{-5.5ex}{\scalebox{.8}{
\begin{tikzpicture}[baseline,scale=.4] 
\draw (0,0)--(2,0)--(2,1)--(0,1)--(0,0) (1,0)--(1,1)
(1,1)--(1,2)--(2,2)--(2,1) (0,0)--(-1,0) (0,1)--(-1,1); 
\draw  plot[smooth, tension=.7] coordinates {(-1,1) (-1.5,.8) (-2,.6) (-2.5,.3) (-3,-.5) (-2.7,-2) (-1.5,-1) (-1,0)};
\node[scale=.8] at (-1.8,-.4){$\overline{\calg}$};
\draw[thick,loosely dotted] (2,1.5)--(3.5,3);
\draw[thick,densely dotted] (3,3)--(4,3)--(4,4)--(3,4)--(3,3);
\draw[thick,densely dotted] (5,4)--(5,5)--(4,5)--(4,4)--(5,4);
\filldraw[color=beaublue] (4,3)--(5,3)--(5,4)--(4,4)--(4,3);
\draw[thick,loosely dotted] (5.2,4.5)--(6.5,6);
\draw (6,6)--(8,6)--(8,7)--(6,7)--(6,6) (7,6)--(7,8)--(8,8)--(8,7); 
\draw[color=red,line width=2.4] (5,3)--(5,4);
\node[scale=.7,color=red] at (5.4,3.5){$a$};
\node[scale=.7] at (.5,.5){$1$};
\node[scale=.7] at (1.5,.5){$2$};
\node[scale=.7] at (1.5,1.5){$3$};
\node[scale=.8] at (4.5,3.5){${\ast}$};
\node[scale=.7] at (6.5,6.5){$3'$};
\node[scale=.7] at (7.5,6.5){$2'$};
\node[scale=.7] at (7.5,7.5){$1'$};
\end{tikzpicture} }
}
\right)
\cup
\match \left(
\raisebox{-5.5ex}{\scalebox{.8}{
\begin{tikzpicture}[baseline,scale=.4] 
\draw (0,0)--(2,0)--(2,1)--(0,1)--(0,0) (1,0)--(1,1)
(1,1)--(1,2)--(2,2)--(2,1) (0,0)--(-1,0) (0,1)--(-1,1); 
\draw  plot[smooth, tension=.7] coordinates {(-1,1) (-1.5,.8) (-2,.6) (-2.5,.3) (-3,-.5) (-2.7,-2) (-1.5,-1) (-1,0)};
\node[scale=.8] at (-1.8,-.4){$\overline{\calg}$};
\draw[thick,loosely dotted] (2,1.5)--(3.5,3);
\draw[thick,densely dotted] (3,3)--(4,3)--(4,4)--(3,4)--(3,3);
\draw[thick,densely dotted] (5,4)--(5,5)--(4,5)--(4,4)--(5,4);
\filldraw[color=beaublue] (4,3)--(5,3)--(5,4)--(4,4)--(4,3);
\draw[thick,loosely dotted] (5.2,4.5)--(6.5,6);
\draw (6,6)--(8,6)--(8,7)--(6,7)--(6,6) (7,6)--(7,8)--(8,8)--(8,7); 
\draw[color=red,line width=2.4] (4,3)--(5,3);
\node[scale=.7,color=red] at (4.5,2.5){$b$};
\node[scale=.7] at (.5,.5){$1$};
\node[scale=.7] at (1.5,.5){$2$};
\node[scale=.7] at (1.5,1.5){$3$};
\node[scale=.8] at (4.5,3.5){${\ast}$};
\node[scale=.7] at (6.5,6.5){$3'$};
\node[scale=.7] at (7.5,6.5){$2'$};
\node[scale=.7] at (7.5,7.5){$1'$};
\end{tikzpicture}}
}
\right) \nonumber
\\  
&=\match \left(
\raisebox{-5.5ex}{\scalebox{.75}{
\begin{tikzpicture}[baseline,scale=.4] 
\draw (0,0)--(2,0)--(2,1)--(0,1)--(0,0) (1,0)--(1,1)
(1,1)--(1,2)--(2,2)--(2,1) (0,0)--(-1,0) (0,1)--(-1,1); 
\draw  plot[smooth, tension=.7] coordinates {(-1,1) (-1.5,.8) (-2,.6) (-2.5,.3) (-3,-.5) (-2.7,-2) (-1.5,-1) (-1,0)};
\node[scale=.8] at (-1.8,-.4){$\overline{\calg}$};
\draw[thick,loosely dotted] (2,1.5)--(3.5,3);
\draw[thick,densely dotted] (3,3)--(4,3)--(4,4)--(3,4)--(3,3);
\draw[thick,densely dotted] (5,4)--(5,5)--(4,5)--(4,4)--(5,4);
\filldraw[color=beaublue] (4,3)--(5,3)--(5,4)--(4,4)--(4,3);
\draw[thick,loosely dotted] (5.2,4.5)--(6.5,6);
\draw (6,6)--(8,6)--(8,7)--(6,7)--(6,6) (7,6)--(7,8)--(8,8)--(8,7); 
\draw[color=red,line width=2.4] (5,3)--(5,4) (5,5)--(4,5) (6,7)--(7,7) (8,6)--(8,7) (7,8)--(8,8);
\node[scale=.7] at (.5,.5){$1$};
\node[scale=.7] at (1.5,.5){$2$};
\node[scale=.7] at (1.5,1.5){$3$};
\node[scale=.8] at (4.5,3.5){${\ast}$};
\node[scale=.7] at (6.5,6.5){$3'$};
\node[scale=.7] at (7.5,6.5){$2'$};
\node[scale=.7] at (7.5,7.5){$1'$};
\node[scale=.6,color=red] at (7.5,8.4){$0'$};
\node[scale=.6,color=red] at (8.4,6.5){$1'$};
\node[scale=.6,color=red] at (6.5,7.4){$2'$};
\draw[color=red,decorate,decoration={brace,amplitude=18pt},xshift=-4pt,yshift=0pt]
 (4.2,2.3) --(-2,-2.6)  node [color=red,midway,xshift=.6cm,yshift=-.7cm] 
{\footnotesize $\calg_{\beta_1}$};
\draw[color=red,decorate,decoration={brace,amplitude=18pt},xshift=-4pt,yshift=0pt]
 (9.2,8)-- (4.9,2.1) node [color=red,midway,xshift=.6cm,yshift=-.6cm] 
{\footnotesize $\widetilde{P}$};
\end{tikzpicture}}
}
\right)
\cup
\match \left(
\raisebox{-5.5ex}{\scalebox{.75}{
\begin{tikzpicture}[baseline,scale=.4] 
\draw (0,0)--(2,0)--(2,1)--(0,1)--(0,0) (1,0)--(1,1)
(1,1)--(1,2)--(2,2)--(2,1) (0,0)--(-1,0) (0,1)--(-1,1); 
\draw  plot[smooth, tension=.7] coordinates {(-1,1) (-1.5,.8) (-2,.6) (-2.5,.3) (-3,-.5) (-2.7,-2) (-1.5,-1) (-1,0)};
\node[scale=.8] at (-1.8,-.4){$\overline{\calg}$};
\draw[thick,loosely dotted] (2,1.5)--(3.5,3);
\draw[thick,densely dotted] (3,3)--(4,3)--(4,4)--(3,4)--(3,3);
\draw[thick,densely dotted] (5,4)--(5,5)--(4,5)--(4,4)--(5,4);
\filldraw[color=beaublue] (4,3)--(5,3)--(5,4)--(4,4)--(4,3);
\draw[thick,loosely dotted] (5.2,4.5)--(6.5,6);
\draw (6,6)--(8,6)--(8,7)--(6,7)--(6,6) (7,6)--(7,8)--(8,8)--(8,7); 
\draw[color=red,line width=2.4] (1,0)--(2,0) (1,1)--(1,2) (3,3)--(3,4) (4,3)--(5,3);
\node[scale=.7] at (.5,.5){$1$};
\node[scale=.7] at (1.5,.5){$2$};
\node[scale=.7] at (1.5,1.5){$3$};
\node[scale=.8] at (4.5,3.5){${\ast}$};
\node[scale=.7] at (6.5,6.5){$3'$};
\node[scale=.7] at (7.5,6.5){$2'$};
\node[scale=.7] at (7.5,7.5){$1'$};
\node[scale=.6,color=red] at (1.5,-.4){$1$};
\node[scale=.6,color=red] at (.6,1.5){$2$};
\draw[color=red,decorate,decoration={brace,amplitude=8pt},xshift=-4pt,yshift=0pt]
 (0,-.4)--(-2.4,-2.4) node [color=red,midway,xshift=.4cm,yshift=-.4cm] 
{\footnotesize $\calg_{\overline{\gamma}}$};
\draw[color=red,decorate,decoration={brace,amplitude=18pt},xshift=-4pt,yshift=0pt]
 (5.2,2.8) --(0.8,-1)  node [color=red,midway,xshift=.6cm,yshift=-.6cm] 
{\footnotesize $\widetilde{\widetilde P}$};
\draw[color=red,decorate,decoration={brace,amplitude=18pt},xshift=-4pt,yshift=0pt]
 (9.2,8)-- (5.4,4) node [color=red,midway,xshift=.7cm,yshift=-.7cm] 
{\footnotesize $\calg_{\beta_2}$};
\end{tikzpicture} }
}
\right) \nonumber
\\
&= (\match \calg_{\beta_1} \sqcup \widetilde{P}) \cup (\match \calg_{\overline{\gamma}} \sqcup \widetilde{\widetilde{P}} \sqcup \match \calg_{\beta_2}).\label{eq2}
\end{align}

Note that we obtain unique matchings $\widetilde{P}$ and $\widetilde{\widetilde{P}}$ on $\calg^{(k)}\cup \{a\}$ and $(\calg^{(k-1)}\setminus G_1) \cup \{b\}$, respectively. Indeed, when the edge $a$ is in the matching, we must have the top of the  successive limit tile in the matching as well. Hence the top of every even (or odd) tile in $\calg^{(k-1)}$ must be in the matching. This also requires that the right edge of every odd (resp. even) tile is in the matching, compare to Example~\ref{Ex:NoMatching}.  Similarly, if the edge $b$ is in the matching, this  gives rise to the unique matching $\widetilde{\widetilde P}$.

We will also use the following rearrangement of $\cross (\gamma, T)$:

\begin{align}
\cross (\gamma,T)&= \cross(\beta_1,T) \cdot x_{\ast}\cdot \cross (\beta_2,T) \label{eq3}\\
&=\cross(\overline{\gamma},T) \cdot x_1x_2\dots x_{\ast} \cdot \cross (\beta_2,T) \label{eq4}
\end{align}

Applying equation (\ref{eq2}) of perfect matchings of $\ga,$ we obtain:
\begin{align*}
x_{\ga}& \stackrel{(def)}{=} \sum\limits_{P \vdash \calg_{\gamma}}  \displaystyle \frac{x(P)}{\cross (\gamma, T)} \\
&\stackrel{\left(\ref{eq2}\right)}{=} \displaystyle \frac{\left(\sum\limits_{P_1 \vdash \calg_{\beta_1}} x(P_1) \right) x(\widetilde{P}) + \left( \sum\limits_{\overline{P}\vdash \calg_{\overline{\gamma}}} x(\overline{P})\right) x(\widetilde{\widetilde{P}})  \left( \sum\limits_{P_2 \vdash \calg_{\beta_2}} x(P_2)\right) }{\cross (\gamma, T)}
\\ 
\phantom{x_{\ga}} &
\begin{aligned}
&\stackrel{\left(\ref{eq3},\ref{eq4}\right)}{=} 
\left(\sum\limits_{P_1 \vdash \calg_{\beta_1}}  \displaystyle \frac{x(P_1)}{\cross (\beta_1, T)}\right)  \frac{1}{x_{\ast}} \left(\frac{x(\widetilde{P})}{\cross(\beta_2,T)}\right) \\
& \qquad\qquad\qquad\qquad +\left(\sum\limits_{\overline{P} \vdash \calg_{\overline{\gamma}}}  \displaystyle \frac{x(\overline{P})}{\cross (\overline{\gamma}, T)}\right)  \left(\displaystyle \frac{x(\widetilde{\widetilde{P}})}{x_1x_2x_3\cdots}\right) \frac{1}{x_{\ast}}\left(\sum\limits_{P_2 \vdash \calg_{\beta_2}}\displaystyle \frac{x(P_2)}{\cross (\beta_2, T)} \right)
\end{aligned}
\\
&\stackrel{(def)}{=}\left(x_{{\beta_1}}\right)\left(\frac{1}{x_{\ast}}\right) \frac{{x_{0'}}\crossout{x_{1'}}\crossout{x_{2'}}\dots}{\crossout{x_{1'}}\crossout{x_{2'}}\dots} 
+\left( x_{\overline{\gamma}}\right)\frac{\crossout{x_1}\crossout{x_2}\dots}{ \crossout{x_{1}}\crossout{x_{2}}\dots} \left(\frac{1}{x_{\ast}}\right) x_{{\beta_2}}\\
&=\frac{x_{{\beta_1}}x_{0'}+x_{\overline{\gamma}}x_{{\beta_2}}}{x_{\ast}}\\
&\stackrel{\left(\ref{eq1}\right)}{=} x_{\gamma},
\end{align*}
where the last equality follows from (\ref{eq1}) since $x_{\ast}$ is an initial variable.

The proof of the remaining cases where only one of the fans is infinite is  given in the same way. The key consideration is that the snake graph $\ga$ will be exactly as in Fig.~\ref{Fig:MSW-type(I)} except that one of the zig-zag subgraphs $\calg^{(k-1)}$ or $\calg^{(k)}$  will be finite.
\bigskip

\noindent\textbf{Case (II):} Consideration of this case follows similar steps to Case (I) except that the local configuration at $G_{\ast}$ will be different. Now suppose that last change of fan in the domain of $\gamma$ is of Type (II) and  that both fans around $\gamma_{\ast}$ are infinite. Then the snake graph of $\gamma$ is of the form given in Fig.~\ref{Fig:MSW-type(II)}.
Denote the arcs of $\cald_\gamma$ as in the figure,
denote also the snake subgraph of $\calg_\gamma$ corresponding to the arc $\overline{\gamma}$ by $\overline{\calg}$ and the snake subgraph corresponding to the crossing of $\gamma$ with the fan $\calf_{i}$ by $\calg_{i}$.

\begin{figure}[!h]
\begin{center}
\psfrag{g}{{\color{red} $\gamma$}}
\psfrag{bg}{{\color{dgreen} $\bar \gamma$}}
\psfrag{b1}{{\color{dgreen}  $\beta_1$}}
\psfrag{b2}{{\color{dgreen} $\beta_2$}}
\psfrag{0'}{{\color{dgreen}  $0'$}}
\psfrag{1'}{{\color{blue}\scriptsize  $1'$}}
\psfrag{2'}{{\color{blue} \scriptsize $2'$}}
\psfrag{1}{{\color{blue}\scriptsize $1$}}
\psfrag{2}{{\color{blue} \scriptsize   $2$}}
\psfrag{3}{{\color{blue}\scriptsize  $3$}}
\psfrag{*}{{\color{magenta} \large $*$}}
\epsfig{file=./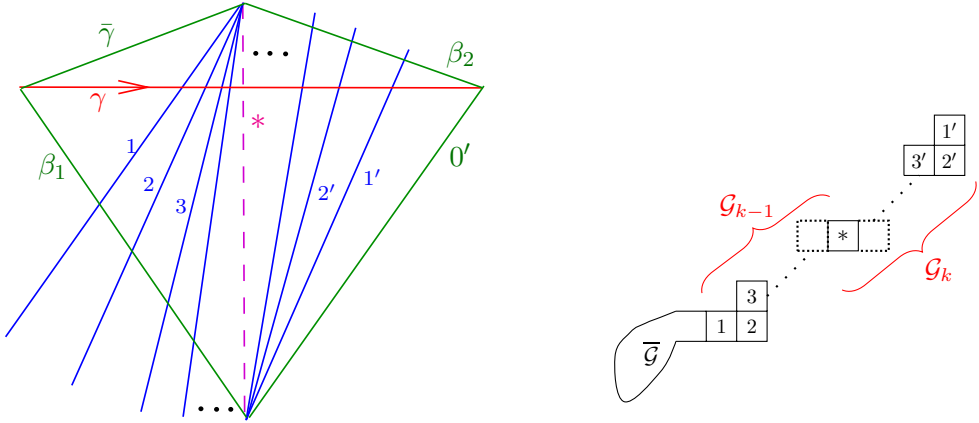,width=0.4\linewidth}
\hspace{.5in}
\raisebox{+6.5ex}{
\begin{tikzpicture}[baseline,scale=.4] 
\draw (0,0)--(2,0)--(2,1)--(0,1)--(0,0) (1,0)--(1,1)
(1,1)--(1,2)--(2,2)--(2,1) (0,0)--(-1,0) (0,1)--(-1,1); 
\draw  plot[smooth, tension=.7] coordinates {(-1,1) (-1.5,.8) (-2,.6) (-2.5,.3) (-3,-.5) (-2.7,-2) (-1.5,-1) (-1,0)};
\node[scale=.8] at (-1.8,-.4){$\overline{\calg}$};
\draw[thick,loosely dotted] (2,1.5)--(3.5,3);
\draw[thick,densely dotted] (3,3)--(4,3)--(4,4)--(3,4)--(3,3);
\draw[thick,densely dotted] (5,4)--(6,4)--(6,3)--(5,3);
\draw (4,3)--(5,3)--(5,4)--(4,4)--(4,3);
\draw[thick,loosely dotted] (5.5,4)--(7,5.5);
\draw (6.5,5.5)--(8.5,5.5)--(8.5,6.5)--(6.5,6.5)--(6.5,5.5) (7.5,5.5)--(7.5,7.5)--(8.5,7.5)--(8.5,6.5); 
\node[scale=.7] at (.5,.5){$1$};
\node[scale=.7] at (1.5,.5){$2$};
\node[scale=.7] at (1.5,1.5){$3$};
\node[scale=.8] at (4.5,3.5){${\ast}$};
\node[scale=.7] at (7,6){$3'$};
\node[scale=.7] at (8,6){$2'$};
\node[scale=.7] at (8,7){$1'$};

\draw[color=red,decorate,decoration={brace,amplitude=8pt},xshift=-4pt,yshift=-0pt]
 (0.0,1.6)--(4.2,4.8) node [color=red,midway,xshift=-0.24cm,yshift=0.5cm] 
{\small $\calg_{k-1}$};

\draw[color=red,decorate,decoration={brace,amplitude=8pt},xshift=-4pt,yshift=-0pt]
 (9.0,5.3)--(4.6,1.7) node [color=red,midway,xshift=0.4cm,yshift=-0.45cm] 
{\small $\calg_{k}$};

\end{tikzpicture}
}
\caption{Snake graph associated with $\gamma$ in Case (II).} 
\label{Fig:MSW-type(II)}
\end{center}
\end{figure}

The snake graphs $\calg_{k-1}$ and $\calg_{k}$ are one-sided infinite zig-zag snake graphs since they are associated with the infinite zig-zag fans $\calf_{k-1}$ and $\calf_{k}$.   Note that since the fans $\calf_{k-1}$ and $\calf_{k}$ have distinct sources, the corresponding glueing in $\ga$ between $\calg_{k-1}$, $G_*$ and $\calg_{k}$ 
in $\calg=\overline{\calg} \cup \calg_{k-1} \cup G_{\ast} \cup \calg_{k}$ is straight.

Using the notation as in Fig.~\ref{Fig:MSW-type(II)} and applying the Ptolemy relation,  we have the identity
\begin{align} x_{\gamma}x_{\ast}=x_{\beta_1}x_{\beta_2}+x_{0'}x_{\overline{\gamma}} \label{eq1'}
\end{align}
in terms of lambda lengths of the arcs $\gamma,\gamma_{\ast},\beta_1,\beta_2,\gamma_{0'},$ and $\overline{\gamma}$. We will show the identity
\begin{align} x_{\ga}x_{\calg_{\ast}}=x_{\calg_{\beta_1}}x_{\calg_{\beta_2}}+x_{\calg_{0'}}x_{\calg_{\overline{\gamma}}} \label{eq5'}.
\end{align}
Observe that we have $x_{\beta_i}=x_{\calg_{\beta_i}},$ for $i=1,2,$ and $x_{\overline{\gamma}}=x_{\calg_{\overline{\gamma}}},$  by the induction hypothesis. 

To show the identity (\ref{eq5'}), we will reorganise the set of perfect matchings of $\ga$ by considering those having the two boundary edges of the tile $G_{\ast}$  in the matching or not:

\begin{align}
\match \ga &=\match \left( \raisebox{-4.5ex}{\scalebox{.8}{
\begin{tikzpicture}[baseline,scale=.4] 
\draw (0,0)--(2,0)--(2,1)--(0,1)--(0,0) (1,0)--(1,1)
(1,1)--(1,2)--(2,2)--(2,1) (0,0)--(-1,0) (0,1)--(-1,1); 
\draw  plot[smooth, tension=.7] coordinates {(-1,1) (-1.5,.8) (-2,.6) (-2.5,.3) (-3,-.5) (-2.7,-2) (-1.5,-1) (-1,0)};
\node[scale=.8] at (-1.8,-.4){$\overline{\calg}$};
\draw[thick,loosely dotted] (2,1.5)--(3.5,3);
\draw[thick,densely dotted] (3,3)--(4,3)--(4,4)--(3,4)--(3,3);
\draw[thick,densely dotted] (5,4)--(6,4)--(6,3)--(5,3);
\filldraw[color=beaublue] (4,3)--(5,3)--(5,4)--(4,4)--(4,3);
\draw[thick,loosely dotted] (5.5,4)--(7,5.5);
\draw (6.5,5.5)--(8.5,5.5)--(8.5,6.5)--(6.5,6.5)--(6.5,5.5) (7.5,5.5)--(7.5,7.5)--(8.5,7.5)--(8.5,6.5); 
\draw[color=red,line width=1.2] (4.3,4.2)--(4.8,3.8) (4.3,3.8)--(4.8,4.2) (4.3,3.2)--(4.8,2.8) (4.3,2.8)--(4.8,3.2);
\node[scale=.7] at (.5,.5){$1$};
\node[scale=.7] at (1.5,.5){$2$};
\node[scale=.7] at (1.5,1.5){$3$};
\node[scale=.8] at (4.5,3.5){${\ast}$};
\node[scale=.7] at (7,6){$3'$};
\node[scale=.7] at (8,6){$2'$};
\node[scale=.7] at (8,7){$1'$};
\end{tikzpicture}}
}
\right) \cup
\match \left( \raisebox{-4.5ex}{\scalebox{.8}{
\begin{tikzpicture}[baseline,scale=.4] 
\draw (0,0)--(2,0)--(2,1)--(0,1)--(0,0) (1,0)--(1,1)
(1,1)--(1,2)--(2,2)--(2,1) (0,0)--(-1,0) (0,1)--(-1,1); 
\draw  plot[smooth, tension=.7] coordinates {(-1,1) (-1.5,.8) (-2,.6) (-2.5,.3) (-3,-.5) (-2.7,-2) (-1.5,-1) (-1,0)};
\node[scale=.8] at (-1.8,-.4){$\overline{\calg}$};
\draw[thick,loosely dotted] (2,1.5)--(3.5,3);
\draw[thick,densely dotted] (3,3)--(4,3)--(4,4)--(3,4)--(3,3);
\draw[thick,densely dotted] (5,4)--(6,4)--(6,3)--(5,3);
\filldraw[color=beaublue] (4,3)--(5,3)--(5,4)--(4,4)--(4,3);
\draw[color=red, line width=2.4] (4,4)--(5,4) (4,3)--(5,3);
\draw[thick,loosely dotted] (5.5,4)--(7,5.5);
\draw (6.5,5.5)--(8.5,5.5)--(8.5,6.5)--(6.5,6.5)--(6.5,5.5) (7.5,5.5)--(7.5,7.5)--(8.5,7.5)--(8.5,6.5); 
\node[scale=.7] at (.5,.5){$1$};
\node[scale=.7] at (1.5,.5){$2$};
\node[scale=.7] at (1.5,1.5){$3$};
\node[scale=.8] at (4.5,3.5){${\ast}$};
\node[scale=.7] at (7,6){$3'$};
\node[scale=.7] at (8,6){$2'$};
\node[scale=.7] at (8,7){$1'$};
\end{tikzpicture}}
}
\right) \nonumber
\\
&=\match \left( \raisebox{-4.5ex}{\scalebox{.78}{
\begin{tikzpicture}[baseline,scale=.4] 
\draw (0,0)--(2,0)--(2,1)--(0,1)--(0,0) (1,0)--(1,1)
(1,1)--(1,2)--(2,2)--(2,1) (0,0)--(-1,0) (0,1)--(-1,1); 
\draw  plot[smooth, tension=.7] coordinates {(-1,1) (-1.5,.8) (-2,.6) (-2.5,.3) (-3,-.5) (-2.7,-2) (-1.5,-1) (-1,0)};
\node[scale=.8] at (-1.8,-.4){$\overline{\calg}$};
\draw[thick,loosely dotted] (2,1.5)--(3.5,3);
\draw[thick,densely dotted] (3,3)--(4,3)--(4,4)--(3,4)--(3,3);
\draw[thick,densely dotted] (5,4)--(6,4)--(6,3)--(5,3);
\filldraw[color=beaublue] (4,3)--(5,3)--(5,4)--(4,4)--(4,3);
\draw[thick,loosely dotted] (5.5,4)--(7,5.5);
\draw (6.5,5.5)--(8.5,5.5)--(8.5,6.5)--(6.5,6.5)--(6.5,5.5) (7.5,5.5)--(7.5,7.5)--(8.5,7.5)--(8.5,6.5); 
\draw[color=red,line width=1.2] (4.3,4.2)--(4.8,3.8) (4.3,3.8)--(4.8,4.2) (4.3,3.2)--(4.8,2.8) (4.3,2.8)--(4.8,3.2);
\node[scale=.7] at (.5,.5){$1$};
\node[scale=.7] at (1.5,.5){$2$};
\node[scale=.7] at (1.5,1.5){$3$};
\node[scale=.8] at (4.5,3.5){${\ast}$};
\node[scale=.7] at (7,6){$3'$};
\node[scale=.7] at (8,6){$2'$};
\node[scale=.7] at (8,7){$1'$};
\draw[color=red,decorate,decoration={brace,amplitude=18pt},xshift=-4pt,yshift=0pt]
 (4,2.4)--(-2.4,-2.4) node [color=red,midway,xshift=.7cm,yshift=-.7cm] 
{\footnotesize $\calg_{\beta_2}$};
\draw[color=red,decorate,decoration={brace,amplitude=18pt},xshift=-4pt,yshift=0pt]
 (9.7,7)-- (5.2,2.1) node [color=red,midway,xshift=.7cm,yshift=-.7cm] 
{\footnotesize $\calg_{\beta_2}$};
\end{tikzpicture}}
}
\right) \cup
\match \left( \raisebox{-4.5ex}{\scalebox{.78}{
\begin{tikzpicture}[baseline,scale=.4] 
\draw (0,0)--(2,0)--(2,1)--(0,1)--(0,0) (1,0)--(1,1)
(1,1)--(1,2)--(2,2)--(2,1) (0,0)--(-1,0) (0,1)--(-1,1); 
\draw  plot[smooth, tension=.7] coordinates {(-1,1) (-1.5,.8) (-2,.6) (-2.5,.3) (-3,-.5) (-2.7,-2) (-1.5,-1) (-1,0)};
\node[scale=.8] at (-1.8,-.4){$\overline{\calg}$};
\draw[thick,loosely dotted] (2,1.5)--(3.5,3);
\draw[thick,densely dotted] (3,3)--(4,3)--(4,4)--(3,4)--(3,3);
\draw[thick,densely dotted] (5,4)--(6,4)--(6,3)--(5,3);
\filldraw[color=beaublue] (4,3)--(5,3)--(5,4)--(4,4)--(4,3);
\draw[color=red, line width=2.4] (4,4)--(5,4) (4,3)--(5,3);
\draw[color=red, line width=2.4] (3,3)--(3,4) (1,1)--(1,2) (1,0)--(2,0);
\node[color=red,scale=.6] at (.6,1.5){$2$};
\node[color=red,scale=.6] at (1.5,-.4){$1$};
\draw[color=red, line width=2.4] (6,3)--(6,4) (6.5,6.5)--(7.5,6.5) (8.5,5.5)--(8.5,6.5) (7.5,7.5)--(8.5,7.5);
\node[color=red,scale=.6] at (7,6.9){$2'$};
\node[color=red,scale=.6] at (8.9,5.9){$1'$};
\node[color=red,scale=.6] at (8,7.9){$0'$};
\draw[thick,loosely dotted] (5.5,4)--(7,5.5);
\draw (6.5,5.5)--(8.5,5.5)--(8.5,6.5)--(6.5,6.5)--(6.5,5.5) (7.5,5.5)--(7.5,7.5)--(8.5,7.5)--(8.5,6.5); 
\node[scale=.7] at (.5,.5){$1$};
\node[scale=.7] at (1.5,.5){$2$};
\node[scale=.7] at (1.5,1.5){$3$};
\node[scale=.8] at (4.5,3.5){${\ast}$};
\node[scale=.7] at (7,6){$3'$};
\node[scale=.7] at (8,6){$2'$};
\node[scale=.7] at (8,7){$1'$};
\draw[color=red,decorate,decoration={brace,amplitude=8pt},xshift=-4pt,yshift=0pt]
 (0,-.4)--(-2.4,-2.4) node [color=red,midway,xshift=.4cm,yshift=-.4cm] 
{\footnotesize $\calg_{\overline{\gamma}}$};
\draw[color=red,decorate,decoration={brace,amplitude=30pt},xshift=-4pt,yshift=0pt]
 (9.9,7)--(1,-1.1) node [color=red,midway,xshift=.9cm,yshift=-.9cm] 
{\footnotesize $\widetilde{P}$};
\end{tikzpicture}}
}
\right) \nonumber
\\
&= (\match \calg_{\beta_1} \sqcup \calg_{\beta_2}) \cup (\match \calg_{\overline{\gamma}} \sqcup \widetilde{P}).
\label{eq2'}
\end{align}

Observe that we obtain the unique matching $\widetilde{P}$ of $\ga\setminus\overline{\calg}$ as in Case (I).

Applying equation (\ref{eq2'}) of perfect matchings of $\ga,$ we get:

\begin{align*}
x_{\ga}& \stackrel{(def)}{=} \sum\limits_{P \vdash \calg_{\gamma}}  \displaystyle \frac{x(P)}{\cross (\gamma, T)} \\
&\stackrel{\left(\ref{eq2'}\right)}{=} \displaystyle \frac{\left(\sum\limits_{P_1 \vdash \calg_{\beta_1}} x(P_1) \right) 
\left( \sum\limits_{P_2 \vdash \calg_{\beta_2}} x(P_2) \right)+ \left( \sum\limits_{\overline{P}\vdash \calg_{\overline{\gamma}}} x(\overline{P})\right) x({\widetilde{P}})}{\cross (\gamma, T)}
\\ 
&\begin{aligned}
&\stackrel{\left(\ref{eq3},\ref{eq4}\right)}{=} \left(\sum\limits_{P_1 \vdash \calg_{\beta_1}}  \displaystyle \frac{x(P_1)}{\cross (\beta_1, T)}\right)  \frac{1}{x_{\ast}} \left(\sum\limits_{P_2 \vdash \calg_{\beta_2}}\frac{x(P_2)}{\cross(\beta_2,T)}\right)\\
&\qquad\qquad\qquad\qquad\qquad\qquad\qquad + \left(\sum\limits_{\overline{P} \vdash \calg_{\overline{\gamma}}}  \displaystyle \frac{x(\overline{P})}{\cross (\overline{\gamma}, T)}\right)  \left(\displaystyle \frac{x(\widetilde{P})}{x_1x_2x_3\dots x_{\ast} \dots x_{2'}x_{1'}}\right) 
\end{aligned}
\\
&\stackrel{(def)}{=}\left(x_{{\beta_1}}\right)\left(\frac{1}{x_{\ast}}\right) \left( x_{\beta_2} \right)
+\left( x_{\overline{\gamma}}\right)\frac{\crossout{x_1}\crossout{x_2}\dots \crossout{x_{2'}} \crossout{x_{1'}}x_{0'}}{ \crossout{x_{1}}\crossout{x_{2}}\dots x_{\ast}\dots \crossout{x_{2'}} \crossout{x_{1'}}}\\
&=\frac{x_{{\beta_1}}x_{\beta_2}+x_{\overline{\gamma}}x_{0'}}{x_{\ast}}\\
&\stackrel{\left(\ref{eq1'}\right)}{=} x_{\gamma},
\end{align*}
where the last equality follows from (\ref{eq1'}) as $x_{\ast}$ is an initial variable.

The remaining cases are argued in a similar way since the local configuration around  $G_{\ast}$ adjacent to the (finite or infinite) zig-zag snake subgraphs  $\calg^{(k-1)}$ and $\calg^{(k)}$  will be a straight piece inside the staircase snake graph $\ga$.
\end{proof}
\subsection{Skein relations} 
In this section we will consider fan triangulations of \emph{unpunctured} infinite surfaces. The goal is to generalise \emph{skein relations} to the infinite case under this setting. To establish these relations, we will generalise the \emph{snake graph calculus} of~\cite{CS} to the infinite case.

For a finite unpunctured surface $\cals$, given two arcs  $\gamma_1, \gamma_2\subset\cals$ and a crossing of them, one can resolve the crossing by locally replacing the crossing {\Large $\times$} with the pair of segments  $\genfrac{}{}{0pt}{5pt}{\displaystyle\smile}{\displaystyle\frown}$  or  $\supset \subset$. 
This gives rise to four new (generalised) arcs  $\gamma_3, \gamma_4$ and $\gamma_5, \gamma_6$  (see Definition~\ref{Def:GenArc})  corresponding to the two different ways of smoothing the crossing. 
As shown initially in \cite{MS}, the corresponding elements $x_{\gamma_1}, x_{\gamma_2}, \dots, x_{\gamma_6}$ in the cluster algebra satisfy the 
\emph{skein relations} given by  
$$x_{\gamma_1} x_{\gamma_2} =  x_{\gamma_3} x_{\gamma_4} + x_{\gamma_5} x_{\gamma_6}.$$

Again in the finite case, another proof of skein relations is given combinatorially in \cite{CS} in terms of snake graphs associated with a pair of crossing arcs and those obtained by smoothing a crossing. The main idea of the proof is given by
\begin{itemize}
\item detecting the region of a crossing in the surface in terms of the associated snake graphs (this notion is called \emph{crossing overlap} in \cite{CS}):
\begin{itemize}
\item crossing in a (finite or infinite) surface occurs exactly in one of the $3$ ways given in Fig.~\ref{Fig:Crossing}. In the infinite surface case, the same configuration remains fixed and the only difference is that the region where the crossing occurs might be a triangulated infinite polygon in the universal cover of the surface. However, the crossing condition only depends on the local configuration on the boundary of this region, and therefore, the whole argument in terms of snake graphs immediately generalises when infinite surfaces are considered;
\end{itemize}
\begin{figure}
\epsfig{file=./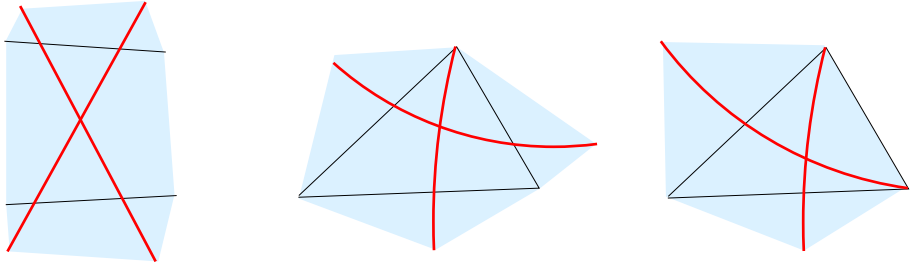,width=0.75\linewidth} 
\caption{Three different types of crossings in a surface.}
\label{Fig:Crossing}
\end{figure}
\item giving a bijection between perfect matchings of $\calg_1\sqcup\calg_2$ and $(\calg_3\sqcup\calg_4)\cup(\calg_5\sqcup\calg_6)$ where $\calg_i$ for $i=1,\dots,6$ are the snake graphs associated with the arcs obtained by smoothing a crossing: this notion is called \emph{resolution} or \emph{grafting} in \cite{CS}:
\begin{itemize}
\item[-] the essential idea of the proof of the bijection between perfect matchings of a pair of snake graphs is to find a \emph{switching position} in an overlap under consideration. When infinite staircase snake graphs are considered, one might need to determine the switching position in an infinite snake graph.  Although it may not be obvious at first sight, this could be done in a similar manner to that of the finite version.  This is due to the fact that we only consider infinite staircase snake graphs (Proposition~\ref{Prop:InfSG}) and perfect matchings of infinite zig-zag snake graphs are characterised explicitly (Lemma~\ref{Lem:UniquePMexcept_1tile}).
\end{itemize} 
\end{itemize}

\begin{definition}[Generalised arc]\label{Def:GenArc}
By a {\it generalised arc} we mean any curve on $\cals$ with endpoints in the marked points of $\cals$ that is not contractible to a boundary segment and not cutting out an unpunctured monogon. As usual, the curves are considered up to isotopy. 
\end{definition}

\begin{definition} [Laurent series associated with generalised arcs] Let $\cals$ be an infinite surface and $T$ be a fan triangulation of $\cals.$ Let $\gamma$ be a generalised arc in $\cals$ and $\ga$ be the infinite snake graph associated with the lift of $\gamma$ in the universal cover. The \emph{Laurent series} $x_{\gamma}$ \emph{associated with} $\gamma$ is defined to be equal to the Laurent series associated with $\ga.$
\end{definition}

\begin{remark}
The assumption that a generalised arc does not cut out an unpunctured monogon can be removed from the definition, and in this case one could use smoothings of self-crossings to associate Laurent series as in~\cite{MSW2}.  
\end{remark}

If $\gamma$ is a generalised arc, then $x_{\gamma}$ is not a cluster variable; however, applying induction on the number of fans crossed by the lift of $\gamma$ as in Theorem~\ref{Thm:MSW-main} one can show that $x_{\gamma}$ is an element of the associated cluster algebra.

\begin{proposition}
The Laurent series associated with a generalised arc is an element of the cluster algebra $\mathcal A(\cals)$.
\end{proposition}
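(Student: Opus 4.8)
The plan is to mirror the inductive structure already established in the proof of Theorem~\ref{Thm:MSW-main} and in Proposition~\ref{prop Laurent}. The essential point is that, for a generalised arc $\gamma$, the Laurent series $x_\gamma$ is \emph{defined} to be the Laurent series $x_{\calg_\gamma}$ associated with the snake graph of $\gamma$, so the task reduces to showing that this series can be obtained from the initial cluster variables $\{x_i\}$ of $T$ by finitely many ring operations (together with limit passages that were already shown to produce elements of $\cala(\cals)$). First I would lift $\gamma$ to the universal cover and consider its domain $\cald_\gamma^T$; by Remark~\ref{fan is elementary} (and Remark~\ref{curve in gan T}) this is a finite union of elementary fans $\calf^{(1)},\dots,\calf^{(k)}$, since $T$ is a fan triangulation and hence contains no infinite zig-zags.

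The induction would be on the number $k$ of elementary fans crossed by (the lift of) $\gamma$, exactly as in Theorem~\ref{Thm:MSW-main}. For the base case $k=1$, the arc lies inside a single elementary fan: if the fan is finite, $x_\gamma$ is a Laurent polynomial in finitely many initial variables and hence visibly lies in $\cala(\cals)$; if the fan is infinite, Proposition~\ref{Thm:MSWforInfZigzag} together with Proposition~\ref{Rem:IncomingFan}(4) expresses $x_\gamma=x_0 x_* \sum_{i} x_{i,i+1}/(x_i x_{i+1})$ as an absolutely converging Laurent series, which is a limit of the cluster variables associated with the genuine arcs $\gamma_{s,n}$; by Corollary~\ref{limit labmda length} this limit is realised inside the algebra, so $x_\gamma\in\cala(\cals)$. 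For the inductive step, I would split off the last fan along the arc $\gamma'$ (possibly a limit arc) separating $\calf^{(k-1)}$ from $\calf^{(k)}$ and apply the Ptolemy relation, as in equations~(\ref{eq1}) and~(\ref{eq1'}):
\[
x_\gamma x_{\gamma'} = x_{\beta_1}x_{\beta_3}+x_{\beta_2}x_{\beta_4},
\]
where each of $\beta_1,\dots,\beta_4$ crosses strictly fewer than $k$ fans. By the induction hypothesis each $x_{\beta_j}\in\cala(\cals)$, and since $\gamma'\in T$ the variable $x_{\gamma'}$ is one of the initial generators; dividing by it gives $x_\gamma$ as an element of $\cala(\cals)$.

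The one genuinely new ingredient relative to Theorem~\ref{Thm:MSW-main} is that $\gamma$ is a \emph{generalised} arc, so it may have self-crossings or cross a given arc of $T$ more than once in a way that an honest arc could not; here the identity $x_\gamma=x_{\calg_\gamma}$ is a definition rather than a theorem, and I must check that the snake-graph expansion $x_{\calg_\gamma}$ really does factor through the same Ptolemy-type recursion. I expect the main obstacle to be precisely this bookkeeping: verifying that the decomposition of $\cald_\gamma^T$ into elementary fans and the associated splitting of $\match\calg_\gamma$ (as carried out in the two cases of Theorem~\ref{Thm:MSW-main}) goes through verbatim for generalised arcs, and that the ``limit relation'' invoked in the infinite-fan base case is legitimate even when $\gamma$ is not an arc. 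Since the snake graph $\calg_\gamma$ is still an infinite staircase snake graph (Proposition~\ref{Prop:InfSG}) whose perfect matchings are characterised by Lemma~\ref{Lem:UniquePMexcept_1tile}, the combinatorial input needed for the Ptolemy splitting is available, and the argument of Theorem~\ref{Thm:MSW-main} applies with only notational changes. Thus, modulo this routine verification, the inductive scheme delivers $x_\gamma\in\cala(\cals)$ for every generalised arc $\gamma$.
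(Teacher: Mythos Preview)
Your proposal is correct and follows essentially the same approach as the paper: the paper's proof is the single sentence ``applying induction on the number of fans crossed by the lift of $\gamma$ as in Theorem~\ref{Thm:MSW-main} one can show that $x_{\gamma}$ is an element of the associated cluster algebra,'' and you have spelled out precisely this induction, including the Ptolemy splitting at the last fan boundary. Your additional remarks about the bookkeeping for self-crossings and the verification that the snake-graph decomposition of $\match\calg_\gamma$ still goes through are exactly the ``routine'' details the paper leaves implicit.
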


By following the same arguments as in~\cite{CS}, we obtain skein relations for cluster algebras associated with unpunctured infinite surfaces.

\begin{theorem}\label{Thm:Skein}
Let $\cals$ be an infinite unpunctured surface and $\gamma_1,\gamma_2$ be crossing generalised arcs. Let $\gamma_3,\dots,\gamma_6$ be the (generalised) arcs obtained by smoothing a crossing of $\gamma_1$ and $\gamma_2$ and $x_1,\dots,x_6$ be the corresponding elements  in the associated infinite surface cluster algebra $\cala(\cals)$. Then the identity
\begin{align*}
x_1x_2=x_3x_4+x_5x_6
\end{align*} 
holds in $\cala(\cals)$.
\end{theorem}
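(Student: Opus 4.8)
The plan is to reduce the identity to a weight-preserving bijection of perfect matchings, exactly as in the finite case of~\cite{CS}, and then to check that the only genuinely new features—infinite snake graphs and limit tiles—are controlled by Proposition~\ref{Prop:InfSG} and Lemma~\ref{Lem:UniquePMexcept_1tile}. Since $\gamma_1,\dots,\gamma_6$ are generalised arcs, each $x_j$ is by definition the Laurent series $x_{\calg_{\gamma_j}}$ associated with the snake graph of (the lift of) $\gamma_j$ in the universal cover. Writing $\calg_j:=\calg_{\gamma_j}$, the asserted identity $x_1x_2=x_3x_4+x_5x_6$ becomes an identity between absolutely convergent Laurent series, and by the expansion formula it is equivalent to a bijection
$$\match(\calg_1\sqcup\calg_2)\;\cong\;\match(\calg_3\sqcup\calg_4)\;\sqcup\;\match(\calg_5\sqcup\calg_6)$$
that matches the weight monomials and crossing monomials on the two sides. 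Establishing this bijection is the whole content of the proof.

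Next I would set up the \emph{crossing overlap}. By Proposition~\ref{Prop:ArcsFinInter} the arcs $\gamma_1,\gamma_2$ meet in finitely many points, and the smoothing is performed at a single such point; lifting to the universal cover, the crossing takes place inside one triangulated region whose three possible local types are precisely those recorded in Fig.~\ref{Fig:Crossing}. The crucial point, to be imported verbatim from~\cite{CS}, is that the crossing condition is detected purely from the configuration along the boundary of this region. Consequently the notion of crossing overlap of $\calg_1$ and $\calg_2$—the common subgraph along which $\gamma_1$ and $\gamma_2$ run parallel on either side of the crossing—carries over unchanged to the infinite setting, even though in the universal cover the region may be an infinite polygon and the overlap may be an infinite one-sided zig-zag subgraph.

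I would then perform the \emph{resolution} (grafting) of matchings following~\cite{CS}: a perfect matching of $\calg_1\sqcup\calg_2$ is transported to a matching of one of $\calg_3\sqcup\calg_4$ or $\calg_5\sqcup\calg_6$ by locating a \emph{switching position} in the overlap and exchanging the two matchings across it. This is weight-preserving because the edges on the two sides of the switch carry the same weights coming from the shared arcs of the fan triangulation. The one new ingredient is that the switching position may lie inside an infinite staircase snake graph, and this is exactly where Proposition~\ref{Prop:InfSG} and Lemma~\ref{Lem:UniquePMexcept_1tile} are used: every $\calg_j$ is a finite union of finite and infinite one-sided zig-zag pieces, and a perfect matching of such an infinite zig-zag is uniquely determined by its height (Definition~\ref{Def:height}), i.e. by the single tile it matches completely. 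Hence on each infinite piece a matching is pinned down by one local choice, the switching position is well defined, and the grafting is reversible.

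Finally I would treat the limit tiles. By Observation~\ref{Obs:LimitTile}(b) no tile is a two-sided limit, so each limit tile sits at the end of exactly one infinite zig-zag, and its matched edge is forced by the limit edge condition of Definition~\ref{def:perfect matching} once the heights of the adjacent zig-zags are fixed. I expect the \textbf{main obstacle} to be precisely this bookkeeping at infinity: one must verify that under the switch the induced heights on the (at most two) infinite zig-zag pieces bordering the crossing region change compatibly, so that the limit edge condition yields matching limit edges on both sides of the bijection, and that the resulting infinite weight monomials—whose cancelling factors are of the type seen in the proof of Theorem~\ref{Thm:MSW-main}—agree term by term with the crossing-monomial normalisation. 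Granting this verification, the bijection is weight- and crossing-monomial-preserving, the Laurent series identity follows, and since $x_1,\dots,x_6\in\cala(\cals)$ the relation holds in the cluster algebra.
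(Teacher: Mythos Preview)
Your proposal is correct and follows essentially the same route as the paper: both argue that the snake-graph calculus of~\cite{CS} carries over verbatim because the crossing overlap depends only on the boundary configuration of the crossing region, and the only new feature---the possibility that the switching position lies in an infinite zig-zag piece---is controlled by Proposition~\ref{Prop:InfSG} and Lemma~\ref{Lem:UniquePMexcept_1tile}. In fact your write-up is more detailed than the paper's own treatment, which simply asserts that ``by following the same arguments as in~\cite{CS}'' the result holds; your explicit discussion of how the height data and limit-edge conditions interact with the grafting bijection is exactly the bookkeeping the paper leaves implicit.
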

\section{Properties of infinite rank cluster algebras} \label{Sec:Consq}

In this section we generalise some properties of (surface) cluster algebras to the setting of cluster algebras from infinite surfaces.

\begin{theorem}\label{Thm:Consq}
Let $\cals$ be an infinite surface and $\cala(\cals)$ be the corresponding cluster algebra. Then
\begin{itemize}
\item seeds are uniquely determined by their clusters; 
\item for any two seeds containing a particular cluster variable $x_\gamma$ there exists a mutation sequence $\mu^\bullet\in \{\mu^{(0)}, \mu^{(n)}, \mu^{(\infty)} \}$ such  that $x_\gamma$ belongs to every cluster obtained in the course of mutation $\mu^\bullet$;
\item there is a cluster containing  cluster variables $\{x_i \mid i\in I\}$, where $I$ is a finite or infinite index set, if and only if for every choice of $i,j\in I$ there exists a cluster containing $x_i$ and $x_j$.
\end{itemize}
Moreover, if $T$ is a fan triangulation of $\cals$ then
\begin{itemize}
\item the ``Laurent phenomenon'' holds, i.e. any cluster variable in $\cala(\cals,T)$ is a Laurent series in cluster variables corresponding to the arcs (and boundary arcs) of $T$;
\item ``Positivity'' holds, i.e. the coefficients in the Laurent series expansion of a cluster variable  in $\cala(\cals,T)$ are positive. 
\end{itemize}
If in addition the surface ${\cals}$ is unpunctured then 
\begin{itemize}
\item the exponents in the denominator of a cluster variable $x_{\gamma}$ corresponding to an arc $\gamma$ are equal to the intersection numbers  $|\gamma\cap \tau_i|$ of $\gamma$ with arcs $\tau_i$ of $T$.
\end{itemize}
\end{theorem}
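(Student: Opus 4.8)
The statement is a package of several properties, and my plan is to dispatch them one at a time, since most follow quickly from results already in place. The Laurent phenomenon is precisely Theorem~\ref{Laurent for punctures} (with Proposition~\ref{prop Laurent} covering the unpunctured case), and Positivity is Corollary~\ref{positivity}; so for a fan triangulation $T$ both bullets are immediate. The claim that seeds are determined by their clusters I would read off from Theorem~\ref{variables are distinct}: since distinct arcs carry distinct Laurent series, the set of cluster variables constituting a cluster recovers the underlying set of arcs, hence the triangulation, hence the whole seed.

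For the second bullet (transporting a fixed $x_\gamma$), let $T,T'$ be two seeds whose clusters both contain $x_\gamma$; by Theorem~\ref{variables are distinct} this means $\gamma\in T$ and $\gamma\in T'$. The plan is to cut $\cals$ along $\gamma$ to obtain a surface $\cals_\gamma$ in which the copies of $\gamma$ become boundary arcs. One checks $\cals_\gamma$ is again an infinite surface (finite genus, finitely many punctures and boundary components, same accumulation data), so that $T,T'$ restrict to triangulations $T_\gamma,T'_\gamma$ of $\cals_\gamma$. Applying transitivity (Theorem~\ref{Thm:T-->T'}) on $\cals_\gamma$ yields a mutation sequence $\mubul$ with $\mubul(T_\gamma)=T'_\gamma$; read back on $\cals$, no elementary flip in this sequence ever touches $\gamma$, which is boundary in $\cals_\gamma$, so $\gamma$ (and thus $x_\gamma$) lies in every intermediate triangulation. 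Admissibility is inherited from $\cals_\gamma$: the only change is that $\gamma$ is held fixed, so every orbit still converges, and the completion by limit arcs transports unchanged. This produces the required $\mubul\in\{\muze,\mun,\muinf\}$.

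For the third bullet, I would translate everything through Theorem~\ref{variables are distinct}: writing $x_i=x_{\gamma_i}$, a cluster contains $\{x_i\mid i\in I\}$ exactly when some triangulation contains $\{\gamma_i\}$, and a cluster contains a pair $x_i,x_j$ exactly when $\gamma_i,\gamma_j$ lie in a common triangulation, i.e.\ are compatible. The forward implication is then trivial, as a triangulation is a set of pairwise compatible arcs. For the converse, pairwise compatibility of $\{\gamma_i\}$ makes the whole family a collection of mutually compatible arcs; since compatibility is a pairwise condition, a union of a chain of compatible collections is again compatible, so Zorn's lemma supplies a maximal compatible collection containing $\{\gamma_i\}$, which is by definition a triangulation and hence the desired cluster.

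The substantive part is the last bullet, on denominators in the unpunctured case. By Theorem~\ref{Thm:MSW-main} we have $x_\gamma=x_{\ga}=\sum_{P\in\match\ga} x(P)/\cross(\gamma,T)$, where $\cross(\gamma,T)=\prod_j x_{i_j}$ ranges over the tiles $G_j$ of $\ga$, each carrying as face weight the variable $x_{i_j}$ of the arc $\tau_{i_j}$ it crosses. Since the tiles are in bijection with the crossings of $\gamma$ with arcs of $T$ and $|\gamma\cap\tau_i|<\infty$ by Proposition~\ref{Prop:ArcsFinInter}, the multiplicity of $x_i$ in $\cross(\gamma,T)$ is exactly $|\gamma\cap\tau_i|$. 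What remains, and is the genuine obstacle, is the no-cancellation claim: that this exponent survives reduction, i.e.\ that no positive power of $x_i$ divides every numerator monomial $x(P)$. Here I would exhibit the minimal perfect matching $P_-$, whose edges are the distinguished boundary edges of the snake graph, and argue that $x(P_-)$ is not divisible by $x_i$ because the edges of a tile are labelled by the sides of its quadrilateral and never by its diagonal $\tau_{i_j}$. For an arc with $|\gamma\cap T|<\infty$ the domain $\cald^T_\gamma$ is a finite subsurface and this is the classical denominator theorem for finite unpunctured surfaces; for arcs with infinitely many crossings I would reduce to that finite case by restricting to finite truncations of $\ga$ in the universal cover (using Remark~\ref{curve in gan T} that $\cald^T_\gamma$ is a finite union of elementary fans), observing that per-term cancellation can only lower the exponent, so that an extremal matching realises $|\gamma\cap\tau_i|$ exactly. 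Locating this uncancelled extremal matching inside an infinite staircase snake graph is the step demanding the most care.
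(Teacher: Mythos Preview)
Your approach aligns with the paper's: the first three bullets are reduced to arc combinatorics via Theorem~\ref{variables are distinct}, and the last three to the Laurent/snake-graph machinery. You supply considerably more detail than the paper, which simply asserts that the first three properties follow from the bijection and the last three from Theorem~\ref{Thm:MSW-main} together with Corollary~\ref{positivity}. Your cut-along-$\gamma$ argument for the second bullet and your Zorn argument for the third are reasonable ways to fill in what the paper leaves implicit. (For the second bullet an alternative closer to the paper's spirit is to rerun the proof of Theorem~\ref{Thm:T-->T'} and use Observation~\ref{not in domain}: since $\gamma$ lies in both $T$ and $T'$, it meets no arc of $T'$ and hence never lies in any domain $\cald^{T_i}_{\gamma_j}$, so it is never flipped.)

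The one genuine slip is in the denominator argument. Your claim that $x(P_-)$ is not divisible by $x_i$ fails: the edges of a tile $G_j$ are indeed labelled by the sides of $Q_j$ rather than its diagonal, but those sides \emph{include} the diagonals $\tau_{i_{j-1}},\tau_{i_{j+1}}$ of the neighbouring tiles. In the single-fan case of Proposition~\ref{Thm:MSWforInfZigzag}, the height-$0$ matching has weight $x_* x_{0,1} x_2 x_3 \cdots$, divisible by every $x_j$ with $j\ge 2$; non-cancellation of $x_j$ is witnessed instead by the matchings of height $j-1$ and $j$, and no single matching serves for all variables at once. Your fallback---localise to the finitely many tiles carrying $x_i$ and invoke the finite-surface denominator theorem---is the correct repair, provided you check that a witnessing matching on the truncation extends to a global perfect matching of the infinite staircase (it does: continue each one-sided zig-zag beyond the truncation with its canonical tail). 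The paper's own proof does not engage with non-cancellation at all; it simply reads the crossing monomial off the expansion formula.
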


\begin{proof}
First three properties follow from the bijection between cluster variables and arcs in $\cals$ established in Theorem~\ref{variables are distinct}.

The other three properties are  immediate consequences of Theorem~\ref{Thm:MSW-main} since the expansion formula is a Laurent series by definition and the coefficients of the terms are parametrised by perfect matchings of snake graphs.
The coefficients are positive integers in view of Corollary~\ref{positivity}. In addition, in the case of unpunctured surface, the denominator of the expansion formula is given by the crossing pattern of the arc. 
\end{proof}

\begin{remark}
\label{rem: outgoing fan}
For  clusters associated with outgoing fan triangulations, the Laurent phenomenon  takes the classical  form: in this case the cluster variables are Laurent polynomials as every arc on $\cals$ crosses only finitely many arcs of a given outgoing fan triangulation.
\end{remark}

\begin{remark}
All results (and the proofs) of Sections~\ref{Sec:Combinatorics} and~\ref{Sec:ClusAlg}  hold also in presence of finite number of orbifold points (in the sense of~\cite{FeSTu3}). More precisely, we can have two types of orbifold points: the cone points with angle $\pi$ (orbifold points of weight $1/2$) and the punctures with self-conjugate horocycles (orbifold points of weight $2$).

\end{remark}

\begin{remark} In principle, one could also introduce laminations (hence principal coefficients) using \emph{outgoing} fan triangulations. Then, one could give skein relations for infinite rank surface cluster algebras with principal coefficients. Moreover, one could associate infinite adjacency quivers of infinite triangulations of surfaces, discuss quiver mutations for cluster variables and also generalise \cite{MSW} expansion formula for snake graphs to give Laurent series expansions for cluster variables with principle coefficients. 
\end{remark}



\begin{thebibliography}{99}
%
%
\bibitem{ABCP} I.~Assem, T.~Br\"ustle, G.~Charbonneau-Jodoin and P.G.~Plamondon, {\em Gentle algebras arising from surface triangulations}, Algebra Number Theory {\bf 4}, (2010), no. 2, 201--229.
%
\bibitem{BG} K.~Baur and S.~Gratz, {\em Transfinite mutations in the completed infinity-gon},  
J. Combin. Theory, Ser. A, {\bf 155} (2018),  321--359.

\bibitem{BFZ} A.~Berenstein, S.~Fomin and A.~Zelevinsky, {\it Cluster algebras III: Upper bounds and double Bruhat cells}. Duke Math. J. \textbf{126}, (2005), {\bf 1}, 1--52. 

\bibitem{BHJ} C.~Bessendrodt, T.~Holm and P.~J\o rgensen, {\em All $Sl_2$-tilings come from infinite triangulations},  
Adv. Math. {\bf 315} (2017), 194--245. 

\bibitem{BZ} T.~Br\"ustle and J.~Zhang, {\em On the cluster category of a marked surface},  Algebra and Number Theory, {\bf 5} (2011), {\bf 4}, 529--566.

%
%

\bibitem{CLS} \I.~\Canakci, K.~Lee and R.~Schiffler, {\em On cluster algebras for surfaces without punctures and one marked point},  Proc. Amer. Math. Soc., Ser. B, {\bf 2} (2015), 35--49.

\bibitem{CS} \I.~\Canakci \ and R.~Schiffler, {\em Snake graph calculus and cluster algebras from surfaces},  J. Algebra, \textbf{382}, (2013) 240--281.


\bibitem{CS2} \I.~\Canakci \ and R.~Schiffler, {\em Snake graph calculus and cluster algebras from surfaces II: Self-crossing snake graphs},  Math. Z., {\bf 281} (1), (2015), 55--102. 

\bibitem{CS3} \I.~\Canakci \ and R.~Schiffler, {\em Snake graph calculus and cluster algebras from surfaces III: Band graphs and snake rings}, 
Int. Math. Res. Not.  (2017) 1--82.
 

\bibitem{CaSc} \I.~\Canakci \ and S.~Schroll, with an appendix by C.~Amiot, {\em Extensions in Jacobian algebras and cluster categories of marked surfaces},   Adv.  Math. {\bf 313} (2017), 1--49. 

%
%
%
%
%
\bibitem{FeTu} A.~Felikson and P.~Tumarkin, {\em Bases for cluster algebras from orbifolds}, 
Adv. Math. {\bf 318} (2017), 191--232.

\bibitem{FeSTu} A.~Felikson, M.~Shapiro and P.~Tumarkin, {\em Skew-symmetric cluster algebras of finite mutation type},  J. Eur. Math. Soc. \textbf{14}, (2012), 1135--1180.

\bibitem{FeSTu2} A.~Felikson, M.~Shapiro and P.~Tumarkin, {\em Cluster algebras of finite mutation type via unfoldings},  Int. Math. Res. Not.  {\bf 8}, (2012), 1768--1804.

\bibitem{FeSTu3} A.~Felikson, M.~Shapiro and P.~Tumarkin, {\em Cluster algebras and triangulated orbifolds},  Adv. Math. \textbf{231}, 5, (2012),  2953--3002.

\bibitem{FG1} V.~Fock and A.~Goncharov, {\em Moduli spaces of local systems and higher Teichm\"uller theory}   Publ. Math. Inst. Hautes \'Etudes Sci.  {\bf 103}  (2006), 1--211. 

\bibitem{FG2} V.~Fock,  and A.~Goncharov,  {\em Dual Teichm\"uller and lamination spaces},
Handbook of Teichm\"uller Theory, Vol. I,   647--684, IRMA Lect. Math. Theor. Phys., 11, Eur. Math. Soc., Z\"urich, 2007.



\bibitem{FST} S.~Fomin, M.~Shapiro and D.~Thurston, {\em Cluster algebras and triangulated surfaces. Part I: Cluster complexes}, Acta Math. \textbf{201} (2008), 83--146. 


\bibitem{FT} S.~Fomin and D.~Thurston, {\em Cluster algebras and triangulated surfaces. Part II: Lambda Lengths}, 
Mem. Amer. Math. Soc., {\bf 255} (2018), no. 1223. 
%
\bibitem{FZ1} S.~Fomin and A.~Zelevinsky, {\em Cluster algebras I: Foundations}, J. Amer. Math. Soc. \textbf{15} \rm (2002), 497--529.

 \bibitem{FZ2} S.~Fomin and A.~Zelevinsky, {\em Cluster algebras II: Finite type classification}, Invent. Math. \textbf{154} (2003), 1, 63--121.

\bibitem{FZ4} S.~Fomin and A.~Zelevinsky, {\em Cluster algebras IV: Coefficients}, Compos. Math. \textbf{143} \rm (2007), 112--164.

 
\bibitem{FK} P.~Di~Francesco and R.~Kedem, {\em $Q$-systems as cluster algebras II}, Lett. Math. Phys. \textbf{89} no 3 (2009), 183--216.

%
%



\bibitem{GG} J.~Grabowski and S.~Gratz, with  appendix by Michael Groechenig, {\it Cluster algebras of infinite rank}, J.  Lond.  Math.  Soc.  (2), \textbf{89}  (2014), no. 2, 337--363.

\bibitem{GG1} J.~Grabowski and S.~Gratz,
{\it Graded quantum cluster algebras of infinite rank as colimits},
J.  Pure  Appl. Alg., {\bf 222} (2018), no. 11, 3395--3413. 

\bibitem{G} S.~Gratz,
{\it Cluster algebras of infinite rank as colimits},
Math. Z., {\bf 281}  (2015) no.3-4, 1137--1169. 

%

\bibitem{H} A.~Hatcher, {\em On triangulations of surfaces}, Topology Appl. {\bf 40 } (1991), 189--194.

\bibitem{HL} D.~Hernandez and B.~Leclerc, {\em A cluster algebra approach to $q$-characters of Kirillov-Reshetikhin modules},    J. Eur. Math. Soc.  {\bf 18} (2016), no. 5, 1113--1159. 

\bibitem{HL2} D.~Hernandez and B.~Leclerc, {\em Cluster algebras and category $O$ for representations of Borel subalgebras of quantum affine algebras},   Algebra Number Theory, {\bf 10} (2016), no. 9, 2015--2052.  

\bibitem{HJ} T.~Holm and P.~J\o rgensen, {\em On a cluster category of infinite Dynkin type, and the relation to triangulations of the infinity-gon},  Math. Z. {\bf 270} (2012), no. 1-2, 277--295. 

\bibitem{HJ2} T.~Holm and P.~J\o rgensen, {\em $SL_2$-tilings and triangulations of the strip},  J. Combin. Theory, Ser. A, {\bf 120} (2013), no. 7, 1817--1834. 

%

\bibitem{IT} K. Igusa and G. Todorov, {\em Continuous cluster categories I},  Algebr. Represent. Theory, {\bf 18} (2015), no. 1, 65--101. 

\bibitem{IT2} K. Igusa and G. Todorov, {\em Continuous Frobenius categories}, (English summary) Algebras, quivers and representations, 115--143, Abel Symp., 8, Springer, Heidelberg, 2013. 

%
%
\bibitem{Labardini} D.~Labardini-Fragoso, {Quivers with potentials associated to triangulated surfaces, part IV: removing boundary assumptions}, Selecta Math. (N.S.) 22 (2016), no. 1, 145--189. 


\bibitem{Ladkani} S.~Ladkani, {\em On Jacobian algebras from closed surfaces}, {\tt arXiv:1207.3778}.

\bibitem{LS4} K.~Lee and R.~Schiffler, {\em Positivity for cluster algebras},   Annals of Math. \textbf{182} (1), (2015) 73--125.



\bibitem{LP} S.~Liu, C.~Paquette, {\em Cluster categories of type $A^{\infty}_{\infty}$ and triangulations of the infinite strip},  {\tt arXiv:1505.06062}.






\bibitem{M} G.~Muller, {\em Skein algebras and cluster algebras of marked surfaces},   Quantum Topol. {\bf 7} (2016), no. 3, 435--503. 

\bibitem{MS} G.~Musiker and R.~Schiffler, {\em Cluster expansion formulas and perfect matchings}, \emph{J. Algebraic Combin.} \textbf{32} (2010), no. 2, 187--209.

\bibitem{MSW} G.~Musiker, R.~Schiffler and L.~Williams, {\em Positivity for cluster algebras from surfaces},  
Adv. Math.  \textbf{227}, (2011), 2241--2308.

\bibitem{MSW2} G.~Musiker, R.~Schiffler and L.~Williams, {\em Bases for cluster algebras from surfaces},  
 Compos. Math. \textbf{149}, 2, (2013), 217--263.

\bibitem{MW} G.~Musiker and L.~Williams, {\em Matrix formulae and skein relations for cluster algebras from surfaces},  Int. Math. Res. Not. \textbf{13}, (2013),  2891--2944.


%



\bibitem{P} R.~C.~Penner, {\em Decorated Teichm\"uller theory}, with a foreword by Yuri I. Manin.  QGM Master Class Series. European Mathematical Society (EMS), Z\"urich, 2012. 

 



%
\bibitem{QZ} Y. Qui and Y. Zhou, {\em Cluster categories for marked surfaces: punctured case}, 
 Compos. Math. {\bf 153}, 9 (2017), 1779--1819.

\bibitem{S2} R. Schiffler, {\em A cluster expansion formula ($A_n$ case)},  Electron. J. Combin. {\textbf 15} (2008), no. 1, Research paper 64, 9 pp. 

\bibitem{S3} R. Schiffler, {\em On cluster algebras arising from unpunctured surfaces II}, Adv. Math. \textbf{223}, (2010), 1885--1923.  

\bibitem{ST} R.~Schiffler and H.~Thomas, {\em On cluster algebras arising from unpunctured surfaces}, Int. Math. Res. Not.  {\textbf 17} (2009), 3160--3189.

%

\bibitem{T} D.~Thurston, {\em A positive basis for surface skein algebras},  Proc. Natl. Acad. Sci. USA \textbf{111}, 27, (2014), 9725--9732.

%
%
%
\bibitem{ZZZ} J.~Zhang, Y.~Zhou and B.~Zhu, {\em Cotorsion pairs in the cluster category of a marked surface}, 
J. Algebra {\bf 391} (2013), 209--226. 

\end{thebibliography}
\end{document}